\tikzset{myboxgroup/.style={draw, densely dotted}} % style for the boxed groups
\newtheorem{lemma}{Lemma}[section]
\newtheorem{proposition}[lemma]{Proposition}
\newtheorem{theorem}[lemma]{Theorem}
\newtheorem{corollary}[lemma]{Corollary}
\theoremstyle{definition}
\newtheorem{example}[lemma]{Example}
\newtheorem{definition}[lemma]{Definition}
\theoremstyle{remark}
\newtheorem{remark}[lemma]{Remark}
\let\xx@thm\@thm
\crefname{section}{section}{sections}
\Crefname{section}{Section}{Sections}
\crefname{definition}{Definition}{Definitions}
\crefname{definitionnodiamond}{Definition}{Definitions}
\crefname{example}{Example}{Examples}
\crefname{examplenodiamond}{Example}{Examples}
\crefname{remark}{Remark}{Remarks}
\crefname{remarknodiamond}{Remark}{Remarks}
\crefname{convention}{Convention}{Conventions}
\crefname{notation}{Notation}{Notations}
\crefname{notationnodiamond}{Notation}{Notations}
\crefname{lemma}{Lemma}{Lemmas}
\crefname{proposition}{Proposition}{Propositions}
\crefname{corollary}{Corollary}{Corollaries}
\crefname{theorem}{Theorem}{Theorems}
\crefname{assumption}{Assumption}{Assumptions}
\crefname{enumi}{}{}
\crefname{equation}{}{}
\crefname{align}{}{}
\crefname{proofstep}{Step}{Steps}
\crefname{table}{Table}{Tables}
\numberwithin{equation}{section}
\newenvironment{pf}{\proof}{\endproof}
\def\CC{{\mathbb C}}
\def\PP{{\mathbb P}}
\def\QQ{{\mathbb Q}}
\def\RR{{\mathbb R}}
\def\ZZ{{\mathbb Z}}
\newcommand{\bbZ}{\mathbb{Z}}
\newcommand{\bbC}{\mathbb{C}}
\newcommand{\bbP}{\mathbb{P}}
\def\0ol{{\bar 0}}
\def\1ol{{\bar 1}}
\def\2ol{{\bar 2}}
\def\ol2{{\bar 2}}
\def\3ol{{\bar 3}}
\def\4ol{{\bar 4}}
\def\5ol{{\bar 5}}
\def\6ol{{\bar 6}}
\def\7ol{{\bar 7}}
\def\8ol{{\bar 8}}
\def\9ol{{\bar 9}}
\def\bold0{{\bf 0}}
\def\bold1{{\bf 1}}
\def\bold2{{\bf 2}} 
\def\bold3{{\bf 3}}
\def\bold4{{\bf 4}}
\def\bold5{{\bf 5}}
\def\bold6{{\bf 6}}
\def\bold7{{\bf 7}}
\def\bold8{{\bf 8}}
\def\bold9{{\bf 9}}
\def\P2Skly{\PP^2_{Skly}}
\def\a{\alpha}
\def\b{\beta}
\def\c{\gamma}
\def\d{\delta}
\def\g{\gamma}
\def\s{\sigma}
\def\ve{\varepsilon}
\def\D{\Delta}
\def\G{\Gamma}
\def\L{\Lambda}
\def\fd{{\mathfrak d}}
\def\fsl{{\mathfrak s}{\mathfrak l}}
\def\sA{{\sf A}}
\def\sD{{\sf D}}
\def\sT{{\sf T}}
\def\sfc{{\sf c}}
\def\sfd{{\sf d}}
\def\sfe{{\sf e}}
\def\sfh{{\sf h}}
\def\sfk{{\sf k}}
\def\sfl{{\sf l}}
\def\sfm{{\sf m}} 
\def\sfn{{\sf n}}
\def\sft{{\sf t}}
\def\sfu{{\sf u}}
\def\sfw{{\sf w}}
\def\sfx{{\sf x}}
\def\sfy{{\sf y}}
\def\sfz{{\sf z}}
\def\sfA{{\sf A}}
\def\sfE{{\sf E}}
\def\sfH{{\sf H}}
\def\sfJ{{\sf J}}
\def\sfM{{\sf M}}
\def\sfN{{\sf N}}
\def\cal{\mathcal}
\def\cE{{\cal E}}
\def\cF{{\cal F}}
\def\cL{{\cal L}}
\def\cM{{\cal M}}
\def\cO{{\cal O}}
\def\cP{{\cal P}}
\def\cV{{\cal V}}
\def\Supp{\operatorname{Supp}}
\def\Ext{\operatorname{Ext}}
\def\Tor{\operatorname{Tor}}
\def\Aut{\operatorname{Aut}}
\def\coh{\operatorname{\mathsf{coh}}}
\def\rank{\operatorname{rank}}
\def\Pic{\operatorname{Pic}}
\def\th{{\rm th}}
\def\dirlim{\mathop{\vtop{\baselineskip -100pt\lineskip -1pt\lineskiplimit 0pt
\setbox0\hbox{lim}\copy0\hbox to \wd0{\rightarrowfill}}}\limits}
\def\invlim{\mathop{\vtop{\baselineskip -100pt\lineskip -1pt\lineskiplimit 0pt
\setbox0\hbox{lim}\copy0\hbox to \wd0{\leftarrowfill}}}\limits}
\def\I11{{1 \kern -0.8pt \! \mbox{l}}}
\def\mumu{{\mu\kern-4.2pt\mu}}
\def\bfmu{{\mu\kern-4.2pt\mu}}
\def\2slash{\backslash \! \backslash}
\newcommand{\NS}{\operatorname{\mathsf{NS}}}
\newcommand{\Div}{\operatorname{Div}}
\newcommand{\CH}{\operatorname{CH}}
\newcommand{\pr}{\operatorname{pr}}
\newcommand{\sfsum}{\operatorname{\mathsf{sum}}}
\renewcommand{\Im}{\operatorname{Im}}
\renewcommand{\colon}{:}
\def\l@subsection{\@tocline{2}{0pt}{2.75pc}{5pc}{}}
\begin{document}

\title[The characteristic variety for elliptic algebras]{The characteristic variety for\\
Feigin and Odesskii's elliptic algebras}

\author{Alex Chirvasitu, Ryo Kanda, and S. Paul Smith}

\address[Alex Chirvasitu]{Department of Mathematics, University at
  Buffalo, Buffalo, NY 14260-2900, USA.}
\email{achirvas@buffalo.edu}

\address[Ryo Kanda]{Department of Mathematics, Graduate School of
  Science, Osaka City University, 3-3-138, Sugimoto, Sumiyoshi, Osaka, 558-8585, Japan.}
\email{ryo.kanda.math@gmail.com}

\address[S. Paul Smith]{Department of Mathematics, Box 354350,
  University of Washington, Seattle, WA 98195, USA.}
\email{smith@math.washington.edu}

\subjclass[2010]{14A22 (Primary), 16S38, 16W50, 17B37, 14H52 (Secondary)}

\keywords{Elliptic algebra; Sklyanin algebra; characteristic variety; point module; theta functions}

% \thanks{ was supported by the National Science Foundation,
%  Award No.  }

\begin{abstract}
This paper examines an algebraic variety that controls an important part of the structure and representation theory of the algebra $Q_{n,k}(E,\tau)$ introduced by Feigin and Odesskii. The $Q_{n,k}(E,\tau)$'s are a family of quadratic algebras depending on a pair of coprime integers $n>k\ge 1$, an elliptic curve $E$, and a point $\tau\in E$. It is already known that the structure and representation theory of $Q_{n,1}(E,\tau)$ is controlled by the geometry associated to $E$ embedded as a degree $n$ normal curve in the projective space $\mathbb P^{n-1}$, and by the way in which the translation automorphism $z\mapsto z+\tau$ interacts with that geometry. For $k\ge 2$ a similar phenomenon occurs: $(E,\tau)$ is replaced by $(X_{n/k},\sigma)$ where $X_{n/k}\subseteq\mathbb P^{n-1}$ is the characteristic variety of the title and $\sigma$ is an automorphism of it that is determined by the negative continued fraction for $\frac{n}{k}$. There is a surjective morphism $\Phi:E^g \to X_{n/k}$ where $g$ is the length of that continued fraction. The main result in this paper is that $X_{n/k}$ is a quotient of $E^g$ by the action of an explicit finite group. We also prove some assertions made by Feigin and Odesskii. The morphism $\Phi$ is the natural one associated to a particular invertible sheaf $\mathcal L_{n/k}$ on $E^g$. The generalized Fourier-Mukai transform associated to $\mathcal L_{n/k}$ sends the set of isomorphism classes of degree-zero invertible $\mathcal O_E$-modules to the set of isomorphism classes of indecomposable locally free $\mathcal O_E$-modules of rank $k$ and degree $n$. Thus $X_{n/k}$ has an importance independent of the role it plays in relation to $Q_{n,k}(E,\tau)$. The backward $\sigma$-orbit of each point on $X_{n/k}$ determines a point module for $Q_{n,k}(E,\tau)$.
\end{abstract}

\maketitle

\tableofcontents

%%%%%%%%%%%%%%%%%%%%%%%%%%%%%%%%%%%%%%%%%%%%%%%%%%%%%%%%%%%%%%%%%%%%%%%%%%%%%%%%
%%%%%%%%%%%%%%%%%%%%%%%%%%%%%%%%%%%%%%%%%%%%%%%%%%%%%%%%%%%%%%%%%%%%%%%%%%%%%%%%
\section{Introduction}
%%%%%%%%%%%%%%%%%%%%%%%%%%%%%%%%%%%%%%%%%%%%%%%%%%%%%%%%%%%%%%%%%%%%%%%%%%%%%%%%
%%%%%%%%%%%%%%%%%%%%%%%%%%%%%%%%%%%%%%%%%%%%%%%%%%%%%%%%%%%%%%%%%%%%%%%%%%%%%%%%

This is the second of several papers we are writing about the elliptic algebras $Q_{n,k}(E,\tau)$ defined by Feigin and Odesskii in 1989 \cite{FO89}. The first of them, \cite{CKS1}, focused on their definition in terms of generators and relations and established some immediate consequences of that definition.  The present paper concerns an algebraic variety that controls a large part of its structure and representation theory, its characteristic variety. Another, \cite{CKS3}, will provide evidence of this ``control'' by showing
that certain twisted homogeneous coordinate rings for the characteristic variety are quotients of $Q_{n,k}(E,\tau)$.

%%%%%%%%%%%%%%%%%%%%%%%%%%%%%%%%%%%%%%%%%%%%%%%%%%%%%%%%%%%%%%%%%%%%%%%%%%%%%%%%
\subsection{The algebras $Q_{n,k}(E,\tau)$}
%%%%%%%%%%%%%%%%%%%%%%%%%%%%%%%%%%%%%%%%%%%%%%%%%%%%%%%%%%%%%%%%%%%%%%%%%%%%%%%%

These algebras depend on a pair of relatively prime integers $n >k \ge 1$\index{n@$n$}\index{k@$k$}, a complex elliptic curve $E$, and a translation automorphism $\tau:E \to E$, or, what is almost the same thing, a point $\tau \in E$\index{tau@$\tau$}. This notation will apply throughout the paper. For fixed $(E,\tau)$, the algebras form a family parametrized by the rational numbers $>1$; this parametrization is realized by writing each rational number 
$>1$ as $\frac{n}{k}$ with $n>k\ge 1$ being a pair of relatively prime positive integers.

Once and for all, we fix a lattice $\L$ in $\CC$ such that $E=\CC/\L$ and use the symbol $\tau$ for a point in $E$ and for a fixed preimage of it in $\CC$.  In the introduction we assume $\tau$ is not in $\frac{1}{n}\L$.\footnote{We discussed the case $\tau \in \frac{1}{n}\Lambda$ in \cite{CKS1}.}

By definition, $Q_{n,k}(E,\tau)$\index{Q_n,k(E,tau)@$Q_{n,k}(E,\tau)$} is the free algebra $\CC\langle x_0,\ldots,x_{n-1}\rangle$\index{x_i@$x_{i}$} modulo the $n^2$ relations\footnote{Feigin
and Odesskii's original definition uses $x_{k(j-r)}x_{k(i+r)}$ instead of $x_{j-r}x_{i+r}$ but, 
as remarked in \cite[\S3.1.1]{CKS1}, this is simply a change of variables.}
\begin{equation}
\label{the-relns-1}
r_{ij}\; =\; \sum_{r \in \ZZ_n} \frac{\theta_{j-i+(k-1)r}(0)}{\theta_{j-i-r}(-\tau)\theta_{kr}(\tau)} \,\, x_{j-r}x_{i+r}	
\end{equation}
where the indices $i$ and $j$ belong to $\ZZ_n=\ZZ/n\ZZ$ and $\theta_0,\ldots,\theta_{n-1}$ are the theta functions defined in \cref{prop.official.theta.basis} below.  When $\tau \in \frac{1}{n}\L$, $\theta_{kr}(\tau)=0$ for some $r$ so the relations no longer make sense; nevertheless, as explained in \cite[\S3.3]{CKS1}, the definition can be extended to all $\tau$ in $\CC$.

Tate and Van den Bergh showed that $Q_{n,1}(E,\tau)$ is a noetherian ring whose homogeneous components have the same dimensions as the homogeneous components of the polynomial ring on $n$ variables \cite{TvdB96}.  They also proved that $Q_{n,1}(E,\tau)$ has excellent homological properties: it has all the good homological properties that the polynomial ring has.

The limit of $Q_{n,k}(E,\tau)$ as $\tau \to 0$ is the polynomial ring on $n$ variables. In \cite{CKS4} we will extend Tate and Van den Bergh's result to show that, when $\tau$ is not a torsion point in $E$, the algebras have the same Hilbert series as a polynomial ring on $n$ variables.

The algebras $Q_{3,1}(E,\tau)$ and $Q_{4,1}(E,\tau)$, known as the 3- and 4-dimensional Sklyanin algebras, respectively, are particularly well understood \cite{ATV2}, \cite{SS92}, \cite{LS93}. Their structure and representation theory is intimately related to and, indeed, controlled by, the geometry associated to the translation automorphism $x \mapsto x+\tau$  on $E$ embedded as a cubic (respectively, quartic) curve in $\PP^2$ (respectively, $\PP^3$) via the theta functions $\theta_i$, $i \in \ZZ_3$ (respectively, $i \in \ZZ_4$). These curves in the ambient projective spaces are the characteristic varieties for $Q_{3,1}(E,\tau)$ and $Q_{4,1}(E,\tau)$. For all $n \ge 3$, the characteristic variety for $Q_{n,1}(E,\tau)$, is a copy of $E$ embedded as a degree-$n$ elliptic normal curve in $\PP^{n-1}$.

The way in which the characteristic variety for $Q_{n,1}(E,\tau)$ controls its representation theory is illustrated by the results in \cite[Thm.~1.4]{TvdB96} and \cite[\S5]{Stan} on the classification of, and relationships between, 
{\it linear modules} for $Q_{n,1}(E,\tau)$. Linear modules are somewhat like Verma modules, though they are not induced from smaller subalgebras. The results and assertions of Feigin and Odesskii suggest that the characteristic variety for $Q_{n,k}(E,\tau)$ controls a large part of {\it its} representation theory.  The simplest linear modules are the {\it point modules} defined in \Cref{ssect.pt.mods} below. The characteristic variety parametrizes most of them.

%%%%%%%%%%%%%%%%%%%%%%%%%%%%%%%%%%%%%%%%%%%%%%%%%%%%%%%%%%%%%%%%%%%%%%%%%%%%%%%%
\subsection{The characteristic variety}
%%%%%%%%%%%%%%%%%%%%%%%%%%%%%%%%%%%%%%%%%%%%%%%%%%%%%%%%%%%%%%%%%%%%%%%%%%%%%%%%

The {\sf characteristic variety} for $Q_{n,k}(E,\tau)$, which we denote by $X_{n/k}$, is defined to be the image of the morphism $\Phi_{|D_{n/k}|}\colon E^g \to \PP^{n-1}$ associated to the complete linear system $|D_{n/k}|$ where $D_{n/k}$ is the divisor defined in \Cref{subse.lnk}.  We write $\cL_{n/k}$ for $\cO_{E^g}(D_{n/k})$.

The integer $g$ in the previous paragraph is the length of the ``negative continued fraction'' expansion for the rational number $n/k$: there is a unique sequence of integers $n_1,\ldots,n_g$, all $\ge 2$, such that
$$
\frac{n}{k} \;=\; n_1 - \frac{1}{n_2-\frac{1}{\cdots \, -\, \frac{1}{n_g}}}.
$$
The divisor $D_{n/k}$, the sheaf $\cL_{n/k}$, and hence the characteristic variety, are defined in terms of $n_1,\ldots,n_g$:
$$
\cL_{n/k} \; := \;  \big(\cL^{n_1} \boxtimes \cdots \boxtimes \cL^{n_g} \big) \otimes \left( \bigotimes_{j=1}^{g-1} \pr_{j,j+1}^*\cP
\right)
$$
where $\cL^r=\cO_E((0))^{\otimes r}$, $\pr_{j,j+1}:E^g \to E^2$ denotes the projection to the $j^{\rm th}$ and $(j+1)^{\rm th}$
factors of $E^g$, and $\cP$ is the Poincar\'e bundle on $E^2$.

Although the definition of $\cL_{n/k}$ (see \cite[\S3.3]{FO89}) appears mysterious at first, its naturality and significance 
is illustrated by the following fact.

\begin{proposition}
[\Cref{prop.FM.transform}]
The generalized Fourier-Mukai transform ${\bf R}\pr_{1*}(\cL_{n/k} \otimes^{\bf L} \pr_g^*(\, \cdot \,))$ is an auto-equivalence of the 
bounded derived category 
$\sD^b(\coh E)$ and provides a bijection from the set of isomorphism classes of invertible $\cO_E$-modules of degree zero to 
the set of isomorphism classes of  indecomposable locally free $\cO_E$-modules of rank $k$ and 
degree $n$. 
\end{proposition}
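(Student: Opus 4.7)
My plan is to recognize $\Psi := {\bf R}\pr_{1*}(\cL_{n/k} \otimes^{\bf L} \pr_g^*(-))$ as an iterated Fourier--Mukai composition built from Mukai's Poincar\'e-kernel autoequivalence of $\sD^b(\coh(E))$ together with line-bundle twists, and then to exploit the induced $SL(2,\ZZ)$-action on the $(\rk,\deg)$-lattice together with Atiyah's classification of indecomposable vector bundles on $E$.

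First I would factor $\pr_1:E^g \to E$ through the tower $E^g \xrightarrow{\pi_g} E^{g-1} \xrightarrow{\pi_{g-1}} \cdots \xrightarrow{\pi_2} E$ in which each $\pi_i$ forgets the last factor, so that ${\bf R}\pr_{1*}$ becomes the composition of the ${\bf R}\pi_{i*}$. Using the product form of $\cL_{n/k}$, the projection formula, and flat base change, I would prove by induction on $g$ that
$$\Psi(\cF) \;\cong\; \bigl(T_{n_1} \circ S \circ T_{n_2} \circ S \circ \cdots \circ S \circ T_{n_g}\bigr)(\cF)\,[\,j\,]$$
for a fixed cohomological shift $j$, where $T_m := (-)\otimes_{\cO_E}\cO_E(m(0))$ and $S(-) := {\bf R}\pr_{1*}(\cP \otimes^{\bf L} \pr_2^*(-))$ is the classical Fourier--Mukai transform on $E\times E$. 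The pairwise factors $\pr_{i,i+1}^*\cP$ of $\cL_{n/k}$ supply exactly the Poincar\'e kernels that appear when one integrates out a single coordinate, and the external factors $\cL^{n_i}$ become the twists $T_{n_i}$ after pullback.

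Granted this decomposition, the autoequivalence statement is immediate: $S$ is an autoequivalence of $\sD^b(\coh(E))$ by Mukai's theorem, each $T_{n_i}$ trivially is, and their composition is too. The action on $K_0^{\mathrm{num}}(E)\cong \ZZ^2$ is then a product of $g$ explicit $SL(2,\ZZ)$ matrices---one $\bigl(\begin{smallmatrix}0 & -1\\ 1 & 0\end{smallmatrix}\bigr)$ for each $S$ and a unipotent one for each $T_{n_i}$---and the standard matrix identity underlying the negative continued fraction of $\frac{n}{k}$ shows that this product sends the class $(1,0)$ of a degree-$0$ line bundle to the numerical class of an indecomposable locally free sheaf of rank $k$ and degree $n$. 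Combining this with the facts that an autoequivalence sends indecomposable objects to indecomposable objects and that, by Atiyah's theorem, isomorphism classes of indecomposable locally free sheaves on $E$ of coprime rank $k$ and degree $n$ are parametrized by $\pic^0(E)$, I conclude that $\Psi$ induces a bijection from $\pic^0(E)$ onto that set, provided we know $\Psi(L)$ is an honest sheaf.

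The hard part will be exactly this last point: cohomological concentration. I would argue by induction that at every stage of the iterated transform constructed above, the input is a semistable sheaf of strictly positive degree, so that the next $S$ sends it to a sheaf sitting in a single cohomological degree (using the classical vanishing $H^1(E,\cG)=0$ for $\cG$ semistable of positive degree) and preserves semistability; positivity of all intermediate degrees is exactly encoded in the conditions $n_i \ge 2$ of the negative continued fraction. Local freeness at the end follows because semistable sheaves of coprime rank and degree on an elliptic curve are automatically locally free and stable; this simultaneously places $\Psi(L)$ in the image of Atiyah's parametrization and finishes the bijection claim.
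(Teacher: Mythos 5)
Your proposal is correct and follows essentially the same route as the paper: both factor the transform through the tower of projections into a composition of Poincar\'e Fourier--Mukai transforms $S$ and twists $T_{n_i}$, track $(\rank,\deg)$ through the negative continued fraction recursion, and use positivity of the intermediate degrees $d(n_i,\ldots,n_g)>0$ to keep everything concentrated in degree zero (the paper even records your $SL(2,\ZZ)$ slope computation as a remark immediately after its proof). The only difference is one of packaging: the paper cites as a black box the fact that ${\bf R}\pr_{1*}(\cP\otimes^{\bf L}\pr_2^*(\,\cdot\,))$ carries $\cE(r,d)$ onto $\cE(d,-r)$ for $d\ge 1$, whereas you re-derive the concentration and local freeness from semistability and Atiyah's classification.
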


We make no claim to originality for this observation (cf., \cite[Rmk.~5.6]{HP2}). 

\subsection{The characteristic variety is a quotient of $E^g$} 
The main result in this paper is 

\begin{theorem}
[\cref{prop.factor.Phi}]
\label{thm.main}
The characteristic variety $X_{n/k}$ is isomorphic to the quotient  $E^g/\Sigma_{n/k}$. 
\end{theorem}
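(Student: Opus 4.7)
The plan is to work with an explicit description of $\Sigma_{n/k}$ as a finite subgroup of $\Aut(E^g)$ (presumably built out of translations by $n$-torsion and permutations matching the symmetries dictated by the negative continued fraction $n/k=[n_1,\ldots,n_g]$), and to split the proof into two parts: (i) showing that $\Phi_{|D_{n/k}|}$ is constant on $\Sigma_{n/k}$-orbits, so it descends to a surjection $\bar\Phi\colon E^g/\Sigma_{n/k}\twoheadrightarrow X_{n/k}$; and (ii) showing that $\bar\Phi$ is an isomorphism.

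For (i), I would verify for each generator $\sigma\in\Sigma_{n/k}$ that $\sigma^*\cL_{n/k}\cong\cL_{n/k}$ and that the induced action on $H^0(E^g,\cL_{n/k})$ is by a character of $\Sigma_{n/k}$ (equivalently, a scalar on each isotypic piece). Because $\cL_{n/k}$ is assembled from the pullbacks $\cL^{n_i}$ and $\mathrm{pr}_{i,i+1}^*\cP$, it is enough to compute the pullbacks in $\NS(E^g)$ and then pin down the remaining translation ambiguity via the theorem of the square; one then confirms projective triviality on the level of sections using the theta-function description developed in \cite{CKS1} and reviewed earlier in the present paper. This step is a bookkeeping calculation once $\Sigma_{n/k}$ is in place.

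For (ii), the descended map $\bar\Phi$ is automatically surjective. To see it is an isomorphism, I would argue in three moves: first, $\Phi$ has finite fibers, since $|D_{n/k}|$ is base-point free and in fact ample (the source is projective and the morphism is generically finite by a dimension count); second, $E^g/\Sigma_{n/k}$ is normal, being a quotient of a smooth projective variety by a finite group action; third, $\bar\Phi$ is birational, i.e., the generic fiber of $\Phi$ is a single $\Sigma_{n/k}$-orbit. Given these, Zariski's main theorem, together with surjectivity and normality, concludes the proof.

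The main obstacle is the birationality in the third move: showing that points of $E^g$ with the same image in $\PP^{n-1}$ must lie in one $\Sigma_{n/k}$-orbit. The cleanest strategy is to exploit the Fourier–Mukai interpretation of \cref{prop.FM.transform}, reading a point $p\in E^g$ through the successive tensor/pullback operations encoded in $\cL_{n/k}$ as recording the data of an indecomposable rank-$k$, degree-$n$ bundle $M_p$ on $E$ together with an auxiliary point of $E$ (or a section/flag data); the fiber of $\Phi$ through $p$ then corresponds to all ways of reassembling $M_p$ from the ingredients of $\cL_{n/k}$, and the combinatorial ambiguity in this reassembly is governed by the continued fraction expansion $[n_1,\ldots,n_g]$, which is exactly what $\Sigma_{n/k}$ has been built to capture. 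Turning this Fourier–Mukai-level statement into a rigorous scheme-theoretic equality of fibers, and ruling out any extra identifications coming from special positions in $E^g$, is where most of the work will lie; the explicit theta-function formulas for sections of $\cL_{n/k}$ provide a fallback verification when the derived-categorical argument leaves gaps at non-generic points.
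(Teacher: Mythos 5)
Your step (i) is sound and matches one of the two arguments the paper actually gives (the equivariant-structure/character argument of \cref{prop.lnk.equiv.str} and \cref{cor.symm.ep}; the paper also gives a purely divisor-theoretic proof in \cref{le.factors}). The gaps are in step (ii), and there are two of them.

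First, your appeal to Zariski's main theorem puts normality on the wrong side. You establish that the \emph{source} $E^g/\Sigma_{n/k}$ is normal, but the statement ``finite $+$ birational $\Rightarrow$ isomorphism'' requires the \emph{target} to be normal, and normality of $X_{n/k}\subseteq \PP^{n-1}$ is not known in advance --- it is essentially equivalent to what you are trying to prove. A finite birational morphism from a normal variety onto a non-normal one is never an isomorphism (the normalization of a cuspidal cubic is the standard example), so even if you carried out every step you list, you would only have identified $E^g/\Sigma_{n/k}$ with the \emph{normalization} of $X_{n/k}$. To upgrade a bijective morphism to a closed immersion you must also separate tangent vectors; this is exactly why the paper proves \cref{le.tg-inj} in addition to \cref{le.iota-inj}, and then invokes the criterion that an injective morphism which is injective on tangent spaces is an isomorphism onto its image. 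Nothing in your outline addresses tangent vectors, and nothing in it can be rearranged to avoid the issue.

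Second, the birationality step itself --- that two points with the same image under $\Phi_{n/k}$ lie in one $\Sigma_{n/k}$-orbit --- is the heart of the theorem, and the Fourier--Mukai heuristic does not yield it. The functor of \cref{prop.FM.transform} acts on objects of $\sD^b(\coh(E))$, not on points of $E^g$; there is no mechanism in your sketch by which a point $p\in E^g$ ``records'' a bundle $M_p$ whose reassembly ambiguity is $\Sigma_{n/k}$, and you would in any case need the statement at \emph{every} point, not just generically, to get injectivity of $\iota$ (generic injectivity alone does not even give set-theoretic bijectivity of $\bar\Phi$). The paper's actual mechanism is elementary but genuinely different: it produces, for any two points in distinct orbits, an explicit effective divisor linearly equivalent to $D_{n/k}$ (a ``standard divisor'') containing one point and not the other (\cref{lem.sep.pts.ni.geq.three}), and runs an induction on $g$ by restricting to slices $E^{i-1}\times\{z_i\}\times E^{g-i}$ where $n_i\ge 3$, using surjectivity of the restriction map on sections (\cref{prop.surj.sections}) and the base case $[2,\ldots,2]$, where $E^g/\Sigma_{g+1}\cong\PP^g$ (\cref{pr.all2}). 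Some such pointwise separation argument --- or a direct proof that $\iota$ separates points and tangent vectors --- is unavoidable, and your fallback to ``explicit theta-function formulas'' is not developed enough to count as one.
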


We now explain the notation $\Sigma_{n/k}$ and its action on $E^g$. 
First, recall the natural permutation action of the symmetric group 
$\Sigma_{g+1}$ of order $(g+1)!$ on the product $E^{g+1}$. 
The summation map $E^{g+1} \to E$ is constant on $\Sigma_{g+1}$-orbits
and its kernel is isomorphic to $E^g$ as an algebraic group in many ways;
by choosing one such isomorphism one obtains an action of $\Sigma_{g+1}$ on $E^g$. The group $\Sigma_{n/k}$ is the 
subgroup of $\Sigma_{g+1} \subseteq \Aut(E^g)$ generated by the ``simple reflections'' $s_i$ for which the integer $n_i$ is equal to 2. 
See \Cref{sect.symmetric.group.action,sect.Sigma.n/k} for the definition of $s_i$ and other details. We note that $\Sigma_{g+1}$ 
acts as automorphisms of $E^g$ as an algebraic group.

The proof that  $X_{n/k}$ is isomorphic to $E^g/\Sigma_{n/k}$ occupies most of \cref{se.char-var-2}. The first step towards that is
to show that $\cL_{n/k}$ has a $\Sigma_{n/k}$-equivariant structure; this is a simple consequence of the fact that the divisor $D_{n/k}$ is
stable under the action of $\Sigma_{n/k}$ (see \Cref{prop.lnk.equiv.str}). To prove  \cref{thm.main} we show that the morphism $\Phi_{|D_{n/k}|}:E^g \to \PP^{n-1}$ factors through $E^g/\Sigma_{n/k}$, and that the induced factor morphism $E^g/\Sigma_{n/k} \to \PP^{n-1}$ 
is an isomorphism onto its image. The second of these steps requires an intricate examination of divisors on $E^g$ that are linearly equivalent to $D_{n/k}$. The intermediate results involved in doing this will be useful in other situations.

It is not simply $X_{n/k}$ in isolation that is important, but the pair $(X_{n/k},\s)$ where $\s$ is the automorphism of $E^g/\Sigma_{n/k}$ 
defined in \cref{sect.aut.sigma}. The automorphism $\s$ is induced from an affine automorphism of $\CC^g$, defined in terms of
the continued fraction for $\frac{n}{k}$, that sends $\L^{g}$ to itself and so descends to $E^g$; that automorphism of $E^g$ commutes with the 
action of $\Sigma_{n/k}$. 

\subsubsection{Description of the characteristic variety as a bundle}
The integers $n_1,\ldots,n_g$ determine a partition $J \sqcup I_1 \sqcup \cdots \sqcup I_s=\{1,\ldots,g+1\}$ 
(see  \cref{sect.Xn/k.again} for details). 

\begin{proposition}
[\cref{prop.E^g/Sigma}]
\label{prop.Xn/k.again}
Let $i_\a=|I_\a|$.
\begin{enumerate}
  \item 
  If there is an $i$ such that $n_i$ and $n_{i+1}$ are $\ge 3$, then 
  $$
  X_{n/k} \; \cong \; E^{|J|-1} \times S^{i_1}E \times \cdots \times S^{i_s}E 
  $$
  where $S^rE$ denotes the  symmetric power of $E$ of dimension $r$.
    \item 
For all $(n,k)$, $X_{n/k}$ is a bundle over $E^{|J|+s-1}$ with fibers  
isomorphic to $\PP^{i_1-1} \times \cdots \times  \PP^{i_s-1}$.
\end{enumerate}
\end{proposition}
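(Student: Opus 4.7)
The plan is to deduce both parts from \cref{thm.main}, which identifies $X_{n/k}$ with the quotient $E^g/\Sigma_{n/k}$. First, I would realize $E^g$ as the kernel of the summation homomorphism $\sfsum\colon E^{g+1}\to E$, so that the given embedding $\Sigma_{g+1}\hookrightarrow \Aut(E^g)$ is the restriction of the permutation action on $E^{g+1}$. Together with the description of $\Sigma_{n/k}$ as the subgroup generated by those simple reflections $s_i$ with $n_i=2$, the combinatorics of the partition $J\sqcup I_1\sqcup\cdots\sqcup I_s$ of \cref{sect.Xn/k.again} then identifies $\Sigma_{n/k}\subseteq\Sigma_{g+1}$ with the Young subgroup $\Sigma_{i_1}\times\cdots\times\Sigma_{i_s}$ that permutes the coordinates of $E^{g+1}$ within each block $I_\alpha$ and fixes those indexed by $J$.

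Next I would pass to the quotient inside the larger ambient space $E^{g+1}$, yielding
$$
E^{g+1}/\Sigma_{n/k} \;\cong\; E^{|J|} \times S^{i_1}E \times \cdots \times S^{i_s}E,
$$
with the summation map descending to a morphism $\sigma\colon E^{|J|}\times \prod_\alpha S^{i_\alpha}E \to E$ whose restriction to each factor $S^{i_\alpha}E$ is the Abel--Jacobi summation $\sfsum\colon S^{i_\alpha}E\to E$. The characteristic variety $X_{n/k}=E^g/\Sigma_{n/k}$ is then the scheme-theoretic fiber of $\sigma$ over $0\in E$. For part~(2), I would invoke the classical fact that each Abel--Jacobi morphism $S^{i_\alpha}E\to E$ is a $\PP^{i_\alpha-1}$-bundle. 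Taking their product, together with the identity on $E^{|J|}$, presents $E^{|J|}\times\prod_\alpha S^{i_\alpha}E$ as a $\prod_\alpha \PP^{i_\alpha-1}$-bundle over $E^{|J|+s}$, through which $\sigma$ factors as the overall summation $E^{|J|+s}\to E$. Restricting to the fiber over $0$ exhibits $X_{n/k}$ as a $\prod_\alpha \PP^{i_\alpha-1}$-bundle over $\ker\bigl(E^{|J|+s}\to E\bigr)\cong E^{|J|+s-1}$.

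For part~(1), the hypothesis that $n_i,n_{i+1}\geq 3$ for some $i\in\{1,\dots,g-1\}$ implies that the index $i+1$ lies in $J$, so in particular $|J|\geq 1$. In that case, the summation $\sigma$ can be trivialized along one $E$-factor of $E^{|J|}$: an arbitrary choice of $|J|-1$ coordinates in $E$ together with arbitrary elements of each $S^{i_\alpha}E$ uniquely determines the remaining $E$-coordinate so as to lie in $\sigma^{-1}(0)$. This yields the asserted product decomposition
$$
X_{n/k} \;\cong\; E^{|J|-1} \times S^{i_1}E \times \cdots \times S^{i_s}E.
$$

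I expect the main obstacle to lie in the first step: carefully verifying that, under the identification $E^g\cong\ker\sfsum$, the subgroup $\Sigma_{n/k}$ really is the Young subgroup $\prod_\alpha \Sigma_{i_\alpha}\subseteq\Sigma_{g+1}$, with the partition blocks $I_\alpha$ corresponding exactly to the maximal runs of consecutive indices with $n_i=2$ (a run of length $\ell$ contributing a block of size $\ell+1$). Once that combinatorial dictionary is in place, the rest of the argument reduces to standard properties of symmetric products of elliptic curves and of the Abel--Jacobi morphism, and to the elementary observation that a summation map between products of abelian varieties is split as soon as one of the factors is $E$ itself.
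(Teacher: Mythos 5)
Your proposal is correct and follows essentially the same route as the paper: identify $E^g$ with the kernel of $\sfsum\colon E^{g+1}\to E$ via the equivariant map $\ve$ of \cref{pr.equiv}, recognize $\Sigma_{n/k}$ as the Young subgroup $\prod_\alpha\Sigma_{I_\alpha}\subseteq\Sigma_{g+1}$, form $E^{g+1}/\Sigma_{n/k}\cong E^{|J|}\times\prod_\alpha S^{i_\alpha}E$, and restrict the resulting $\prod_\alpha\PP^{i_\alpha-1}$-bundle over $E^{|J|+s}$ to the zero-sum locus $E^{|J|+s}_0\cong E^{|J|+s-1}$. Your splitting argument for part~(1) (solving for the remaining $E$-coordinate when $|J|\ge 1$) matches how the paper deduces the product decomposition in the case $J\ne\varnothing$.
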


After finding this description of $E^g/\Sigma_{n/k}$ we learned that it follows from \cite[Prop.~2.9]{AA18}.

In \cref{cor.E^g/Sigma=S^gE}, we use this result, and a result due to Kov\'acs, to 
determine all pairs $(n,k)$ for which $X_{n/k}$ is isomorphic to a symmetric power of  $E$.
It seems likely that similar arguments would allow one to determine those $(n,k)$ for which $X_{n/k}$ has a 
particular bundle structure.

\subsubsection{An \'etale cover of the characteristic variety}
In \cite{CKS5} we give yet another description of the characteristic variety. Let $E[r]$ denote the $r$-torsion subgroup of $E$. 

\begin{theorem}
Let $i_\a=|I_\a|$, $d=\gcd\{i_1,\ldots,i_s\}$, and let $\theta:E[i_1] \times \cdots \times E[i_s] \to E[d]$ be the map 
$\theta(z_1,\ldots,z_s)=(i_1/d)z_1 + \cdots + (i_s/d)z_s$.
There is an \'etale cover
$$
E^{|J|+s-1}  \times \PP^{i_1-1} \times \cdots \times \PP^{i_s-1} \, \longrightarrow \, X_{n/k}
$$
with Galois group $E[i_1] \times \cdots \times E[i_s]$ when $J \ne \varnothing$ and 
$\ker\theta$ when $J = \varnothing$.
\end{theorem}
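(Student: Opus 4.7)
Plan: The strategy is to reduce to classical étale covers of symmetric powers of $E$ via \cref{thm.main}. Identifying $E^g$ with $\ker(\sfsum\colon E^{g+1}\to E)$ and using that $\sfsum$ is $\Sigma_{g+1}$-invariant, I would write $X_{n/k}=E^g/\Sigma_{n/k}$ as the fiber over $0$ of the map $E^{g+1}/\Sigma_{n/k}\to E$ induced by $\sfsum$. Since $\Sigma_{n/k}=\prod_\alpha \Sigma_{I_\alpha}$ is the product of symmetric groups on the blocks $I_\alpha$ of \cref{sect.Sigma.n/k},
$$
E^{g+1}/\Sigma_{n/k}\;\cong\;E^{|J|}\times S^{i_1}E\times\cdots\times S^{i_s}E,
$$
and the induced map to $E$ sends $(u,D_1,\ldots,D_s)$ to $\sum_j u_j+\sum_\alpha \mathrm{aj}(D_\alpha)$, where $\mathrm{aj}\colon S^{i_\alpha}E\to E$ is the Abel--Jacobi sum (consistent with \cref{prop.Xn/k.again}(1)).

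Next I use the classical étale cover $\pi_i\colon E\times\PP^{i-1}\to S^iE$ with Galois group $E[i]$ acting by translation on the $E$-factor, which exists because $\mathrm{aj}\colon S^iE\to E$ is a $\PP^{i-1}$-bundle that becomes trivial after pullback along the isogeny $[i]\colon E\to E$, satisfying $\mathrm{aj}\circ\pi_i=[i]\circ\pr_E$. Taking the product of these covers over $\alpha$ and the identity on $E^{|J|}$ yields an étale Galois cover with group $\Gamma:=\prod_\alpha E[i_\alpha]$ under which the map to $E$ pulls back to the homomorphism $\varphi\colon E^{|J|+s}\to E$, $\varphi(u,z)=\sum_j u_j+\sum_\alpha i_\alpha z_\alpha$. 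Pulling back $X_{n/k}$ thus produces $\ker(\varphi)\times\prod_\alpha\PP^{i_\alpha-1}$, and this locus is preserved by $\Gamma$ because $i_\alpha\cdot E[i_\alpha]=0$.

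It remains to analyze $\ker(\varphi)$ and the $\Gamma$-stabilizer of each connected component. When $J\ne\varnothing$, the coefficient vector of $\varphi$ contains a $1$, so $\varphi$ is surjective with connected kernel $\cong E^{|J|+s-1}$ (diagonalize by Smith normal form); the pullback is already connected and we obtain the étale cover $E^{|J|+s-1}\times\prod\PP^{i_\alpha-1}\to X_{n/k}$ with Galois group $\Gamma$. When $J=\varnothing$, write $\varphi=[d]\circ\theta'$ where $d=\gcd(i_\alpha)$ and $\theta'(z)=\sum(i_\alpha/d)z_\alpha$ is surjective with connected kernel $\cong E^{s-1}$; then $\ker(\varphi)=(\theta')^{-1}(E[d])$ is a disjoint union of $d^2$ translates of $\ker(\theta')$. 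A translation by $t\in\Gamma$ sends the component $(\theta')^{-1}(c)$ to $(\theta')^{-1}(c+\theta(t))$ with $\theta$ as in the statement, and since each summand $E[i_\alpha]\to E[d]$, $t_\alpha\mapsto (i_\alpha/d)t_\alpha$, is already surjective, $\Gamma$ acts transitively on the components with stabilizer $\ker\theta$. Restricting to a single component yields the desired étale cover $E^{s-1}\times\prod\PP^{i_\alpha-1}\to X_{n/k}$ with Galois group $\ker\theta$. The main delicate point is this last step: pinning down the component structure of $\ker(\varphi)$ and the exact stabilizer subgroup when $J=\varnothing$; everything else assembles from \cref{thm.main} and the standard description of symmetric powers of elliptic curves.
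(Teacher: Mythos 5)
The paper itself contains no proof of this theorem: it is stated in the introduction and its proof is explicitly deferred to \cite{CKS5}, so there is no in-house argument to compare against. Your proof is correct, and it is the natural continuation of \cref{prop.E^g/Sigma}: realizing $X_{n/k}$ as the fibre of $\sfsum\colon E^{|J|}\times S^{i_1}E\times\cdots\times S^{i_s}E\to E$ over $0$, pulling back along the product of the classical covers $E\times\PP^{i_\alpha-1}\to S^{i_\alpha}E$ with deck group $E[i_\alpha]$, and then analysing $\ker\varphi$ for $\varphi(u,z)=\sum_j u_j+\sum_\alpha i_\alpha z_\alpha$. The case split is handled correctly: when $J\ne\varnothing$ the coefficient $1$ makes $\ker\varphi\cong E^{|J|+s-1}$ connected, so the whole group $\prod_\alpha E[i_\alpha]$ survives as the Galois group; when $J=\varnothing$ the factorization $\varphi=[d]\circ\theta'$ identifies the component group of $\ker\varphi$ with $E[d]$, the deck group permutes components through the homomorphism $\theta$, and surjectivity of each $(i_\alpha/d)\colon E[i_\alpha]\to E[d]$ gives transitivity, leaving $\ker\theta$ as the stabilizer of (hence the Galois group of) a single component $\ker\theta'\times\prod_\alpha\PP^{i_\alpha-1}$ with $\ker\theta'\cong E^{s-1}$. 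The extreme cases are consistent with the paper: $s=0$ returns the identity cover $E^g\to E^g$, and $J=\varnothing$, $s=1$ gives $d=g+1$, $\ker\theta=0$ and recovers $X_{n/k}\cong\PP^g$ as in \cref{pr.all2}.

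Two points deserve tightening. First, the deck action of $E[i]$ on $E\times\PP^{i-1}$ over $S^iE$ is $t\cdot(z,D)=(z+t,\,T_{-t}D)$: it is translation on the $E$-factor \emph{together with} a nontrivial (theta-group) action on the $\PP^{i-1}$-factor; as literally written (``acting by translation on the $E$-factor'') the quotient would be $E\times\PP^{i-1}$ again rather than $S^iE$. Freeness, which you need for the cover to be \'etale, is supplied by the $E$-coordinate alone, so nothing downstream breaks. Second, in the $J=\varnothing$ case you should state explicitly that the induced action of $\Gamma$ on the set of components $\{\theta'^{-1}(c)\}_{c\in E[d]}$ is translation of $c$ by $\theta(t)$, so that the stabilizer of \emph{every} component is the same subgroup $\ker\theta$ and the quotient of one component by $\ker\theta$ is all of $X_{n/k}$; this is the ``delicate point'' you flag, and once phrased this way it is immediate.
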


The automorphism $\s:X_{n/k} \to X_{n/k}$ lifts to an automorphism of this \'etale cover \cite{CKS5}.

\subsection{The point variety for $Q_{n,k}(E,\tau)$}
\label{ssect.pt.mods}
Let $V=\operatorname{span}\{x_0,\ldots,x_{n-1}\}=Q_{n,k}(E,\tau)_1$\index{V@$V$}. If $p \in \PP(V^*)$ we write $p^\perp$ for the subspace of $V$ consisting of the linear forms that vanish at $p$. 

A {\sf point module} for $Q_{n,k}(E,\tau)$ is a graded left $Q_{n,k}(E,\tau)$-module $M=M_0\oplus M_1 \oplus \cdots$ that is generated 
by $M_0$ and has the property that $\dim_\CC M_i=1$ for all $i \ge 0$. Each point module determines a point $p \in \PP(V^*)$,
namely the unique $p$ for which $p^\perp M_0=0$. 
The set of all $p$ that arise in this way is called the {\sf point variety} for $Q_{n,k}(E,\tau)$
even though we  don't know whether it is an algebraic variety when $k \ge 2$; it is when $k=1$.

In \Cref{sect.pt.mods}, we show that every $p \in X_{n/k}$ belongs to the point variety.
To make such a statement we must reconcile the fact that $X_{n/k}$ is defined as a subvariety of $\PP(H^0(E^g,\cL_{n/k})^*)$ whereas 
$p \in \PP(V^*)$. Two steps are required to do this. First, we show there is a canonical isomorphism between 
$H^0(E^g,\cL_{n/k})$ and the space of theta functions $\Theta_{n/k}(\L)$ on $g$ variables defined in \Cref{sect.Theta.n/k}.
Second, we identify each basis element $x_i \in V$ with a particular  $w_i \in \Theta_{n/k}(\L)$. The $w_i$'s are defined in 
\Cref{sssect.wi.basis}. The space $\Theta_{n/k}(\L)$ is an irreducible representation for the finite 
Heisenberg group $H_n$ of order $n^3$, and the $w_i$'s are defined in terms of the $H_n$-action. 
In particular, we obtain an explicit description of the composition $\CC^g \to E^g \stackrel{\Phi}{\longrightarrow} \PP^{n-1}$ as  $\sfz \; \mapsto \; (w_0(\sfz),\ldots,w_{n-1}(\sfz))$.

\subsubsection{}
Suppose $k=1$. Then $g=1$ and $\cL_{n/1}=\cO_E(n(0))$ so the characteristic variety for $Q_{n,1}(E,\tau)$ is the image of  the 
canonical map $E \to \PP\big(H^0(E,\cL_{n/1})^*\big)$. Implicit in this statement is an identification $V=H^0(E,\cL_{n/1})$. 
We identify $E$ with its image in $\PP(V^*)$. 
For all $n \ge 3$, except for $n=4$, the point variety for $Q_{n,1}(E,\tau)$ is $E$  \cite{SPS-pt-mods}. 
The point variety for $Q_{4,1}(E,\tau)$ is the union of $E$ and four additional points, those additional points 
being the vertices of the four singular quadrics that contain $E$ \cite{SS92}.

\subsubsection{}
When $k>1$, the point variety for $Q_{n,k}(E,\tau)$ is not known. In  \cite{FO89}, Feigin and Odesskii showed it 
contains the characteristic variety when $\tau$ is close to 0 and they speak as if the two coincide: see, in particular, paragraph (d) on the
first and second pages of \cite{FO98}. This is not true for $Q_{4,1}(E,\tau)$
nor for $Q_{8,3}(E,\tau)$.

\subsection{The contents of the paper}

In \cref{sect.prelims} we set up notation and collect some results, old and new, that will be used in this and our
subsequent papers. The subgroup $\Sigma_{n/k} \subseteq \Aut(E^g)$ is defined in \Cref{sect.Sigma.n/k}. 
There
we examine the integers $n_1,\ldots,n_g$ appearing in the continued fraction expression for $\frac{n}{k}$, 
and define and examine two related sequences $\sfk=(k_1,\ldots,k_g)$, and $\sfl=(l_1,\ldots,l_g)$. A distinguished 
automorphism $\s:E^g \to E^g$, depending on the data $(\sfk,\sfl,\tau)$, is defined in \cref{sect.aut.sigma}. We show that $\s$
commutes with the action of $\Sigma_{n/k}$ and therefore descends to an automorphism of $X_{n/k}$.
\Cref{sect.theta.fns.1.var,sect.Theta.n/k} record some facts and notation regarding 
a space of theta functions in one variable, $\Theta_{n,c}(\L)$, and a space of theta functions in $g$ variables,  $\Theta_{n/k}(\L)$.
The space $\Theta_{n/k}(\L)$  is naturally isomorphic to $H^0(E^g,\cL_{n/k})$ (\cref{sect.morphism.Phi.n.k}).

The results in \cref{se.char-var-1,se.char-var-2} depend only on the data $(n,k,E)$. They concern the sheaf $\cL_{n/k}$ on $E^g$,
the divisor $D_{n/k}$ defined in \cref{eq:d}, and the characteristic variety $X_{n/k}$.

Because $\cL_{n/k}$ is $\Sigma_{n/k}$-equivariant there is an action of $\Sigma_{n/k}$ on $H^0(E^g,\cL_{n/k})$ and hence
an action of it on $\Theta_{n/k}(\L)$; the latter action can be defined directly. The Heisenberg group of order $n^3$ also acts naturally on 
$\Theta_{n/k}(\L)$ making it an irreducible representation. \Cref{sect.theta.several.var} addresses these matters. We also identify there 
a basis $\{w_i \;| \; i \in \ZZ_n\}$ for $\Theta_{n/k}(\L)$ that behaves nicely with respect to the action of the Heisenberg group. We explain in \cref{ssect.Q_1} how the degree-one component 
of $Q_{n/k}(E,\tau)$ can be identified with $\Theta_{n/k}(\L)$ in such a way that the $w_i$'s are identified with the $x_i$'s in \cref{the-relns-1}; the proof of this relies on a beautiful theta-function identity discovered by Odesskii {\cite[p.~1153]{Od-survey}} (\cref{09847525}).
We provide a detailed proof of Odesskii's identity in \Cref{sec.prf.th.id}.

The last subsection of the paper considers the point modules for $Q_{n,k}(E,\tau)$. 
We show there is a point module associated to the backward $\s$-orbit of each point on $X_{n/k}$; 
these modules were first identified by Feigin and Odesskii. They are {\it not} all the point modules in general: for example, as noted at 
\cite[Rmk.~(ii), p.~270]{SS92}, $Q_{4,1}(E,\tau)$ has four additional point modules for generic $\tau$. At the end of \cref{sect.pt.mods},
 we observe that $Q_{8,3}(E,\tau)$ (for generic $\tau$) has additional point modules corresponding to the points on four lines in 
 $\PP^7$, none of which lies on its characteristic variety which is $E^2 \subseteq \PP^7$.   

An index of notation appears just before the bibliography.

%%%%%%%%%%%%%%%%%%%%%%%%%%%%%%%%%%%%%%%%%%%%%%%%%%%%%%%%%%%%%%%%%%%%%%%%%%%%%%%%
\subsection{Acknowledgements}
%%%%%%%%%%%%%%%%%%%%%%%%%%%%%%%%%%%%%%%%%%%%%%%%%%%%%%%%%%%%%%%%%%%%%%%%%%%%%%%%
A.C. acknowledges partial support through NSF grant DMS-1801011. 

R.K. was a JSPS Overseas Research Fellow, and supported by JSPS KAKENHI Grant Numbers JP16H06337, JP17K14164, and JP20K14288, Leading Initiative for Excellent Young Researchers, MEXT, Japan, and Osaka City University Advanced Mathematical Institute (MEXT Joint Usage/Research Center on Mathematics and Theoretical Physics JPMXP0619217849). R.K. would like to express his deep gratitude to Paul Smith for his hospitality as a host researcher during R.K.'s visit to the University of Washington.

S.P.S. thanks his colleagues Jarod Alper, S\'andor Kov\'acs, Max Lieblich, Amos Turchet, and Bianca Viray for many useful conversations. 

%%%%%%%%%%%%%%%%%%%%%%%%%%%%%%%%%%%%%%%%%%%%%%%%%%%%%%%%%%%%%%%%%%%%
%%%%%%%%%%%%%%%%%%%%%%%%%%%%%%%%%%%%%%%%%%%%%%%%%%%%%%%%%%%%%%%%%%%%
\section{Preliminaries}
\label{sect.prelims}
%%%%%%%%%%%%%%%%%%%%%%%%%%%%%%%%%%%%%%%%%%%%%%%%%%%%%%%%%%%%%%%%%%%%
%%%%%%%%%%%%%%%%%%%%%%%%%%%%%%%%%%%%%%%%%%%%%%%%%%%%%%%%%%%%%%%%%%%%

\subsection{The elliptic curve $E$}
Fix $\eta \in \CC$\index{eta@$\eta$} lying in the upper half-plane. Let $\L=\ZZ+\ZZ\eta$\index{Lambda@$\Lambda$} and $E=\CC/\L$\index{E@$E$}. 
We write $0$ for the zero element in $\CC$ and for its image in $E$. We give $E$ the group structure inherited from $\CC$. 
Thus $0 \to \L \to \CC \to E \to 0$ is an exact sequence of abelian groups.
The $g$-fold Cartesian product of this, namely $0 \to \L^g \to \CC^g \to E^g \to 0$, is also exact.

Let $\sfsum\colon\Div(E) \to E$\index{sum@$\sfsum$} be the map 
$\sfsum\!\big(\sum_{i=1}^r a_i(z_i)\big)=a_1z_1+\cdots+a_rz_r$.

We write $E[n]$\index{E[n]@$E[n]$} for the $n$-torsion subgroup of $E$. 
It equals $\frac{1}{n}\L/\L$ and is isomorphic to $\ZZ_n \times \ZZ_n$, where $\ZZ_{n}=\ZZ/n\ZZ$.

%%%%%%%%%%%%%%%%%%%%%%%%%%%%%%%%%%%%%%%%%%%%%%%%%%%%%%%%%%%%%%%%%%%%%%%%%%%%%%%%
\subsection{The action of $\Sigma_{g+1}$ on $E^g$}
\label{sect.symmetric.group.action}
%%%%%%%%%%%%%%%%%%%%%%%%%%%%%%%%%%%%%%%%%%%%%%%%%%%%%%%%%%%%%%%%%%%%%%%%%%%%%%%%
Always, $\Sigma_r$\index{Sigma_r@$\Sigma_r$} denotes the symmetric group on $r$ letters. 

We write $S^rE$\index{S^r E@$S^{r}E$} for its $r^{\rm th}$ symmetric power, i.e., 
$S^rE$ is the quotient  $E^r/\Sigma_r$ with respect to the natural action of the symmetric group $\Sigma_r$ on the $r$-fold product $E^r$.

A particular action of $\Sigma_{g+1}$ on $E^g$  will play an important role. Its action is analogous to
the action of the Weyl group of type $A_g$ on ``the'' Cartan subalgebra of $\fsl_{g+1}$,
i.e., the natural action of $\Sigma_{g+1}$ on the diagonal $(g+1) \times (g+1)$ matrices of trace zero. 
The analogy becomes apparent if we first define the action of $\Sigma_{g+1}$ on $\ZZ^{g}$ and then 
obtain the  action of $\Sigma_{g+1}$ on $E^g$ by realizing $E^g$ as $\ZZ^g \otimes_\ZZ E$.

If $(z_{1},\ldots,z_{g}) \in E^g$ we define $z_{0}=0$ and $z_{g+1}=0$ (see \cref{defn.s_j} below).

\begin{proposition}
\label{pr.equiv}
Let $\ve :\ZZ^g \to \ZZ^{g+1}$\index{epsilon@$\ve$}, $\omega :\ZZ^{g+1} \to \ZZ^g$\index{omega@$\omega$}, $\sfsum:\ZZ^{g+1} \to \ZZ$\index{sum@$\sfsum$}, and 
$s_i:\ZZ^g \to \ZZ^g$\index{s_i@$s_{i}$}, $1\le i \le g$,  be the $\ZZ$-linear maps
\begin{align}  
\ve(z_1,\ldots,z_g) & \;:=\; (z_1,z_2-z_1,\ldots,z_{g}-z_{g-1},-z_g),
\label{defn.varepsilon}
\\
\omega(z_1,\ldots,z_{g+1}) & \; :=\; \Big(z_1,z_1+z_2,\ldots, \sum_{i=1}^{g}z_i\Big),
\\
\sfsum(z_1,\ldots,z_{g+1}) & \; :=\; z_1+\cdots +z_{g+1},
\\
s_i(z_1,\ldots,z_g) & \; :=\; (z_1,\ldots, z_{i-1}, z_{i-1}-z_i+z_{i+1},z_{i+1},\ldots,z_g).
\label{defn.s_j}
\end{align}

Let $(i,i+1):\ZZ^{g+1} \to \ZZ^{g+1}$, $1 \le i \le g$, be the transposition that switches the $i^{\rm th}$ and $(i+1)^{\rm st}$ coordinates.
There is a split exact sequence and a commuting rectangle of abelian groups
$$
\xymatrix{
0 \ar[r] & \ZZ^g \ar[rr]_\ve  \ar[d]_{s_i} && \ar@/_1pc/[ll]_{\omega} \ZZ^{g+1}   \ar[d]^{(i,i+1)} \ar[rr]^\sfsum   &&  \ZZ \ar[r] & 0
\\
 & \ZZ^g \ar[rr]_\ve    &&  \ZZ^{g+1}. 
}
$$
The same formulas define  a split exact sequence and a commuting rectangle of abelian varieties
$$
\xymatrix{
0 \ar[r] & E^g \ar[rr]_\ve  \ar[d]_{s_i} && \ar@/_1pc/[ll]_{\omega} E^{g+1}   \ar[d]^{(i,i+1)} \ar[rr]^\sfsum   &&  E \ar[r] & 0
\\
 & E^g \ar[rr]_\ve    &&  E^{g+1}. 
}
$$
In particular,
\begin{enumerate}
  \item 
  the  map $(i,i+1) \mapsto s_i$ extends to an isomorphism $\Sigma_{g+1} \to \langle s_i \; | \; 1 \le i \le g\rangle \subseteq \Aut(E^g)$, and
  \item 
the morphism $\ve:E^g \to E^{g+1}$ is equivariant for the actions of $\Sigma_{g+1}$ on $E^g$ and $E^{g+1}$.
\end{enumerate}
\end{proposition}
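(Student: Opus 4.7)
The plan is to reduce everything to a routine verification at the level of $\ZZ$-modules and then observe that all the maps in sight are $\ZZ$-linear, so they descend to abelian groups and abelian varieties uniformly.

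First I would check the top row of the diagram. The map $\ve$ is injective (its first coordinate recovers $z_1$, then the second recovers $z_2$, etc.), and a direct telescoping shows ${\sf sum}\circ\ve = 0$. Conversely, if $(w_1,\dots,w_{g+1})$ satisfies $\sum w_i = 0$, then $(w_1,w_1+w_2,\dots,w_1+\cdots+w_g)$ is a preimage under $\ve$, so the sequence is short exact. A direct computation gives $\omega\circ\ve = \id_{\ZZ^g}$, which exhibits $\omega$ as a splitting.

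Next I would verify the commutativity $\ve\circ s_i = (i,i+1)\circ\ve$. Writing $\ve(z_1,\dots,z_g) = (y_1,\dots,y_{g+1})$ with $y_j = z_j - z_{j-1}$ (and the convention $z_0 = z_{g+1} = 0$), the transposition $(i,i+1)$ only alters the $i$-th and $(i+1)$-st coordinates, turning them into $y_{i+1} = z_{i+1}-z_i$ and $y_i = z_i-z_{i-1}$ respectively. On the other hand, applying $\ve$ to $s_i(z_1,\dots,z_g)$ and computing the $i$-th and $(i+1)$-st coordinates produces exactly these two values; all other coordinates are unaffected. The remaining identities (for example $s_i^2 = \id$, the braid relations) then follow formally from the corresponding facts in $\Sigma_{g+1}$, but I would not need them directly.

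Since all four maps $\ve,\omega,{\sf sum},s_i,(i,i+1)$ are defined by $\ZZ$-linear formulas, they send $\L^g,\L^{g+1},\L$ into themselves and therefore induce the corresponding morphisms of algebraic groups $E^g,E^{g+1},E$; exactness and the splitting are preserved because quotienting by $\L^g,\L^{g+1},\L$ is exact on the relevant diagram. This yields the claimed exact sequence and commuting rectangle for the elliptic curve versions.

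Finally I would assemble (1) and (2). The natural permutation action of $\Sigma_{g+1}$ on $E^{g+1}$ preserves the subgroup $\ker({\sf sum})$, which by the exact sequence is identified via $\ve$ with $E^g$; transporting the action along $\ve$ gives the $\Sigma_{g+1}$-action on $E^g$, and by construction $\ve$ is $\Sigma_{g+1}$-equivariant, which is (2). Moreover, by the commuting rectangle, the simple transposition $(i,i+1)$ is transported to $s_i$, so the map $(i,i+1)\mapsto s_i$ extends to a homomorphism from $\Sigma_{g+1}$ into $\Aut(E^g)$. The only non-automatic point is faithfulness: an element of $\Sigma_{g+1}$ fixing every point of $\ker({\sf sum}) \subseteq E^{g+1}$ would in particular act trivially on the tangent space at the identity, which is the standard codimension-one subrepresentation $\{(a_1,\dots,a_{g+1})\in \CC^{g+1}:\sum a_i = 0\}$; this representation is well-known to be faithful, so the permutation must be trivial. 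This gives (1) and completes the proof. I expect no serious obstacle: the proof is bookkeeping plus the single observation that the standard representation of $\Sigma_{g+1}$ is faithful.
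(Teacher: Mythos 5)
Your proof is correct and follows essentially the same route as the paper: verify the $\ZZ$-module diagram by direct computation, then descend to $E$ (the paper phrases the descent as applying the functor $-\otimes_\ZZ E$ to the split exact sequence, which is what your lattice-quotient argument realizes concretely). Your explicit check of faithfulness via the standard representation fills in a point the paper leaves implicit, but it is not a departure in method.
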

\begin{pf}
The properties of the first diagram are verified by simple calculations. 
Applying the functor $-\otimes_\ZZ E$ to it produces another split exact sequence and a commuting rectangle.
\end{pf}

Since $s_i:\ZZ^g \to \ZZ^g$, $s_i:E^g \to E^g$ is an automorphism of $E^g$ as an algebraic group.

%%%%%%%%%%%%%%%%%%%%%%%%%%%%%%%%%%%%%%%%%%%%%%%%%%%%%%%%%%%%%%%
%%%%%%%%%%%%%%%%%%%%%%%%%%%%%%%%%%%%%%%%%%%%%%%%%%%%%%%%%%%%%%% 
\subsection{The integers $n_1,\ldots,n_g$ and the subgroup $\Sigma_{n/k} \subseteq \Aut(E^g)$}
\label{sect.Sigma.n/k}
%%%%%%%%%%%%%%%%%%%%%%%%%%%%%%%%%%%%%%%%%%%%%%%%%%%%%%%%%%%%%%%
%%%%%%%%%%%%%%%%%%%%%%%%%%%%%%%%%%%%%%%%%%%%%%%%%%%%%%%%%%%%%%%

If $n_1,\ldots,n_g$ are positive integers, we use the notation\index{[n_1,...,n_g]@$[n_1,\ldots,n_g]$}
\begin{equation}
\label{contd.frac}
[n_1,\ldots,n_g] \;=\; n_{1}-\frac{1}{n_{2}-\frac{1}{\ddots n_{g-1}-\frac{1}{n_{g}}}}
\end{equation}
throughout the paper (when division by zero does not occur in the right-hand side).  

Given the relatively prime integers $n>k\ge 1$ in the definition of $Q_{n,k}(E,\tau)$,
there are unique integers $g\ge 1$\index{g@$g$} and $n_1,\ldots,n_g$\index{n_i@$n_{i}$}, all $\ge 2$, such that 
\begin{equation}
\label{eq:cont}
	\frac{n}{k}\;=\; [n_1,\ldots,n_g].
\end{equation}

Let $s_i \in \Aut(E^g)$ be the automorphism defined by (\ref{defn.s_j}). Define\index{Sigma_n/k@$\Sigma_{n/k}$}
$$
\Sigma_{n/k}  \; :=\; \text{the subgroup generated by $\{s_i \; | \; n_i=2, \; 1 \le i \le g\}$.}
$$
Thus $\Sigma_{n/k} \cong \Sigma_{t_{1}+1}\times\cdots\times \Sigma_{t_{r}+1}$
where $t_{1},\ldots,t_{r}$ are the lengths of the maximal sequences of consecutive $2$'s in $(n_{1},\ldots,n_{g})$. For example,
if $(n_1,\ldots,n_g)=(3,2,4,2,2,2,5)$, then $\Sigma_{n/k}\cong \Sigma_2 \times \Sigma_4$. Since $n/(n-1)=[2,\ldots,2]$, where the number of $2$'s is $n-1$, 
$\Sigma_{n/n-1} \cong \Sigma_{g+1}=\Sigma_{n}$. We have seen that $Q_{n,n-1}(E,\tau)$ is a polynomial ring on $n$ variables for all $\tau$ (\cite[Prop.~5.5]{CKS1}).

\cref{prop.factor.Phi} shows that the characteristic variety for $Q_{n,k}(E,\tau)$ is isomorphic to $E^g/\Sigma_{n/k}$.

%%%%%%%%%%%%%%%%%%%%%%%%%%%%%%%%%%%%%%%%%%%%%%%%%%%%%%%%%%%%%%%%%%%%%%%%%%%%%%%%
\subsection{The matrix $\sD(n_{1},\ldots,n_{g})$}
\label{sect.negative.contd.fracs}
%%%%%%%%%%%%%%%%%%%%%%%%%%%%%%%%%%%%%%%%%%%%%%%%%%%%%%%%%%%%%%%%%%%%%%%%%%%%%%%%

Given a sequence $n_{1},\ldots,n_{g}\in\ZZ$, we define the $g \times g$ matrix\index{D(n_1,...,n_g)@$\sD(n_{1},\ldots,n_{g})$}
\begin{equation}
\label{defn.D.matrix}
\sD(n_{1},\ldots,n_{g}) \; := \; 
\begin{pmatrix}
		n_{1}	& -1	& 			& 			&		\\
		-1		& n_{2}	& -1		& 			&		\\
				& -1	& \ddots	& \ddots	& 		\\
				&		& \ddots	& n_{g-1}	& -1	\\
				&		&			& -1		& n_{g}
	\end{pmatrix}
\end{equation}
and the integer\index{d(n_1,...,n_g)@$d(n_{1},\ldots,n_{g})$}
\begin{equation*}
  d(n_{1},\ldots,n_{g}) \; := \;  \det\sD(n_1,\ldots,n_g).
\end{equation*}
Note that $d(n_{1},\ldots,n_{g})=d(n_{g},\ldots,n_{1})$ because $\sD(n_{g},\ldots,n_{1}) $ is obtained from $\sD(n_{1},\ldots,n_{g}) $ by conjugation by the permutation matrix that reverses the order of $1,2,\ldots,g$. It is often convenient to have $d(n_{1},\ldots,n_{g})$ defined for $g=0$ and $g=-1$ so we adopt the conventions
\begin{align*}
d(n_{1},\ldots,n_{0}) &  \; := \; 1,
\\
d(n_{1},\ldots,n_{-1}) & \; := \;0.
\end{align*}
 The cofactor expansions of $\sD(n_{1},\ldots,n_{g})$ along the first and last columns lead to the equalities 
\begin{align}
  d(n_{2},\ldots,n_{g})n_{1}& \;=\; d(n_{1},\ldots,n_{g})+d(n_{3},\ldots,n_{g}),\label{4293857}\\
  d(n_{1},\ldots,n_{g-1})n_{g}& \;=\;   d(n_{1},\ldots,n_{g})+d(n_{1},\ldots,n_{g-2}).\label{4398529}
\end{align}
These equalities also hold when $g=1$ and $g=2$.

\begin{lemma}\label{le.nkk}
  Let $g$ be a positive integer and $n_{1},\ldots,n_{g}\in\bbZ$. Then
  \begin{equation}
    \label{eq:nkk}
   d(n_1,\ldots,n_{g-1})d(n_2,\ldots,n_{g}) \;=\;  d(n_2,\ldots,n_{g-1})d(n_1,\ldots,n_{g})+1.
  \end{equation}
\end{lemma}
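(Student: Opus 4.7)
The plan is to prove \cref{eq:nkk} by induction on $g$, using the cofactor recursion \cref{4398529}. This is a classical continuant identity (related to the fact that consecutive convergents of a continued fraction differ by $\pm 1$), and the induction goes through cleanly.

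For the base cases, I would check $g=1$ and $g=2$ directly against the conventions $d(n_1,\ldots,n_0)=1$ and $d(n_1,\ldots,n_{-1})=0$. For $g=1$ the identity reads $1\cdot 1 = 0\cdot n_1 + 1$, and for $g=2$ it reads $n_1 n_2 = 1\cdot(n_1 n_2 - 1) + 1$.

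For the induction step, I would assume \cref{eq:nkk} holds for $g$ and prove it for $g+1$. Applying \cref{4398529} twice,
\begin{align*}
 d(n_1,\ldots,n_{g+1}) &\;=\; n_{g+1}\,d(n_1,\ldots,n_g) - d(n_1,\ldots,n_{g-1}),\\
 d(n_2,\ldots,n_{g+1}) &\;=\; n_{g+1}\,d(n_2,\ldots,n_g) - d(n_2,\ldots,n_{g-1}),
\end{align*}
and substituting into the difference $d(n_1,\ldots,n_g)d(n_2,\ldots,n_{g+1}) - d(n_2,\ldots,n_g)d(n_1,\ldots,n_{g+1})$, the $n_{g+1}$-terms cancel, leaving exactly $d(n_1,\ldots,n_{g-1})d(n_2,\ldots,n_g) - d(n_2,\ldots,n_{g-1})d(n_1,\ldots,n_g)$, which equals $1$ by the inductive hypothesis. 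This closes the induction.

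A more conceptual packaging of the same argument, which I might use if it reads more smoothly: let $M(n) := \bigl(\begin{smallmatrix} n & -1\\ 1 & 0\end{smallmatrix}\bigr)$, which has determinant $1$. A straightforward induction on $g$ via \cref{4398529} establishes
\[
M(n_g)\,M(n_{g-1})\cdots M(n_1) \;=\; \begin{pmatrix} d(n_1,\ldots,n_g) & -d(n_2,\ldots,n_g) \\ d(n_1,\ldots,n_{g-1}) & -d(n_2,\ldots,n_{g-1}) \end{pmatrix},
\]
and equating determinants of the two sides gives the identity immediately. The only real obstacle is bookkeeping with the empty-sequence conventions, which is already resolved by the two base cases; there is no genuine conceptual difficulty.
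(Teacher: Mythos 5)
Your proof is correct and is essentially the paper's own argument: the paper also proves \cref{eq:nkk} by induction on $g$, the only difference being that it expands via \cref{4293857} (stripping $n_1$ from the front and reducing to the sequence $n_2,\ldots,n_g$) whereas you expand via \cref{4398529} (stripping $n_{g+1}$ from the back), which is the mirror image of the same computation. Your transfer-matrix packaging is a clean restatement but not a genuinely different route; indeed the paper itself records the identity as the vanishing of a $2\times 2$ determinant immediately after the lemma.
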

\begin{proof}
  We prove this  by induction on $g$.
  When $g=1$ both sides of (\ref{eq:nkk}) equal $1$.  Suppose $g\geq 2$. Applying \cref{4293857} to $d(n_1,\ldots,n_{g})$ and $d(n_1,\ldots,n_{g-1})$, we obtain
	\begin{align*}
          &d(n_{1},\ldots,n_{g-1})d(n_{2},\ldots,n_{g})\; -\; d(n_{2},\ldots,n_{g-1})d(n_{1},\ldots,n_{g})\\
          &\;=\; \big(n_{1}d(n_{2},\ldots,n_{g-1})-d(n_{3},\ldots,n_{g-1})\big)d(n_{2},\ldots,n_{g})\\
          &\phantom{\;=\;xxxxxxxxxxxxxx}-d(n_{2},\ldots,n_{g-1})\big(n_{1}d(n_{2},\ldots,n_{g})\;-\; d(n_{3},\ldots,n_{g})\big)\\
          &\;=\; d(n_{2},\ldots,n_{g-1})d(n_{3},\ldots,n_{g})\;-\; d(n_{3},\ldots,n_{g-1})d(n_{2},\ldots,n_{g}),
	\end{align*}
	which is equal to $1$ by the induction hypothesis.
\end{proof}

The identity in \cref{eq:nkk} can be stated as
$$
 \det   \begin{pmatrix}
   d(n_1,\ldots,n_{g})    &  -d(n_1,\ldots,n_{g-1})  
   \\
d(n_2,\ldots,n_{g})      &   -d(n_2,\ldots,n_{g-1})  

\end{pmatrix} 
\;=\; 1.
$$

\begin{lemma}\label{lem.d.id}
  Let $g$ be a positive integer and $n_{1},\ldots,n_{g}\in\bbZ$. For each $1\leq i\leq g$,
	\begin{align*}
          d(n_{1},\ldots,n_{g})
          &=n_{i}d(n_{1},\ldots,n_{i-1})d(n_{i+1},\ldots,n_{g})\\
          &\phantom{xxxxx} \;-\; d(n_{1},\ldots,n_{i-2})d(n_{i+1},\ldots,n_{g})-d(n_{1},\ldots,n_{i-1})d(n_{i+2},\ldots,n_{g}).
	\end{align*}
\end{lemma}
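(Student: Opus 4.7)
The identity is the Laplace expansion of $\det\sD(n_{1},\ldots,n_{g})$ along its $i$-th row (equivalently, its $i$-th column). The $i$-th row of the tridiagonal matrix $\sD(n_{1},\ldots,n_{g})$ has at most three nonzero entries, namely $\sD_{i,i-1}=-1$, $\sD_{i,i}=n_{i}$, and $\sD_{i,i+1}=-1$; the boundary cases $i=1$ and $i=g$ are absorbed uniformly by the conventions $d(n_{1},\ldots,n_{0})=1$ and $d(n_{1},\ldots,n_{-1})=0$, because the missing off-diagonal entry simply drops out of the expansion.

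The plan is to compute the three relevant minors $M_{i,i-1}$, $M_{i,i}$, $M_{i,i+1}$ explicitly. Striking row $i$ and column $i$ decouples $\sD(n_{1},\ldots,n_{g})$ into the block-diagonal matrix with diagonal blocks $\sD(n_{1},\ldots,n_{i-1})$ and $\sD(n_{i+1},\ldots,n_{g})$, whose determinant is $d(n_{1},\ldots,n_{i-1})\,d(n_{i+1},\ldots,n_{g})$; combined with the entry $n_{i}$ this yields the first term of the claimed identity. For the $(i,i-1)$-minor one deletes row $i$ and column $i-1$ and observes that the only surviving entry from the original $i$-th column sits in the row immediately above the deleted row, so the resulting matrix is block lower-triangular with diagonal blocks $\sD(n_{1},\ldots,n_{i-2})$, $(-1)$, and $\sD(n_{i+1},\ldots,n_{g})$. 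The cofactor sign $(-1)^{i+(i-1)}=-1$, multiplied by the $(-1)$ entry of $\sD$ at position $(i,i-1)$ and by the $(-1)$ coming from the middle $1\times 1$ block, collapses to $-d(n_{1},\ldots,n_{i-2})\,d(n_{i+1},\ldots,n_{g})$. The $(i,i+1)$-minor is treated symmetrically, contributing $-d(n_{1},\ldots,n_{i-1})\,d(n_{i+2},\ldots,n_{g})$. Summing the three contributions gives the stated formula.

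The main obstacle will be the sign bookkeeping and the block-triangularization of the $(i,i\pm 1)$-minors: one needs to track which column of the minor inherits a nonzero entry from the original $i$-th column and verify that everything else splits into two disjoint tridiagonal pieces. A purely computational alternative avoiding block reasoning is induction on $g$: for $i=1$ and $i=g$ the identity reduces to \cref{4293857} and \cref{4398529} respectively; for $1<i<g$ one expands $d(n_{1},\ldots,n_{g})$ via \cref{4398529}, applies the induction hypothesis at position $i$ to both $d(n_{1},\ldots,n_{g-1})$ and $d(n_{1},\ldots,n_{g-2})$, and reassembles the terms using \cref{4398529} in reverse applied to the suffixes $(n_{i+1},\ldots,n_{g})$ and $(n_{i+2},\ldots,n_{g})$. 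The edge case $i=g-1$ is handled by substituting \cref{4293857} for \cref{4398529} so that the inductive hypothesis is applied to a prefix in which $i$ still lies strictly interior.
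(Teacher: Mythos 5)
Your proposal is correct and is essentially the paper's proof: both expand $\det\sD(n_{1},\ldots,n_{g})$ along the $i$-th row/column, the only difference being that the paper evaluates the two off-diagonal minors by a further cofactor expansion while you read them off directly as block-triangular determinants. (One phrase is slightly imprecise: after deleting row $i$, the original $i$-th column retains entries at rows $i-1$ \emph{and} $i+1$, but the latter lies below your block diagonal, so the block-lower-triangular structure and the resulting sign computation stand.)
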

\begin{proof}
  Assume $3\leq i\leq g-2$ for simplicity; the other cases are easier. By the cofactor expansion along the $i^{\mathrm{th}}$ column, $d(n_{1},\ldots,n_{g})$ is equal to
	\begin{align*}
          \det
          \begin{pmatrix}
            \ddots	& -1		&		&			&			\\
            -1		& n_{i-2}	& -1	&			&			\\
            &			& -1	& -1		&			\\
            &			&		& n_{i+1}	& -1		\\
            & & & -1 & \ddots
          \end{pmatrix}
                       +n_{i}d(n_{1},\ldots,n_{i-1})d(n_{i+1},\ldots,n_{g})+\det
                       \begin{pmatrix}
                         \ddots	& -1		&		&			&			\\
                         -1		& n_{i-1}	&		&			&			\\
                         & -1		& -1	& 			&			\\
                         &			& -1	& n_{i+2}	& -1		\\
                         & & & -1 & \ddots
                       \end{pmatrix}.
	\end{align*}
	The cofactor expansion down the column with two $(-1)$'s shows that the first summand is equal to
	\begin{align*}
          \det
          \begin{pmatrix}
            \ddots	& -1		&		&			&			\\
            -1		& n_{i-3}	& -1	&			&			\\
            &			&		& -1		&			\\
            &			&		& n_{i+1}	& -1		\\
            & & & -1 & \ddots
		\end{pmatrix}
		-d(n_{1},\ldots,n_{i-2})d(n_{i+1},\ldots,n_{g})
	\end{align*}
	where the first determinant is zero. Therefore
	\begin{align*}
          d(n_{1},\ldots,n_{g})
          &=n_{i}d(n_{1},\ldots,n_{i-1})d(n_{i+1},\ldots,n_{g})\\
          &-d(n_{1},\ldots,n_{i-2})d(n_{i+1},\ldots,n_{g})-d(n_{1},\ldots,n_{i-1})d(n_{i+2},\ldots,n_{g}).\qedhere
	\end{align*}
\end{proof}

\begin{lemma}\label{lem.D.inverse}
	Let $g$ be a positive integer and $n_{1},\ldots,n_{g}\in\bbZ$. Then
	\begin{equation*}
		\sD(n_{1},\ldots,n_{g})^{-1}=\frac{1}{d(n_{1},\ldots,n_{g})}\big(d[i,j]\big)_{i,j}
	\end{equation*}
	where
	\begin{equation*}
		d[i,j]=
		\begin{cases}
			d(n_{1},\ldots,n_{i-1})d(n_{j+1},\ldots,n_{g}) & \text{if $i\leq j$,} \\
			d(n_{1},\ldots,n_{j-1})d(n_{i+1},\ldots,n_{g}) & \text{if $j\leq i$.}
		\end{cases}
	\end{equation*}
\end{lemma}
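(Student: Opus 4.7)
The plan is to verify directly that $\sfD(n_1,\ldots,n_g)\cdot A = d(n_1,\ldots,n_g)\cdot I$, where $A=(d[i,j])_{i,j}$ is the matrix in the statement. Because $\sfD$ is tridiagonal with entries $\sfD_{i,i}=n_i$ and $\sfD_{i,i\pm 1}=-1$, the $(i,j)$-entry of the product collapses to
\begin{equation*}
(\sfD A)_{ij} \;=\; -A_{i-1,j}\;+\;n_iA_{ij}\;-\;A_{i+1,j},
\end{equation*}
with the convention that $A_{0,j}=A_{g+1,j}=0$ (and adopting the analogous conventions $d(n_{g+1},\ldots,n_g)=1$, $d(n_{g+2},\ldots,n_g)=0$ so that the formula for $A_{ij}$ still makes sense on the boundary). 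So the whole proof reduces to three case checks: $i<j$, $i>j$, and $i=j$.

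For $i<j$, every entry $A_{i-1,j}$, $A_{ij}$, $A_{i+1,j}$ falls in the ``upper triangular'' regime, so each is a common factor $d(n_{j+1},\ldots,n_g)$ times one of $d(n_1,\ldots,n_{i-2})$, $d(n_1,\ldots,n_{i-1})$, $d(n_1,\ldots,n_i)$. The bracketed combination $-d(n_1,\ldots,n_{i-2})+n_id(n_1,\ldots,n_{i-1})-d(n_1,\ldots,n_i)$ is precisely the identity \cref{4398529} applied to the sequence $(n_1,\ldots,n_i)$, so it vanishes. The case $i>j$ is symmetric: now each entry has a common factor $d(n_1,\ldots,n_{j-1})$ and the bracket becomes $-d(n_i,\ldots,n_g)+n_id(n_{i+1},\ldots,n_g)-d(n_{i+2},\ldots,n_g)$, which vanishes by \cref{4293857} applied to $(n_i,\ldots,n_g)$.

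For the diagonal entry $i=j$, $A_{i-1,i}$ lies in the ``upper'' regime and $A_{i+1,i}$ in the ``lower'' regime, while $A_{ii}=d(n_1,\ldots,n_{i-1})d(n_{i+1},\ldots,n_g)$ is common to both. The resulting expression
\begin{equation*}
n_id(n_1,\ldots,n_{i-1})d(n_{i+1},\ldots,n_g)\;-\;d(n_1,\ldots,n_{i-2})d(n_{i+1},\ldots,n_g)\;-\;d(n_1,\ldots,n_{i-1})d(n_{i+2},\ldots,n_g)
\end{equation*}
is exactly the right-hand side of \cref{lem.d.id}, hence equals $d(n_1,\ldots,n_g)$, as required.

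The only real obstacle is bookkeeping on the boundary indices $i\in\{1,g\}$ (where one of $d(n_1,\ldots,n_{i-1})$, $d(n_{i+1},\ldots,n_g)$, or their shifted variants must be interpreted via the conventions $d(\emptyset)=1$, $d(\cdot_{-1})=0$), and in the case $j=i\pm 1$ (where the two cases of the definition of $d[i,j]$ meet and one should check they agree). Both kinds of checks are routine once the conventions are fixed. No deeper input than \cref{4293857,4398529,lem.d.id} is needed.
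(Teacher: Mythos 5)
Your proposal is correct and follows essentially the same route as the paper: multiply $\sfD$ by the candidate matrix, reduce each entry to the three-term combination $-d[i-1,j]+n_id[i,j]-d[i+1,j]$, and dispose of the off-diagonal cases via \cref{4293857,4398529} and the diagonal case via \cref{lem.d.id}. The only (immaterial) difference is that the paper dispatches $i>j$ by observing the product is symmetric rather than checking it directly as you do.
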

\begin{proof}
	The $(i,j)$-entry of the symmetric matrix $\sD(n_{1},\ldots,n_{g})\big(d[i,j]\big)_{i,j}$ is
	\begin{equation}\label{eq.D.inverse.d}
		-d[i-1,j]+n_{i}d[i,j]-d[i+1,j].
	\end{equation}
	
	If $i=j$, then \cref{eq.D.inverse.d} equals
	\begin{equation*}
		-\;d(n_{1},\ldots,n_{i-2})d(n_{i+1},\ldots,n_{g})\;+\;n_{i}d(n_{1},\ldots,n_{i-1})d(n_{i+1},\ldots,n_{g})
		\;-\;d(n_{1},\ldots,n_{i})d(n_{i+1},\ldots,n_{g}),
	\end{equation*}
	which is equal to $d(n_{1},\ldots,n_{g})$ by \cref{lem.d.id}.
	
	If $i<j$, then \cref{eq.D.inverse.d} equals
	\begin{equation*}
		\big(-d(n_{1},\ldots,n_{i-2})+n_{i}d(n_{1},\ldots,n_{i-1})-d(n_{1},\ldots,n_{i})\big)\, d(n_{j+1},\ldots,n_{g}),
	\end{equation*}
	which is zero by \cref{4398529}.
	
	Hence $\sD(n_{1},\ldots,n_{g})\big(d[i,j]\big)_{i,j}$ is the identity matrix multiplied by $d(n_{1},\ldots,n_{g})$.
\end{proof}

\begin{lemma}\label{le.combd}
Let $g$ be a positive integer and $n_{1},\ldots,n_{g}\in\bbZ$. Then
  \begin{equation*}
    d(n_{1},\ldots,n_{g})=\sum_{0\le j\le g}\sum_{\mathbf{i}}(-1)^{\frac {g-j}2}n_{i_1}\ldots n_{i_j}
  \end{equation*}
  where
$
    {\bf i} = (i_1<\ldots<i_j)
$
  ranges over all subsequences of $1,\ldots,g$ that alternate in
  parity and such that $i_1$ is odd and $i_j$ has the same parity as $g$. (When $j=0$ the term $n_{i_1}\cdots n_{i_j}$ is defined to be 1.)
\end{lemma}
\begin{proof}
  This is a simple induction on $g$ using \cref{4293857}, left to the reader.
\end{proof}

\subsubsection{The integers $k'$, $k_i$, and $l_i$}
\label{subsubsect.cont.frac}

Let $n$ and $k$ be coprime integers such that  $n>k \ge 1$.

Let $k'$\index{k'@$k'$} be the unique integer such that $kk'\equiv 1$ (mod $n$) and $n>k' \ge 1$.

We define $k_{i}$\index{k_i@$k_{i}$} and $l_{i}$\index{l_i@$l_{i}$} for $0\leq i\leq g+1$ by
\begin{equation*}
	k_{i}:=d(n_{i+1},\ldots,n_{g})\qquad\text{and}\qquad l_{i}:=d(n_{i-1},\ldots,n_{1}).
\end{equation*}
It follows from \cref{4293857} that $k_{i}n_{i}=k_{i-1}+k_{i+1}$ and $l_{i}n_{i}=l_{i-1}+l_{i+1}$ for all $i=1,\ldots,g$.

\begin{proposition}\label{prop.nkl}\leavevmode
\begin{enumerate}
  \item\label{item.prop.nkl.kl} 
  $n=k_{0}>\cdots>k_{g+1}\;=\; 0\; =\; l_{0}<\cdots<l_{g+1}=n$. 
  \item\label{item.prop.nkl.zg} 
$k_{1}=k$ and $l_g=k'$. 
  \item\label{item.prop.nkl.gcd} 
  $\gcd(k_i,k_{i+1}) =\gcd(l_i,l_{i+1})=1$ for all $i=0,\ldots,g$.
  \item\label{item.prop.det.D}
  $n=d(n_1,\ldots,n_g)=\det\sD(n_1,\ldots,n_g)$, $k=d(n_2,\ldots,n_g)$, and $k'=d(n_1,\ldots,n_{g-1})$.
  \item\label{item.prop.nkl.frac}
 For all $1\leq i\leq g$,
	\begin{equation}\label{20987542}
		\frac{k_{i-1}}{k_{i}}\;=\; [n_i,\ldots, n_g] 
		\qquad\text{and}\qquad \frac{l_{i+1}}{l_{i}}\;=\;     [n_i,    \ldots, n_1].
	\end{equation}
\end{enumerate}
\end{proposition}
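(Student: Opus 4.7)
The plan is to establish the parts in the order (5), (1), (3), (4), (2), each step building on the previous. The only tool required is the three-term recurrence $n_i k_i = k_{i-1} + k_{i+1}$, valid for $1 \le i \le g$, together with its counterpart $n_i l_i = l_{i-1} + l_{i+1}$; both follow from \cref{4293857} applied to the appropriate subsequence. I will also use the boundary values $k_g = l_1 = 1$ and $k_{g+1} = l_0 = 0$, together with the symmetry $l_{g+1} = d(n_g,\ldots,n_1) = d(n_1,\ldots,n_g) = k_0$.

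For (5), I would induct downward on $i$. The base case $i=g$ reads $k_{g-1}/k_g = n_g/1 = [n_g]$. For the inductive step, dividing the recurrence by $k_i$ gives
\begin{equation*}
\frac{k_{i-1}}{k_i} \;=\; n_i - \frac{k_{i+1}}{k_i} \;=\; n_i - \frac{1}{[n_{i+1},\ldots,n_g]} \;=\; [n_i,\ldots,n_g].
\end{equation*}
The second equality of (5) is handled by the symmetric upward induction on $i$ starting from $l_2/l_1 = n_1$. Part (1) is another downward induction: the base case is $k_g = 1 > 0 = k_{g+1}$, and if $k_i > k_{i+1} \ge 0$ then $k_{i-1} = n_i k_i - k_{i+1} \ge 2 k_i - k_{i+1} > k_i$, since $n_i \ge 2$; the $l_i$ inequalities are symmetric. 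Part (3) is yet another downward induction: $\gcd(k_g, k_{g+1}) = \gcd(1,0) = 1$, and any common divisor of $k_{i-1}$ and $k_i$ also divides $n_i k_i - k_{i-1} = k_{i+1}$, hence divides $\gcd(k_i, k_{i+1}) = 1$.

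For (4), setting $i = 1$ in (5) yields $k_0/k_1 = [n_1, \ldots, n_g] = n/k$. Combined with $\gcd(k_0, k_1) = 1$ from (3) and $\gcd(n,k) = 1$, this forces $k_0 = n$ and $k_1 = k$, i.e., $n = d(n_1,\ldots,n_g)$ and $k = d(n_2,\ldots,n_g)$. Part (2) then follows: $k_1 = k$ is established, and for $l_g = k'$ I would apply \cref{le.nkk} to obtain $l_g k_1 = d(n_2,\ldots,n_{g-1}) k_0 + 1$, i.e., $l_g k \equiv 1 \pmod{n}$; since $1 \le l_g < n$ by (1), this pins down $l_g = k'$. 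The remaining identity $k' = d(n_1,\ldots,n_{g-1}) = l_g$ in (4) is then immediate from the symmetry of $d$.

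There is no genuine difficulty; the proof is a coordinated chain of short inductions. The only point requiring care is the logical order: (4) needs both the ratio from (5) and the coprimality from (3) to separate $k_0$ and $k_1$ individually (rather than just their ratio), and (2) in turn requires (4) to convert the identity in \cref{le.nkk} into a congruence modulo $n$.
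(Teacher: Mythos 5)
Your proof is correct, and it rests on the same engine as the paper's --- the three-term recurrence $n_ik_i=k_{i-1}+k_{i+1}$ from \cref{4293857} together with \cref{le.nkk} for the congruence $l_gk\equiv 1 \pmod n$ --- but it packages the first half differently. The paper introduces the auxiliary sequence $m_i'/m_i$ of \emph{reduced} fractions equal to $[n_i,\ldots,n_g]$, shows the $m_i$ satisfy the same recurrence with the same terminal values $m_g=1$, $m_{g+1}=0$, and concludes $m_i=k_i$; this delivers part (5) and the coprimality in part (3) in one stroke, since the $m_i$ are coprime by construction. You instead prove (5) by a direct downward induction on the recurrence and then prove (3) by a separate Euclidean-style induction (a common divisor of $k_{i-1},k_i$ divides $k_{i+1}=n_ik_i-k_{i-1}$). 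Both are valid; your version avoids the auxiliary sequence at the cost of one extra induction, and the only point you should make explicit is that in the induction for (5) the denominators $k_{i+1}$ are nonzero --- this does come out of the induction itself (the continued fractions $[n_{i+1},\ldots,n_g]$ are $>1$ since all $n_j\ge 2$), but as written you invoke part (1) only afterwards. The logical ordering you flag at the end --- (5) and (3) before (4), and (4) before (2) --- matches the dependencies in the paper's argument.
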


\begin{proof}
For $i=1,\ldots,g$, let $m_i'$ and $m_i$ be the unique relatively prime positive integers  such that $m'_{i}/m_{i}$ is equal to the right-hand side of the first equality in \cref{20987542}.
We also define $m_0=m'_1$ and $m_{g+1}=0$.
For all $i=1,\ldots,g-1$,
	\begin{equation*}
		\frac{m'_{i}}{m_{i}} \;=\; n_{i}-\frac{1}{m'_{i+1}/m_{i+1}} \;=\; n_{i}-\frac{m_{i+1}}{m'_{i+1}}\, .
	\end{equation*}
	Since $m_{i}$ and $m'_{i}$ are relatively prime, $m_{i}=m'_{i+1}$. Hence for all $i=2,\ldots,g-1$,
	\begin{equation*}
		\frac{m_{i-1}}{m_{i}} \;=\; n_{i}-\frac{m_{i+1}}{m_{i}}\, .
	\end{equation*}
	Therefore $n_{i}m_{i}=m_{i-1}+m_{i+1}$; this equality also holds for $i=1$ and $i=g$.
	As we noted above, we also have $n_{i}k_{i}=k_{i-1}+k_{i+1}$ for all $i=1,\ldots,g$. Since $m_{g}=1=k_{g}$ and $m_{g+1}=0=k_{g+1}$, 
	it follows that $m_{i}=k_{i}$ for all $i=0, \ldots, g+1$. 
	Thus, $k_{i-1}/k_i = m_{i-1}/m_i = m_i'/m_i$; i.e., the first equality in \cref{20987542} holds.
	
	We obtain the second equality in \cref{20987542} from the first by replacing the sequence $n_{i},\ldots,n_{g}$ by $n_{i},\ldots,n_{1}$.
	
	Since $n_{i}\geq 2$, the continued fractions in \cref{20987542} are $>1$. 
	Hence  $k_{i-1}>k_{i}$ and  $l_{i+1}>l_i$.  
	
	Since $n/k=m_1'/m_1=m_0/m_1$, we have $n=k_0$ and $k=k_1$. 
	Also, $k_{1}l_{g} \equiv 1$ (mod $n$) because 
	$$
	l_{g} k_{1} \;= \; d(n_1,\ldots,n_{g-1})d(n_2,\ldots,n_g) \; = \;  d(n_2,\ldots,n_{g-1})d(n_1,\ldots,n_g) +1\;=\; d(n_2,\ldots,n_{g-1})n+1
	$$
	(where the middle equality comes from \cref{le.nkk}). 
	But $k_{1}=k$ and $0<l_{g}<n$ so $l_g=k'$. 
	We also have $l_{g+1}=d(n_g,\ldots,n_1)=d(n_1,\ldots,n_g)=k_0=n$.
	
	Since $k_{i}=m_i=m'_{i+1}$ and $k_{i+1}=m_{i+1}$, $\gcd(k_i,k_{i+1}) =1$. Similarly, replacing $n_1,\ldots,n_g$ by $n_g,\ldots,n_1$, we obtain 
	$\gcd(l_i,l_{i+1}) =1$. 
	
	All the statements in the proposition have now been proved.
\end{proof}

\begin{corollary}
If $n/k=[n_1,\ldots,n_g]$, then $n/k'=[n_g,\ldots,n_1]$.       
\end{corollary}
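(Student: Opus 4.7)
The plan is to apply \cref{prop.nkl} to the reversed sequence $(n_g, n_{g-1}, \ldots, n_1)$ and recognize that the invariants $n$ and $k'$ play symmetric roles under reversal. Specifically, since $\sD(n_g,\ldots,n_1)$ is obtained from $\sD(n_1,\ldots,n_g)$ by conjugation by the order-reversing permutation matrix, the determinant function $d$ is invariant under reversal of its arguments: $d(n_g,\ldots,n_1) = d(n_1,\ldots,n_g) = n$, and similarly $d(n_{g-1},\ldots,n_1) = d(n_1,\ldots,n_{g-1}) = k'$ by \cref{item.prop.det.D}.

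Now I would invoke the first equality of \cref{20987542} with $i=1$ applied to the reversed sequence: if we let $\tilde n_j := n_{g+1-j}$ and define $\tilde k_j := d(\tilde n_{j+1},\ldots,\tilde n_g)$, then $\tilde k_0 = d(n_g,\ldots,n_1) = n$ and $\tilde k_1 = d(n_{g-1},\ldots,n_1) = k'$. The identity $\tilde k_0/\tilde k_1 = [\tilde n_1,\ldots,\tilde n_g]$ given by \cref{prop.nkl}\eqref{item.prop.nkl.frac} then reads exactly as $n/k' = [n_g,\ldots,n_1]$, which is the desired conclusion.

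No serious obstacle arises: the entire content of the corollary is that \cref{prop.nkl} is symmetric under reversal of $(n_1,\ldots,n_g)$, a symmetry already baked into the definition of $d$. The only bookkeeping point is to verify that $k'$, defined as the multiplicative inverse of $k$ modulo $n$ in the range $1 \le k' < n$, really equals $d(n_1,\ldots,n_{g-1}) = l_g$; but this is precisely the content of parts \eqref{item.prop.nkl.zg} and \eqref{item.prop.det.D} of \cref{prop.nkl}, so no new calculation is required.
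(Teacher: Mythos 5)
Your proof is correct and is essentially the paper's own route: the paper leaves the corollary as an immediate consequence of \cref{prop.nkl}, namely the second equality of \cref{20987542} at $i=g$ combined with $l_{g+1}=n$ and $l_g=k'$, and since that second equality is itself obtained by applying the first to the reversed sequence, your argument is the same one unwound by a single step. The bookkeeping you flag ($k'=l_g=d(n_1,\ldots,n_{g-1})$ and the reversal-invariance of $d$) is exactly what the paper supplies in parts \cref{item.prop.nkl.zg} and \cref{item.prop.det.D} of \cref{prop.nkl}, so nothing is missing.
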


\subsubsection{Remark} 
The conditions imposed on the sequences $i_1<\cdots<i_j$  in   \Cref{le.combd} are equivalent to requiring that each term $n_{i_1}\ldots n_{i_j}$ in the sum is obtained from $n_1\ldots n_g$ by deleting any number, $0$ up to $\left\lfloor \frac g2\right\rfloor$, of consecutive factors $n_in_{i+1}$.  These conditions imply that $j$ has the same
  parity as $g$, whence $(-1)^{\frac {g-j}2}$ is $\pm 1$ as opposed to   an ambiguous root of unity.

\begin{lemma} 
\label{lem.Zn}
Let $(n_1,\ldots,n_g)$ be any point in $\ZZ^g$. Let $\sD=\sD(n_1,\ldots,n_g)$ and $d:=\det\sD$. 
Let $(A,+)$ be an abelian group and let $A[d]$ and $dA$ denote the kernel and image of the multiplication map $A \stackrel{d}{\longrightarrow} A$, respectively. If $d \ne 0$, there is an exact sequence of abelian groups
\begin{equation}
\label{eq:Tor.seq}
0 \to A[d] \to  A^g \stackrel{\sD}{\longrightarrow} A^g \to A/dA \to 0,
\end{equation}
 where $\sD:A^g \to A^g$ denotes left multiplication by $\sD$.
In particular, 
when $\frac{n}{k}=[n_1,\ldots,n_g]$,  $\ZZ^g/\sD \ZZ^g\cong \ZZ_n$.
\end{lemma}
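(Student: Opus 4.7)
My plan is to prove the claim in two steps: first identify the cokernel $\ZZ^g/\sD\ZZ^g$ as cyclic of order $|d|$ via a Smith normal form computation, then derive the four-term exact sequence by a tensor product argument. The final assertion about $\ZZ_n$ is then immediate from \cref{prop.nkl}\cref{item.prop.det.D}.

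The key point in Step 1 is that the invariant factors of $\sD$ over $\ZZ$ are $1,1,\ldots,1,|d|$. Since $\det(\sD)=d\neq 0$, the $g$-th determinantal divisor $\delta_g$ equals $|d|$; it suffices to show the $(g-1)$-th determinantal divisor $\delta_{g-1}$ equals $1$, as this forces $\delta_1=\cdots=\delta_{g-1}=1$ by divisibility. I exhibit a single $(g-1)\times(g-1)$ minor with determinant $\pm 1$: delete the first row and last column of $\sD$. Because $\sD$ is tridiagonal with $-1$'s on the sub- and super-diagonals, the resulting matrix $M$ has $M_{ii}=\sD_{i+1,i}=-1$ on its diagonal and $M_{ij}=\sD_{i+1,j}=0$ for all $j<i$ (as such entries of $\sD$ vanish when $|i+1-j|\ge 2$). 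Thus $M$ is upper-triangular with $-1$'s on the diagonal, so $\det(M)=(-1)^{g-1}$, giving $\delta_{g-1}=1$. Consequently there exist $U,V\in GL_g(\ZZ)$ with $U\sD V=\mathrm{diag}(1,\ldots,1,d)$, and in particular $\ZZ^g/\sD\ZZ^g\cong \ZZ/d\ZZ$.

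For Step 2, since $d\neq 0$ the map $\sD\colon\ZZ^g\to\ZZ^g$ is injective, so
$$
0\;\longrightarrow\;\ZZ^g\;\xrightarrow{\;\sD\;}\;\ZZ^g\;\longrightarrow\;\ZZ/d\ZZ\;\longrightarrow\;0
$$
is a free resolution of $\ZZ/d\ZZ$ over $\ZZ$. Applying $-\otimes_\ZZ A$ and using $\ZZ^g\otimes_\ZZ A=A^g$ yields the long exact Tor-sequence
$$
0\;\longrightarrow\;\Tor_1^\ZZ(\ZZ/d\ZZ,A)\;\longrightarrow\;A^g\;\xrightarrow{\;\sD\;}\;A^g\;\longrightarrow\;A\otimes_\ZZ \ZZ/d\ZZ\;\longrightarrow\;0.
$$
The standard identifications $\Tor_1^\ZZ(\ZZ/d\ZZ,A)=A[d]$ (from the resolution $0\to\ZZ\xrightarrow{d}\ZZ\to\ZZ/d\ZZ\to 0$) and $A\otimes_\ZZ\ZZ/d\ZZ=A/dA$ produce exactly \cref{eq:Tor.seq}.

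For the particular case $\frac{n}{k}=[n_1,\ldots,n_g]$, \cref{prop.nkl}\cref{item.prop.det.D} gives $d=n$, so taking $A=\ZZ$ in the exact sequence (or applying Step 1 directly) yields $\ZZ^g/\sD\ZZ^g\cong\ZZ/n\ZZ=\ZZ_n$. The only nontrivial step is verifying $\delta_{g-1}=1$ above, and this is entirely a book-keeping exercise with the tridiagonal shape of $\sD$; I do not anticipate any serious obstacle.
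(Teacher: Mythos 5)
Your proof is correct and follows essentially the same route as the paper's: both establish that the invariant factors of $\sD$ are $1,\ldots,1,|d|$ by exhibiting a unimodular $(g-1)\times(g-1)$ minor (you delete the first row and last column, the paper deletes the first column and bottom row --- the same minor up to transposition, since $\sD$ is symmetric), and both then obtain \cref{eq:Tor.seq} by tensoring the free resolution $0\to\ZZ^g\xrightarrow{\sD}\ZZ^g\to\ZZ_d\to 0$ with $A$ and identifying $\Tor_1^\ZZ(\ZZ_d,A)=A[d]$.
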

\begin{pf}
Since $d \ne 0$, 
$\ZZ^g/\sD \ZZ^g$ is a finite group. Its structure is therefore
determined by its invariant factors, which are the same as those of $\sD$.
If $s_1|s_2|\ldots|s_g$ denote the invariant factors of $\sD$, then $s_k=\frac{d_k(\sD)}{d_{k-1}(\sD)}$ where $d_0(\sD)=1$ and $d_k(\sD)$
denotes the greatest common divisor of the determinants of the $k \times k$ minors of $\sD$ 
\cite[Chap.~II, \S\S13--16]{Newman}. 
The $(g-1)\times (g-1)$ minor obtained by deleting the first column and bottom row of $\sD$ is a lower triangular matrix with $-1$'s on the
diagonal so $d_{g-1}(\sD)=1$. The invariant factors for $\sD$, and hence of the group $\ZZ^g/\sD \ZZ^g$, 
are therefore $1,\ldots,1,\det\sD$. Hence $\ZZ^g/\sD \ZZ^g \cong \ZZ_d$.

Since $\Tor^\ZZ_1(A,\ZZ_d)=A[d]$, the exact sequence \cref{eq:Tor.seq} is obtained by applying the functor 
$A\otimes_\ZZ-$ to the exact sequence $0 \to \ZZ^g \stackrel{\sD}{\longrightarrow} \ZZ^g \to \ZZ_d \to 0$.
\end{pf}

\begin{remark}\label{re.inv-fact}
  More generally,  if $s_1|s_2|\ldots|s_g$ 
  are the invariant factors of $D\in M_g(\ZZ)$, then the kernel and cokernel of $D:A^g\to A^g$ are isomorphic to
  \begin{equation*}
    \bigoplus_i A[s_i]\qquad \text{and}\qquad \bigoplus_i A/s_iA,
  \end{equation*}
 respectively.
\end{remark}

%%%%%%%%%%%%%%%%%%%%%%%%%%%%%%%%%%%%%%%%%%%%%%%%%%%%%%%%%%%%%%%
%%%%%%%%%%%%%%%%%%%%%%%%%%%%%%%%%%%%%%%%%%%%%%%%%%%%%%%%%%%%%%% 
\subsection{A distinguished automorphism $\s:E^g \to E^g$}
\label{sect.aut.sigma}
%%%%%%%%%%%%%%%%%%%%%%%%%%%%%%%%%%%%%%%%%%%%%%%%%%%%%%%%%%%%%%%
%%%%%%%%%%%%%%%%%%%%%%%%%%%%%%%%%%%%%%%%%%%%%%%%%%%%%%%%%%%%%%%

Let $\sfk,\sfl,\sfn \in \ZZ^g \subseteq \CC^g$\index{k@$\sfk$}\index{l@$\sfl$}\index{n@$\sfn$} be the points
\begin{align*}
\sfk & \; :=\; (k_1,\ldots,k_g),
\\
\sfl & \;:=\; (l_1,\ldots,l_g),
\\
\sfn & \;:=\; (n,\ldots,n).
\end{align*}
Define $\s:\CC^g \to \CC^g$\index{sigma@$\s$} by
$$
\s(\sfz)\; :=\; \sfz+(\sfk+\sfl-\sfn)\tau.
$$
Thus, $\s(z_1,\ldots,z_g)=(z_1+\tau_1,\ldots,z_g+\tau_g)$ where  $\tau_i =(k_i+l_i-n)\tau$\index{tau_i@$\tau_{i}$}.

Because $(\CC^g,+)$ is an abelian group, all translation automorphisms  of it commute with one another. 
In particular,  $\s$ 
commutes with the translation action of $\L^g$ on $\CC^g$, and therefore induces an automorphism of $E^g=\CC^g/\L^g$  that we  also 
 denote by $\s$.

\begin{proposition}\label{prop.t.sigma.comm} 
Let $\sft=(\tau_1,\ldots,\tau_g)=(\sfk+\sfl-\sfn)\tau \in E^g$.
\begin{enumerate}
\item\label{item.t.sigma.comm.t}
The point $\sft$ is invariant under the action of $\Sigma_{n/k}$.
\item\label{item.t.sigma.comm.sigma}
The action of $\s$ on $E^g$  commutes with the action of $\Sigma_{n/k}$
\item\label{item.t.sigma.comm.des}
The action of $\s$  descends to an automorphism of $E^g/\Sigma_{n/k}$ (that we also denote by $\s$).
\end{enumerate}
\end{proposition}
\begin{proof}
\cref{item.t.sigma.comm.t}
Suppose $n_j=2$. Then $s_j(\sft)$ differs from $\sft$, if it differs at all, only in the $j^{\th}$ coordinate. Set $\tau_{0}:=(k_{0}+l_{0}-n)\tau=0$ and $\tau_{g+1}:=(k_{g+1}+l_{g+1}-n)\tau=0$.
The $j^{\th}$ coordinate of $s_j(\sft)$ is $\tau_{j-1}-\tau_j+\tau_{j+1}$ so  $s_j(\sft)=\sft$ if and only if $2\tau_j=\tau_{j-1}+\tau_{j+1}$;
 i.e.,  if and only if $2(k_j+l_j)=k_{j-1}+l_{j-1}+ k_{j+1}+l_{j+1}$; but 
this equality holds since $n_ik_i=k_{i-1}+k_{i+1}$ and $n_il_i=l_{i-1}+l_{i+1}$ for all $i$.
Hence $s_j(\sft)=\sft$. It follows that $\sft$ is fixed by every element in $\Sigma_{n/k}$.

\cref{item.t.sigma.comm.sigma}
Let $\theta \in \Sigma_{n/k}$. Since $\theta$ is an automorphism of $E^g$ as an algebraic group and $\s$ is translation by an element
$\sft$ such that $\theta(\sft)=\sft$, a calculation shows that $\s$ commutes with $\theta$. Hence $\s$ commutes with the action of $\Sigma_{n/k}$. \cref{item.t.sigma.comm.des} follows immediately. 
\end{proof}

%%%%%%%%%%%%%%%%%%%%%%%%%%%%%%%%%%%%%%%%%%%%%%%%%%%%%%%%%%%%%%%%%%%%%%%%%%%%%%%%
\subsection{Theta functions in one variable}
\label{sect.theta.fns.1.var}
%%%%%%%%%%%%%%%%%%%%%%%%%%%%%%%%%%%%%%%%%%%%%%%%%%%%%%%%%%%%%%%%%%%%%%%%%%%%%%%%

We recall some notation in   \cite{CKS1}.

Given an integer  $n \ge 1$ and a point $c \in \CC$, we write $\Theta_{n,c}(\L)$\index{Theta_n,c(Lambda)@$\Theta_{n,c}(\L)$} for the
space of holomorphic functions $f:\CC\to\CC$ such that
\begin{align*}
f(z+1) & \; = \; f(z) \qquad \text{and}
\\
f(z+\eta) & \; = \; e\big(-nz+ \tfrac{n}{2}+c\big) f(z).
\end{align*}
This is a vector space of dimension $n$. 
In keeping with the notation in the Kiev preprint \cite[p.~32]{FO-Kiev} and in  the first Odesskii-Feigin paper \cite{FO89}, we always 
use the notation\index{Theta_n(Lambda)@$\Theta_{n}(\L)$}
\begin{equation}
\label{eq.basic.theta.fn}
	\Theta_{n}(\L) \;:=\; \Theta_{n,\frac{n-1}{2}}(\L).
\end{equation}

The function\index{theta(z)@$\theta(z)$}
\begin{equation}
\label{defn.Jacobi.theta}
\theta(z)  \;=\;  \sum_{n \in \ZZ} (-1)^n e\big(nz + \tfrac{1}{2}n(n-1)\eta\big)
\end{equation}
is a basis for $\Theta_{1,0}(\L)$. It has a zero of order one at 0 and no other zeros in a fundamental parallelogram for $\Lambda$.

 Once and for all we fix a basis for $\Theta_n(\L)$, namely $\{\theta_0,\ldots,\theta_{n-1}\}$.

\begin{proposition}\label{prop.official.theta.basis} \cite[Prop.~2.6]{CKS1}
	The functions\index{theta_alpha(z)@$\theta_\a(z)$}
	\begin{equation}
	\label{official.theta_alpha}
		\theta_\a(z) \; :=\; e\left(\a z +\tfrac{\a}{2n}+\tfrac{\a(\a-n)}{2n}\eta\right)  \prod_{m=0}^{n-1}  \theta\!\left(z +\tfrac{m}{n}+ \tfrac{\a}{n}\eta\right),
	\end{equation}
	indexed by $\alpha\in\bbZ$, have the following properties:
	\begin{enumerate}
		\item
		$\theta_{\a+n}=\theta_{\a}$,
		\item
		$\{\theta_{0},\ldots,\theta_{n-1}\}$ is a basis for $\Theta_{n}(\L)$,
		\item 
		$\theta_\a(z+\tfrac{1}{n})  \; = \; e\big(\frac{\a}{n}\big) \theta_\a(z)$, 
		\item
		$\theta_\a(z+\tfrac{1}{n}\eta)  \; = \;  e\big(-z-\tfrac{1}{2n}+\tfrac{n-1}{2n}\eta\big)\theta_{\a+1}(z)$,
		\item\label{item.official.theta.basis.minus}
		$\theta_\a(-z)  \; = \;  -  e\big(-nz+\tfrac{\a}{n}\big)\theta_{-\a}(z)$, and
		\item\label{item.official.theta.basis.zeros}
		the zeros of $\theta_{\alpha}$ are $\{\frac{1}{n}(-\a\eta +m) \; | \; 0 \le m \le n-1\}+\Lambda$, which are all simple,
		\item
		\label{eq.torsion.translate}
		for all integers $r \ge 0$,
		$		\theta_{\a}(z+\frac{r}{n}\eta)     
 		 \,=\,                e\big(-rz -\frac{r}{2n}+\frac{rn-r^2}{2n}\eta\big)   \theta_{\a+r}(z)$.
	\end{enumerate}
\end{proposition}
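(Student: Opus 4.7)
The plan is to verify each of (1)--(7) by direct computation from the product formula \cref{official.theta_alpha}, powered by three basic facts about $\theta(z)$: the quasi-periodicities $\theta(z+1)=\theta(z)$ and $\theta(z+\eta)=-e(-z)\theta(z)$ (equivalent to $\theta\in\Theta_{1,0}(\Lambda)$), together with the quasi-odd identity $\theta(-z)=-e(-z)\theta(z)$, which I would obtain by the substitution $n\leftrightarrow 1-n$ in the series \cref{defn.Jacobi.theta}. I would organize the proof so that (3) is first, (4) and (7) follow, and (1), (6), (2) build on those before tackling the trickiest piece (5).

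For (3), shifting $z\mapsto z+\tfrac{1}{n}$ in $\prod_{k=0}^{n-1}\theta(z+\tfrac{k}{n}+\tfrac{\alpha}{n}\eta)$ cyclically permutes the factors via $\theta(w+1)=\theta(w)$, so the whole effect is the factor of $e(\alpha/n)$ produced by the prefactor. For (4), the shift $z\mapsto z+\tfrac{\eta}{n}$ turns each $\theta(z+\tfrac{k}{n}+\tfrac{\alpha}{n}\eta)$ into $\theta(z+\tfrac{k}{n}+\tfrac{\alpha+1}{n}\eta)$, so the product matches the one in $\theta_{\alpha+1}$ and the residual scalar becomes an explicit polynomial identity in $\alpha$ that I would verify by direct expansion. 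Iterating (4) $r$ times gives (7), with the exponent $(rn-r^{2})/(2n)$ arising from a telescoping sum. For (1), applying $\theta(w+\eta)=-e(-w)\theta(w)$ to each of the $n$ factors under $\alpha\mapsto\alpha+n$ produces an aggregate sign $(-1)^{n}$ and an aggregate exponential, and a short bookkeeping check shows they cancel against the change in the prefactor after using $e(n)=1$. Property (6) is immediate once one knows $\theta$ has simple zeros exactly on $\Lambda$: one-dimensionality of $\Theta_{1,0}(\Lambda)$ forces $\theta$ to have a unique zero modulo $\Lambda$, which is at $0$ (pairing $n$ with $1-n$ in the series shows $\theta(0)=0$), and the zero set of $\theta_{\alpha}$ is then read off factor-by-factor. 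For (2), linear independence follows from (3) since the eigenvalues $e(\alpha/n)$ are distinct across $\alpha\in\{0,\ldots,n-1\}$; membership of $\theta_{\alpha}$ in $\Theta_{n,(n-1)/2}(\Lambda)$ is one more computation of the same flavor as (1).

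The step I expect to be the main obstacle is (5), the symmetry $\theta_{\alpha}(-z)=-e(-nz+\tfrac{\alpha}{n})\theta_{-\alpha}(z)$, because applying $\theta(-w)=-e(-w)\theta(w)$ to each factor of $\prod_{k}\theta(-z+\tfrac{k}{n}+\tfrac{\alpha}{n}\eta)$ yields products $\theta(z-\tfrac{k}{n}-\tfrac{\alpha}{n}\eta)$, whereas the product appearing in $\theta_{-\alpha}(z)$ is $\prod_{k}\theta(z+\tfrac{k}{n}-\tfrac{\alpha}{n}\eta)$. My plan is to reconcile the two by reindexing $k\mapsto n-k\pmod{n}$ and absorbing the resulting unit shifts via $\theta(w-1)=\theta(w)$; afterwards, the identity collapses to the scalar equality $(-1)^{n}e(\tfrac{\alpha}{n}-nz+\tfrac{n-1}{2})=-e(-nz+\tfrac{\alpha}{n})$, which holds because $(-1)^{n}=e(n/2)$ and $e(n-\tfrac{1}{2})=-1$. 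Once this bookkeeping is done carefully, all seven statements follow.
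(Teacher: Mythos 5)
The paper offers no proof of this proposition---it is recalled from \cite{CKS1} without argument---so there is no in-paper proof to compare against; I can only assess your argument on its own terms, and it is correct throughout. I checked the bookkeeping: in (4) the $\alpha$-dependence of the residual exponent cancels, leaving exactly $e\big(-z-\tfrac{1}{2n}+\tfrac{n-1}{2n}\eta\big)$; iterating gives (7) with the telescoped exponent $\sum_{j=0}^{r-1}\tfrac{n-1-2j}{2n}=\tfrac{rn-r^2}{2n}$; in (1) and (5) the aggregate constant from applying the quasi-periodicity (resp.\ quasi-oddness) of $\theta$ to all $n$ factors is $(-1)^n e\big(\tfrac{n-1}{2}\big)=e\big(n-\tfrac12\big)=-1$, exactly as you claim; and your reindexing $k\mapsto n-k$ in (5) does reproduce the product defining $\theta_{-\alpha}$ after absorbing the unit shifts, with total scalar $(-1)^n e\big(\tfrac{\alpha}{n}-nz+\tfrac{n-1}{2}\big)=-e\big(-nz+\tfrac{\alpha}{n}\big)$. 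Two small remarks. First, for (2) the membership $\theta_\alpha\in\Theta_{n,\frac{n-1}{2}}(\Lambda)$ is most cleanly obtained from your own item (7) with $r=n$ combined with (1), which yields $\theta_\alpha(z+\eta)=-e(-nz)\theta_\alpha(z)$, matching the required automorphy factor $e\big(-nz+\tfrac{n}{2}+\tfrac{n-1}{2}\big)$. Second, your claim that $\theta$ has a unique simple zero modulo $\Lambda$ ``because $\dim\Theta_{1,0}(\Lambda)=1$'' is defensible via Riemann--Roch, but the paper simply asserts this fact immediately after \cref{defn.Jacobi.theta}, so you may cite it; note also that item (6) of the statement should read ``the zeros of $\theta_\alpha(z)$,'' which is how you (correctly) interpreted it.
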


The  {\sf Heisenberg group of order $n^3$} is\index{H_n@$H_{n}$}\index{S@$S$}\index{T@$T$}
$$
H_n \;=\; \langle S,T \; | \; S^n=T^n=[S,T]^n=1, \, [S,[S,T]]=[T,[S,T]]=1\rangle.
$$
It acts on $\Theta_n(\L)$ via the operators
\begin{align*}
(S\cdot f)(z) & \;=\; f\big(z+\tfrac{1}{n}\big)
\\
(T\cdot f)(z) & \;=\; e\big(z+\tfrac{1}{2n}-\tfrac{n-1}{2n}\eta \big)f\big(z+\tfrac{1}{n}\eta\big).
\end{align*}

\begin{lemma}
\label{lem.Hn.action.1}
\cite[Lem.~2.8]{CKS1}
The space $\Theta_n(\L)$ is an irreducible representation of $H_n$.
The action on the basis $\theta_\a$ is 
\begin{align*}
(S\cdot \theta_\a)(z) & \;=\;   e\big(\tfrac{\a}{n} \big) \theta_{\a}(z),
\\
(T\cdot \theta_\a)(z) & \;=\; \theta_{\a+1}(z).
\end{align*}
\end{lemma}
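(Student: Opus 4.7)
The plan is to read the two displayed formulas directly off of \cref{prop.official.theta.basis}, then use them to verify the Heisenberg relations, and finally to extract irreducibility from a standard eigenspace argument. All three steps come down to simple manipulations once the identities in \cref{prop.official.theta.basis} are in hand.

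First I would compute the action on the basis. The formula for $S\cdot\theta_\a$ is immediate from \cref{prop.official.theta.basis}(3). For $T$, one substitutes \cref{prop.official.theta.basis}(4) into the definition of $T$: the two exponential prefactors $e(z+\tfrac{1}{2n}-\tfrac{n-1}{2n}\eta)$ and $e(-z-\tfrac{1}{2n}+\tfrac{n-1}{2n}\eta)$ are exact inverses, so they cancel and leave $\theta_{\a+1}$. This yields both claims of the lemma.

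Next I would verify the defining relations $S^{n}=T^{n}=[S,T]^{n}=1$ and the centrality of $[S,T]$, working on the basis. From the formulas, $S^{n}$ acts as the scalar $e(\a)=1$ on $\theta_\a$, and $T^{n}\theta_\a=\theta_{\a+n}=\theta_\a$ by \cref{prop.official.theta.basis}(1). A direct comparison of $ST\cdot\theta_\a=e(\tfrac{\a+1}{n})\theta_{\a+1}$ with $TS\cdot\theta_\a=e(\tfrac{\a}{n})\theta_{\a+1}$ shows that $[S,T]$ acts as the scalar $e(1/n)$ on every basis vector; since scalars are central and $e(1/n)^{n}=e(1)=1$, the remaining Heisenberg relations hold, and $\Theta_n(\L)$ is a well-defined $H_n$-module.

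Finally, for irreducibility I would use that $S$ has $n$ distinct eigenvalues $\{e(\a/n):\a\in\ZZ_n\}$, with one-dimensional eigenspaces $\CC\theta_\a$. Any nonzero $H_n$-stable subspace $W$ is a sum of $S$-eigenspaces, hence contains some $\theta_\a$; applying powers of $T$ then forces $W$ to contain every $\theta_\b$, whence $W=\Theta_n(\L)$. There is essentially no obstacle here: the only delicate point is the exponential bookkeeping in the $T$-computation, and that is already absorbed into the normalizations chosen for $\theta_\a$ in \cref{prop.official.theta.basis}.
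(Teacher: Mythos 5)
Your proof is correct, and it is exactly the argument the paper intends: the lemma is stated without proof as an immediate consequence of \cref{prop.official.theta.basis}, with the $S$- and $T$-formulas following from parts (3) and (4) (the exponential prefactors in the definition of $T$ and in (4) indeed cancel), and irreducibility following from the standard eigenspace-plus-cyclic-permutation argument. No gaps.
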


%%%%%%%%%%%%%%%%%%%%%%%%%%%%%%%%%%%%%%%%%%%%%%%%%%%%%%%%%%%%%%%%%%%%%%%%%%%%%%%%
\subsection{Theta functions in $g$ variables: the space $\Theta_{n/k}(\L)$}
\label{sect.Theta.n/k}
%%%%%%%%%%%%%%%%%%%%%%%%%%%%%%%%%%%%%%%%%%%%%%%%%%%%%%%%%%%%%%%%%%%%%%%%%%%%%%%%

Fix a point $\sfc=(c_1,\ldots,c_g) \in \CC^g$.\footnote{Odesskii's survey \cite[p.~1152]{Od-survey} uses 
$\sfc=\frac{1}{2}(n_1,\ldots,,n_g) - (0,1,\ldots, 1)\eta$.}

Define $\Theta_{n/k}(\Lambda)$\index{Theta_n/k(Lambda)@$\Theta_{n/k}(\Lambda)$} to be the $\CC$-vector space consisting of all holomorphic functions $f:\CC^{g}\to \CC$ such that
\begin{equation}\label{eq:od-qp}
	\begin{cases}
		f(z_{1},\ldots,z_{i}+1,\ldots,z_{g})\;=\;f(z_{1},\ldots,z_{g}),\\
		f(z_{1},\ldots,z_{i}+\eta,\ldots,z_{g}) \;=\;e(z_{i-1}-n_{i}z_{i}+ z_{i+1} +c_{i})f(z_{1},\ldots,z_{g})
	\end{cases}
\end{equation}
with the convention that $z_0=z_{g+1}=0$.

\subsubsection{}
\label{25982524}
We write $\sfe_1,\ldots, \sfe_g$\index{e_i@$\sfe_{i}$} for the standard basis for $\ZZ^g$. Thus,
$\sfe_i = (0,\ldots,0,1,0,\ldots,0)$ where the $1$ is in the $i^{\th}$ position.

\begin{proposition}\cite{Od-survey}\label{prop.dim.theta.sp}
	$\dim_{\CC}\Theta_{n/k}(\Lambda)=n$.
\end{proposition}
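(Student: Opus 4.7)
My plan is to analyze $\Theta_{n/k}(\Lambda)$ via Fourier expansion. Any $f\in\Theta_{n/k}(\Lambda)$ is periodic with period $1$ in each variable by the first relation in \cref{eq:od-qp}, so we may write
$$f(z)\;=\;\sum_{m\in\ZZ^{g}}a_{m}\,e(m\cdot z),\qquad m\cdot z\;:=\;m_{1}z_{1}+\cdots+m_{g}z_{g},$$
and translate the $\eta$-quasi-periodicities in the second relation into a recurrence on the Fourier coefficients $a_{m}$.

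Substituting $z_{i}\mapsto z_{i}+\eta$, expanding both sides, and matching coefficients of $e(m\cdot z)$ yields the recursion
$$a_{m-v_{i}}\;=\;e\!\left(\eta m_{i}-c_{i}\right)a_{m},$$
where $v_{i}:=\sfe_{i-1}-n_{i}\sfe_{i}+\sfe_{i+1}$ (with the convention $\sfe_{0}=\sfe_{g+1}=0$) is $-1$ times the $i^{\text{th}}$ column of the matrix $\sD=\sD(n_{1},\ldots,n_{g})$ from \cref{defn.D.matrix}. I would then check that the recursions for distinct indices $i$ and $j$ are compatible: composing them in either order, the consistency reduces to the symmetry $(v_{i})_{j}=(v_{j})_{i}$, which holds because $\sD$ is symmetric. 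No obstruction arises, so the coefficients $a_{m}$ are fully determined by their values on a set of coset representatives of $\ZZ^{g}/\sD\,\ZZ^{g}$. By \cref{lem.Zn} this index equals $|{\det\sD}|=n$, giving the upper bound $\dim_{\CC}\Theta_{n/k}(\Lambda)\le n$.

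For the matching lower bound I would check that each choice of values on the $n$ cosets extends to a convergent Fourier series. Iterating the recursion shows that, up to bounded multiplicative factors, $|a_{m_{0}+\sD k}|$ behaves like $\exp\!\bigl(-\pi\operatorname{Im}(\eta)\,k^{T}\sD k+O(|k|)\bigr)$; this decays in Gaussian fashion because $\sD$ is positive definite when every $n_{i}\ge 2$, as witnessed by the identity $x^{T}\sD x=(n_{1}-1)x_{1}^{2}+(n_{g}-1)x_{g}^{2}+\sum_{i=2}^{g-1}(n_{i}-2)x_{i}^{2}+\sum_{i=1}^{g-1}(x_{i}-x_{i+1})^{2}$. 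Hence each of the $n$ cosets produces a well-defined element of $\Theta_{n/k}(\Lambda)$, and the resulting $n$ functions are linearly independent because they have disjoint supports in $\ZZ^{g}$.

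The main obstacle is the convergence bookkeeping: one must accumulate the factors $e(\eta m_{i}-c_{i})$ along an arbitrary walk of recursions and extract a positive-definite quadratic form in the displacement vector $k$, so that it is positive-definiteness of $\sD$ rather than merely its invertibility that ultimately delivers convergence. A cleaner alternative would be to identify $\Theta_{n/k}(\Lambda)$ with $H^{0}(E^{g},\cL_{n/k})$ as in \cref{prop.sheaf.theta.func} below and then invoke Riemann--Roch for abelian varieties together with the equality $\det\sD=n$ from \cref{prop.nkl}; but that route relies on material introduced only later in the paper.
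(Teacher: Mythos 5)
Your proof follows essentially the same route as the paper's: Fourier expansion, translation of the $\eta$-quasi-periodicities into the recursion $a_{\alpha}e(\alpha_i\eta)=a_{\alpha+\sD\sfe_i}e(c_i)$ (whose cross-compatibility comes from the symmetry of $\sD$), and counting cosets of $\sD\ZZ^g$ via \cref{lem.Zn}. The one place you go beyond the paper is the lower bound: the paper stops at the observation that the coefficients are determined by their values on coset representatives, which strictly speaking only gives $\dim_\CC\Theta_{n/k}(\Lambda)\le n$, whereas your Gaussian-decay estimate --- resting on the positive-definiteness of $\sD$ when all $n_i\ge 2$, via the identity $x^{T}\sD x=(n_{1}-1)x_{1}^{2}+(n_{g}-1)x_{g}^{2}+\sum_{i=2}^{g-1}(n_{i}-2)x_{i}^{2}+\sum_{i=1}^{g-1}(x_{i}-x_{i+1})^{2}$ for $g\ge 2$ --- actually verifies that each coset datum yields a convergent series, making explicit a step the paper leaves implicit.
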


\begin{proof}
We adopt the convention that $\sfe_{0}=\sfe_{g+1}=0$ and $z_0=z_{g+1}=0$. 

	Suppose $f:\CC^g \to \CC$ is a holomorphic function that is periodic of period $1$ in each variable. There are unique scalars $a_{\alpha}\in\CC$ such that
	\begin{equation*}
		f(z_{1},\ldots,z_{g}) \;= \; \sum_{\alpha\in\ZZ^{g}}a_{\alpha}e(\alpha_{1}z_{1}+\cdots+\alpha_{g}z_{g}).
	\end{equation*}
	Clearly
	\begin{equation*}
		f(z_{1},\ldots,z_{i}+\eta,\ldots,z_{g}) \;=\; \sum_{\alpha\in\ZZ^{g}}a_{\alpha}e(\alpha_{i}\eta)e(\alpha_{1}z_{1}+\cdots+\alpha_{g}z_{g}).
	\end{equation*}
	However, $e(z_{i-1}-n_{i}z_{i}+z_{i+1}+c_{i})f(z_{1},\ldots,z_{g})$ equals
	\begin{align*}
		&\sum_{\alpha\in\ZZ^{g}}a_{\alpha}e(c_{i})e(\alpha_{1}z_{1}+\cdots+(\alpha_{i-1}+1)z_{i-1}+(\alpha_{i}-n_{i})z_{i}+(\alpha_{i+1}+1)z_{i+1}+\cdots+\alpha_{g}z_{g})\\
		&=\sum_{\alpha\in\ZZ^{g}}a_{\alpha+n_{i}\sfe_{i}-\sfe_{i-1}-\sfe_{i+1}}e(c_{i})e(\alpha_{1}z_{1}+\cdots+\alpha_{g}z_{g})
	\end{align*}
	 so $f\in\Theta_{n/k}(\Lambda)$ if and only if 
	\begin{equation}\label{eq.dim.Theta_n/k}
		a_{\alpha}e(\alpha_{i}\eta)\;=\; a_{\alpha-\sfe_{i-1}+n_{i}\sfe_{i}-\sfe_{i+1}}e(c_{i})
	\end{equation}
	for all $\a \in \ZZ^g$.
	
	It follows from \cref{eq.dim.Theta_n/k} that the $a_\a$'s are determined by their values for $\a$ belonging to a set of coset representatives for the subgroup $\sD\ZZ^{g}$ of $\ZZ^g$, where $\sD:=\sD(n_{1},\ldots,n_{g})$ is the matrix defined in \cref{defn.D.matrix}. Hence $\dim\Theta_{n/k}(\Lambda)$ is equal to the cardinality of 
	$\ZZ^{g}/\sD\ZZ^{g}$. 
	By \cref{lem.Zn}, $\ZZ^{g}/\sD\ZZ^{g} \cong \ZZ_n$.
\end{proof}

%%%%%%%%%%%%%%%%%%%%%%%%%%%%%%%%%%%%%%%%%%%%%%%%%%%%%%%%%%%%%%%%%%%%%%%%%%%%%%%%
\subsection{Notation}
\label{sect.notation.1}
%%%%%%%%%%%%%%%%%%%%%%%%%%%%%%%%%%%%%%%%%%%%%%%%%%%%%%%%%%%%%%%%%%%%%%%%%%%%%%%%

Let $X$ be a non-singular complex projective algebraic variety.

If $1 \le i \ne j \le g$ we write $\pr_{ij}\colon X^g \to X \times X$\index{pr_ij@$\pr_{ij}$} for the projection onto the $i^{th}$ and $j^{th}$ components, and
$\pr_{i}:X^g \to X$\index{pr_i@$\pr_{i}$} for the projection onto the $i^{th}$ component.
If $D\subseteq X\times X$  is a closed subscheme  we write $D_{ij}:=\pr_{ij}^{-1}(D)$.

We write $\Delta$\index{Delta@$\Delta$} for the diagonal $\{(x,x) \; | \; x \in X\} \subseteq X\times X$. 
Thus\index{Delta_ij@$\D_{ij}$}
$$
\D_{ij}   \;=\;  \pr_{ij}^{-1}(\D) \;=\; \{(x_1,\ldots,x_g) \in X^g \; | \; x_i=x_j\}.
$$

If $\cL$ is an invertible sheaf on $X$ that is is generated by its global sections, we write 
$$
\Phi_{|\cL|}:X \, \longrightarrow \PP(H^0(X,\cL)^*)
$$
for the morphism $\Phi_{|\cL|}(x) := \{s \in H^0(X,\cL) \; | \; s(x)=0\}$\index{Phi__L_@$\Phi_{|\cL|}$}.

If $D$ is a divisor on $X$ such that  $\cO_X(D)$ is generated by its global sections or, equivalently, 
such that the linear system $|D|$ is  base-point free, we write $\Phi_{|D|}:=\Phi_{|\cO_{X}(D)|}$\index{Phi__D_@$\Phi_{|D|}$}.

\subsubsection{Line bundles and invertible $\cO_X$-modules}
We will use Roman letters like $L$ to denote line bundles and script letters like $\cF$ to denote sheaves of $\cO_X$-modules.

\subsubsection{Preliminaries on abelian varieties}
\label{sect.notation.2}
Let $X$ be a complex abelian variety.

If $x \in X$ we write $T_x:X \to X$\index{T_x@$T_{x}$} for the translation automorphism $a \mapsto a+x$. If $\cF$ is a sheaf of 
$\cO_X$-modules we call $T_x^*\cF$ a {\sf translation} of $\cF$. 

Let $D_{1}$ and $D_{2}$ be divisors on $X$. We write $D_{1}\sim D_{2}$\index{-@$\sim$} if they are linearly equivalent. 
We say that $D_{1}$ and $D_{2}$ are \textsf{numerically equivalent}, denoted $D_1 \equiv D_2$\index{-@$\equiv$}, if $D_1\cdot C =D_2 \cdot C$ for all irreducible curves $C$ on $X$. By \cite[19.3.1]{Fulton-2nd-ed-98} and the fact that $H^{2}(X,\ZZ)$ is torsion-free (since $X$ is a complex torus), $D_1$ and $D_2$ are algebraically equivalent if and only if they are numerically equivalent.
The \textsf{N\'eron-Severi group} of $X$, which we denote by  $\NS(X)$\index{NS(X)@$\NS(X)$}, is the group of divisors on 
$X$ modulo algebraic equivalence, i.e., $\NS(X)=\Div(X)/\!\equiv$. Thus, 
 $D$ is zero in $\NS(X)$ if and only if $\deg_C(i^* \cO_{X}(D))=0$ for all irreducible curves $i:C \hookrightarrow X$.

The defining homomorphism $\Div(X) \to \NS(X)$ factors through the Picard group, $\Pic(X)$. 
We write $\Pic^0(X)$ for the kernel of the 
homomorphism $\Pic(X) \to \NS(X)$. Thus, $\NS(X)=\Pic(X)/\Pic^0(X)$. 

We extend the notation and terminology of algebraic equivalence to  invertible $\cO_X$-modules:  
if $\cL$ and $\cL'$ are algebraically equivalent invertible $\cO_X$-modules we write $\cL \equiv \cL'$\index{-@$\equiv$}.

By \cite[p.~88]{bl}, two ample invertible sheaves on $X$ are algebraically equivalent if and only if one is isomorphic to a translation of the other.  In other words, if $\cL$ is ample and $\cM$ is in $\Pic^0(X)$, then $\cL \otimes \cM \cong T_x^*\cL$ for some $x \in X$ (see \cite[\S8, Thm.~1]{Mum08}).

\begin{proposition}
\label{thm.mumf.p77}
Let $D$ and $D'$ be divisors on $X$. If $D \equiv D'$, then
\begin{enumerate}
\item\label{item.mumf.ample} $D$ is ample if and only if $D'$ is, and
\item\label{item.mumf.veryample} $D$ is very ample if and only if $D'$ is.
\end{enumerate}
\end{proposition}
\begin{proof}  
\cref{item.mumf.ample} This follows from the Nakai-Moishezon Criterion \cite[Thm.~A.5.1, p.~434]{Hart}.

\cref{item.mumf.veryample} Let $\cL=\cO_X(D)$ and $\cL'=\cO_X(D')$. Suppose $D$ is very ample (and therefore ample). By \cref{item.mumf.ample}, $D'$ is also ample.  Thus $\cL$ and $\cL'$ are algebraically equivalent and ample so $\cL' \cong T_x^*\cL$ for some $x \in X$.  Let $\Phi,\Phi':X \to \PP^d$ be the morphisms associated to $\cL$ and $T_x^*\cL$ respectively.  By \cite[Lem~4.6.1]{bl}, there is an automorphism $f:\PP^d\to \PP^d$ such that the diagram
$$
\xymatrix{
X \ar[d]_-{\Phi'}  \ar[r]^-{T_x} & X \ar[d]^-{\Phi}
\\
\PP^d \ar[r]_-{f} & \PP^d
}
$$
commutes. It follows that $\cL'$, and hence $D'$, is also very ample.
\end{proof}

\begin{definition}\label{def.k}
\cite[p.~60]{Mum08} If $\cL$ is an invertible $\cO_X$-module and $D$ a divisor on $X$ we define\index{K(L)@$K(\cL)$}\index{K(D)@$K(D)$}\index{H(D)@$H(D)$}
\begin{align*}
K(\cL) \; & :=\; \{x \in X \; | \; T_x^*\cL\cong \cL\},
\\
K(D)\; &:=\; K(\cO_X(D)),
\\
H(D)\; & :=\; \{x \in X \; | \; T_x^*(D)=D\} \qquad \text{(actual equality of divisors).}
\end{align*}
\end{definition}

\begin{proposition}
\cite[\S6, Appl.~1]{Mum08}
\label{prop.AV.Appl.1}
For an effective divisor $D \in \Div(X)$ the following conditions are equivalent:
\begin{enumerate}
\item $D$ is ample;
\item $K(D)$ is finite;
\item $H(D)$ is finite.  
\end{enumerate}
\end{proposition}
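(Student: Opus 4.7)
The proposition is attributed to \cite[\S II.6, Appl.~1]{Mum08}, so my plan would be to follow Mumford's exposition. The containment $H(D) \subseteq K(D)$ is immediate from the definitions, since equality of divisors implies isomorphism of the associated invertible sheaves; this gives $(2)\Rightarrow(3)$ for free. The substantive work lies in the other implications.

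For $(1)\Leftrightarrow(2)$, set $\cL=\cO_A(D)$. The key input is the Theorem of the Square, which says that $\phi_\cL \colon A \to \mathrm{Pic}(A)$, $x \mapsto T_x^*\cL \otimes \cL^{-1}$, is a group homomorphism factoring through $\mathrm{Pic}^0(A)$, and whose kernel is exactly $K(\cL)$. Thus $K(\cL)$ is a closed algebraic subgroup of $A$, and its identity component $K^0$ is an abelian subvariety. Assuming $\cL$ is ample, the restriction $\cL|_{K^0}$ is ample (restriction of ample to a closed subvariety) while simultaneously being translation-invariant up to isomorphism, hence an element of $\mathrm{Pic}^0(K^0)$. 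An ample line bundle on a positive-dimensional abelian variety cannot be algebraically trivial, so $K^0=\{0\}$ and $K(\cL)$ is finite. Conversely, if $K(\cL)$ is finite, then $\phi_\cL$ is an isogeny onto $\mathrm{Pic}^0(A)$; Riemann--Roch on $A$ then gives $\chi(\cL)^2 = \deg\phi_\cL \neq 0$, and combining the nonvanishing of $\chi(\cL)$ with the effectivity of $D$ (so $h^0(\cL)>0$), a standard bootstrapping argument passing to tensor powers $\cL^{\otimes m}$ shows some power is very ample, proving $\cL$ ample.

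The remaining direction is $(3)\Rightarrow(2)$, which I expect to be the main obstacle. The strategy is to consider the morphism
\begin{equation*}
\psi \colon K(D)^0 \,\longrightarrow\, |D| = \PP(H^0(\cL)), \qquad x \,\longmapsto\, [T_x^*s_D],
\end{equation*}
where $s_D \in H^0(\cL)$ is a section with divisor $D$. (This requires first choosing, for each $x \in K(D)^0$, a specific isomorphism $T_x^*\cL \cong \cL$, which can be done on the connected identity component in a controlled way.) The fiber of $\psi$ over $[s_D]$ is exactly $H(D)\cap K(D)^0$, finite by hypothesis, so $\psi$ is either constant or generically finite. If $K(D)^0$ were positive-dimensional, the second alternative would produce a non-constant morphism from an abelian variety to projective space, whose pullback bundle $\psi^*\cO_{\PP^N}(1)$ is algebraically equivalent to the numerically trivial class on $K(D)^0$ arising from $\cL|_{K(D)^0} \in \mathrm{Pic}^0(K(D)^0)$ -- a contradiction, since $\psi^*\cO(1)$ must have nonzero degree on some curve when $\psi$ is non-constant and generically finite. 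Hence $K(D)^0=\{0\}$, forcing $K(D)$ to be finite, and $(2)\Rightarrow(1)$ from the previous paragraph closes the cycle. The subtle point is the clean identification of $\psi^*\cO(1)$ with (a translate of) $\cL|_{K(D)^0}$, which is what ultimately couples the finiteness of $H(D)$ to the triviality of $K(D)^0$.
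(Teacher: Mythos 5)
The paper offers no proof of this proposition at all --- it is quoted directly from Mumford \cite[\S II.6, Appl.~1]{Mum08} --- so the only question is whether your reconstruction is sound. The easy pieces are: $H(D)\subseteq K(D)$ does give $(2)\Rightarrow(3)$, and your restriction-to-$K(\cL)^{0}$ argument for $(1)\Rightarrow(2)$ is the standard one. The genuine gap is in $(2)\Rightarrow(1)$, which is the direction carrying all the weight, and which you dispatch with ``Riemann--Roch \dots\ and a standard bootstrapping argument passing to tensor powers shows some power is very ample.'' That is not an argument. What Riemann--Roch and the vanishing theorem give you from $K(\cL)$ finite and $D$ effective is $h^{0}(\cL^{\otimes m})=m^{g}\chi(\cL)>0$, i.e.\ that $\cL$ is big; bigness plus effectivity does not formally upgrade to ampleness, and no amount of passing to tensor powers produces a very ample power without a genuine geometric input. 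The missing input is exactly Mumford's: by the theorem of the square, $T_{x}^{*}D+T_{-x}^{*}D\in|2D|$ for every $x$, which shows $|2D|$ is base-point free; and if the resulting morphism $\Phi_{|2D|}$ contracted an irreducible curve $C$, then $D\cdot C=0$ forces $C+x\subseteq D$ for every $x$ with $(D-x)\cap C\neq\varnothing$, so $D$ is stable under the positive-dimensional subgroup generated by $C-C$, contradicting the finiteness of $H(D)$ (available from $H(D)\subseteq K(D)$). Hence $\Phi_{|2D|}$ is finite, so $\cL^{2}$, and therefore $\cL$, is ample. (Over $\CC$ one could instead invoke Appell--Humbert: nondegeneracy of the Hermitian form together with $h^{0}>0$ forces positive definiteness.) Either way, the mechanism you describe is not the one that works.

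Your direct argument for $(3)\Rightarrow(2)$ is the most interesting part of the proposal and can in fact be completed: the ``subtle point'' you flag is closed by the seesaw theorem. With $m\colon K(D)^{0}\times A\to A$ the addition map, one has $m^{*}\cL\otimes\pr_{2}^{*}\cL^{-1}\cong\pr_{1}^{*}\cM$ with $\cM$ algebraically equivalent to $\cL|_{K(D)^{0}}$, and $\cL|_{K(D)^{0}}\in{\rm Pic}^{0}(K(D)^{0})$ because it is invariant under every translation of $K(D)^{0}$; the section $m^{*}s_{D}$ then exhibits $\psi^{*}\cO_{\PP^{N}}(1)\cong\cM$, which is numerically trivial, ruling out the generically finite alternative, while the constant alternative with $\dim K(D)^{0}>0$ is ruled out directly by the finiteness of the fiber $H(D)\cap K(D)^{0}$. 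But note that once the hard implication above is in place this entire argument is redundant, since $(3)\Rightarrow(1)\Rightarrow(2)$ already closes the cycle; the effort would be better invested in the step that actually fails as written.
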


\begin{proposition}
\cite[\S16, Vanishing Theorem]{Mum08}
\label{prop.vanishing}
If $\cL$ is an invertible $\cO_X$-module such that $K(\cL)$ is finite, then there is a unique $i$ such that $H^i(X,\cL) \ne 0$ and
$H^j(X,\cL) = 0$ for all $j \ne i$.
\end{proposition}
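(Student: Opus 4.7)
The plan is to follow Mumford's argument in \cite[\S16]{Mum08}. First, I would observe that the hypothesis that $K(\cL)$ is finite is equivalent to the polarization morphism $\phi_\cL \colon A \to \wh{A}$, $x \mapsto T_x^*\cL \otimes \cL^{-1}$, being an isogeny; indeed $\ker(\phi_\cL) = K(\cL)$. Riemann-Roch on the abelian variety $A$ then yields $\chi(\cL)^2 = \deg(\phi_\cL) \ne 0$, which guarantees that at least one $H^i(A, \cL)$ is nonzero and so supplies the existence of the index $i$ asserted by the statement.

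For concentration in a single degree, I would use the Appell-Humbert classification, writing $\cL$ as the line bundle associated to a pair $(H, \chi)$ where $H$ is a Hermitian form on $V \cong \CC^g$ whose imaginary part is integral on $\Lambda$. The condition that $K(\cL)$ be finite translates to $H$ being nondegenerate, and one defines the index $i(\cL)$ to be the number of negative eigenvalues of $H$. Diagonalizing $H$ produces, up to an isogeny, a decomposition $A \sim A_+ \times A_-$ with $\cL$ pulling back to $\cL_+ \boxtimes \cL_-$ where $\cL_+$ is ample on $A_+$ and $\cL_-^{-1}$ is ample on $A_-$. Kodaira vanishing on each factor (for the second factor via Serre duality applied to the ample $\cL_-^{-1}$) combined with the K\"unneth formula then gives $H^j(A, \cL) = 0$ for all $j \ne \dim A_- = i(\cL)$.

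The main obstacle is the clean execution of this isogeny reduction, since a general nondegenerate $\cL$ does not literally split as a pullback along a product decomposition, and one must track carefully how the semicharacter and translations interact with the splitting; invoking \cref{thm.mumf.p77} to absorb translational discrepancies within an algebraic equivalence class helps here. A more conceptual route that sidesteps these bookkeeping issues is to apply the Fourier-Mukai transform with kernel the Poincar\'e bundle on $A \times \wh{A}$: because $\phi_\cL$ is an isogeny, the Fourier-Mukai transform of $\cL$ is a single coherent sheaf placed in cohomological degree $i(\cL)$, and the cohomology of $\cL$ on $A$ can be extracted from the cohomology sheaves of this complex, immediately yielding vanishing in all degrees $j \ne i(\cL)$ as well as the nonvanishing in degree $i(\cL)$ already predicted by $\chi(\cL) \ne 0$.
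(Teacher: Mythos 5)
The paper does not prove this proposition at all: it is imported verbatim from Mumford, \emph{Abelian Varieties}, \S 16, so there is no internal proof to compare against. Judged on its own terms, your first paragraph is fine ($K(\cL)$ finite $\Leftrightarrow$ $\phi_\cL$ an isogeny, and $\chi(\cL)^2=\deg\phi_\cL\ne 0$ forces some $H^i\ne 0$), but the two routes you offer for the concentration statement both have genuine gaps.

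The splitting argument in your second paragraph does not work. Diagonalizing the Hermitian form $H$ gives a decomposition $V=V_+\oplus V_-$ of the universal cover as a \emph{complex} vector space, but there is no reason for $V_\pm$ to be rational with respect to $\Lambda$, i.e.\ for $(\Lambda\cap V_+)\oplus(\Lambda\cap V_-)$ to have finite index in $\Lambda$. Without that, no isogeny $A\sim A_+\times A_-$ compatible with the signature decomposition exists; indeed a \emph{simple} abelian surface can carry a nondegenerate line bundle of index $1$ (e.g.\ $L\otimes M^{-n}$ for suitable ample $L,M$ and intermediate $n$), yet admits no nontrivial product decomposition up to isogeny. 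The analytic proof of the index theorem genuinely uses the decomposition $V=V_+\oplus V_-$ only at the level of harmonic/Dolbeault computations on $V$, not as a splitting of the abelian variety. Your Fourier--Mukai alternative is circular: the assertion that a nondegenerate $\cL$ satisfies WIT with index $i(\cL)$ is established (in Mukai's paper and elsewhere) by cohomology and base change from the fibrewise groups $H^*(A,\cL\otimes P_\alpha)\cong H^*(A,T_x^*\cL)\cong H^*(A,\cL)$, so concentration of the transform in one degree is essentially a restatement of the vanishing theorem rather than a proof of it. The standard non-circular argument is Mumford's: show $H^*(A\times\wh A,\cP)$ is concentrated in degree $g$, pull back along the isogeny $1\times\phi_\cL$ to identify $H^*(A,\cL)\otimes H^*(A,\cL^{-1})$ with a sum of copies of that cohomology, and conclude with Serre duality ($\omega_A\cong\cO_A$) that any two nonvanishing degrees of $H^*(A,\cL)$ coincide.
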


%%%%%%%%%%%%%%%%%%%%%%%%%%%%%%%%%%%%%%%%%%%%%%%%%%%%%%%%%%%%%%%%%%%%%%%%%%%%%%%%
\subsection{A surjective morphism $E^{g} \to \PP^{g}$}
\label{sect.map.E^r.toP^r}
%%%%%%%%%%%%%%%%%%%%%%%%%%%%%%%%%%%%%%%%%%%%%%%%%%%%%%%%%%%%%%%%%%%%%%%%%%%%%%%%

The material in this subsection is folklore. Some of it can be found in \cite {Hulek86}. 

Let $r$ be an integer $\ge 3$ and let $g=r-1$.
The divisor $r(0) \in \Div(E)$ is very ample so determines a morphism 
$\Phi_{|r(0)|}:E \to \PP^{g}=\PP(H^0(E,\cO_E(r(0)))^*)$, the image 
of which is an elliptic normal curve of degree $r$. For the rest of this section we identify $E$ with its image under $\Phi_{|r(0)|}$. 

If $H$ is a hyperplane in $\PP^{g}$, then the scheme-theoretic intersection $E\cap H$ is a divisor $(x_1)+\cdots +(x_r)$ 
for which $x_1+\cdots+x_r=0$. Conversely, if $x_1,\ldots,x_r \in E$ are such that $x_1+\cdots+x_r=0$  
there is a unique hyperplane $H \subseteq \PP^{g}$ such that  $E\cap H$ is the divisor $(x_1)+\cdots +(x_r)$. 

Let $(\PP^{g})^\vee$ denote the dual projective space consisting of the hyperplanes in $\PP^{g}$. 

For each $\sfx=(x_1,\ldots,x_{g}) \in E^{g}$ we define $L_\sfx \subseteq \PP^{g}$ to be the unique hyperplane whose scheme-theoretic 
intersection with $E$ is the divisor $(x_1)+\cdots+(x_{g})+(-x_1-\cdots-x_{g})$. The morphism $E^{g} \to (\PP^{g})^\vee$, 
$\sfx \mapsto L_\sfx$, is surjective and factors through the natural map $E^{g} \to S^{g}E$ so giving a surjective 
morphism  
\begin{equation}
\label{map.SrE.toP^r}
S^{g}E \to (\PP^{g})^\vee, \qquad \sfx=(\!(x_1,\ldots,x_{g})\!) \mapsto L_\sfx.
\end{equation}
The degree of this morphism is $g$ because if $\sfx'$ is obtained from $\sfx$ by replacing any $x_i$ by 
$-x_1-\cdots-x_{g}$, then $L_\sfx=L_{\sfx'}$, and these are the only equalities among the $L_\sfx$'s.

Let $\theta \in \Aut(E^{g})$ be the automorphism $\theta(x_1,\ldots,x_{g}) =(x_1,x_2-x_1,\ldots,x_{g}-x_{g-1})$. 

Thus, $L_{\theta(\sfx)}$ is the unique hyperplane whose scheme-theoretic intersection with $E$ is the divisor 
\begin{equation}
\label{deg.n.divisor}
(x_1)\,+\, (x_2-x_1)\,+\, \cdots \,+ \, (x_{g}-x_{g-1})\,+\,(-x_{g}).
\end{equation}

\begin{proposition}
\label{prop.emb.ell}
 The map $\sfx \mapsto L_{\theta(\sfx)}$ is  a surjective  morphism
$$
E^{g} \, \longrightarrow \, (\PP^{g})^\vee
$$
and is equal to $\Phi_{|D|}$ where 
$$
D\;= \;  
(0)  \! \times \! E^{g-1} \,+\, E^{g-1}  \! \times \!  (0)  \, +\, \sum_{j=1}^{g-1} \D_{j,j+1}  \, \in\, \Div(E^{g}).
$$
\end{proposition}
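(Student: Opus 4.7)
The idea is to identify $\Psi^{*}\cO_{(\PP^{g})^\vee}(1)$ with $\cO_{E^{g}}(D)$ by pulling back one specific hyperplane, and then to compute $h^{0}(E^{g},\cO(D))=g+1$ via Riemann--Roch on the abelian variety $E^{g}$ to conclude that $\Psi=\Phi_{|D|}$, where I write $\Psi\colon\sfx\mapsto L_{\theta(\sfx)}$.

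First, since $\theta$ is an automorphism of $E^{g}$ and $\sfy\mapsto L_{\sfy}$ is a morphism factoring through $S^{g}E\to(\PP^{g})^\vee$ as in \cref{map.SrE.toP^r}, the composite $\Psi$ is a morphism $E^{g}\to(\PP^{g})^\vee$; it is surjective because \cref{map.SrE.toP^r} is.

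Second, I would compute $\Psi^{-1}(H_{0})$ for the hyperplane $H_{0}\subseteq(\PP^{g})^\vee$ consisting of those hyperplanes in $\PP^{g}$ that contain the point $0\in E\subseteq\PP^{g}$. A point $\sfx\in E^{g}$ lies in $\Psi^{-1}(H_{0})$ exactly when $0$ appears in the divisor in \cref{deg.n.divisor}, i.e., when at least one of $x_{1}=0$, $x_{i}=x_{i-1}$ for some $2\le i\le g$, or $x_{g}=0$ holds. The scheme-theoretic preimage is therefore equal to $\{0\}\times E^{g-1}+\sum_{i=1}^{g-1}\Delta_{i,i+1}+E^{g-1}\times\{0\}=D$, so $\Psi^{*}\cO_{(\PP^{g})^\vee}(1)\cong\cO_{E^{g}}(D)$ and $\Psi$ is given by the subspace $\Psi^{*}H^{0}(\cO(1))\subseteq H^{0}(\cO(D))$, which has dimension $g+1$ because $\Psi^{*}$ is injective on global sections (surjectivity of $\Psi$ prevents its image from lying in a hyperplane).

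To conclude that this subsystem is complete, I would show $h^{0}(\cO_{E^{g}}(D))=g+1$. Passing to the coordinates $\sfy=\theta(\sfx)$ one has $\theta^{*}D=\sum_{i=1}^{g}\pr_{i}^{-1}(0)+\sfsum^{-1}(0)$, where $\sfsum\colon E^{g}\to E$ is the summation homomorphism. Each of these $g+1$ summands is the pullback of the point class $[0]\in\NS(E)$ under a surjective group homomorphism, so each has vanishing self-intersection on $E^{g}$. In the multinomial expansion of $(\theta^{*}D)^{g}$, the only surviving monomials are therefore the $g+1$ squarefree ones (obtained by omitting exactly one of the $g+1$ summands); each is a single point with transverse intersection and carries the multinomial coefficient $g!$, so $D^{g}=(g+1)\cdot g!=(g+1)!$. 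Hence $\chi(\cO(D))=D^{g}/g!=g+1\neq 0$, so the isogeny $\phi_{\cO(D)}$ has finite kernel $K(\cO(D))$; combined with the effectivity of $D$, \cref{prop.AV.Appl.1} gives that $\cO(D)$ is ample, and then \cref{prop.vanishing} yields $h^{0}(\cO(D))=\chi(\cO(D))=g+1$, as required.

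\textbf{Main obstacle.} The substantive content is the intersection calculation $D^{g}=(g+1)!$, which is clean once one changes coordinates to $\sfy$. The rest is the hyperplane-pullback bookkeeping of Step 2 and standard abelian-variety cohomology.
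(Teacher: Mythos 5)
Your proof is correct and follows the same core route as the paper: both identify the pullback of the hyperplane $\sfh\subseteq(\PP^{g})^\vee$ consisting of hyperplanes through $0\in E$ with the divisor $D$, and deduce surjectivity from the fact that every hyperplane section of $E\subseteq\PP^{g}$ is a degree-$(g+1)$ divisor summing to $0$. The one place you go beyond the paper is the completeness of the linear system: the paper simply asserts $f=\Phi_{|f^{*}\sfh|}$, whereas you justify $h^{0}(\cO_{E^{g}}(D))=g+1$ by computing $D^{g}=(g+1)!$ in the coordinates $\sfy=\theta(\sfx)$ (where $D$ becomes $\sum_{i}\pr_{i}^{-1}(0)+\sfsum^{-1}(0)$, whose summands square to zero in the Chow ring) and then invoking Mumford's vanishing theorem; this fills a genuine gap in the paper's two-line argument and agrees with what \cref{cor.sect} later gives for the standard divisor of type $(2,\ldots,2)$, for which $n=g+1$.
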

\begin{pf}
Write $f$ for the morphism $\sfx \mapsto  L_{\theta(\sfx)}$. 
Since every hyperplane in $\PP^{g}$ meets $E$ at $r$ points (counted with multiplicity) whose sum is $0$, $f$ is surjective.

If $\sfh$ is a hyperplane in $(\PP^{g})^\vee$ and $f^*\sfh \in \Div(E^{g})$ is its pullback to $E^{g}$, 
then $f=\Phi_{|f^*\sfh|}$.  In particular, if $\sfh$ is the hyperplane consisting of those hyperplanes in 
$\PP^{g}$ that pass through $0\in E$, then $f^*\cO_{\PP^{g}}(\sfh)=\cO_{E^{g}}(f^*\sfh)$.
Clearly, $(x_1,\ldots,x_g) \in f^{-1}(\sfh)$ if and only $0$ lies on the hyperplane whose scheme-theoretic intersection with $E$ is the divisor
in (\ref{deg.n.divisor}).  Thus, $f^{-1}(\sfh) =  \big\{(x_1,\ldots,x_{g}) \; \big\vert \; 0 \in \{x_1,x_2-x_1,\ldots,x_{g}-x_{r-2},-x_{g}\} \big\}$, $f^*\sfh=D$, 
and $f=\Phi_{|D|}$. 
\end{pf}

\begin{corollary} 
\label{cor.maps.Eg.to.Pg}
Let $x\in E$ and let 
$$
D\; := \;
(x)  \! \times \! E^{g-1}\,+\, E^{g-1}  \! \times \!  (x)  \, +\, \sum_{j=1}^{g-1} \D_{j,j+1}  \, \in\, \Div(E^{g}).
$$
The morphism $\Phi_{|D|}\colon E^{g}\to \PP(H^0(E^{g}, \cO_{E^{g}}(D))^*)$ is a surjection $E^{g} \to \PP^{g}$.
\end{corollary}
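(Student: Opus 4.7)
The plan is to reduce the statement to \cref{prop.emb.ell}, which proves the case $x=0$, by pulling back the divisor $D$ along a suitable translation automorphism of $E^{g}$ so as to recover the divisor $D_0 := \{0\}\times E^{g-1}+E^{g-1}\times\{0\}+\sum_{i=1}^{g-1}\Delta_{i,i+1}$ of that proposition. The key observation is that each diagonal $\Delta_{i,i+1}$ is invariant under any translation by a \emph{diagonal} element $(a,\ldots,a)\in E^{g}$, because membership in $\Delta_{i,i+1}$ depends only on the difference of the $i^{\th}$ and $(i{+}1)^{\th}$ coordinates.

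With this in mind, I would set $a:=(x,x,\ldots,x)\in E^{g}$ and let $T=T_{a}\colon E^{g}\to E^{g}$ be translation by $a$. A direct check shows that $T^{*}\bigl((x)\times E^{g-1}\bigr)=(0)\times E^{g-1}$, $T^{*}\bigl(E^{g-1}\times(x)\bigr)=E^{g-1}\times(0)$, and $T^{*}\Delta_{i,i+1}=\Delta_{i,i+1}$ for every $i$; summing these yields the crucial equality $T^{*}D=D_{0}$ as effective divisors on $E^{g}$ (not merely as elements of $\mathrm{Pic}(E^{g})$).

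This on-the-nose equality produces a canonical isomorphism of invertible sheaves $T^{*}\cO_{E^{g}}(D)\cong \cO_{E^{g}}(D_{0})$, and hence a linear isomorphism $T^{*}\colon H^{0}(E^{g},\cO(D))\xrightarrow{\ \sim\ }H^{0}(E^{g},\cO(D_{0}))$ which identifies the complete linear systems $|D|$ and $|D_{0}|$. Under this identification the evaluation hyperplane at $p\in E^{g}$ for $|D_{0}|$ corresponds to the evaluation hyperplane at $T(p)$ for $|D|$, so $\Phi_{|D|}$ agrees with $\Phi_{|D_{0}|}\circ T^{-1}$ up to an induced isomorphism of the target projective space. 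Since \cref{prop.emb.ell} provides that $\Phi_{|D_{0}|}$ is a surjection $E^{g}\twoheadrightarrow\PP^{g}$ and $T^{-1}$ is an automorphism of $E^{g}$, we conclude that $\Phi_{|D|}\colon E^{g}\to \PP^{g}$ is also surjective.

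The main (and rather modest) obstacle is simply to confirm the equality $T^{*}D=D_{0}$ at the level of honest divisors; this is what forces the choice $a=(x,\ldots,x)$, since any other diagonal translate would either shift $(x)\times E^{g-1}$ or $E^{g-1}\times(x)$ away from $(0)\times E^{g-1}$ or $E^{g-1}\times(0)$, and a non-diagonal translate would displace the $\Delta_{i,i+1}$. Once this bookkeeping is done, everything else is formal transport along the translation $T$.
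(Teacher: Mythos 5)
Your argument is correct and is essentially the paper's own proof: the paper disposes of this corollary in one line by observing that $D$ is a translation of the divisor in \cref{prop.emb.ell} by $(-x,\ldots,-x)\in E^{g}$, which is precisely the transport-along-$T_{(x,\ldots,x)}$ computation you carry out in detail. The verification that the diagonal translate fixes each $\Delta_{i,i+1}$ and moves $(x)\times E^{g-1}$ and $E^{g-1}\times(x)$ to the corresponding divisors through $0$ is exactly the intended bookkeeping.
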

\begin{proof}
This follows from \cref{prop.emb.ell} because the $D$ in this corollary is a translation of the $D$ in \cref{prop.emb.ell} by $(-x,\ldots,-x)\in E^{g}$.
\end{proof}

%%%%%%%%%%%%%%%%%%%%%%%%%%%%%%%%%%%%%%%%%%%%%%%%%%%%%%%%%%%%%%%%%%%%%%%%%%%%%%%%
\section{The invertible $\cO_{E^g}$-module $\cL_{n/k}$ and the divisor $D_{n/k}$}
\label{se.char-var-1}
%%%%%%%%%%%%%%%%%%%%%%%%%%%%%%%%%%%%%%%%%%%%%%%%%%%%%%%%%%%%%%%%%%%%%%%%%%%%%%%%

In \cref{defn.Lnk} below, following Odesskii and Feigin \cite[\S3.3]{FO89}, we define an invertible sheaf $\cL_{n/k}$ on  $E^g$. 
Although the definition appears mysterious at first, \Cref{prop.FM.transform} explains the significance of 
$\cL_{n/k}$ in terms of vector bundles on $E$. The rest of \cref{se.char-var-1} establishes the basic properties of $\cL_{n/k}$:
for example, \Cref{pr.ample} shows  $\cL_{n/k}$ is ample and \Cref{cor.sect} confirms  the assertion in \cite[(d), pp.~65--66]{FO98} that 
$\dim_\CC H^0(E^g,\cL_{n/k})=n$. These results can be stated in terms of divisors, and in \cref{sss.grph}
we provide alternative proofs of some of these results for a larger class of divisors on $E^g$.

In \Cref{prop.gen.gl.sec} we prove that $\cL_{n/k}$ is generated by its global sections. This too is stated in \cite{FO89}.

\subsection{Definition of $\cL_{n/k}$ and the divisor $D_{n/k}$ on $E^g$}\label{subse.lnk}

Let $\cO_E((0))$ be the degree-one invertible $\cO_E$-module corresponding to the divisor $(0)$, let $\cL^r=\cO_E((0))^{\otimes r}=
\cO_E(r\cdot(0))$, and define\index{L_n/k@$\cL_{n/k}$}
\begin{equation}
\label{defn.Lnk}
\cL_{n/k} \; : =\; \big( \cL^{n_1} \boxtimes \cdots \boxtimes \cL^{n_g} \big) \otimes \left( \bigotimes_{j=1}^{g-1} \pr^*_{j,j+1} \big(\cL^{-1}
\boxtimes \cL^{-1}\big) (\D)\right)
\end{equation}
The term $\cP:=(\cL^{-1} \boxtimes \cL^{-1}) (\D)=\cO_{E^2}(\D-(0) \! \times \! E - E\! \times \! (0))$\index{P@$\cP$}
in \cref{defn.Lnk} is the Poincar\'e bundle on $E \times E$.

We will often use the fact that $\cL_{n/k}=\cO_{E^g}(D_{n/k})$ where\index{D_n/k@$D_{n/k}$}
\begin{equation}\label{eq:d} 
D_{n/k}  \; :=\; \sum_{i=1}^g E^{i-1} \times D_i \times E^{g-i} \, + \, \sum_{j=1}^{g-1} \D_{j,j+1}
\end{equation}
and $D_{i}:=(n_{i}-2+\delta_{i,1}+\delta_{i,g})(0)$. When $g\geq 2$,
\begin{equation}
\label{defn.Di}
D_i \; = \; 
\begin{cases}
(n_i-1)(0) & \text{if $i \in \{1,g\}$}
\\
(n_i-2)(0) & \text{if $2 \le i \le g-1$.}
\end{cases}
\end{equation}

When $k=1$, $g=1$,  $\cL_{n/1}=\cL^n=\cO_E(n(0))$, and $D_{n/1}=n(0)$.

Sometimes we use the notation\index{L_[n_1,...,n_g]@$\cL_{[n_1,\ldots,n_g]}$}\index{D_[n_1,...,n_g]@$D_{[n_1,\ldots,n_g]}$}
\begin{align*}
\cL_{[n_1,\ldots,n_g]} & \; = \; \cL_{n/k},
\\
D_{[n_1,\ldots,n_g]} & \; = \; D_{n/k}.
\end{align*}
We will also use  the notation $\cL_{[n_1,\ldots,n_g]}$ for the sheaf on the right-hand side of \cref{defn.Lnk} when $n_1,\ldots, n_g$
are arbitrary integers. In the same spirit, we define $D_{[n_1,\ldots,n_g]}$ for all $(n_1,\ldots,n_g) \in \ZZ^g$.

\subsubsection{}
Although the next result is known to the experts (see, e.g., \cite[Rmk.~5.6]{HP2}), we could not find a proof in the literature.
Let $\cE(k,n)$\index{E(k,n)@$\cE(k,n)$} denote the full subcategory of the bounded derived category $\sD^b(E):=\sD^{b}(\coh E)$\index{D^b(E)@$\sD^b(E)$} consisting of the locally free 
indecomposable $\cO_E$-modules of rank $k$ and degree $n$ concentrated in homological degree 0.

\begin{proposition}
\label{prop.FM.transform}
The functor $\Phi:= {\bf R} \pr_{1*}\big( \cL_{n/k} \otimes^{\bf L}\pr_g^*(\, \cdot\, )\big)$ is an auto-equivalence of  
$\sD^b(\coh E)$  that sends $\cE(1,0)$ to $\cE(k,n)$.
\end{proposition}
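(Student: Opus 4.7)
The plan is to induct on the length $g$ of the negative continued fraction $n/k=[n_1,\ldots,n_g]$.

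The base case $g=1$ forces $k=1$, $\cL_{n/1}=\cL^n$, and $\pr_1=\pr_g=\id_E$, so $\Phi$ is simply $\mathcal{F}\mapsto\cL^n\otimes\mathcal{F}$, visibly an autoequivalence that sends $\cE(1,0)$ bijectively to $\cE(1,n)=\cE(k,n)$.

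For $g\ge 2$, let $q\colon E^g\to E^{g-1}$ be the projection that drops the first coordinate. Since each $\pr_{j,j+1}$ with $j\ge 2$ factors through $q$, regrouping factors in \cref{defn.Lnk} gives
\begin{equation*}
\cL_{n/k}\;\cong\;\pr_1^*\cL^{n_1}\otimes\pr_{1,2}^*\cP\otimes q^*\cL_{k/k_2},
\end{equation*}
where $k/k_2=[n_2,\ldots,n_g]$ by \cref{prop.nkl}. Factoring $\pr_1^g=\pr_1^{E^2}\circ\pr_{1,2}$ and applying flat base change to the Cartesian square built from $q$, $\pr_{1,2}$, $\pr_1^{g-1}$, and $\pr_2^{E^2}$, followed by the projection formula, yields the functor identity
\begin{equation*}
\Phi_{n/k}\;\simeq\;T_{n_1}\circ\mathcal{S}\circ\Phi_{k/k_2},
\end{equation*}
in which $T_{n_1}=\cL^{n_1}\otimes(-)$ and $\mathcal{S}=\mathbf{R}\pr_{1*}(\cP\otimes\pr_2^*(-))$ is Mukai's Fourier--Mukai transform on $E$.

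All three factors are autoequivalences --- $T_{n_1}$ trivially, $\mathcal{S}$ by Mukai's theorem, and $\Phi_{k/k_2}$ by induction --- so $\Phi_{n/k}$ is as well. Applied to a degree-zero line bundle $\mathcal{L}_0$: by induction $\Phi_{k/k_2}(\mathcal{L}_0)\in\cE(k_2,k)$ is stable of positive slope $k/k_2$; Mukai's transform preserves sheaf-hood in homological degree $0$ on positive-slope semistables and acts as $(r,d)\mapsto(d,-r)$ on $K_0$, so $\mathcal{S}(\Phi_{k/k_2}(\mathcal{L}_0))$ is an indecomposable sheaf of rank $k$ and degree $-k_2$; finally $T_{n_1}$ delivers rank $k$ and degree $n_1k-k_2=n$, using the recursion $k_{i-1}=n_ik_i-k_{i+1}$ noted in \cref{prop.nkl}. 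Indecomposability is preserved by every equivalence, and on a smooth curve the torsion subsheaf is a direct summand, so a rank-$k$ indecomposable coherent sheaf must be locally free. Hence the image lies in $\cE(k,n)$.

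The main obstacle is keeping the composition concentrated in homological degree $0$: a priori $\mathcal{S}$ can introduce a shift, and what rules this out is precisely the positivity of the intermediate slope $k/k_2$, itself guaranteed by the standing hypothesis $n_i\ge 2$.
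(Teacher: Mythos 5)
Your proposal is correct and follows essentially the same route as the paper: induction on $g$, peeling off the first tensor factor of $\cL_{n/k}$, and using flat base change plus the projection formula to write $\Phi_{n/k}\simeq(\cL^{n_1}\otimes-)\circ\mathcal{S}\circ\Phi_{[n_2,\ldots,n_g]}$, with the rank/degree bookkeeping done via the standard action of the Poincar\'e transform on $\cE(r,d)$ for $d\ge1$. The only cosmetic differences are that the paper treats $g=2$ as a separate warm-up before the general step and simply cites the fact that $\mathcal{S}$ sends $\cE(r,d)$ to $\cE(d,-r)$ for $d\ge1$, where you justify the absence of a shift via positivity of the intermediate slope.
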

\begin{proof}
We make use of  the well-known fact  (\cite[\S\S11.3 and 14.2]{Polishchuk-book}) that  
$$
  {\bf R} \pr_{1*}\big( \cP \otimes^{\bf L}\pr_2^*(\cE(r,d))\big)  \;  = \; 
 \begin{cases}
 \cE(d,-r) & \text{if $d \ge 1$,}
 \\
  \cE(-d,r)[-1] & \text{if $d \le 0$.}
 \end{cases}
 $$
 
 Since almost all the inverse and direct image functors in this proof are the full derived functors 
between various bounded derived categories of coherent sheaves we omit the symbols ${\bf L}$ and ${\bf R}$.

The result is clear when $g=1$ because in that case $k=1$ and $\Phi=\cL^n  \otimes -$.

Assume $g=2$ and write $\frac{n}{k}=[n_1,n_2]$. Thus
$$ 
\cL_{n/k} \; =\;  \pr_1^*\cL^{n_1} \otimes \pr_{12}^*\cP   \otimes   \pr_2^* \cL^{n_2}  \; =\;  \pr_1^*\cL^{n_1} \otimes \cP   \otimes   \pr_2^* \cL^{n_2}.
$$
Let $\cF \in  \sD^b(E)$. By the Projection Formula \cite[(3.11), p.83]{Huy-FM},
\begin{align*}
\Phi(\cF)
& \;=\; 
\pr_{1*}  \big(\pr_1^* \cL^{n_1}  \otimes  \cP  \otimes  \pr_2^*( \cL^{n_2} \otimes \cF) \big)
\\
& \;=\;    \cL^{n_1}  \otimes  \pr_{1*} \big( \cP  \otimes   \pr_2^*( \cL^{n_2} \otimes \cF) \big).
\end{align*}
Certainly $\Phi$ is an auto-equivalence because it is the composition of the auto-equivalences $\cL^{n_2}  \otimes -$,
$ \pr_{1*} \big( \cP  \otimes   \pr_2^*(\, \cdot \,)\big)$, and $ \cL^{n_1}  \otimes -$.
Assume $\cF \in \cE(1,0)$. 
Then $\cL^{n_2} \otimes \cF \in \cE(1,n_2)$ and, because $n_2 \ge 1$, 
$$
\pr_{1*} \big( \cP  \otimes   \pr_2^*( \cL^{n_2} \otimes \cF) \big)  \; \in \;   \cE(n_2,-1).
$$
The proposition holds when $g=2$ because
$ \cL^{n_1}  \otimes  \pr_{1*} \big( \cP  \otimes   \pr_2^*( \cL^{n_2} \otimes \cF) \big)  \in  \cE(n_2,n_1n_2-1) = \cE(k,n)$.

We now assume that $g\ge 3$ and that the proposition is true for $g-1$.
The next diagram helps to keep track of the calculations (most of which involve the Projection Formula and Flat Base Change
\cite[(3.18), p.~85]{Huy-FM}, cf. the proof of \cite[Prop.~5.10]{Huy-FM}): 
$$
\xymatrix{
&& E^g \ar[dl]_{\pr_{12}} \ar[dr]^\pi
\\
&\ar[dl]_\c E^2 \ar[dr]^\d&& \ar[dl]_{\ve}  E^{g-1} \ar[dr]^{\a}
\\
E  &&  E  && E^{g-2} \ar[dr]^\b
 \\
&&&&& E
}
$$
The morphisms are the obvious projections; for example, $\pr_1=\c\circ \pr_{12}$, $\pr_2=\d \circ \pr_{12}=\ve\pi$, $\pr_g=\b\a\pi$,
and $\a$ is the projection onto the right-most factor in the factorization $ E^{g-1}= E \times E^{g-2}$.
The square in the diagram is a Cartesian square in which all four morphisms are flat. 
We will use the factorization 
$$
\cL_{n/k} \;=\;  \pr_1^*\cL^{n_1} \otimes \pr_{12}^*\cP  \otimes \pi^* \cV
$$
where $\cV$ is the invertible $\cO_{E^{g-1}}$-module
$$ 
\big( \pr_2^*\cL^{n_2} \otimes \pr_{23}^*\cP \big) \otimes \cdots  \otimes  \big( \pr_{g-1}^* \cL^{n_{g-1}}  \otimes  
\pr_{g-1,g}^*\cP\big)  \otimes   \pr_g^* \cL^{n_g}
$$
(we label the components of $E^{g-1}$ with $2,\ldots,g$).
Write $\frac{n}{k}=n_1 - \frac{m}{k}$. 
If $\cF \in \sD^b(E)$, then  
\begin{align*}
\Phi (\cF) & \;=\;  \cL^{n_1} \otimes   \pr_{1*}\big( \pr_{12}^*\cP  \otimes \pi^*\cV  \otimes \pr_g^*\cF\big)
\\
 & \;=\;  \cL^{n_1} \otimes \c_{*}  \pr_{12*}\big(\pr_{12}^*\cP  \otimes \pi^*\cV \otimes  \pi^*\a^*\b^*\cF\big)
 \\
 & \;=\;  \cL^{n_1} \otimes \c_{*}  \big(\cP  \otimes    \pr_{12*}\big( \pi^*\cV  \otimes  \pi^* \a^*\b^* \cF\big) \big)
  \\
 & \;=\;  \cL^{n_1} \otimes   \c_{*}  \big(\cP  \otimes   \pr_{12*} \pi^* ( \cV  \otimes \a^*\b^* \cF) \big)
   \\
 & \; = \;  \cL^{n_1} \otimes  \c_{*}  \big(\cP  \otimes  \d^*  \ve_* ( \cV  \otimes \a^*\b^* \cF)\big).
\end{align*}
If $\cF \in \cE(1,0)$, then $ \ve_* ( \cV  \otimes \a^*\b^* \cF) \in \cE(m,k)$ by the induction hypothesis.
Since $\c_*(\cP \otimes \d^*(\, \cdot\, )\big)$ sends $\cE(m,k)$ to $\cE(k,-m)$, 
$\Phi (\cF) \in  \cL^{n_1} \otimes \cE(k,-m) = \cE(k,n_1k-m)= \cE(k,n)$.

Finally, an induction argument on $g$ shows that $\Phi$ is an auto-equivalence: the $g=2$ case was addressed above;
by the induction hypothesis,  $\ve_* ( \cV  \otimes \a^*\b^*(\, \cdot\,))$ is an auto-equivalence; 
and $\c_{*}  \big(\cP  \otimes  \d^*(\, \cdot\,))$ and $\cL^{n_1} \otimes -$ are auto-equivalences; hence the result.
\end{proof}

\subsubsection{}
There is a more direct interpretation of the $n_i$'s in terms of slope. 
Consider the homomorphism from the Grothendieck group $K_0(\coh E)=K_0(\sD^b(E))$ to $\NS(E)=\ZZ^2$
given by taking the first Chern class, $c_1:\coh E \to \ZZ^2$, namely
$$
c_1(\cF) = \begin{pmatrix} \deg\cF \\ \rank\cF \end{pmatrix}.
$$
We use the basis ${0 \choose 1}=c_1(\cO_E)$ and ${1 \choose 0}=c_1(\cO_{\{0\}})$
for $\ZZ^2$ where $\cO_{\{0\}}$ is the skyscraper sheaf at $0 \in E$. 
Define $\mu:\ZZ^2 \to \QQ \sqcup\{\infty\}=\PP^1_\QQ$ by $\mu{d \choose r}=\frac{d}{r}$.
The actions on $K_0(\sD^b(E))$ that are induced by the auto-equivalences $ {\bf R} \pr_{1*}\big( \cP \otimes^{\bf L}\pr_2^*(\, \cdot \,)\big)$ and $-\otimes \cO_E((0))$ induce actions on $\ZZ^2$ that are given by left multiplication by 
$$
S \;=\; \begin{pmatrix} 0 & -1 \\ 1 & 0 \end{pmatrix} \qquad \text{and} \qquad T  \;=\; \begin{pmatrix} 1 & 1 \\ 0 & 1 \end{pmatrix},
$$
respectively.
The induced action of the functor $\Phi$ in \cref{prop.FM.transform} is therefore left multiplication by 
$T^{n_1}S T^{n_2}S \ldots ST^{n_g}$.
A straightforward calculation shows that
\begin{equation*}
\label{SL2Z-action}
\mu\bigg(   T^{n_1}S T^{n_2}S \ldots ST^{n_g}S \begin{pmatrix} 1 \\ 0  \end{pmatrix} \! \bigg) \;=\; [n_1,\ldots,n_g] \;=\; \frac{n}{k}.
\end{equation*}

\subsubsection{Comparison with Feigin and Odesskii}
The sheaf $\cL_{n/k}$ is isomorphic to the sheaf of holomorphic sections of the line bundle denoted $\overline{\xi}$ in \cite{FO89}.
For $g\ge 2$, the latter sheaf is  
$$
\big( \cL^{n_1} \boxtimes \cdots \boxtimes \cL^{n _g} \big) \otimes \left( \bigotimes_{j=1}^{g-1} \pr^*_{j,j+1} \big((\cL^1
\boxtimes \cL^1)(-\D')\big) \right)
$$
where $\D'=\{(z,-z) \; | \; z \in E\}$.
It follows from the See-Saw Theorem that $2((0) \times E)+ 2(E\times (0) )$ and  $\D+\D'$ 
are linearly equivalent divisors on $E \times E$: 
for all $p \in E-\{0\}$ the restrictions of these divisors to $\{p\} \times E$ are linearly equivalent and so 
are their restrictions to  $E \times \{p\}$. 
Hence, on $E \times E$, 
$$
(n_1+1)((0)\!\times \! E) + (n_2+1)(E \! \times \! (0)) -\D' \, \sim \, (n_1-1)((0)\!\times\! E) + (n_2-1)(E \! \times \! (0)) +\D
$$
The claimed isomorphism for $g=2$ is immediate, and the general case follows easily.

\subsubsection{} 
The sheaf $\cL_{n/k}$ is not as special as it might first appear.

It has the form $(\cL_1 \boxtimes \cdots \boxtimes \cL_g)(\D_{1,2}+\cdots+\D_{g-1,g})$ where each $\cL_i$ is 
an invertible $\cO_E$-module of degree $n_i-2+\delta_{i,1}+\delta_{i,g}$. 
However, if $\cL$ is {\it any} invertible $\cO_{E^g}$-module of this form, then \cref{prop.FM.transform} holds with $\cL$ in place of 
$\cL_{n/k}$. Similarly, if $\sfx \in E^g$, then  \cref{prop.FM.transform} holds with 
$T_\sfx^*\cL_{n/k}$ in place of $\cL_{n/k}$.\footnote{The second observation explains the first. 
Both  $(\cL_1 \boxtimes \cdots \boxtimes \cL_g)(\D_{1,2}+\cdots+\D_{g-1,g}) \otimes \cL_{n/k}^{-1}$ and
$T_\sfx^*\cL_{n/k} \otimes \cL_{n/k}^{-1}$ belong to $\Pic^0(E^g)$ and, since $\cL_{n/k}$ is ample (\cref{pr.ample}), 
every $\cF$ in $\Pic^0(E^g)$ is isomorphic $T_\sfx^*\cL_{n/k} \otimes \cL_{n/k}^{-1}$ for some $\sfx \in E^g$ (see
 \cite[\S8, Thm.~1]{Mum08}). Thus, we could have
replaced $\cL_{n/k}$ by any algebraically equivalent invertible $\cO_{E^g}$-module.}
  Furthermore, if we replace $\cL_{n/k}$ in the definition of 
 the characteristic variety $X_{n/k}$ at the beginning of \cref{se.char-var-2} by any algebraically equivalent invertible 
 $\cO_{E^g}$-module, then the resulting variety would remain the same; i.e., there is a great deal of flexibility in how one chooses
to define $\cL_{n/k}$ and hence $X_{n/k}$. This explains why, in \cite[\S3.3]{FO89}, Feigin and Odesskii allow the line bundles they call 
$\xi_i$ to be any line bundles on $E$ of the appropriate degrees. 

Finally, looking ahead, if $D= D_{\fd_i,z_j}$ is one of the standard divisors of type $(n_{1},\ldots,n_{g})$ in \cref{ssec.ample.Lnk} where $n_{i}\geq 2$ for all $i$,
and $\cL_{n/k}$ is replaced by $\cO_{E^g}(D)$ in   \cref{prop.FM.transform}, then  \cref{prop.FM.transform} is true for {\it some} 
relatively prime integers $n>k \ge 1$.

\subsection{Ampleness of $\cL_{n/k}$}
\label{ssec.ample.Lnk}

Although $D_{n/k}$ and $\cL_{n/k}$ play a central role in the study of $Q_{n,k}(E,\tau)$, it is convenient to introduce a larger class of divisors and invertible $\cO_{E^g}$-modules.

For a point $z\in E$, we define the divisors\index{Delta^z_j,j+1@$\Delta^z_{j,j+1}$}
\begin{equation}
      \Delta^z_{j,j+1} \; := \; \{(z_1,\ldots,z_g)\in E^g\; |\; z_{j+1}=z_j+z\}, \quad 1 \le j \le g-1.
\end{equation}
For divisors $\fd_1,\ldots,\fd_g$ on $E$ 
and points $z_1,\ldots,z_{g-1} \in E$,  we define\index{D_d_i,z_j@$D_{\fd_i,z_j}$}
\begin{equation}\label{eq.st.div}
  D_{\fd_i,z_j}  \; := \; \sum_{i=1}^g \pr_i^*(\fd_i) \,+\, \sum_{j=1}^{g-1}\Delta^{z_j}_{j,j+1}.
\end{equation}
If all $\fd_i$ are effective we call $D_{\fd_i,z_j}$ a {\sf standard divisor} on $E^g$. We say $D_{\fd_i,z_j}$ is {\sf of type $(n_1,\cdots,n_g)$} where
\begin{equation*}
	n_{i}=\deg\fd_{i}+2-\delta_{i,1}-\delta_{i,g},
\end{equation*}
that is, $n_{1}=\deg\fd_1$ when $g=1$, and
\begin{equation*}
  n_i=
  \begin{cases}
    \deg\fd_i+1,& i=1,g\\
    \deg\fd_i+2,&\text{otherwise}
  \end{cases}
\end{equation*}
when $g\geq 2$.

In conjunction with \cref{prop.AV.Appl.1}, the next lemma establishes the ampleness of $\cL_{n/k}$, and $\cO_{E^{g}}(D)$ for standard divisors $D$.

\begin{lemma}\label{le.kl}
Let $(n_1,\ldots,n_g)$ be any point in $\ZZ^g$, and let $\sD=\sD(n_1,\ldots,n_g)$ be the matrix defined in \cref{defn.D.matrix}. Let $D=D_{\fd_{i},z_{j}}$ be a standard divisor of type $(n_{1},\ldots,n_{g})$.
If $d:=\det\sD \ne 0$, then $K(D)\cong E[d]$. 
 In particular, 
\begin{enumerate}
\item\label{item.le.kl.main}
$K(D_{n/k})\cong E[n]$, and
\item\label{item.le.kl.cor}
if $n_{i}\geq 2$ for all $i\neq 1$ (resp.,\ all $i\neq g$) and $n_{1}\geq 1$ (resp., $n_{g}\geq 1$), then $K(D)$
is finite.
\end{enumerate}
\end{lemma}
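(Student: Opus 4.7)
The plan is to identify $K(D)$ with the kernel of the homomorphism $\sD\colon E^g\to E^g$ obtained by composing the principal polarizations on each factor with the tridiagonal matrix $\sD(n_1,\ldots,n_g)$; the lemma then reduces to the invariant-factor analysis already carried out in the proof of \cref{lem.Zn}. Since $K$ depends only on the algebraic equivalence class of $\cO(D)$, I first replace each $\fd_i$ by $(\deg\fd_i)\cdot(0)$ and each $z_j$ by $0$ to work with a representative centered at the origin.

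The key step is to compute the polarization map $\phi_{\cO(D)}\colon E^g\to\pic^0(E^g)\cong E^g$, $x\mapsto T_x^*\cO(D)\otimes\cO(D)^{-1}$, summand by summand, where the identification $\pic^0(E^g)\cong E^g$ comes from applying $\lambda_0\colon E\to\widehat{E}$ to each factor. The summand $\pr_i^*\fd_i$ contributes the vector with $-(\deg\fd_i)\,x_i$ in position $i$ and zeros elsewhere, since $T_x^*\pr_i^*\fd_i-\pr_i^*\fd_i\in\pr_i^*\pic^0(E)$ corresponds under Abel--Jacobi to $-(\deg\fd_i)x_i\in E$. For $\Delta_{j,j+1}=\mu_{j,j+1}^{-1}(0)$ with $\mu_{j,j+1}(z)=z_{j+1}-z_j$, the See-Saw theorem yields $\mu^*\cM\cong\pr_2^*\cM\otimes\pr_1^*\cM^{-1}$ for $\cM\in\pic^0(E)$; combined with $T_x^*\Delta_{j,j+1}=\Delta^{x_j-x_{j+1}}_{j,j+1}$ this produces a contribution with entries $x_{j+1}-x_j$ and $x_j-x_{j+1}$ in positions $j$ and $j+1$. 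Summing everything, the $i$-th component of $\phi_{\cO(D)}(x)$ is $x_{i-1}-n_i x_i+x_{i+1}$ (with $x_0=x_{g+1}=0$), where the two adjacent off-diagonal $-x_i$ terms combine with $-(\deg\fd_i)x_i$ to yield precisely $-n_i x_i=-(\deg\fd_i+2-\delta_{i,1}-\delta_{i,g})x_i$. Hence $\phi_{\cO(D)}=-\sD$ and $K(D)=\ker(\sD\colon E^g\to E^g)$.

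By \cref{re.inv-fact}, $K(D)\cong\bigoplus_i E[s_i]$ where $s_1\mid\cdots\mid s_g$ are the invariant factors of $\sD$; the argument in the proof of \cref{lem.Zn} shows that these are $1,\ldots,1,d$, so in particular every element of $K(D)$ is killed by $d$ and lies in $E[d]^g$. Specialization to $D=D_{n/k}$ uses \cref{prop.nkl}\cref{item.prop.det.D} to identify $d=n$, giving part (1). For part (2), I show by induction on $g$ using \cref{4293857} that the hypotheses $n_i\geq 2$ for $i\neq 1$ and $n_1\geq 1$ force $d>0$: inductively $d(n_2,\ldots,n_g)>d(n_3,\ldots,n_g)\geq 0$, so $d(n_1,\ldots,n_g)=n_1 d(n_2,\ldots,n_g)-d(n_3,\ldots,n_g)>0$; the symmetric hypothesis case then follows from $d(n_g,\ldots,n_1)=d(n_1,\ldots,n_g)$. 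Finiteness of $K(D)$ is then immediate from the above description. The main obstacle is the careful See-Saw computation producing the correct $\begin{pmatrix}1 & -1\\ -1 & 1\end{pmatrix}$ block in $\sD$ at positions $(j,j+1)$ and ensuring the diagonal absorbs the adjacent $\Delta$ contributions to recover precisely the integer $n_i$.
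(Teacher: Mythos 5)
Your proof is correct in substance and reaches the same pivotal identification as the paper, namely $K(D)=\ker\big(\sD\colon E^g\to E^g\big)$, but by a different route. The paper restricts $T_\sfx^*D$ and $D$ to the slices $\{(y_1,\ldots,y_{i-1})\}\times E\times\{(y_{i+1},\ldots,y_g)\}$ and compares the resulting degree-$n_i$ divisors on $E$ up to linear equivalence via the Seesaw theorem, reading off the condition $-x_{i-1}+n_ix_i-x_{i+1}=0$ directly from divisors; you instead compute the polarization homomorphism $\phi_{\cO(D)}\colon E^g\to\pic^0(E^g)\cong E^g$ additively over the summands of $D$. Your bookkeeping is right: the terms $\pr_i^*\fd_i$ give the diagonal $-(\deg\fd_i)x_i$, each diagonal $\Delta_{j,j+1}$ contributes the block with entries $x_{j+1}-x_j$ and $x_j-x_{j+1}$, and the two adjacent blocks at position $i$ supply the extra $-2x_i$ (corrected by $\delta_{i,1},\delta_{i,g}$ at the ends) needed to recover $n_i$. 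The preliminary normalization $\fd_i\rightsquigarrow(\deg\fd_i)(0)$, $z_j\rightsquigarrow 0$ is legitimate since $\phi_{\cO(D)}$ depends only on the class of $D$ in $\NS(E^g)$. Your version is in fact closer in spirit to the paper's second proof via labeled graphs (\cref{pr.det-k}) than to its first, and your induction for part (2) is a uniform replacement for the paper's use of $d(1,n_1,\ldots,n_g)=n-k$.

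There is one substantive discrepancy, and it cuts in your favor. You conclude that $K(D)\cong\bigoplus_i E[s_i]\cong E[d]$ and is merely \emph{contained} in $E[d]^g$, whereas the statement asserts the equality $K(D)=E[d]^g$. You are right not to claim that equality: by \cref{lem.Zn} (equivalently \cref{re.inv-fact} with invariant factors $1,\ldots,1,d$) the kernel of $\sD\colon E^g\to E^g$ is $E[d]\cong\ZZ_d\times\ZZ_d$, of order $d^2$, while $|E[d]^g|=d^{2g}$; so the asserted equality fails whenever $g\ge 2$ and $d>1$. The paper's own proof makes the same unsupported jump ("that kernel is $E[d]^g$ by \cref{lem.Zn}", which contradicts \cref{lem.Zn} itself), so this is a defect of the statement rather than a gap in your argument. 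The form you prove --- $K(D)=\ker(\sD\colon E^g\to E^g)\subseteq E[d]^g$ --- is the one that should be retained, and it suffices for parts (1) and (2), which only use the identification of the kernel and its finiteness when $d\ne 0$.
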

\begin{proof}
Let $\sfx=(x_{1},\ldots,x_{g})\in E^{g}$. Let $\cL=\cO_{E^{g}}(D)$. By the Seesaw Theorem \cite[\S5, Cor.~6]{Mum08}, 
$T^{*}_{\sfx}\cL$ and $\cL$ are isomorphic if and only if, for each $i=1,\ldots,g$ and for generic $(y_{1},\ldots,y_{i-1},y_{i+1},\ldots,y_{g})\in E^{g-1}$, their restrictions to
\begin{equation}
\label{eq.slice}
Y \;=\; \{(y_1,\ldots,y_{i-1})\} \times E \times \{(y_{i+1},\ldots,y_{g})\} \; \subseteq E^g
\end{equation}
are isomorphic to each other.

The restriction of $T^{*}_{\sfx}D$ to $Y$, regarded as a divisor on $E$ by the natural identification, is 
\begin{equation}\label{eq.rest.trans.div}
T_{x_{i}}^{*}\fd_{i}+(x_{i-1}-x_{i}+y_{i-1}+z_{i-1})+(x_{i+1}-x_{i}+y_{i+1}-z_{i})
\end{equation}
for $2\le i\le g-1$; if $i=1$ (resp.,\ $i=g$), delete the second (resp.,\ the third) term.
When $i \notin \{1,g\}$, $\deg\fd_{i}=n_{i}-2$, and thus
the restrictions of $T^{*}_{\sfz}\cL$ and $\cL$ to $Y$ are isomorphic if  and only if \cref{eq.rest.trans.div} is linearly equivalent to
\begin{equation*}
	\fd_{i}+(y_{i-1}+z_{i-1})+(y_{i+1}-z_{i}),
\end{equation*}
that is, if and only if $-x_{i-1}+n_{i}x_{i}-x_{i+1}=0$. The same formula holds for $i=1$ and $i=g$ if we set  $x_{0}=x_{g+1}=0$. 
Thus, $T^*_\sfx\cL \cong \cL$ if and only if $\sfx$ is in the kernel of  the multiplication map $\sD:E^g \to E^g$.
That kernel is isomorphic to $E[d]$ by \cref{lem.Zn}.

\cref{item.le.kl.main}
By \cref{prop.nkl}\cref{item.prop.det.D}, $\det\sD=d(n_{1},\ldots,n_{g})=n\ne 0$.

\cref{item.le.kl.cor}
Suppose $n_{i}\geq 2$ for all $i$. Write $[n_{1},\ldots,n_{g}]=n/k$ for relatively prime integers $n>k \ge 1$. 
To prove the first claim we will show that  $d(1,n_{1},\ldots,n_{g})\ne 0$.
By \cref{4293857},
\begin{equation*}
	d(1,n_{1},\ldots,n_{g})=d(n_{1},\ldots,n_{g})-d(n_{2},\ldots,n_{g})=n-k>0.
\end{equation*}
So the first claim holds.  The second follows because $d(n_{1},\ldots,n_{g},1)=d(1,n_{1},\ldots,n_{g})\ne 0$. 
\end{proof}

\begin{proposition}\label{pr.ample}
If $n_{i}\geq 2$ for all $i\neq 1$ (resp.,\ all $i\neq g$) and $n_{1}\geq 1$ (resp., $n_{g}\geq 1$), then $\cO_{E^{g}}(D)$ is ample for all standard divisors of type $(n_{1},\ldots,n_{g})$. In particular, $\cL_{n/k}$ is ample.
\end{proposition}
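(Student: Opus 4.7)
The plan is essentially to combine the two preceding results. A standard divisor $D_{\fd_i,z_j}$ is effective by construction, since each $\fd_i$ is effective and each $\Delta_{j,j+1}^{z_j}$ is an effective (indeed irreducible) divisor on $E^g$. So \cref{prop.AV.Appl.1} reduces the ampleness of $\cO_{E^g}(D)$ to the finiteness of $K(D)$.

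First I would invoke \cref{le.kl}\cref{item.le.kl.cor} directly: under the hypothesis that $n_i\geq 2$ for all $i\neq 1$ (resp.\ all $i\neq g$) and $n_1\geq 1$ (resp.\ $n_g\geq 1$), that lemma already asserts that $K(D)$ is finite. Combined with the previous paragraph, this yields ampleness of $\cO_{E^g}(D)$ for every standard divisor of type $(n_1,\ldots,n_g)$.

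For the ``in particular'' claim, I would simply check that $D_{n/k}$, as defined in \cref{eq:d}, is a standard divisor of type $(n_1,\ldots,n_g)$ where the $n_i$ come from the negative continued fraction expansion $n/k=[n_1,\ldots,n_g]$. Indeed, $D_{n/k}=D_{\fd_i,z_j}$ with $\fd_i=D_i$ as in \cref{defn.Di} and all $z_j=0$; a quick check of degrees shows $\deg D_i+2-\delta_{i,1}-\delta_{i,g}=n_i$, matching the definition of type. Since the continued fraction expansion satisfies $n_i\geq 2$ for every $i$, the hypotheses of the first part apply, so $\cL_{n/k}=\cO_{E^g}(D_{n/k})$ is ample.

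There is essentially no obstacle here: the work has all been done in \cref{le.kl} and \cref{prop.AV.Appl.1}, so the present proposition is little more than a bookkeeping exercise that packages them together with the observation that standard divisors are effective and that $D_{n/k}$ is a standard divisor whose associated data $(n_1,\ldots,n_g)$ automatically satisfies $n_i\geq 2$.
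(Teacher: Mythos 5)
Your proof is correct and follows exactly the same route as the paper: cite \cref{le.kl}\cref{item.le.kl.cor} for the finiteness of $K(D)$ and then \cref{prop.AV.Appl.1} to conclude ampleness. The extra remarks about effectivity of standard divisors and the verification that $D_{n/k}$ is a standard divisor with all $n_i\geq 2$ are harmless bookkeeping that the paper leaves implicit.
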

\begin{proof}
By  \cref{le.kl}\cref{item.le.kl.cor}, $K(D)$ is finite, so $D$ is ample by  \cref{prop.AV.Appl.1}.
\end{proof}

\begin{corollary}\label{cor.xi-h0}
If $D$ is a standard divisor of type $(n_{1},\ldots,n_{g})$ with $n_i \ge 2$ for all $i$,
then $H^q(E^g,\cO(D))=0$ for all $q>0$.
\end{corollary}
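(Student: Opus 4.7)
The plan is to combine the ampleness result just established (\cref{pr.ample}) with the structural information about $K(D)$ from \cref{le.kl} and Mumford's Vanishing Theorem (\cref{prop.vanishing}).

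First, I would observe that the hypothesis ``$n_i\geq 2$ for all $i$'' is stronger than what \cref{le.kl}\cref{item.le.kl.cor} requires, so $K(D)$ is finite. By \cref{prop.vanishing} applied to $\cL:=\cO_{E^g}(D)$ on the abelian variety $A=E^g$, there is a \emph{unique} index $i_0$ with $H^{i_0}(E^g,\cL)\neq 0$ and $H^j(E^g,\cL)=0$ for every $j\neq i_0$. So the task reduces to identifying $i_0=0$.

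Next, I would argue $i_0=0$ by exhibiting a nonzero global section. Since $D=\sum_i\pr_i^*(\fd_i)+\sum_j\Delta^{z_j}_{j,j+1}$ is a \emph{standard} divisor (so each $\fd_i$ is effective), the divisor $D$ itself is effective. Therefore the canonical inclusion $\cO_{E^g}\hookrightarrow\cO_{E^g}(D)$ given by the defining section of $D$ shows
\[
  H^0(E^g,\cO_{E^g}(D))\;\supseteq\;H^0(E^g,\cO_{E^g})\;=\;\CC\;\neq\;0.
\]
Hence $i_0=0$ and the corollary follows.

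I do not foresee any real obstacle: the only nontrivial inputs, ampleness plus finiteness of $K(D)$, are already in place, and Mumford's Vanishing Theorem has been quoted verbatim as \cref{prop.vanishing}. (As an alternative, one could simply invoke Kodaira vanishing, using $\omega_{E^g}\cong \cO_{E^g}$ together with ampleness of $D$, but the Mumford-style argument is in keeping with the style of the paper and avoids appealing to characteristic-zero Kodaira vanishing externally.)
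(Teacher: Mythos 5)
Your argument is correct, but it takes a genuinely different route from the paper's. The paper proves \cref{cor.xi-h0} in one line from the Kodaira Vanishing Theorem: since $\cO(D)$ is ample by \cref{pr.ample} and $\omega_{E^g}\cong\cO_{E^g}$, Kodaira gives $H^q(E^g,\cO(D)\otimes\omega_{E^g})=H^q(E^g,\cO(D))=0$ for $q>0$. You instead use Mumford's index theorem (\cref{prop.vanishing}): finiteness of $K(D)$ (which follows from \cref{le.kl} since $\det\sD(n_1,\ldots,n_g)=n\neq 0$ when all $n_i\ge 2$) forces cohomology to be concentrated in a single degree, and effectivity of the standard divisor $D$ supplies a nonzero global section pinning that degree at $0$. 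Both steps are sound; in particular, every $\fd_i$ is effective by the definition of a standard divisor, so $D$ is effective and $H^0(E^g,\cO(D))\neq 0$. What your approach buys is that it stays entirely within the abelian-variety toolkit already quoted from Mumford, works in any characteristic, and in fact never uses ampleness as such --- only nondegeneracy of $\cO(D)$ plus effectivity. What the paper's approach buys is brevity, at the cost of importing characteristic-zero Kodaira vanishing. Either proof is acceptable.
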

\begin{proof}
Let $X$ be a smooth projective complex algebraic variety, $\omega_X$  its canonical bundle, and $\cL$ an ample invertible $\cO_X$-module.
The Kodaira Vanishing Theorem \cite[Rmk.~III.7.15]{Hart} says that $H^q(X,\cL \otimes \omega_X)=0$ for all $q>0$. 
But $\omega_X \cong \cO_X$ when $X$ is an abelian variety so the result follows from \Cref{pr.ample}.
\end{proof}

\begin{corollary}\label{cor.dp}
If $D$ is a standard divisor of type $(n_{1},\ldots,n_{g})$ with $n_i \ge 2$ for all $i$, then
$$
\dim_\CC H^0(E^g,\cO(D))=\frac{D^g}{g!}
$$
where $D^g$ denotes the $g$-fold self-intersection number.
\end{corollary}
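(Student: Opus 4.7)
The plan is to combine the vanishing statement just proved in \cref{cor.xi-h0} with the Riemann--Roch theorem for abelian varieties. Since $E^g$ is an abelian variety of dimension $g$, Riemann--Roch (see, e.g., \cite[\S16]{Mum08}) asserts that for any invertible sheaf $\cL$ on $E^g$,
\begin{equation*}
\chi(E^g, \cL) \;=\; \frac{(c_1(\cL))^g}{g!}\,.
\end{equation*}
Applied to $\cL = \cO_{E^g}(D)$, this gives
\begin{equation*}
\sum_{q=0}^{g}(-1)^q \dim_\CC H^q(E^g, \cO(D)) \;=\; \frac{D^g}{g!}\,.
\end{equation*}

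First I would invoke \cref{pr.ample} to see that $\cO_{E^g}(D)$ is ample (the hypothesis $n_i \ge 2$ for all $i$ is stronger than the hypothesis of \cref{pr.ample}). Next I would apply \cref{cor.xi-h0}, which yields $H^q(E^g, \cO(D)) = 0$ for all $q>0$. Therefore $\chi(E^g,\cO(D)) = \dim_\CC H^0(E^g,\cO(D))$, and equating with the Riemann--Roch formula above gives the claimed equality.

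There is no real obstacle; the whole point of \cref{cor.xi-h0} was presumably to set this up. The only mild subtlety is to cite the correct form of Riemann--Roch: on a $g$-dimensional abelian variety one has $\chi(\cL) = \frac{1}{g!}(D^g)$ directly in terms of the self-intersection number, which is the form recorded in \cite[\S16]{Mum08}. No further computation is required.
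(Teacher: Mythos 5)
Your proposal is correct and is essentially identical to the paper's own proof: both apply the Riemann--Roch theorem for abelian varieties (\cite[III.16]{Mum08}) to get $\chi(\cO(D))=D^g/g!$ and then use the vanishing of higher cohomology from \cref{cor.xi-h0} to identify the Euler characteristic with $\dim_\CC H^0$. The extra explicit appeal to \cref{pr.ample} is harmless, since ampleness is already built into the proof of \cref{cor.xi-h0}.
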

\begin{proof}
  The Riemann-Roch theorem in \cite[\S16]{Mum08} says that  $D^g/g!$ equals the Euler
  characteristic
  \begin{equation*}
    \chi(\cO_{E^{g}}(D)) \;=\;  \sum_{i=0}^g (-1)^i \dim_\CC H^i(E^g,\cO(D)).
  \end{equation*}
But the higher cohomology groups of $\cO(D)$ vanish by 
  \cref{cor.xi-h0} so  $\chi(\cO(D))=\dim H^0(\cO(D))$.    
\end{proof}

\begin{proposition}
\label{pr.sect}
Let $z_1,\ldots,z_g \in E$ and let $X_i=E^{i-1} \times \{z_i\} \times E^{g-i}$ for $i=1,\ldots,g$. 
If $a_1,\ldots,a_g$ and $b_1,\ldots,b_{g-1}$ are arbitrary integers and $D$ is the divisor
$$
a_1X_1\,+\,  \cdots \,+\, a_gX_g \,+\,  b_1\D_{12} \,+\, \cdots \,+\,b_{g-1} \D_{g-1,g}
$$
on $E^g$, then 
$$
\frac{D^g}{g!} \;=\; \det\sA 
$$
where $\sA=\sA(a_1,\ldots,a_g;b_1,\ldots,b_{g-1})$ is the $g \times g$ matrix 
\begin{equation}
\label{defn.A.matrix}
\begin{pmatrix}
		a_{1} +b_1	& b_1				& 				& 			&		\\
		b_1			& a_{2}+b_1+b_2		& b_2			& 			&		\\
					& b_2				&a_{3}+b_2+b_3	& b_3		&		\\
					&  					& \ddots			& 			& 	& 		\\
					&  					& 				&\ddots 		& 	& 		\\
					&					& 				& b_{g-2}			& a_{g-1}+b_{g-2}+b_{g-1} & b_{g-1}  \\
					&					&				&			& b_{g-1}		& a_{g} + b_{g-1}
	\end{pmatrix}
\end{equation} 
\end{proposition}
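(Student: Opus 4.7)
The plan is to proceed by induction on $g$, computing $D^g$ in $\mathrm{CH}^*(E^g)$ by decomposing $E^g = E^{g-1}\times E$ along its last factor and matching the result to the cofactor expansion of $\det(\sA)$ along its last row. Write $x_i := [X_i]$ and $\delta_j := [\Delta_{j,j+1}]$; these classes depend only on the algebraic equivalence classes of the divisors, hence not on the chosen base points. The computation rests on four basic relations: (i) $x_i^2 = 0$ (generic translates of $X_i$ are disjoint); (ii) $\delta_j^2 = 0$ (pullback via $\pr_{j,j+1}$ of $[\Delta]^2 = 0 \in \mathrm{CH}^2(E\times E)$, which vanishes because the tangent bundle of $E$ is trivial); and (iii)--(iv) $\delta_j\cdot x_j = \delta_j\cdot x_{j+1} = x_j\cdot x_{j+1}$ (direct transverse intersection in the two coordinates involved).

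The base case $g=1$ is immediate: $D = a_1 X_1$, $\sA = (a_1)$, $D/1! = a_1 x_1 = \det(\sA)\cdot x_1$. For the inductive step, let $\pi\colon E^g\to E^{g-1}$ be the first projection and decompose
\[
D \;=\; \pi^* D_0 \,+\, a_g X_g \,+\, b_{g-1}\Delta_{g-1,g},
\]
where $D_0 := \sum_{i<g} a_i x_i + \sum_{j<g-1} b_j \delta_j$, viewed as a divisor on $E^{g-1}$, has associated $(g-1)\times(g-1)$ tridiagonal matrix $\sA_0$. Set $A := \pi^* D_0$, $B := a_g x_g$, $C := b_{g-1}\delta_{g-1}$. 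Because $B^2 = C^2 = 0$ and $A^g = \pi^*(D_0^g) = 0$ by dimension reasons on $E^{g-1}$, the trinomial expansion collapses to
\[
D^g \;=\; g\, A^{g-1}(B+C) \,+\, g(g-1)\, A^{g-2}\, BC.
\]

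For the first two summands, the inductive hypothesis supplies $A^{g-1} = (g-1)!\det(\sA_0)\cdot x_1\cdots x_{g-1}$, and the identities $x_{g-1}\delta_{g-1} = x_g\delta_{g-1} = x_{g-1}x_g$ give the combined contribution $g!(a_g + b_{g-1})\det(\sA_0)\cdot x_1\cdots x_g$. For the crossed term one needs the mixed intersection $D_0^{g-2}\cdot x_{g-1}^{(g-1)}$ on $E^{g-1}$, which I would obtain by restricting $D_0$ to the slice $X_{g-1}\cong E^{g-2}$: under this restriction $x_{g-1}^{(g-1)}\mapsto 0$ and $\delta_{g-2}^{(g-1)}\mapsto x_{g-2}^{(g-2)}$ (since the condition $z_{g-1}=z_{g-2}$ becomes $z_{g-2}=\mathrm{const}$ on the slice), while all other generators pull back to their analogues. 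Thus $D_0|_{X_{g-1}}$ is again a standard divisor on $E^{g-2}$, and its tridiagonal matrix coincides with the top-left principal submatrix $\sA_0^{(g-2)}$; the induction then yields $D_0^{g-2}\cdot x_{g-1}^{(g-1)} = (g-2)!\det(\sA_0^{(g-2)})\cdot x_1\cdots x_{g-1}$, whence $g(g-1)\,A^{g-2}BC = g!\,a_g b_{g-1}\det(\sA_0^{(g-2)})\cdot x_1\cdots x_g$.

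Dividing by $g!$ and summing gives
\[
\frac{D^g}{g!} \;=\; \bigl[(a_g + b_{g-1})\det(\sA_0) \,+\, a_g b_{g-1}\det(\sA_0^{(g-2)})\bigr]\cdot x_1\cdots x_g.
\]
A short algebraic verification---cofactor-expanding $\det(\sA)$ along its last row and using multilinearity in the last column of the $(g-1)\times(g-1)$ principal minor to rewrite $\det(\sA^{(g-1)}) = \det(\sA_0) + b_{g-1}\det(\sA_0^{(g-2)})$---identifies the bracket with $\det(\sA)$, completing the induction. The main obstacle is having to strengthen the inductive statement to track $D_0^{g-2}\cdot x_{g-1}^{(g-1)}$ alongside $D_0^{g-1}$; the key observation is that this mixed intersection is itself computed by the same determinant formula applied to the restricted standard divisor on the smaller abelian variety $X_{g-1}\cong E^{g-2}$, producing exactly the principal submatrix $\sA_0^{(g-2)}$ required by the last-row cofactor expansion of $\det(\sA)$.
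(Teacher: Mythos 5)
Your proof is correct and follows essentially the same route as the paper's: induction on $g$ using $X_i^2=\Delta_{i,i+1}^2=0$ and $X_i\Delta_{i,i+1}=X_iX_{i+1}=X_{i+1}\Delta_{i,i+1}$, matching the result to the cofactor recursion for the tridiagonal determinant. The only (cosmetic) difference is the bookkeeping in the inductive step: the paper peels off $X=a_1X_1-b_1X_2+b_1\Delta_{12}$ from the first factor so that $D-X$ is again a standard divisor and handles the cross term via the identity $X_1X_2Y=X_1X_2Z$, whereas you peel $a_gX_g+b_{g-1}\Delta_{g-1,g}$ off the last factor and evaluate the cross term by restricting to the slice $X_{g-1}$; both reduce to the same principal-minor recursion.
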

\begin{proof}
We compute $D^{g}/g!$ in the Chow ring $\CH(E^g)$ but we identify algebraically equivalent divisors in this proof.
We will use the fact that $X_i^2=\Delta_{i,i+1}^2=0$ and 
$X_i\D_{i,i+1}= X_iX_{i+1}=X_{i+1}\D_{i,i+1}$.

We argue by induction on $g$.  The result is certainly true when $g=1$.

If $g=2$, then $D=a_1X_1\,+\, a_2X_2 \,+\,b_1 \Delta$ so
  \begin{equation*}
    \frac{D^2}{2!} \;= \; (a_1a_2+a_1b_1+a_2b_1)X_1X_2  \;=\;  
    \det 
    \begin{pmatrix}
    a_{1} +b_1	& b_1  \\
    b_1		& a_{2}+b_1
    	\end{pmatrix} X_1X_2.
  \end{equation*}
Thus the proposition is true when $g=2$.

From now on we assume $g \ge 3$ and that the result is true for smaller $g$'s.

 We will make use of the following elements in  $\CH(E^g)$:  
\begin{align*}
X &  \;=\;  a_1X_1\,-\,  b_1 X_2 \,+\,  b_1\D_{12},
\\
Y & \;= \;  D-X
\\
& \;=\; (a_2+b_1)X_2 \, + \, a_3X_3 \, +  \cdots \,+\, a_gX_g \,+\,  b_2\D_{23} \,+\, \cdots \,+\,b_{g-1} \D_{g-1,g} \quad \text{and}
\\
Z &  \;=\;  (a_3+b_2)X_3\,+\,  a_4X_4 \, +  \cdots \,+\, a_gX_g \,+\,  b_3\D_{34} \,+\, \cdots \,+\,b_{g-1} \D_{g-1,g}.
\end{align*}

Since $D=X+Y$,
$$
\frac{D^{g}}{g!} \;=\; \frac{1}{g!} \sum_{j=0}^g \binom{g}{j}X^jY^{g-j} \;=\; \frac{Y^g}{g!} \,+\, X \frac{Y^{g-1}}{(g-1)!} \,+\, \frac{1}{2} X^2  \frac{Y^{g-2}}{(g-2)!}\,+\, \cdots .
$$
The terms involving $X^3, X^4,\ldots,$ are zero   because $X=X'\times E^{g-2}$ for a divisor $X'$ on $E \times E$ and $(X')^3=(X')^4=\cdots=0$. 
For a similar reason, $Y^g=0$. Thus  
\begin{equation}
\label{eq.D^g.over.g!}
\frac{D^{g}}{g!}  \;=\; \ X \frac{Y^{g-1}}{(g-1)!} \,+\, \frac{1}{2} X^2  \frac{Y^{g-2}}{(g-2)!}.
\end{equation}
However, $X^2=-2b_1^2 X_1X_2$, and $X_1X_2Y=X_1X_2Z$ because
$$
X_1X_2(Y-Z) \;=\; X_1X_2((a_2+b_1)X_2 - b_2 X_3 + b_2\D_{23}) \;=\; 0,
$$
so 
$$
\frac{D^{g}}{g!}  \;=\; X \frac{Y^{g-1}}{(g-1)!} \,-\, b_1^2 X_1X_2  \frac{Z^{g-2}}{(g-2)!}.
$$
Applying the induction hypothesis to $Y$, viewed as a divisor on $E^{g-1}$,   and to $Z$, viewed as a divisor on $E^{g-2}$,   
\begin{align*}
 \frac{Y^{g-1}}{(g-1)!}  & \;  =\; \det\sA(a_2+b_1,a_3,\ldots,a_g;b_2,\ldots, b_{g-1}) \qquad \text{and}
\\
 \frac{Z^{g-2}}{(g-2)!}  & \;  =\; \det\sA(a_3+b_2,a_4,\ldots,a_g;b_3,\ldots, b_{g-1}).
\end{align*}

Using the cofactor expansion along the first row of $\sA(a_1,\ldots,a_g;b_1,\ldots,b_{g-1})$ then the cofactor expansion down the 
first column of the appropriate minor, one sees that
\begin{align*}
\det\sA(a_1,\ldots,a_g;b_1,\ldots, b_{g-1})  &\; = \;  (a_1+b_1)  \det\sA(a_2+b_1,a_3,\ldots,a_g;b_2,\ldots, b_{g-1})
\\
& \phantom{xxxxxx}\,-\, b_1^2\det\sA(a_3+b_2,a_4,\ldots,a_g;b_3,\ldots, b_{g-1}).
\end{align*}
The result follows.
\end{proof}

\begin{corollary}
\label{cor.sect}
If $D$ is a standard divisor of type $(n_{1},\ldots,n_{g})$ with $n_i \ge 2$ for all $i$, then
\begin{equation*}
	\dim_\CC H^0(E^g,\cO(D))=d(n_{1},\ldots,n_{g}).
\end{equation*}
In particular, $\dim_\CC H^0(E^g,\cL_{n/k})=n$.
\end{corollary}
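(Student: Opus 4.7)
The plan is to combine \cref{cor.dp} with \cref{pr.sect} and then identify the resulting determinant with $d(n_1,\ldots,n_g) = n$, using \cref{prop.nkl}\cref{item.prop.det.D}.

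First, since $D$ is a standard divisor of type $(n_1,\ldots,n_g)$ with every $n_i \ge 2$, \cref{cor.dp} applies and gives
\[
\dim_\CC H^0(E^g,\cO(D)) \;=\; \frac{D^g}{g!}.
\]
To compute this self-intersection number, I would first reduce to the form treated in \cref{pr.sect} by replacing each divisor $\fd_i$ on $E$ by a linearly equivalent one supported at a chosen point $z_i \in E$: since $\fd_i$ has degree $a_i := \deg \fd_i$, we have $\pr_i^*(\fd_i) \sim a_i \pr_i^*(z_i) = a_i X_i$ in the notation of \cref{pr.sect}. Similarly $\Delta^{z_j}_{j,j+1}$ is algebraically equivalent to $\Delta_{j,j+1}$ (both are translates of the diagonal), so it contributes $b_j = 1$ in \cref{pr.sect}. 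Hence $D$ is algebraically (even linearly) equivalent to the divisor treated there with $b_j = 1$ for all $j$.

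Next I would read off $\sA = \sA(a_1,\ldots,a_g;1,\ldots,1)$. By definition of ``type $(n_1,\ldots,n_g)$'', $a_1 = n_1 - 1$, $a_g = n_g - 1$, and $a_i = n_i - 2$ for $1 < i < g$, so the diagonal entries $a_i + b_{i-1} + b_i$ (with the convention $b_0 = b_g = 0$) collapse to exactly $n_i$, while the off-diagonal entries are $1$. Thus $\sA$ differs from $\sD(n_1,\ldots,n_g)$ only in the signs of its off-diagonal entries. These two matrices are conjugate via the diagonal matrix $\operatorname{diag}(1,-1,1,-1,\ldots)$, so $\det \sA = \det \sD(n_1,\ldots,n_g) = d(n_1,\ldots,n_g)$, which equals $n$ by \cref{prop.nkl}\cref{item.prop.det.D}.

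Finally, the class $X_1 X_2 \cdots X_g \in \mathrm{CH}(E^g)$ is represented by the scheme-theoretic intersection $\{(z_1,\ldots,z_g)\}$, a single reduced point, and hence has degree $1$. Combining everything,
\[
\dim_\CC H^0(E^g,\cO(D)) \;=\; \frac{D^g}{g!} \;=\; \det(\sA) \;=\; n.
\]
No step presents a real obstacle: the only thing to be careful about is the sign issue between $\sA$ and $\sD(n_1,\ldots,n_g)$, which is resolved by the diagonal conjugation above.
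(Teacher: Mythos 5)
Your proof is correct and follows essentially the same route as the paper's: apply \cref{cor.dp}, reduce to \cref{pr.sect} by translation and linear equivalence of the $\fd_i$ to get the tridiagonal determinant with diagonal $n_i$ and off-diagonal entries $+1$, and then identify it with $d(n_1,\ldots,n_g)=n$ via \cref{prop.nkl}. The only difference is cosmetic: where you conjugate by $\operatorname{diag}(1,-1,1,-1,\ldots)$ to flip the off-diagonal signs (valid, since the matrix is tridiagonal), the paper instead argues via the parity of the monomials appearing in the determinant polynomial; your version is arguably the cleaner of the two.
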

\begin{proof}
By \cref{cor.dp}, 
$\dim H^0(E^g,\cO(D))=D^g/g!$. To compute the $g$-fold intersection number $D^{g}$ for the standard divisor $D=D_{\fd_{i},z_{j}}$, we can assume that $z_{j}=0$ by applying the translation by a point in $E^{g}$, and that each $\fd_{i}$ is a sum of copies of a single degree-$1$ divisor on $E$ since $\fd_{i}$ is linearly equivalent to such a sum. Thus,
by \cref{pr.sect}, 
\begin{equation} 
\label{eq:D^g/g!}
\frac{D^g}{g!} \;=\;  
\det \begin{pmatrix}
		n_{1}	& 1	& 			& 			&		\\
		1		& n_{2}	& 1		& 			&		\\
				& 1	& \ddots	& \ddots	& 		\\
				&		& \ddots	& n_{g-1}	& 1	\\
				&		&			& 1		& n_{g}
	\end{pmatrix}.
\end{equation} 
If we write $f(n_1,\ldots,n_g)$ for the determinant in the right-hand side, one sees that
$$
f(n_1,\ldots,n_g) \;=\; n_1f(n_2,\ldots,n_g) \,-\, f(n_3,\ldots,n_g).
$$
Viewing $f(x_1,\ldots,x_g)$ as a polynomial in the $x_i$'s, an induction argument shows that every monomial appearing in it is a 
product of an even number of $x_i$'s when $g$ is even and a product of an odd number of $x_i$'s when $g$ is odd. It follows that 
$f(-x_1,\ldots,-x_g)=(-1)^g f(x_1,\ldots,x_g)$.  Hence
$$
\det \begin{pmatrix}
		n_{1}	& 1	& 			& 			&		\\
		1		& n_{2}	& 1		& 			&		\\
				& 1	& \ddots	& \ddots	& 		\\
				&		& \ddots	& n_{g-1}	& 1	\\
				&		&			& 1		& n_{g}
	\end{pmatrix}
	\;=\; 
	\det \begin{pmatrix}
		n_{1}	& -1	& 			& 			&		\\
		-1		& n_{2}	& -1		& 			&		\\
				& -1	& \ddots	& \ddots	& 		\\
				&		& \ddots	& n_{g-1}	& -1	\\
				&		&			& -1		& n_{g}
	\end{pmatrix}
$$	
which is $d(n_{1},\ldots,n_{g})$.

If $D=D_{n/k}$, then $d(n_{1},\ldots,n_{g})=n$ by \cref{prop.nkl}\cref{item.prop.det.D}.
\end{proof}

%%%%%%%%%%%%%%%%%%%%%%%%%%%%%%%%%%%%%%%%%%%%%%%%%%%%%%%%%%%%%%%%%%%%%%%%%%%%%%%%
\subsection{Divisors on $E^g$ associated to labeled graphs}
\label{sss.grph}
%%%%%%%%%%%%%%%%%%%%%%%%%%%%%%%%%%%%%%%%%%%%%%%%%%%%%%%%%%%%%%%%%%%%%%%%%%%%%%%%

In this section we use labeled graphs as bookkeeping devices for certain divisors on  $E^g$. As a consequence we provide another proof for \cref{le.kl} when $D=D_{n/k}$.

\begin{definition}\label{def.wt-grph}
  A {\sf labeled graph} is an unoriented graph with an integer label on each of its edges (some of which can be loops; there is at most one edge for each pair of vertices). 
  The label $0$ means that the respective edge is not present.

  We use $G$ to denote both the labeled and unlabeled graph when no confusion arises. We write $G^0$\index{G^0@$G^{0}$} and $G^1$\index{G^1@$G^{1}$} for the sets of vertices and edges respectively. For $e\in G^1$ we write $\ell_e\in \ZZ$\index{l_e@$\ell_e$} for its label. We introduce the following objects.

  \begin{itemize}
  \item The {\sf degree matrix} of $G$ is the diagonal $|G^0|\times|G^0|$ matrix  $\mathrm{Deg}_G$\index{Deg_G@$\mathrm{Deg}_G$} whose $i^{th}$ diagonal entry is 
  $\sum_e \ell_e$, where the sum ranges over the edges incident to $i$ that are {\it not} loops.
  \item   The {\sf adjacency matrix} of $G$ is the $|G^0|\times |G^0|$ matrix $\sfA_G=(a_{ij})$\index{A_G@$\sfA_G$} whose entry $a_{ij}$
 is the label of the edge $(i,j)\in G^1$. 
  \item  $\sfM=\sfM_G:=\mathrm{Deg}_G-\sfA_G$\index{M, M_G@$\sfM, \sfM_G$}.
  \end{itemize}
\end{definition}

Let $G$ be a labeled graph and let $g=|G^0|$. We write $D_G$\index{D_G@$D_{G}$} for the divisor 
\begin{equation*}
  D_G \; :=\; \sum_{e\in G^1}\ell_e D_e 
\end{equation*}
on $E^g$ where
\begin{equation*}
  e=(i,j)  \; \Longrightarrow  \; D_e=
  \begin{cases}
    \Delta_{ij} &\text{ if }i\ne j,\\
    E^{i-1}\times\{pt\}\times E^{g-i} &\text{ otherwise,}
  \end{cases}
\end{equation*}
where the point in $E$ denoted by $pt$ is fixed but arbitrary for each $i$.

The Chern class of $\cL_G:=\cO_{E^{g}}(D_{G})$\index{L_G@$\cL_{G}$} corresponds to a Hermitian form $\sfH_G$\index{H_G@$\sfH_{G}$} on $\CC^g$ as described in \cite[Ch.~2]{bl}.
 We will now identify the associated skew-symmetric form $\sfE=\sfE_G:=\Im \sfH_{G}$\index{E, E_G@$\sfE$, $\sfE_{G}$} with respect to the basis
\begin{equation}\label{eq:4}
  \eta \sfe_1,\  \ldots,\ \eta \sfe_g,\ \sfe_1,\ \ldots,\ \sfe_g
\end{equation}
of $\RR^{2g}\cong \CC^g$, where   $\sfe_i$ is the vector whose $i^{th}$ entry is $1$ and other entries are zero.

In the notation of \cite[Prop.~1.2]{Polishchuk-book}, the factor of automorphy (see \cite[\S1.3]{Kempf-book} and/or \cite[Appendix~B]{bl}) 
for $\cL_G$ is given by
\begin{equation}\label{eq:5}
  \begin{aligned}
    e_{\sfe_i}(z_1,\cdots,z_g) &= 1,\\
    e_{\eta \sfe_i}(z_1,\cdots,z_g)&=e((-\sfM\sfz)_i + C_i)
  \end{aligned}
\end{equation}
for certain constants $C_i$. (Polishchuk calls a factor of automorphy a ``multiplicator''.)

Writing $e_\sfu(\sfz)=e(f_\sfu(\sfz))$, the skew-symmetric bilinear form $\sfE$ can be computed as explained in \cite[\S2, Prop., p.~18]{Mum08}:
  \begin{equation}\label{eq:3}
    \sfE(\sfu_1,\sfu_2)  \; = \; f_{\sfu_2}(\sfz+\sfu_1)+f_{\sfu_1}(\sfz) - f_{\sfu_1}(\sfz+\sfu_2) - f_{\sfu_2}(\sfz).
  \end{equation}
  It follows that the $2g\times 2g$ matrix $\sfE$, with respect to the basis in \Cref{eq:4},  for which 
  $\sfE(\sfu_1,\sfu_2) =\sfu_1^\sT\sfE\sfu_2$ is of the form
\begin{equation*}
  \begin{pmatrix}
    *& \sfM_G\\ -\sfM_G& 0
  \end{pmatrix}.
\end{equation*}
The upper left hand block $*$ is also zero: according to \Cref{eq:5} we have
\begin{equation*}
f_{\eta \sfe_i}(\sfz+\eta \sfe_j) - f_{\eta \sfe_i}(\sfz) \; = \;  -\eta (\sfM \sfe_j)_i = -\eta \sfM_{ij}.
\end{equation*}
Since $\sfM$ is symmetric, interchanging $i$ and $j$ has no effect on this expression and hence \Cref{eq:3} vanishes for $\sfu_1=\sfe_i$ and $\sfu_2=\sfe_j$. In conclusion, the matrix of $\sfE=\Im \sfH$ in the basis \Cref{eq:4} is
\begin{equation}\label{eq:6}
  \begin{pmatrix}
    0& \sfM_G\\ -\sfM_G& 0
  \end{pmatrix}.
\end{equation}

\begin{proposition}\label{pr.dg}
  Let $G$ be a labeled graph. The self-intersection number of the divisor $D=D_G$ defined above can be computed as follows:
  \begin{equation*}
    \frac{D^g}{g!} \;=\; \det\big(\mathrm{Deg}_G-\sfA_G\big). 
  \end{equation*}
\end{proposition}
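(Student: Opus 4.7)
The strategy combines the Chern-class computation carried out in \Cref{eq:3}--\Cref{eq:6} with Mumford's Riemann--Roch theorem on abelian varieties.

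Applying Riemann--Roch \cite[\S III.16]{Mum08} to $\cL_{G}=\cO_{E^{g}}(D_{G})$ gives $\chi(\cL_{G})=D_{G}^{g}/g!$. For any line bundle on a $g$-dimensional complex abelian variety, with associated alternating form $\sfE$ (the imaginary part of the Hermitian form representing $c_{1}(\cL)$), one also has the identity $\chi(\cL)^{2}=\det(\sfE)$; see \cite[Ch.~3]{bl}. From \Cref{eq:6}, the matrix of $\sfE_{G}$ in the basis \Cref{eq:4} is the block matrix $\left(\begin{smallmatrix} 0 & \sfM_{G} \\ -\sfM_{G} & 0 \end{smallmatrix}\right)$, whose determinant is $\det(\sfM_{G})^{2}$ by a standard block-matrix manipulation. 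Thus
$$
\left(\frac{D_{G}^{g}}{g!}\right)^{\!2}\;=\;\det(\sfM_{G})^{2}
\qquad\text{and so}\qquad
\frac{D_{G}^{g}}{g!}\;=\;\varepsilon(G)\det(\sfM_{G})
$$
for some $\varepsilon(G)\in\{\pm 1\}$.

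To show $\varepsilon(G)=+1$ in all cases I would appeal to polynomiality in the labels. Both sides are polynomial expressions in $\{\ell_{e}\}_{e\in G^{1}}$: the left side by multilinearity of intersection numbers (and the vanishing of top self-intersections $D_{e}^{g}$), and the right side as the determinant of a matrix whose entries are linear in the $\ell_{e}$'s. Fixing the underlying unlabeled graph, $\varepsilon(G)$ is therefore constant in the labels, so it suffices to verify it on a single non-degenerate specialization. For the path topology one takes loop labels $n_{i}-2+\delta_{i,1}+\delta_{i,g}$ and edge labels $1$, producing $D_{G}=D_{n/k}$; then \cref{cor.sect} gives $D_{G}^{g}/g!=n$, while \cref{prop.nkl}\cref{item.prop.det.D}, combined with the parity argument from the proof of \cref{cor.sect} (which shows that replacing the off-diagonal $-1$'s in $\sD(n_{1},\ldots,n_{g})$ by $+1$'s does not change the determinant), gives $\det(\sfM_{G})=n$. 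Other underlying topologies are handled by induction on the number of non-loop edges: one sets the label of such an edge to zero, reducing to a smaller graph, and exploits polynomiality in that label to propagate the sign.

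The main obstacle is pinning down $\varepsilon(G)$ uniformly across all graph topologies, since in principle the sign could flip between different topologies. A direct Pfaffian evaluation of $\left(\begin{smallmatrix} 0 & \sfM_{G} \\ -\sfM_{G} & 0 \end{smallmatrix}\right)$ produces $(-1)^{g(g-1)/2}\det(\sfM_{G})$, which one then has to reconcile with the orientation convention implicit in Mumford's Riemann--Roch relative to the symplectic basis \Cref{eq:4}; the polynomiality-plus-test-case route I have sketched sidesteps this bookkeeping, at the cost of needing one test case per underlying topology, which the inductive reduction above supplies.
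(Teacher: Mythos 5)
Your argument is essentially the paper's: both pass from the block matrix $\left(\begin{smallmatrix} 0 & \sfM_G \\ -\sfM_G & 0\end{smallmatrix}\right)$ of $\Im\sfH$ to $(D^g/g!)^2=\det(\sfM_G)^2$ via \cite[Thms.~3.6.1 and 3.6.3]{bl}, and both resolve the residual sign by observing that each side is a polynomial in the labels. The only divergence is the test case: the paper evaluates at the graph whose edges are all loops, where $D^g/g!=\prod_i\ell_{(i,i)}=\det(\sfM_G)$ is immediate; since every labeled graph is a specialization of the complete graph with loops with indeterminate labels, this single evaluation fixes the sign once and for all and dissolves your worry about the sign flipping between topologies. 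Your route through the path topology, \cref{cor.sect}, and an edge-deletion induction is correct but heavier, and it imports \cref{pr.sect}, which the alternative treatment in \cref{sss.grph} is meant to be independent of.
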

\begin{proof}
  Recall that $\sfM_G=\mathrm{Deg}_G-\sfA_G$. As seen above, the matrix of the skew bilinear form $\Im \sfH$ with respect to this basis is
\begin{equation*}
  \begin{pmatrix}
    0& \sfM_G\\ -\sfM_G& 0
  \end{pmatrix}.
\end{equation*}
It follows that
\begin{equation*}
  \det \Im \sfH = (\det \sfM_G)^2, 
\end{equation*}
which is equal to the square of the self-intersection number $\frac{D^g}{g!}$ by \cite[Thms.~3.6.1 and 3.6.3]{bl}. We thus have the desired conclusion up to sign:
\begin{equation*}
  \frac{D^g}{g!}=\pm \det \sfM_G. 
\end{equation*}
To determine the sign, notice first that the self-intersection number is a multivariate polynomial in the labels of $G$, as is $\det \sfM_G$. The fact that the sign is $+$ now follows by considering the computationally immediate case when $G^1$ consists of only loops.
\end{proof}

The next result extends  \Cref{le.kl} to  divisors of the form $D_G$.
We identify $\CC^{g}$ with $\RR^{2g}$ using the basis \cref{eq:4}. Thus $\L=\ZZ^{2g}$.

\begin{proposition}\label{pr.det-k}
  Let $D_G$ be the divisor attached to a labeled graph $G$ as above. Then  $K(D_G)$ is the kernel of the matrix
  $\sfM_G =\mathrm{Deg}_G-\sfA_G$ regarded as an operator on $E^{g}$ by multiplication.
\end{proposition}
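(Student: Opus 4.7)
The plan is to imitate the Seesaw-theorem argument used in the proof of \cref{le.kl}, now with the divisor $D_G$ attached to an arbitrary labeled graph in place of the standard divisor used there. Fix $\sfx=(x_1,\ldots,x_g)\in E^g$ and set $\cL:=\cO_{E^g}(D_G)$. By the Seesaw Theorem, $\sfx\in K(D_G)$ iff, for every $i\in\{1,\ldots,g\}$ and for a generic point $(y_j)_{j\ne i}\in E^{g-1}$, the restrictions of $\cL$ and $T_\sfx^{*}\cL$ to the one-dimensional slice
\[
Y_i \;:=\; \{y_1\}\times\cdots\times\{y_{i-1}\}\times E\times\{y_{i+1}\}\times\cdots\times\{y_g\} \;\cong\; E
\]
are isomorphic. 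Both restrictions have the same degree on the elliptic curve $Y_i$, so linear equivalence is detected by the $\sfsum$-map to $E$.

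The bulk of the argument will be a direct edge-by-edge bookkeeping of $D_G|_{Y_i}$ and $T_\sfx^{*}D_G|_{Y_i}$. For generic $y_j$'s, any edge $e\in G^1$ not incident to the vertex $i$ (including a loop at some $j\ne i$) restricts to the empty divisor on $Y_i$ and so contributes nothing. A loop at $i$ of label $\ell_e$ contributes $\ell_e\cdot(pt)$ and $\ell_e\cdot(pt-x_i)$ to the two restrictions, respectively; a non-loop edge joining $i$ and $j$ of label $\ell_e$ contributes $\ell_e\cdot(y_j)$ and $\ell_e\cdot(y_j+x_j-x_i)$. Taking $\sfsum$-differences and collecting terms, the condition of linear equivalence on the $i^{\text{th}}$ slice becomes a single linear equation on $\sfx$ in $E$ whose coefficients assemble, across $i\in\{1,\ldots,g\}$, into the linear system $\sfM_G\sfx=0$; this identifies $K(D_G)$ with $\ker(\sfM_G)\subseteq E^g$.

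The only substantive technical point is the generic-vanishing assertion for edges $e$ with neither endpoint equal to $i$: since $D_e$ is cut out by an equation involving only coordinates frozen by $Y_i$, for a generic choice of those frozen values the equation fails and $D_e|_{Y_i}$ is empty. This is the same mechanism underpinning the proof of \cref{le.kl}. The other delicate piece is the sign bookkeeping that collapses the loop-contribution $-\ell_{(i,i)} x_i$ together with the non-loop contributions $\ell_{(i,j)}(x_j-x_i)$ into the $i^{\text{th}}$ row of $\sfM_G$, using $\mathrm{Deg}_G(i,i)=\sum_{j\ne i}\sfA_G(i,j)$. As a cross-check and an alternative route that avoids any sign bookkeeping, one can extract the identification directly from the factor of automorphy for $\cL_G$ set up in the proof of \cref{pr.dg}: that formula presents $\sfM_G$ as the matrix of the polarization map $\phi_{\cL_G}\colon E^g\to\widehat{E^g}\cong E^g$, whence $K(D_G)=\ker(\phi_{\cL_G})=\ker(\sfM_G)$ is immediate.
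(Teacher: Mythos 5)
Your primary route is genuinely different from the paper's. The paper proves \cref{pr.det-k} analytically: it reads off the factor of automorphy of $\cL_G$, computes the skew-symmetric form $\Im\sfH$ in the basis $\eta e_1,\ldots,\eta e_g,e_1,\ldots,e_g$, and identifies $K(D_G)$ with the kernel of the induced torus endomorphism --- essentially the Appell--Humbert description of the polarization map. You instead generalize the Seesaw argument of \cref{le.kl} edge by edge, which is more elementary and stays at the level of divisors on slices; your closing ``cross-check'' via the factor of automorphy is in fact the paper's actual proof. The Seesaw route is viable: generic emptiness of $D_e|_{Y_i}$ for edges $e$ not incident to $i$, equality of degrees of the two restrictions, and detection of linear equivalence on an elliptic curve by the summation map are all sound, and the loop/non-loop contributions you list are the correct ones.

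The one place the argument does not close as written is exactly the ``sign bookkeeping'' you flag. Your own tallies give, as the $i^{\mathrm{th}}$ Seesaw condition,
\[
-\Bigl(\ell_{(i,i)}+\sum_{j\ne i}\ell_{(i,j)}\Bigr)x_i+\sum_{j\ne i}\ell_{(i,j)}x_j=0,
\]
so the coefficient matrix you actually produce is $\mathrm{Deg}_G+\sfA_G$ with the signs of the \emph{off-diagonal} entries reversed, not $\sfM_G$ itself; the identity $\mathrm{Deg}_G(i,i)=\sum_{j\ne i}\sfA_G(i,j)$ collapses the diagonal correctly but cannot repair the off-diagonal sign. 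For the path graphs underlying $D_{n/k}$ (indeed for any bipartite graph) the two matrices are conjugate by a diagonal matrix with entries $\pm1$, an automorphism of $E^g$ preserving the relevant torsion subgroups, so the conclusion is unaffected in the cases the paper uses. For a general labeled graph, however, the two kernels differ as subgroups of $E^g$: for $G$ a single edge $(1,2)$ with label $1$ one has $D_G=\Delta_{12}$, your Seesaw computation correctly returns the diagonal of $E^2$ as $K(\Delta_{12})$, whereas $\ker\left(\begin{smallmatrix}1&1\\1&1\end{smallmatrix}\right)$ is the antidiagonal. So the step ``collecting terms \ldots\ into the linear system $\sfM_G\sfx=0$'' needs either the off-diagonal sign carried explicitly, or a remark that the statement is to be read with the convention under which the off-diagonal entries of the polarization matrix are $-\ell_{(i,j)}$; as it stands, that one line is a genuine gap (and, to your credit, the Seesaw method is what makes the discrepancy visible at all).
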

\begin{proof}
 Let $D=D_G$ and $\sfM=\sfM_G$.
 
As before, we denote by $\sfE=\sfE_{G}$ the skew-symmetric form $\Im \sfH$ associated to the Hermitian form $\sfH$ of the invertible sheaf $\cL=\cO(D)$. According to \cite[discussion preceding Lem.~2.4.7]{bl}, $K(D)$ is $\L(\cL)/\L$, where
  \begin{equation*}
    \L(\cL):=\{v\in V\ |\ \sfE(v,\L)\subseteq \ZZ\}
  \end{equation*}
and $V=\CC^g\cong \RR^{2g}$ is the universal cover of $E^g$. Having identified $E$ with the matrix \Cref{eq:6}, this is the same as
  \begin{equation*}
    \{v\in V\ |\ \sfE v\in \L\}. 
  \end{equation*}
  Now consider the real endomorphism $\sfJ$ of $V$ acting as $-\eta$ on $\bigoplus_{i=1}^g\RR \sfe_i$ and as $\frac 1{\eta}$ on $\bigoplus_{i=1}^g\RR \eta \sfe_i$. Note that $\sfJ$ acts isomorphically on $\L$, and hence induces an automorphism of the torus $E^g$ (not holomorphic, in general).

  The operator on $E^g$ induced by $\sfM$ is $\sfJ\sfE$, where $\sfE$ is similarly regarded as an operator on $E^g$ via the matrix \Cref{eq:6}.
  \begin{equation*}
    K(D) = \L(\cL)/\L
  \end{equation*}
  is the kernel of the torus endomorphism $\sfE:E^g\to E^g$. This in turn coincides with the kernel of the endomorphism
$\sfM = \sfJ\sfE:E^g\to E^g$,   hence the conclusion.
\end{proof}

In particular, we obtain an

\begin{proof}[Alternative proof for \Cref{le.kl} when $D=D_{n/k}$]
  Simply apply \Cref{pr.det-k} to the labeled graph $G$ whose underlying graph has edges $(i,i)$ and $(i,i+1)$ with labels
  \begin{equation*}
    \ell_{(i,i+1)} = -1,\quad \ell_{(i,i)} = n_{i}+2-\delta_{i,1}-\delta_{i,g}.
    \qedhere
\end{equation*}
\end{proof}

%%%%%%%%%%%%%%%%%%%%%%%%%%%%%%%%%%%%%%%%%%%%%%%%%%%%%%%%%%%%%%%%%%%%%%%%%%%%%%%%
\section{The characteristic variety as a quotient of $E^g$}
\label{se.char-var-2}
%%%%%%%%%%%%%%%%%%%%%%%%%%%%%%%%%%%%%%%%%%%%%%%%%%%%%%%%%%%%%%%%%%%%%%%%%%%%%%%%

Because $\cL_{n/k}$ is generated by its global sections (\cref{prop.gen.gl.sec}), there is an associated morphism\index{Phi_n/k@$\Phi_{n/k}$}
\begin{equation}
\label{defn.Phi_D}
\Phi_{n/k}\;:=\;\Phi_{|\cL_{n/k}|} \colon E^g  \, \longrightarrow \, \PP^{n-1}\;=\; \PP\big(H^0(E^g,{\cL_{n/k}})^*\big).
\end{equation}
We write $X_{n/k}$\index{X_n/k@$X_{n/k}$} for the image of $\Phi_{n/k}$ and call it the {\sf characteristic variety} for $Q_{n,k}(E,\tau)$.
The main result in this section is that 
$$
X_{n/k} \; \cong \; E^g/\Sigma_{n/k}.
$$
While proving this we make several observations that we will use in \cite{CKS3}: 
\begin{itemize}
  \item[-] 
  there is a unique $\Sigma_{n/k}$-equivariant structure on $\cL_{n/k}$ such that the induced action of $\Sigma_{n/k}$ on
$H^0(E^g,{\cL_{n/k}})$ is trivial; 
\item[-]
we will always give $\cL_{n/k}$ that equivariant structure;
\item[-]
 writing $\phi:E^g \to E^g/\Sigma_{n/k}$\index{phi@$\phi$} for the quotient morphism, 
 $(\phi_* \cL_{n/k})^{\Sigma_{n/k}}$ is an invertible sheaf on $E^g/\Sigma_{n/k}$;
  \item[-]
  $D_{n/k}$ is stable under the action of $\Sigma_{n/k}$; 
  \item[-] 
$
\cL_{n/k} \; \cong \; \phi^*\big((\phi_* \cL_{n/k})^{\Sigma_{n/k}} \big);
$
\item[-]
if  $\iota: E^g/\Sigma_{n/k} \to \PP^{n-1}$\index{iota@$\iota$} is the unique morphism such that $\Phi_{n/k}=\iota \circ \phi$, then 
$(\phi_* \cL_{n/k})^{\Sigma_{n/k}} \cong \iota^*\cO_{\PP^{n-1}}(1)$.
\end{itemize}

%%%%%%%%%%%%%%%%%%%%%%%%%%%%%%%%%%%%%%%%%%%%%%%%%%%%%%%%%%%%%%%%%%%%%%%%%%%%%%%%
\subsection{Effective divisors linearly equivalent to $D_{n/k}$}
\label{sect.eff.divs}
%%%%%%%%%%%%%%%%%%%%%%%%%%%%%%%%%%%%%%%%%%%%%%%%%%%%%%%%%%%%%%%%%%%%%%%%%%%%%%%%

We examine when two standard divisors are linearly equivalent to each other. This provides a large supply of effective divisors linearly equivalent to $D_{n/k}$.

\begin{proposition}\label{pr.ssw}
Let $\fd_1,\ldots,\fd_g,\fd_1',\ldots,\fd_g'$ be effective divisors on $E$ and $z_1,\ldots,z_{g-1},z_1',\ldots,z_{g-1}' \in E$. 
  \begin{enumerate}
  \item\label{item.prop.ssw.dz}
 The divisors $D_{\fd_i,z_j}$  and $D_{\fd'_i,z'_j}$ are linearly equivalent if and only if
  \begin{equation}
    \fd_i+(-z_i)+(z_{i-1}) \; \sim  \; \fd'_i+(-z'_i)+(z'_{i-1})
  \end{equation}
  for all $1\le i\le g$, with the convention that $z_0=z'_0=z_{g}=z'_{g}=0$. 
  \item\label{item.prop.ssw.d}
If $\deg\fd_i'=\deg\fd_i$ and  $\fd_i'\sim \fd_i$ for all $i$, then 
  $D_{\fd'_i,z_j}$ is linearly equivalent to $D_{\fd_i,z_j}$.
  \item\label{item.prop.ssw.dpr}
 The divisor $D_{\fd_i,z_j}$  is linearly equivalent to $D_{n/k}$ if and only if  
 \begin{enumerate}
  \item 
  $\deg\fd_i=(n_i-2+\delta_{i,1}+\delta_{i,g})(0)$ for all $i$ and
  \item 
  $\sfsum\fd_i=z_i-z_{i-1}$ for all $i$.
\end{enumerate}
\end{enumerate}
\end{proposition}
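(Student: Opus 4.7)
My approach is the same See-Saw strategy already used in the proof of \cref{le.kl}: linear equivalence of two divisors on $E^g$ is detected by restrictions to the one-dimensional slices
\begin{equation*}
Y_i(\sfy)\;:=\;\{y_1\} \times \cdots \times \{y_{i-1}\} \times E \times \{y_{i+1}\} \times \cdots \times \{y_g\}.
\end{equation*}
I will prove (1) first and deduce (2) and (3) from it.

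For (1), the ``only if'' direction is immediate since linearly equivalent divisors have linearly equivalent restrictions. For the ``if'' direction I would iterate the See-Saw Theorem on the factorization $E^g=E\times E^{g-1}$: if the line bundle $\cO_{E^g}(D_{\fd_i,z_j}-D_{\fd'_i,z'_j})$ restricts trivially to every $Y_1$-slice, then See-Saw gives that it is pulled back from $E^{g-1}$, and the hypotheses on the remaining slices $Y_2,\ldots,Y_g$ let me continue down to the trivial bundle on a point. The key computation, implicit in the proof of \cref{le.kl}, is that for $2\le i\le g-1$ the restriction of $D_{\fd_i,z_j}$ to $Y_i(\sfy)$ is
\begin{equation*}
\fd_i\,+\,(y_{i-1}+z_{i-1})\,+\,(y_{i+1}-z_i),
\end{equation*}
with one diagonal term removed at the boundary indices $i\in\{1,g\}$; only $\pr_i^*\fd_i$ and the diagonal terms $\Delta^{z_{i-1}}_{i-1,i}$ and $\Delta^{z_i}_{i,i+1}$ contribute, every other summand restricting to a divisor supported off the generic slice. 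Since two equal-degree divisors on $E$ are linearly equivalent iff they have the same sum, subtracting the analogous expression for $D_{\fd'_i,z'_j}$ makes the $y_{i\pm 1}$ terms cancel and reduces slice-wise linear equivalence to the two conditions $\deg\fd_i=\deg\fd'_i$ and $\sfsum(\fd_i)-z_i+z_{i-1}=\sfsum(\fd'_i)-z'_i+z'_{i-1}$. Taken together these translate exactly into the claimed linear equivalence $\fd_i+(-z_i)+(z_{i-1})\sim \fd'_i+(-z'_i)+(z'_{i-1})$ on $E$, with the conventions $z_0=z'_0=z_g=z'_g=0$ absorbing the boundary cases uniformly.

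Part (2) is then an immediate application of (1): if $\fd_i\sim\fd'_i$ on $E$, then $\fd_i+(-z_i)+(z_{i-1})\sim\fd'_i+(-z_i)+(z_{i-1})$ trivially, so the criterion of (1) holds with $z'_j=z_j$. For (3), I would specialize (1) to the case $D_{n/k}=D_{D_i,0}$, with $D_i$ as in \cref{defn.Di} and all $z'_j=0$. Matching degrees in the criterion of (1) recovers conditions (a) and (b) of (3), while the resulting sum condition $\sfsum(\fd_i)-z_i+z_{i-1}=\sfsum(D_i)-0+0=0$ recovers condition (c).

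The main obstacle I expect is the ``if'' direction of the slice-wise criterion in (1), namely concluding global triviality of a line bundle on a product of $g$ elliptic curves from triviality of all slice restrictions. The iterated See-Saw argument is standard but requires some bookkeeping to ensure that at each inductive step the bundle pushed down to the residual factor still restricts trivially to the next family of slices. The explicit restriction computation and the deductions of (2) and (3) from (1) are then routine.
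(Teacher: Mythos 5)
Your proposal is correct and follows essentially the same route as the paper: part (1) is the See-Saw criterion applied to the coordinate slices, with the restriction of a standard divisor to a generic slice computed as $\fd_i+(y_{i-1}+z_{i-1})+(y_{i+1}-z_i)$ and Abel's theorem reducing slice-wise equivalence to equality of degrees and sums, after which (2) and (3) are the same immediate specializations the paper makes. The only difference is that you spell out the iterated See-Saw bookkeeping that the paper leaves implicit.
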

\begin{proof}
\cref{item.prop.ssw.dz}
 This follows from the Seesaw Theorem which says that two divisors on $E^g$ are linearly equivalent 
 if and only if their restrictions to almost all slices $\{(x_1, \ldots,x_{i-1})\}\times  E\times \{(x_{i+1}, \ldots,x_g)\}$   are linearly equivalent.

\cref{item.prop.ssw.d}
This follows immediately from \cref{item.prop.ssw.dz}.

\cref{item.prop.ssw.dpr}
For all $i=1,\ldots,g$, let $z_i'=0$ and $\fd_i'=D_i$ as in  \cref{defn.Di}.
Then $D_{\fd'_i,z'_j}=D_{n/k}$. Thus,   
$D_{\fd_i,z_j}$  is linearly equivalent to $D_{n/k}$ if and only if   $\fd_i+(-z_i)+(z_{i-1}) \sim D_i+(0)+(0)$ for all $i$; i.e., 
if and only if $\deg\fd_i=\deg D_i$ and $\sfsum\fd_i-z_i+z_{i-1}=0$ for all $i$.
\end{proof}

\begin{proposition}\label{prop.gen.gl.sec}
	Suppose $n_{i}\geq 2$ for all $i$. Every standard divisor $D_{\fd_{i},z_{j}}$ of type $(n_{1},\ldots,n_{g})$ is base-point free or, equivalently, $\cO_{E^{g}}(D_{\fd_{i},z_{j}})$ is generated by its global sections.
	
	In particular, $D_{n/k}$ is base-point free and $\cL_{n/k}$ is generated by its global sections.
\end{proposition}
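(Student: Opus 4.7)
The ``in particular'' clause follows from the general statement: $D_{n/k}$ is itself a standard divisor of type $(n_1,\ldots,n_g)$ (take $\fd_i = D_i$ and $z_j = 0$), and base-point-freeness of a divisor is equivalent to global generation of its associated invertible sheaf. So I focus on the main claim. The case $g=1$ is immediate, since $\cO_E(\fd_1)$ has degree $n_1 \ge 2$ on $E$ and is therefore globally generated. Assume $g \ge 2$. The plan is: fix an arbitrary point $p = (p_1,\ldots,p_g) \in E^g$ and produce a standard divisor $D' = D_{\fd_i',z_j'}$ of the same type, linearly equivalent to $D = D_{\fd_i,z_j}$, with $p \notin \Supp(D')$.

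By \cref{pr.ssw}\cref{item.prop.ssw.dz} and the requirement $\deg\fd_i' = \deg\fd_i$, the condition $D' \sim D$ is equivalent to the $g$ sum relations
\begin{equation*}
\sfsum(\fd_i') \;=\; \sfsum(\fd_i) \,-\, z_i \,+\, z_{i-1} \,+\, z_i' \,-\, z_{i-1}' \qquad (z_0 = z_0' = z_g = z_g' = 0).
\end{equation*}
Hence, once $(z_1',\ldots,z_{g-1}') \in E^{g-1}$ is fixed, the divisor class of each $\fd_i'$ on $E$ is determined, and it remains only to pick an effective representative in each class. I first choose the $z_j'$ subject to: (a) $z_j' \ne p_{j+1} - p_j$ for each $j$, so that $p \notin \Delta_{j,j+1}^{z_j'}$; and (b) for each middle index $i$ with $n_i = 2$, so that $\deg\fd_i' = 0$ and necessarily $\fd_i' = 0$, the forced linear equality $z_i' - z_{i-1}' = z_i - z_{i-1}$. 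The equalities from (b) cut out a closed subvariety $V \subseteq E^{g-1}$ of dimension at least $(g-1) - \#\{2 \le i \le g-1 : n_i = 2\} \ge 1$.

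Within $V$, the open conditions from (a), together with the further codimension-one conditions ``for each $i$ with $\deg\fd_i' = 1$, the unique effective representative, namely the point $\sfsum(\fd_i')$, differs from $p_i$'' (relevant only when $n_i = 2$ at an endpoint or $n_i = 3$ in the middle), together exclude only finitely many proper closed subsets of $V$. Since $\dim V \ge 1$, a point $(z_1',\ldots,z_{g-1}') \in V$ satisfying all of them exists; fix one. For each remaining index $i$, $\deg\fd_i' \ge 2$, and the complete linear system on $E$ in the prescribed class is base-point-free, so $\fd_i'$ may be chosen effective, of the correct sum, and not passing through $p_i$. For each $i$ with $\deg\fd_i' \le 1$, the representative is already pinned down and avoids $p_i$ by the choice of $z_j'$. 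The resulting $D' = D_{\fd_i',z_j'}$ is linearly equivalent to $D$ and satisfies $p \notin \Supp(D')$, which is exactly base-point-freeness at $p$.

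The part requiring real care is the dimension count for $V$ in the extreme regime where $n_i = 2$ for many middle indices; the bound $\dim V \ge 1$ guarantees that parameters never run out, and combined with the standard fact that complete linear systems of degree $\ge 2$ on an elliptic curve are base-point-free, this is the essential elementary input that makes the construction go through.
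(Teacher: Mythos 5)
Your proof is correct and takes essentially the same route as the paper's: both use \cref{pr.ssw}\cref{item.prop.ssw.dz} to produce a standard divisor linearly equivalent to $D_{\fd_i,z_j}$ that misses the given point, treating the degree-$0$, degree-$1$, and degree-$\ge 2$ slots exactly as you do. The only difference is bookkeeping --- the paper parametrizes the family by the sums $w_i=\sfsum(\fd_i')$ subject to $\sum_{i}(w_i-\sfsum(\fd_i))=0$ and reduces the exclusions to finitely many forbidden values of a single free $w_1\in E$, whereas you parametrize by the $z_j'$ and work on the irreducible subvariety $V$, which leaves you the (easy, but unstated) obligation to check that each excluded locus is a \emph{proper} closed subset of $V$, i.e.\ that $z_j'$ and the relevant $z_i'-z_{i-1}'$ are non-constant on $V$ because the corresponding linear forms are independent of those cutting out $V$.
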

\begin{proof}
	Fix a point $\sfx=(x_{1},\ldots,x_{g})\in E^{g}$. We will find a standard divisor $D_{\fd'_{i},z'_{j}}$ linearly equivalent to $D_{\fd_{i},z_{j}}$ that does not contain $\sfx$. Choose $\sfw=(w_{1},\ldots,w_{g})\in E^{g}$ so that
	\begin{itemize}
		\item\label{item.gen.gl.sec.zero} $w_{i}=0$ for all $i$ for which $\deg\fd_{i}=0$,
		\item\label{item.gen.gl.sec.one} $w_{i}\neq x_{i}$ for all $i$ for which $\deg\fd_{i}=1$,
		\item\label{item.gen.gl.sec.ineq} $z_{j}+\sum_{i=1}^{j}(w_{i}-\sfsum\fd_{i})\neq x_{j+1}-x_{j}$ for all $1\leq j\leq g-1$, and
		\item\label{item.gen.gl.sec.eq} $\sum_{i=1}^{g}(w_{i}-\sfsum\fd_{i})=0$.
	\end{itemize}
	Such a $\sfw$ does exist: For example, let $w_{i}=0$ whenever $\deg\fd_{i}=0$, and choose $w_{i}\neq x_{i}$ for other $i\neq 1,g$. Once we further fix $w_{1}$, the last condition determines $w_{g}$. Since $\deg\fd_{1},\deg\fd_{g}\geq 1$, the remaining requirements are a {\it finite} 
number of inequalities on $w_{1}$ (or $w_{g}$), so there is a solution.
	
	Due to the properties of $\sfw$, we can choose a divisor $\fd'_{i}$ on $E$ for each $i$ so that
	\begin{itemize}
		\item $\deg\fd'_{i}=\deg\fd_{i}$,
		\item $\sfsum\fd'_{i}=w_{i}$, and
		\item $\fd'_{i}$ does not contain $x_{i}$.
	\end{itemize}
	Let $z'_{j}:=z_{j}+\sum_{i=1}^{j}(w_{i}-\sfsum\fd_{i})$ for each $j$. Then the standard divisor $D_{\fd'_{i},z'_{j}}$ does not contain $\sfx$, and it is linearly equivalent to $D_{\fd_{i},z_{j}}$ by \cref{pr.ssw}\cref{item.prop.ssw.dz}.
\end{proof}

\begin{lemma}\label{lem.sep.pts.ni.geq.three}
	Let $D=D_{\fd_{i},z_{j}}$ be a standard divisor of type $(n_{1},\ldots,n_{g})$ where $n_{i}\geq 2$ for all $i$. Let $\sfx=(x_{1},\ldots,x_{g})$ and $\sfy=(y_{1},\ldots,y_{g})$ be points in $E^{g}$. If $x_{t}\neq y_{t}$ for some $t$ satisfying $n_{t}\geq 3$, then there is a standard divisor $D'=D_{\fd'_{i},z'_{j}}$ linearly equivalent to $D$ such that $\sfx\in D'$ and $\sfy\notin D'$.
\end{lemma}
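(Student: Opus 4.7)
The plan is to apply \cref{pr.ssw}(1), which asserts that two standard divisors $D_{\fd_i,z_j}$ and $D_{\fd'_i,z'_j}$ of the same type are linearly equivalent precisely when $\sfsum(\fd_i)-z_i+z_{i-1}=\sfsum(\fd'_i)-z'_i+z'_{i-1}$ for every $i$, under the convention $z_0=z_g=z'_0=z'_g=0$. Summing over $i$ yields the global constraint $\sum_i\sfsum(\fd'_i)=\sum_i\sfsum(\fd_i)$; conversely, once the $\fd'_i$ of the prescribed degrees meet this constraint, the $z'_j$ are uniquely determined by the recursion $z'_j-z'_{j-1}=z_j-z_{j-1}+\sfsum(\fd_j)-\sfsum(\fd'_j)$, with $z'_g=0$ automatic. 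The whole task therefore reduces to producing the $\fd'_i$ with suitable support and sum conditions.

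Since $n_t\geq 3$ forces $\deg(\fd_t)\geq 1$, I would take $\fd'_t$ to be an effective divisor of degree $\deg(\fd_t)$ whose support contains $x_t$ and whose remaining $\deg(\fd_t)-1$ support points avoid $y_t$; this is possible because $x_t\neq y_t$. When $\deg(\fd_t)\geq 2$ the value $\sfsum(\fd'_t)$ is a free parameter in $E$; when $\deg(\fd_t)=1$ one is forced to take $\fd'_t=(x_t)$, so $\sfsum(\fd'_t)=x_t$ is rigid. For each $i\neq t$, I would pick $\fd'_i$ effective of degree $\deg(\fd_i)$ whose support avoids $y_i$ and whose sum satisfies the global constraint above. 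Since $\deg(\fd_1),\deg(\fd_g)\geq 1$, the sums $\sfsum(\fd'_1)$ and $\sfsum(\fd'_g)$ supply enough flexibility to absorb the constraint, and the $z'_j$ are then defined by the recursion.

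To conclude, I would verify $\sfy\notin D'$. The componentwise conditions $y_i\notin\fd'_i$ are already arranged, while the diagonal conditions $z'_j\neq y_{j+1}-y_j$ for $j=1,\dots,g-1$ are finitely many codimension-one avoidances on the continuous family of admissible choices, hence satisfied by generic selection; the inclusion $\sfx\in D'$ follows immediately from $x_t\in\fd'_t$. The main obstacle is the bookkeeping in degenerate situations where $\deg(\fd_t)=1$ and many other $\deg(\fd_i)=0$ (corresponding to adjacent $n_i=2$), since then most $\fd'_i$ are rigid; but $\deg(\fd_1)\geq 1$ and $\deg(\fd_g)\geq 1$ always hold, leaving at least a one-parameter family of valid choices, which suffices to evade the finitely many forbidden configurations for the $z'_j$ and for the rigid sums $\sfsum(\fd'_i)$.
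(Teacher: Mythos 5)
Your argument is correct and follows essentially the same route as the paper's: reduce linear equivalence to the sum conditions of \cref{pr.ssw}, choose the sums $\sfsum(\fd'_i)$ (forced when $\deg(\fd_i)\le 1$, nearly free otherwise) subject to one global constraint and finitely many avoidance conditions, and use $\deg(\fd_1),\deg(\fd_g)\ge 1$ to guarantee a solution of the resulting finitely many inequations. One trivial slip: the recursion should read $z'_j-z'_{j-1}=z_j-z_{j-1}+\sfsum(\fd'_j)-\sfsum(\fd_j)$ (you transposed the primed and unprimed sums), which does not affect the structure of the argument.
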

\begin{proof}
	Choose $\sfw=(w_{1},\ldots,w_{g})\in E^{g}$ so that
	\begin{itemize}
		\item $w_{i}=0$ for all $i$ for which $\deg\fd_{i}=0$,
		\item $w_{t}=x_{t}$ if $\deg\fd_{t}=1$,
		\item $w_{i}\neq y_{i}$ for all $i$ for which $\deg\fd_{i}=1$,
		\item $w_{t}\neq x_{t}+y_{t}$ if $\deg\fd_{t}=2$,
		\item $z_{j}+\sum_{i=1}^{j}(w_{i}-\sfsum\fd_{i})\neq x_{j+1}-x_{j}$ for all $1\leq j\leq g-1$,
		\item $z_{j}+\sum_{i=1}^{j}(w_{i}-\sfsum\fd_{i})\neq y_{j+1}-y_{j}$ for all $1\leq j\leq g-1$, and
		\item $\sum_{i=1}^{g}(w_{i}-\sfsum\fd_{i})=0$.
	\end{itemize}
	If $t=1$ or $t=g$, then $\deg\fd_{t}\geq 2$ and the second condition does not arise. So such $\sfw$ exists by the same reason as in the proof of \cref{prop.gen.gl.sec}. Similarly, choose a divisor $\fd'_{i}$ on $E$ for each $i$ so that
	\begin{itemize}
		\item $\deg\fd'_{i}=\deg\fd_{i}$,
		\item $\sfsum\fd'_{i}=w_{i}$,
		\item $\fd'_{i}$ does not contain $y_{i}$, and
		\item $\fd'_{t}$ contains $x_{t}$,
	\end{itemize}
	and let $z'_{j}:=z_{j}+\sum_{i=1}^{j}(w_{i}-\sfsum\fd_{i})$ for each $j$. The standard divisor $D_{\fd'_{i},z'_{j}}$ satisfies the desired property.
\end{proof}

Now we prove a similar result that will be used in the proof of \cref{le.tg-inj}.

\begin{lemma}\label{lem.dnk.pt.in.only.div}
Let $\sfx=(x_{1},\ldots,x_{g})$ be a point and 
	let $D_{\fd_{i},z_{j}}$ be a standard divisor of type $(n_{1},\ldots,n_{g})$ where $n_{i}\geq 2$ for all $i$.  For each $t$ satisfying $n_{t}\geq 3$, there is a standard divisor $D_{\fd'_{i},z'_{j}}$ linearly equivalent to $D_{\fd_{i},z_{j}}$ such that $D_{\fd'_{i},z'_{j}}$ contains $E^{t-1}\times(x_{t})\times E^{g-t}$ and no other component of $D_{\fd'_{i},z'_{j}}$ contains $\sfx$.
\end{lemma}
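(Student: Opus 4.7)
The plan is to imitate the construction in the proofs of \cref{prop.gen.gl.sec,lem.sep.pts.ni.geq.three}: build $D_{\fd'_i,z'_j}$ by choosing $\sfw=(w_1,\ldots,w_g)\in E^g$ that dictates the sums $w_i=\sfsum(\fd'_i)$ and then recovers $z'_j$ through the formula $z'_j := z_j+\sum_{i=1}^{j}(w_i-\sfsum(\fd_i))$. By \cref{pr.ssw}\cref{item.prop.ssw.dz}, the resulting standard divisor $D_{\fd'_i,z'_j}$ is automatically linearly equivalent to $D_{\fd_i,z_j}$ as long as $\deg(\fd'_i)=\deg(\fd_i)$ and $\sum_{i=1}^g(w_i-\sfsum(\fd_i))=0$.

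Next I would translate the geometric requirement into conditions on $\sfw$. The component $E^{t-1}\times(x_t)\times E^{g-t}$ must appear in $D_{\fd'_i,z'_j}$, which forces $x_t\in\mathrm{supp}(\fd'_t)$; since $n_t\geq 3$ we have $\deg(\fd'_t)\geq 1$ (and $\geq 2$ if $t\in\{1,g\}$), so we may write $\fd'_t=(x_t)+\fe_t$ with $\fe_t$ effective of degree $\deg(\fd_t)-1$ and $\sfsum(\fe_t)=w_t-x_t$. No other $\pr_i^{*}(\fd'_i)$-component should contain $\sfx$, which amounts to $x_i\notin\mathrm{supp}(\fd'_i)$ for $i\neq t$, and the diagonal components force the inequalities $z'_j\neq x_{j+1}-x_j$ for $1\leq j\leq g-1$. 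I would then list the constraints on $\sfw$:
\begin{itemize}
\item $w_i=0$ whenever $\deg(\fd_i)=0$,
\item $w_i\neq x_i$ whenever $\deg(\fd_i)=1$ and $i\neq t$,
\item $w_t=x_t$ in case $\deg(\fd_t)=1$ (which can only occur when $t\notin\{1,g\}$ and $n_t=3$),
\item $z_j+\sum_{i=1}^{j}(w_i-\sfsum(\fd_i))\neq x_{j+1}-x_j$ for $1\leq j\leq g-1$,
\item $\sum_{i=1}^{g}(w_i-\sfsum(\fd_i))=0$.
\end{itemize}

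The existence of such $\sfw$ follows by the same counting argument used in \cref{prop.gen.gl.sec}: because $n_1,n_g\geq 2$ we have $\deg(\fd_1),\deg(\fd_g)\geq 1$, so at least one of $w_1,w_g$ is free after imposing the equality constraint, while the remaining conditions cut out a cofinite set in $E$. Once $\sfw$ is chosen, I would construct each $\fd'_i$ with $i\neq t$ so that $\deg(\fd'_i)=\deg(\fd_i)$, $\sfsum(\fd'_i)=w_i$, and $x_i\notin\mathrm{supp}(\fd'_i)$, using the fact that when $\deg(\fd_i)\geq 2$ there is enough freedom to avoid any one point while fixing the sum; and I would pick $\fd'_t=(x_t)+\fe_t$ as above. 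Defining $z'_j$ by the displayed formula completes the construction, and the linear equivalence $D_{\fd'_i,z'_j}\sim D_{\fd_i,z_j}$ follows from \cref{pr.ssw}\cref{item.prop.ssw.dz}.

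The only real subtlety will be the bookkeeping when $\deg(\fd_t)=1$ (so $w_t=x_t$ is forced): I must check that this forced value is compatible with both the $g-1$ strict inequalities cutting the diagonals and the single summation constraint. But since fixing $w_t$ merely shifts the origin of the affine constraint on the remaining free coordinates (among which, by $n_1,n_g\geq 2$, there is always at least one), a cofinite choice still exists. No other step is delicate; the verification that the resulting $D_{\fd'_i,z'_j}$ has the advertised component structure is then immediate from the definition of a standard divisor.
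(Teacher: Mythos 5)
Your proposal is correct and follows essentially the same construction as the paper's proof: choose $\sfw$ subject to (nearly) the same list of constraints, build each $\fd'_i$ with prescribed degree and sum $w_i$, avoiding $x_i$ for $i\neq t$ and forcing $x_t\in\operatorname{Supp}(\fd'_t)$, and set $z'_j := z_j+\sum_{i=1}^{j}(w_i-\sfsum(\fd_i))$, with \cref{pr.ssw}\cref{item.prop.ssw.dz} giving the linear equivalence. The one condition you omit is the paper's requirement that $w_t\neq 2x_t$ when $\deg(\fd_t)=2$: in that case your $\fe_t$ is the single forced point $(w_t-x_t)$, which could equal $(x_t)$ and yield $\fd'_t=2(x_t)$; this still satisfies the lemma as literally stated (there is only one component through $\sfx$), but the application in \cref{le.tg-inj} needs $E^{t-1}\times(x_t)\times E^{g-t}$ to occur with multiplicity exactly one, so you should add that inequality to your list of constraints on $\sfw$.
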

\begin{proof}
	Choose $\sfw=(w_{1},\ldots,w_{g})\in E^{g}$ so that
	\begin{itemize}
		\item $w_{i}=0$ for all $i$ for which $\deg\fd_{i}=0$,
		\item $w_{i}\neq x_{i}$ for all $i\neq t$ for which $\deg\fd_{i}=1$,
		\item $w_{t}=x_{t}$ if $\deg\fd_{t}=1$,
		\item $w_{t}\neq 2x_{t}$ if $\deg\fd_{t}=2$,
		\item $z_{j}+\sum_{i=1}^{j}(w_{i}-\sfsum\fd_{i})\neq x_{j+1}-x_{j}$ for all $1\leq j\leq g-1$, and
		\item $\sum_{i=1}^{g}(w_{i}-\sfsum\fd_{i})=0$.
	\end{itemize}
	Choose a divisor $\fd'_{i}$ on $E$ for each $i$ so that
	\begin{itemize}
		\item $\deg\fd'_{i}=\deg\fd_{i}$,
		\item $\sfsum\fd'_{i}=w_{i}$,
		\item $\fd'_{i}$ does not contain $x_{i}$ if $i\neq t$, and
		\item $\fd'_{t}$ contains $x_{t}$ with multiplicity one,
	\end{itemize}
	and let $z'_{j}:=z_{j}+\sum_{i=1}^{j}(w_{i}-\sfsum\fd_{i})$ for each $j$.
\end{proof}

%%%%%%%%%%%%%%%%%%%%%%%%%%%%%%%%%%%%%%%%%%%%%%%%%%%%%%%%%%%%%%%%%%%%%%%%%%%%%%%%
\subsection{Generalities on equivariant sheaves}
\label{subsec.equiv.shvs}
%%%%%%%%%%%%%%%%%%%%%%%%%%%%%%%%%%%%%%%%%%%%%%%%%%%%%%%%%%%%%%%%%%%%%%%%%%%%%%%%
Let $X$ be a quasi-projective algebraic $\Bbbk$-variety\footnote{We follow the convention in \cite[\S II.4, Defn., p.~105]{Hart} that {\it algebraic varieties} are integral separated schemes of finite type over an algebraically closed field $\Bbbk$.}  
endowed with an action by a finite group $\G$.  
The quotient morphism is denoted by
$$
\phi: X \, \longrightarrow \, X/\G.
$$
This subsection records some facts about $\G$-equivariant invertible $\cO_X$-modules 
(i.e., ``linearized sheaves'' in the terminology of  \cite[\S 1.3]{mumf-git}). 
Most are probably well-known but we couldn't find an ideal reference.

The structure sheaf $\cO_X$ has a family of $\Gamma$-equivariant structures parametrized by the characters of $\Gamma$, i.e.,
the group homomorphisms $\chi:\Gamma\to \Bbbk^*$. 
This correspondence is such that the equivariant structure attached to $\chi$ results in an action of $\Gamma$ on 
$H^0(X,\cO_X)$ via $\chi$ (i.e., as a representation of $\Gamma$, $H^0(X,\cO_X)$ is $\chi$-isotypic). 
We write $\cO_X^{\chi}$ for $\cO_X$ with the equivariant structure corresponding to $\chi$.
If  $\cL$ is a $\Gamma$-equivariant $\cO_X$-module, the {\sf $\chi$-twist}  of the equivariant structure on $\cL$ is the tensor product\index{L^chi@$\cL^{\chi}$}
$$
\cL^{\chi} \; :=\;  \cL\otimes \cO_X^{\chi}
$$
of equivariant sheaves.

The next result is used after we prove that $\cL_{n/k}$ 
has a $\Sigma_{n/k}$-equivariant structure (\Cref{prop.lnk.equiv.str}). 

\begin{proposition}\label{pr.gen-equivariant}
Let $X$ be a quasi-projective algebraic variety acted upon by a finite group $\Gamma$ and let $\phi:X\to X/\Gamma$
be the quotient morphism. If $\cL$ is 
 \begin{itemize}
  \item
  a globally generated $\Gamma$-equivariant invertible $\cO_X$-module such that
  \item 
  the action of $\Gamma$ on $H^0(X,\cL)$ resulting from the equivariant structure is trivial,
  \end{itemize}
then $(\phi_*\cL)^{\Gamma}$ is an invertible  $\cO_{X/\Gamma}$-module and the canonical map  $\phi^*\big((\phi_*\cL)^{\Gamma}\big) \to \cL$
is an isomorphism. 
\end{proposition}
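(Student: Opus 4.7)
The plan is to establish both claims at once by a local trivialization argument on $X/\Gamma$. Fix a closed point $y \in X/\Gamma$ and let $\phi^{-1}(y) = \{x_1,\ldots,x_r\}$ be the corresponding $\Gamma$-orbit. Since $\cL$ is globally generated, I can choose some $s \in H^0(X,\cL)$ with $s(x_1)\neq 0$. The hypothesis that $\Gamma$ acts trivially on $H^0(X,\cL)$ means $s$ is $\Gamma$-invariant, and because the $x_i$ are $\Gamma$-translates of $x_1$ the equivariant structure then forces $s(x_i)\neq 0$ for every $i$. (Equivalently, the stabilizer $\Gamma_{x}$ acts trivially on the fiber $\cL(x)$ at every $x\in X$, so Kempf's descent criterion applies; I prefer the hands-on route.)

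Let $U := \{x\in X : s(x)\neq 0\}$. This is a $\Gamma$-stable open subset containing the whole fiber $\phi^{-1}(y)$. Since $\phi$ is a finite (in particular closed) morphism, I may shrink if necessary to arrange $U = \phi^{-1}(V)$ for an affine open neighborhood $V\ni y$ in $X/\Gamma$. On $U$ the section $s$ gives a trivialization $\cO_U \xrightarrow{\,\sim\,} \cL|_U$, $1\mapsto s$, and because $s$ is $\Gamma$-invariant and $\cO_U$ carries its tautological ($\Gamma$-invariant) equivariant structure, this trivialization is $\Gamma$-equivariant. Applying $\phi_*$ and then taking $\Gamma$-invariants, and using the basic identity $(\phi_*\cO_U)^{\Gamma} = \cO_V$ for a finite group quotient of affines (i.e.\ $A^{\Gamma}$ is the coordinate ring of $\Spec(A)/\Gamma$), I obtain $(\phi_*\cL)^{\Gamma}|_V \cong \cO_V$. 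Since $y$ was arbitrary, $(\phi_*\cL)^{\Gamma}$ is invertible on $X/\Gamma$.

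For the second claim, pulling this local isomorphism back along $\phi|_U$ identifies the restriction of the canonical counit $\phi^*\big((\phi_*\cL)^{\Gamma}\big)\to \cL$ to $U$ with the trivialization $\cO_U\to \cL|_U$, $1\mapsto s$, and hence the canonical map is an isomorphism on $U$. As the sets $U$ obtained this way cover $X$ when $y$ ranges over $X/\Gamma$, the canonical map is a global isomorphism. The part requiring genuine care is the compatibility of the formation of $\Gamma$-invariants with tensoring by an invariant generating section, i.e.\ that $(\phi_*\cL)^{\Gamma}|_V$ really is the free $\cO_V$-module on $s$; this is where the quasi-projectivity hypothesis enters, since it guarantees that orbits are contained in affine opens so one can reduce to the affine computation over $\Spec(A^{\Gamma})$.
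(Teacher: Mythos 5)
Your proof is correct and follows essentially the same route as the paper's: use global generation plus the triviality of the $\Gamma$-action on $H^0(X,\cL)$ to produce an invariant section that is nonvanishing on a whole orbit, trivialize $\cL$ equivariantly on a $\Gamma$-stable affine neighborhood of that orbit, reduce to the structure-sheaf case (the paper cites \cite[\S7, Theorem]{Mum08}, you invoke the underlying affine computation $A^{\Gamma}$ directly), and glue. The only cosmetic difference is how the affine $\Gamma$-invariant neighborhood is produced (you take the full nonvanishing locus and shrink via finiteness of $\phi$; the paper takes an affine open containing the orbit and intersects its $\Gamma$-translates), which does not change the substance of the argument.
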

\begin{proof}
Every finite subset of $X$ is contained in an open affine subscheme of $X$ (see, e.g., \cite[Prop.~3.3.36]{liu02}). 
In particular this is true for $\Gamma$-orbits. 
Hence \cite[\S7, Thm.]{Mum08} applies to every $\Gamma$-invariant open subscheme 
$U\subseteq X$ to prove the claim in the case $\cL=\cO_X$.

  Now let $x\in X$. Because $\cL$ is generated by its global sections there is an $s\in H^0(X,\cL)$ that does not vanish at
   $x$. Since the action of $\Gamma$ on $H^0(X,\cL)$ is trivial, $s$ vanishes at none of the points in the orbit $\Gamma x$. 
 That orbit is therefore contained in some (dense, by irreducibility) affine open subscheme $U\subseteq X$ where $s$ is non-vanishing. 
Replacing $U$ by $\bigcap_{\gamma\in \Gamma}\, \gamma U$,
  we can also assume $U$ is $\Gamma$-invariant (like $U$, the intersection is affine because the scheme is quasi-projective and hence separated).

  The $\Gamma$-invariant section
  \begin{equation*}
    s|_U\in H^0(U,\cL) 
  \end{equation*}
 implements a $\Gamma$-equivariant isomorphism $\cO_U \to \cL|_U$, so we can apply  \cite[\S7, Thm.]{Mum08} to conclude that
$(\phi_*\cL)^{\Gamma}$ is an invertible sheaf on $U/\Gamma$ and  the canonical morphism
    \begin{equation}\label{eq:9}
  \phi^*(\phi_*\cL)^{\Gamma} \, \longrightarrow \,    \cL
    \end{equation}
    is an isomorphism over $U$. 
  Since $X$ is covered by such open patches $U$, $X/\Gamma$ is covered by the open affines $U/\Gamma$ and
   these canonical isomorphisms glue to give a  global isomorphism $\phi^*(\phi_*\cL)^{\Gamma} \, \to \,    \cL$.
\end{proof}

\begin{lemma}\label{le.act-scale}
  Let $X$ be a projective algebraic variety acted upon by the finite group $\Gamma$ and 
  let $\cL$ be a $\Gamma$-equivariant globally generated invertible $\cO_X$-module. If the map
  \begin{equation*}
    \Phi_{|\cL|}:X\;\longrightarrow\;\PP(H^0(X,\cL)^*)
  \end{equation*}
  factors through $X/\Gamma$, then 
  \begin{enumerate}
  \item\label{item.act.scale.isotypic}
the action of $\Gamma$ on $H^0(X,\cL)$ is $\chi$-isotypic for some  character $\chi$ and
  \item\label{item.act.scale.triv}
the  natural action of $\Gamma$ on $H^0(X,\cL^{\chi^{-1}})$ is trivial.
\end{enumerate} 
\end{lemma}
\begin{proof}
\cref{item.act.scale.isotypic}
Let $\g \in \Gamma$. Let $s$ be a non-zero section of 
$\cL$. By hypothesis, $(s)_0=(\g.s)_0$. 
Hence 
there is an automorphism
of $\cL$ sending $s$ to $\g.s$. Since $X$ is a projective variety, and therefore irreducible according to our conventions, 
$    \Aut(\cL)\cong\Aut(\cO_X)\cong \Bbbk^{\times} $.
Hence $\g.s$ is a scalar multiple of $s$. Thus, every non-zero element in
$H^0(X,\cL)$ is an eigenvector for every $\g\in \Gamma$. 
It follows that $H^0(X,\cL)$ is $\chi$-isotypic for some character $\chi$.

Part \cref{item.act.scale.triv} is obvious.
\end{proof}

\begin{example}\label{ex.z-z}
  The global generation assumption is necessary in  \Cref{pr.gen-equivariant}. 
  Consider, for example, the natural action of $\Gamma:=\{\pm 1\}$ on $E$ and 
   the sheaf $\cL:=\cO_E((0))$.
  Since $(0)$ is stable under the action of $\G$ we can put a $\G$-equivariant structure on $\cL$. The space $H^0(E,\cL)$ consists of 
  only the constant functions $\cO_E \to \cO_E \subseteq \cL$, so the  induced action of $\G$ on $H^0(E,\cL)$ is trivial. However,  $E/\Gamma \cong  \PP^1$ and, with the notation above, $ (\phi_*\cL)^{\Gamma}\cong \cO_{\PP^1}$. 
 It follows that the homomorphism \Cref{eq:9} is not an isomorphism in this situation.
\end{example}

\begin{lemma}\label{le.inv-div}
Let $X$ be a projective algebraic variety acted upon by the finite group $\Gamma$.
If  $D$ is an effective Cartier divisor on $X$ that is stable under the action of $\G$,
then there is a $\Gamma$-equivariant structure on $\cO_X(D)$ such that the resulting action
of $\G$  on $H^0(X,\cO_X(D))$ fixes the sections vanishing along $D$.
\end{lemma}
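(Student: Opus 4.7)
The plan is to exhibit a canonical $\Gamma$-equivariant structure on $\cO_X(D)$ produced directly from the $\Gamma$-stability of $D$, and then to observe that it fixes the sections vanishing along $D$ essentially by construction. First, using the function-field realization $\cO_X(D)(U) = \{f \in K(X) \colon \operatorname{div}(f) + D \geq 0 \text{ on } U\}$, the hypothesis that $g^{-1}D = D$ as Cartier divisors for every $g \in \Gamma$ ensures that the natural action of $\Gamma$ on the constant sheaf of rational functions restricts to an action on the subsheaf $\cO_X(D)$. The resulting isomorphisms $\phi_g \colon g^*\cO_X(D) \xrightarrow{\sim} \cO_X(D)$ satisfy the cocycle condition tautologically, and thus assemble into a $\Gamma$-equivariant structure on $\cO_X(D)$.

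Second, under this structure the canonical section $s_D \in H^0(X, \cO_X(D))$ cutting out $D$ is identified with the constant function $1 \in K(X)$; since $\Gamma$ acts on $K(X)$ by pullback and fixes constants, $s_D$ is a $\Gamma$-fixed vector. The global sections of $\cO_X(D)$ that vanish along $D$ form the image of the multiplication map $H^0(X, \cO_X) \xrightarrow{\,\cdot\, s_D\,} H^0(X, \cO_X(D))$, and since $X$ is a projective integral algebraic variety (from the running hypotheses inherited from \cref{le.act-scale}), one has $H^0(X, \cO_X) = \Bbbk$, so this image is the one-dimensional line $\Bbbk \cdot s_D$. The invariance of $s_D$ therefore gives the invariance of every vector in this line, which is the desired conclusion.

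The main obstacle, really only a matter of bookkeeping, is to distinguish carefully between $\Gamma$-stability of $D$ as an equality of Cartier divisors (which is what is actually used) and mere set-theoretic stability of the support of $D$, and between the canonical ``function-field'' equivariant structure constructed above and the other equivariant structures on $\cO_X(D)$ that differ from it by characters of $\Gamma$. Once conventions for how $\Gamma$ acts on $K(X)$ and on divisors are fixed, no genuine difficulty remains, and one could alternatively arrive at the same conclusion by starting with any $\Gamma$-equivariant structure on $\cO_X(D)$, noting that it acts on the line $\Bbbk\cdot s_D$ by some character $\chi$ of $\Gamma$, and then twisting by $\chi^{-1}$ to kill the character.
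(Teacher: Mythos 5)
Your proof is correct and takes essentially the same approach as the paper: both build the canonical equivariant structure from the $\Gamma$-stability of $D$ as a divisor and then observe that the distinguished section cutting out $D$ (and hence its one-dimensional span, which is exactly the space of sections vanishing along $D$) is fixed. The only difference is presentational — you realize $\cO_X(D)$ as a $\Gamma$-stable subsheaf of the constant sheaf $K(X)$ so the cocycle condition is tautological, whereas the paper invokes the bijection between effective Cartier divisors and pairs $(\cL,s)$ together with the triviality of $\Aut(\cL,s)$ to get the unique isomorphisms $\phi_\gamma$ and the cocycle condition.
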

\begin{proof}
Let $\gamma\in \Gamma$ and  let $s\in H^0(X,\cO_X(D))$ be a section whose zero locus is $D$;
$s$ is then unique up to a non-zero scalar multiple (see the proof of \Cref{le.act-scale}).

Effective Cartier divisors are in bijection with isomorphism classes of pairs $(\cL,s)$ where $\cL$ is an invertible sheaf and $s$ is a non-zero section. The divisor corresponding to $(\cL,s)$ is the divisor of zeros $(s)_{0}$.

Since $\gamma D=D$, the pairs $(\cO_X(D),s)$ and $(\gamma^*\cO_X(D),\gamma^*s)$ are isomorphic. 
But since the automorphism group of a pair $(\cL,s)$ over a projective variety is trivial, there is exactly one isomorphism
\begin{equation*}
  \phi_{\gamma}:\cO_X(D)\to \gamma^*\cO_X(D)
\end{equation*}
that sends $s$ to $\gamma^*s$. It follows that the cocycle condition
\begin{equation*}
 \begin{tikzpicture}[auto,baseline=(current  bounding  box.center)]
  \path[anchor=base] (0,0) node (1) {$\cO(D)$} +(3,-.5) node (2) {$\beta^*\cO(D)$} +(6,0) node (3) {$(\alpha\beta)^*\cO(D)$};
  \draw[->] (1) to[bend right=6] node[pos=.5,auto,swap] {$\scriptstyle \phi_{\beta}$} (2);
  \draw[->] (2) to[bend right=6] node[pos=.5,auto,swap] {$\scriptstyle \beta^*\phi_{\alpha}$} (3);
  \draw[->] (1) to[bend left=6] node[pos=.5,auto] {$\scriptstyle \phi_{\alpha\beta}$} (3);  
 \end{tikzpicture}
\end{equation*}
holds and hence that the $\phi_{\gamma}$, $\gamma\in \Gamma$, constitute an equivariant structure. Furthermore, the resulting action on $H^0(X,\cO_X(D))$ fixes $s$ by construction and hence all of its scalar multiples, as desired.
\end{proof}

\begin{remark}
The choice of $s$ in the above proof does not affect the equivariant structure, as $s$ is unique up to scaling which would then be passed on to $\gamma^*s$. The structure is thus canonical.
\end{remark}

\begin{remark}\label{rem.char.triv}
  If, in the context of \Cref{le.inv-div}, the hypotheses of \Cref{le.act-scale} are also satisfied for $\cL=\cO_X(D)$, then the action of $\Gamma$ on $H^0(X,\cO_X(D))$ is trivial: indeed, the latter result says that the action is scaling by a character, whereas the former identifies a fixed invariant vector $s\in H^0(X,\cO_X(D))$.

Moreover, the $\Gamma$-equivariant structure obtained by \Cref{le.inv-div} is canonical in the following sense. Let
\begin{equation*}
	\begin{tikzcd}
		X\ar[r,"\phi"] & X/\Gamma\ar[r,"\iota"] & \bbP(H^{0}(X,\cL)^{*})
	\end{tikzcd}
\end{equation*}
be the factorization of $\Phi_{|\cL|}\colon X\to\bbP(H^{0}(X,\cL)^{*})$. The invertible sheaf $\cL':=\iota^{*}\cO(1)$ has a trivial $\Gamma$-equivariant structure with respect to the trivial action of $\Gamma$ on $X/\Gamma$, and its pullback $\cL=\phi^{*}\cL'$ along the $\Gamma$-equivariant morphism $X\to X/\Gamma$ has the induced equivariant structure. The canonical map
\begin{equation*}
	H^{0}(X/\Gamma,\cL')\;\to\; H^{0}(X,\cL)
\end{equation*}
is non-zero and $\Gamma$-equivariant. Since the action of $\Gamma$ on $H^{0}(X/\Gamma,\cL')$ is trivial and that on $H^{0}(X,\cL)$ is $\chi$-isotypic by \cref{le.act-scale}, that on $H^{0}(X,\cL)$ should also be trivial. This shows that the induced equivariant structure is the same as the equivariant structure obtained by \Cref{le.inv-div}. Indeed, as in the proof of \cref{le.act-scale}, the structure morphism $\cL\to\gamma^{*}\cL$ is unique up to a non-zero scalar multiple, and the multiple affects the action on $H^{0}(X,\cL)$. Since both equivariant structures induce a trivial action on $H^{0}(X,\cL)$, they are the same.
\end{remark}

%%%%%%%%%%%%%%%%%%%%%%%%%%%%%%%%%%%%%%%%%%%%%%%%%%%%%%%%%%%%%%%%%%%%%%%%%%%%%%%%
\subsection{The $\Sigma_{n/k}$-equivariant structure on $\cL_{n/k}$}
\label{subsec.equiv.lnk}
%%%%%%%%%%%%%%%%%%%%%%%%%%%%%%%%%%%%%%%%%%%%%%%%%%%%%%%%%%%%%%%%%%%%%%%%%%%%%%%%

We now introduce a class of standard divisors that are stable under the action of $\Sigma_{n/k}$. 
We call a standard divisor $D_{\fd_{i},z_{j}}$   \textsf{balanced} if $z_{j}=0$ for all $j$ and it contains $(0)\times E^{g-1}+E^{g-1}\times(0)$.
This is equivalent to the condition that $D_{\fd_{i},z_{j}}$ contains
\begin{equation}\label{eq.min.bal.std.div}
	\begin{cases}
		2(0) & \text{if $g=1$;} \\
		(0)\times E^{g-1}+\Delta_{1,2}+\cdots+\Delta_{g-1,g}+E^{g-1}\times(0) & \text{if $g\geq 2$.}
	\end{cases}
\end{equation}
In particular, $D_{n/k}$ is balanced.

\begin{proposition}\label{prop.lnk.equiv.str}
Let $D$ be a balanced standard divisor of type $(n_{1},\ldots,n_{g})$. Then $D$ is stable under the action of $\Sigma_{n/k}$ and, consequently, $\cO_{E^{g}}(D)$ admits a $\Sigma_{n/k}$-equivariant structure.
  In particular, $D_{n/k}$ is stable under the action of $\Sigma_{n/k}$ and $\cL_{n/k}=\cO_{E^{g}}(D_{n/k})$ admits a $\Sigma_{n/k}$-equivariant structure.
\end{proposition}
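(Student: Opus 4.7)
The plan is to verify $\Sigma_{n/k}$-stability of $D$ by a direct calculation on each generator $s_i$ with $n_i=2$, after which the equivariant structure on $\cO_{E^g}(D)$ follows from the general principle (encoded in \Cref{le.inv-div}) that a $\Gamma$-stable effective Cartier divisor on a projective variety canonically linearizes its associated invertible sheaf.

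For the stability, recall that $s_i$ affects only the $i^{\th}$ coordinate of $E^g$, replacing $z_i$ by $z_{i-1}-z_i+z_{i+1}$ with the convention $z_0=z_{g+1}=0$. A short pullback calculation on the summands of $D=\sum_j\pr_j^*\fd_j+\sum_j\Delta_{j,j+1}$ yields $s_i^*(\pr_j^*\fd_j)=\pr_j^*\fd_j$ for all $j\ne i$, and $s_i^*(\Delta_{j,j+1})=\Delta_{j,j+1}$ whenever $j\notin\{i-1,i\}$, while for interior $i$ the operator $s_i^*$ swaps $\Delta_{i-1,i}$ with $\Delta_{i,i+1}$. Thus the whole question reduces to controlling $s_i^*(\pr_i^*\fd_i)$ together with the (possibly absent) boundary diagonal summands.

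A clean case split then finishes the stability claim. For interior $i$ (i.e.\ $2\le i\le g-1$) with $n_i=2$, one has $\deg\fd_i = 0$, so $\fd_i = 0$ and the two adjacent diagonals simply swap. For $i=1$ with $g\ge 2$ and $n_1=2$, balancedness forces the unique degree-one divisor $\fd_1$ to equal $(0)$, and one computes $s_1^*\pr_1^*(0)=\Delta_{1,2}$ and $s_1^*\Delta_{1,2}=\pr_1^*(0)$, so these two summands swap. The case $i=g$ is entirely symmetric, and the degenerate case $g=1,\, n_1=2$ reduces to the obvious stability of $D=2(0)$ under $z\mapsto-z$. Hence $D$ is $\Sigma_{n/k}$-stable; the special case $D=D_{n/k}$ is covered since $D_{n/k}$ is balanced by inspection.

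Finally, once $\Sigma_{n/k}$-stability of $D$ is established, for each $\gamma\in\Sigma_{n/k}$ the equality $\gamma^{-1}(D)=D$ yields an isomorphism $\gamma^*\cO_{E^g}(D)\cong\cO_{E^g}(D)$. Fixing a defining section $s$ of $D$ and insisting that it be sent to $\gamma^*s$ pins the isomorphism uniquely, killing the $\CC^\times$-scaling ambiguity (using $\Aut(\cO_{E^g}(D))=\CC^\times$, which holds because $E^g$ is projective); uniqueness then forces the cocycle condition and produces the canonical $\Sigma_{n/k}$-equivariant structure as in \Cref{le.inv-div}. The main subtlety in the whole argument is the boundary case $i\in\{1,g\}$: the balancedness hypothesis $\fd_1,\fd_g\ge(0)$ is precisely what furnishes the summand $\pr_i^*(0)$ needed to pair against the adjacent diagonal under $s_i^*$, and without it the stability would fail.
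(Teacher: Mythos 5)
Your proposal is correct and follows essentially the same route as the paper: check that each $s_i$ with $n_i=2$ permutes the irreducible components of the balanced divisor (swapping the two summands adjacent to position $i$, the key computations being $s_i^*\Delta_{i,i+1}=\Delta_{i-1,i}$, $s_1^*\Delta_{1,2}=(0)\times E^{g-1}$, and $s_g^*\Delta_{g-1,g}=E^{g-1}\times(0)$), then invoke the canonical linearization of \Cref{le.inv-div}. The paper merely organizes the bookkeeping slightly differently, splitting $D$ into the minimal balanced core \cref{eq.min.bal.std.div} (stable under every $s_i$) plus a remainder with $\deg\fd_i=n_i-2$, whereas you case-split on whether $i$ is interior or boundary; the content is identical.
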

\begin{proof}
	The case $g=1$ is obvious so we assume $g\geq 2$. By the assumption, $D$ is the sum of
	\begin{equation}\label{eq.lnk.equiv.str.first}
		(0)\times E^{g-1}+\Delta_{1,2}+\cdots+\Delta_{g-1,g}+E^{g-1}\times (0)
	\end{equation}
	and
	\begin{equation}\label{eq.lnk.equiv.str.second}
		\sum_{i=1}^{g}E^{i-1}\times\fd_{i}\times E^{g-i}
	\end{equation}
	where $\deg\fd_{i}=n_{i}-2$.
	For each $2\leq i\leq g-1$, $s_{i}^{*}\Delta_{i,i+1}=\Delta_{i-1,i}$. Indeed, if $\sfx=(x_{1},\ldots,x_{g})\in E^g$, then
  \begin{equation*}
    \sfx\in s_{i}^{*}\Delta_{i,i+1}\iff x_{i-1}-x_{i}+x_{i+1}=x_{i+1}\iff x_{i-1}=x_{i}\iff\sfx\in\Delta_{i-1,i}.
  \end{equation*}
  Since $s_{i}^{2}=1$, $s_{i}^{*}\Delta_{i-1,i}=\Delta_{i,i+1}$. Similarly, $s_{1}^{*}\Delta_{1,2}=(0)\times E^{g-1}$ and $s_{g}^{*}\Delta_{g-1,g}=E^{g-1}\times(0)$. Thus \cref{eq.lnk.equiv.str.first} is stable under all $s_{i}$. If $1\leq i\leq g$ and $n_{i}=2$, then $E^{j-1}\times\fd_{j}\times E^{g-j}$ is stable under the action of $s_{i}$ for all $j\neq i$. So \cref{eq.lnk.equiv.str.second} is stable under $s_{i}$. Therefore $D$ is stable under all $s_{i}$ satisfying $n_{i}=2$ so we can equip $\cO_X(D)$ with the equivariant structure 
  described in \Cref{le.inv-div}. 
  
  In particular, all this applies to $\cL_{n/k}\cong\cO_{E^{g}}(D_{n/k})$.  
\end{proof}

Let $D$ be a balanced standard divisor of type $(n_{1},\ldots,n_{g})$ where $g\geq 2$ and $n_{i}\geq 2$ for all $i$. Later we need to consider the restriction of $D$ to the closed subvariety
\begin{equation*}
	Y\;:=\;E^{t-1}\times\{z\}\times E^{g-t}
\end{equation*}
for a point $z\in E$ and $1\leq t\leq g$ satisfying $n_{t}\geq 3$. Obviously $Y$ is stable under the action of $\Sigma_{n/k}$, and there is a canonical isomorphism
\begin{equation*}
	\Sigma_{n/k}\;\cong\;\Sigma_{[n_{1},\ldots,n_{t-1}]}\times\Sigma_{[n_{t+1},\ldots,n_{g}]}
\end{equation*}
that sends each $s_{i}$ to $s_{i}$ on either $E^{t-1}$ or $E^{g-t}$, which identifies these two groups. Note that the group on the right-hand side is naturally acting on $E^{t-1}\times E^{g-t}$.

\begin{lemma}\label{lem.restr.dnk.equiv}
	In the above setting, there is a $\Sigma_{[n_{1},\ldots,n_{t-1}]}\times\Sigma_{[n_{t+1},\ldots,n_{g}]}$-equivariant isomorphism
	\begin{equation*}
		\psi\colon E^{t-1}\times E^{g-t}\to Y
	\end{equation*}
	such that $\psi^{*}(\cO_{E^{g}}(D)|_{Y})$ is isomorphic to
	\begin{equation*}
		\cO(D_{\ell}\times E^{g-t}+E^{t-1}\times D_{r}),
	\end{equation*}
	where $D_{\ell}$ and $D_{r}$ are some balanced standard divisors of types $(n_{1},\ldots,n_{t-1})$ and $(n_{t+1},\ldots,n_{g})$, respectively.
\end{lemma}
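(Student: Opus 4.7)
My plan is to exhibit $\psi$ as a translation of the canonical inclusion, then pull back the components of the standard divisor $D=D_{\fd_{i},z_{j}}$ one factor at a time. Let $\iota_{0}\colon E^{t-1}\times E^{g-t}\to E^{g}$ be the closed immersion inserting $0$ in the $t$-th slot, and let $v\in E^{g}$ be the point whose only nonzero coordinate is $z$, in position $t$, so that $Y=v+\iota_{0}(E^{t-1}\times E^{g-t})$. I will search for $\psi$ of the form $T_{v+\iota_{0}(\mathbf{c})}\circ\iota_{0}$ for a suitable tuple $\mathbf{c}=(c_{1},\ldots,c_{t-1},c_{t+1},\ldots,c_{g})$. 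Because each $s_{i}\in\Sigma_{n/k}$ is a group automorphism of $E^{g}$, equivariance with respect to the product action of $\Sigma_{[n_{1},\ldots,n_{t-1}]}\times\Sigma_{[n_{t+1},\ldots,n_{g}]}$ reduces, after setting $c_{t}:=z$ and $c_{0}:=c_{g+1}:=0$, to the system
\[
2c_{i}\;=\;c_{i-1}+c_{i+1}\qquad\text{for every }1\le i\le g\text{ with }n_{i}=2.
\]
Inside each maximal block of consecutive $2$'s in $(n_{1},\ldots,n_{g})$ these equations force the $c$'s to form an arithmetic progression between the block's boundary values. Since $n_{t}\ge 3$, the index $t$ never sits strictly inside such a block, so at most one endpoint of any block is tied to $c_{t}=z$, and divisibility of $E$ then lets me solve all these progression constraints simultaneously.

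Next, using \cref{pr.ssw} I will replace $D$ within its linear equivalence class so that $Y$ is not contained in any component; concretely, I arrange $z\notin\fd_{t}$ and $z_{t-1},z_{t}\neq z$. Each piece of $D$ then pulls back along $\psi$ as follows: for $i\neq t$, $\pr_{i}\circ\psi$ is the composition of a translation with the corresponding projection from $E^{t-1}\times E^{g-t}$, so $\psi^{*}\pr_{i}^{*}\cO(\fd_{i})$ is the pullback of a divisor of the same degree to the appropriate factor; for $i=t$, $\pr_{t}\circ\psi$ is the constant map $y\mapsto z$, contributing the trivial line bundle; a diagonal $\Delta^{z_{j}}_{j,j+1}$ with $t\notin\{j,j+1\}$ pulls back to $\Delta^{z_{j}+c_{j}-c_{j+1}}_{j,j+1}$ in the same factor; and the two boundary-crossing diagonals $\Delta^{z_{t-1}}_{t-1,t}$ and $\Delta^{z_{t}}_{t,t+1}$ pull back to the single-coordinate fiber divisors $\pr_{t-1}^{*}(z-c_{t-1}-z_{t-1})$ and $\pr_{t+1}^{*}(z+z_{t}-c_{t+1})$, respectively.

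Finally, grouping the pulled-back terms according to whether they involve coordinates $<t$ or $>t$ rewrites $\psi^{*}(D|_{Y})$ as $D_{\ell}^{(0)}\times E^{g-t}+E^{t-1}\times D_{r}^{(0)}$ for standard divisors of the expected types $(n_{1},\ldots,n_{t-1})$ and $(n_{t+1},\ldots,n_{g})$: the degree count works out because the extra fiber divisor on the left raises the degree at coordinate $t-1$ from $n_{t-1}-2$ to $n_{t-1}-1$, exactly matching the right-boundary convention for standard divisors on $E^{t-1}$, and symmetrically on the right. I then invoke \cref{pr.ssw}\cref{item.prop.ssw.dz} to replace $D_{\ell}^{(0)}$ and $D_{r}^{(0)}$ by linearly equivalent balanced standard divisors $D_\ell$ and $D_r$, which is possible because the required degrees of the boundary $\fd_{i}$'s are at least one, leaving enough room to include $(0)$ and to adjust the sums so that all shift parameters vanish. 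The hardest step will be constructing $\mathbf{c}$: the arithmetic-progression constraints must be consistent across all runs of $2$'s---particularly the runs immediately to the left or right of $t$ when $n_{t\pm 1}=2$, where the forced value $c_{t}=z$ at one endpoint of a block must be propagated across using divisibility in $E$ and must remain compatible with whatever value is fixed at the block's other endpoint.
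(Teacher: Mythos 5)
Your proposal is correct and follows essentially the same route as the paper: both realize $\psi$ as the inclusion of $E^{t-1}\times E^{g-t}$ into $Y$ twisted by a translation vector $\mathbf{c}$ satisfying $2c_i=c_{i-1}+c_{i+1}$ at the indices with $n_i=2$, then pull back $D$ component by component and invoke \cref{pr.ssw} to pass to balanced standard divisors. The only (cosmetic) differences are that the paper takes the global arithmetic progressions $c_i=iy_\ell$ and $c_i=(g+1-i)y_r$ with $ty_\ell=z=(g-t+1)y_r$, which satisfy your constraints at every index at once, whereas you solve them block by block; and you explicitly move $D$ within its linear equivalence class to avoid $z\in\fd_t$, a point the paper leaves implicit.
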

\begin{proof}
	As in the proof of \cref{prop.lnk.equiv.str}, we can write $D=D'+D_{\fd_{i}}$ where $D'$ is \cref{eq.lnk.equiv.str.first} and $D_{\fd_{i}}$ is \cref{eq.lnk.equiv.str.second}. The restriction of $\cO_{E^{g}}(D')$ is
	\begin{equation*}
		\cO_{E^{g}}(D')|_{Y}\;=\;\cO_{Y}(D'_{\ell}\times(z)\times E^{g-t}+E^{t-1}\times(z)\times D'_{r})
	\end{equation*}
	where
	\begin{align*}
		D'_{\ell}&\;=\;(0)\times E^{t-2}+\Delta_{1,2}+\cdots+\Delta_{t-2,t-1}+E^{t-2}\times (z),\\
		D'_{r}&\;=\;(z)\times E^{g-t-1}+\Delta_{1,2}+\cdots+\Delta_{g-t-1,g-t}+E^{g-t-1}\times (0),
	\end{align*}
	with the convention that $D'_{\ell}=0$ when $t=1$ and $D'_{r}=0$ when $t=g$.
	Choose $y_{\ell},y_{r}\in E$ so that $ty_{\ell}=z=(g-t+1)y_{r}$, and define the isomorphism $\psi\colon E^{t-1}\times E^{g-t}\to Y$ by
	\begin{equation*}
		\begin{split}
			&\psi(x_{1},\ldots,x_{t-1},x_{t+1},\ldots,x_{g})\\
			&\;=\;(x_{1}+y_{\ell},x_{2}+2y_{\ell},\ldots,x_{t-1}+(t-1)y_{\ell},z,x_{t+1}+(g-t)y_{r},\ldots,x_{g-1}+2y_{r},x_{g}+y_{r}).
		\end{split}
	\end{equation*}
	It is easy to see $\psi$ commutes with $s_{i}$ for all $i\neq t$, so $\psi$ is $\Sigma_{[n_{1},\ldots,n_{t-1}]}\times\Sigma_{[n_{t+1},\ldots,n_{g}]}$-equivariant. The pullback of $\cO_{E^{g}}(D')|_{Y}$ is
	\begin{equation*}
		\psi^{*}(\cO_{E^{g}}(D')|_{Y})\;=\;\cO(D''_{\ell}\times E^{g-t}+E^{t-1}\times D''_{r})
	\end{equation*}
	where
	\begin{align*}
		D''_{\ell}&\;=\;(-y_{\ell})\times E^{t-2}+\Delta^{-y_{\ell}}_{1,2}+\cdots+\Delta^{-y_{\ell}}_{t-2,t-1}+E^{t-2}\times (y_{\ell}),\\
		D''_{r}&\;=\;(y_{r})\times E^{g-t-1}+\Delta^{y_{r}}_{1,2}+\cdots+\Delta^{y_{r}}_{g-t-1,g-t}+E^{g-t-1}\times (-y_{r}).
	\end{align*}
	By \cref{pr.ssw}\cref{item.prop.ssw.dz}, $D''_{\ell}$ and $D''_{r}$ are linearly equivalent to the divisors of the form \cref{eq.lnk.equiv.str.first} on $E^{t-1}$ and $E^{g-t}$, respectively.
	
	On the other hand,
	\begin{equation*}
		\psi^{*}(\cO_{E^{g}}(D_{\fd_{i}})|_{Y})\;=\;\cO(D_{\fd^{\ell}_{i}}\times E^{g-t}+E^{t-1}\times D_{\fd^{r}_{j}})
	\end{equation*}
	where $D_{\fd^{\ell}_{i}}$ and $D_{\fd^{r}_{j}}$ are the divisors of the form \cref{eq.lnk.equiv.str.second} for respective divisors $\fd^{\ell}_{i}$ ($1\leq i\leq t-1$) and $\fd^{r}_{j}$ ($t+1\leq j\leq g$) on $E$ with $\deg\fd^{\ell}_{i}=n_{i}-2$ and $\deg\fd^{r}_{j}=n_{j}-2$. Therefore the desired result follows.
\end{proof}

We end this discussion with the following result, which we later generalize in \Cref{prop.factor.Phi}.

\begin{lemma}\label{pr.all2}
  When all $n_i=2$, i.e., $k=g=n-1$, the morphism $\Phi_{n/k}$ from \Cref{defn.Phi_D} factors as
  \begin{equation}\label{eq:7}
    \xymatrix{
  E^g \ar@/^2pc/[rr]^-{\Phi_{n/k}}  \ar[r]_>>>>>{\phi} & E^g/\Sigma_{n/k}   \ar[r]_-{\iota} & \PP^{n-1}
}
  \end{equation}
where $\phi$ is the quotient map and $\iota$ is an isomorphism.
\end{lemma}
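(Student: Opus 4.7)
The strategy is to recognize $\Phi_{n/k}$ as a morphism described classically in \cref{prop.emb.ell}, and then exploit the standard identification of the complete linear system $|n(0)|$ on $E$ with the hyperplanes of the elliptic normal embedding.

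The first step is to unpack the definition. From \cref{defn.Di}, when every $n_i = 2$ we have $D_i = (0)$ for $i \in \{1,g\}$ and $D_i = 0$ for $2 \leq i \leq g-1$, so for $g \geq 2$
\begin{equation*}
D_{n/k} \;=\; (0) \times E^{g-1} + E^{g-1} \times (0) + \sum_{i=1}^{g-1} \Delta_{i,i+1},
\end{equation*}
which is precisely the divisor $D$ appearing in \cref{prop.emb.ell}. Therefore $\Phi_{n/k}$ sends $\sfx = (x_1,\ldots,x_g)$ to the unique hyperplane $L_{\theta(\sfx)}$ in $\PP^{n-1}$ whose scheme-theoretic intersection with the elliptic normal curve $E \subseteq \PP^{n-1}$ is the effective divisor $(x_1) + (x_2-x_1) + \cdots + (x_g-x_{g-1}) + (-x_g)$. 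The edge case $g=1$ (i.e.\ $n=2$) is the classical fact that $|2(0)|$ realizes $E$ as a double cover of $\PP^1$ with Galois group generated by $z \mapsto -z$, which immediately verifies the lemma in that case.

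Next, for the factorization through $\phi$, I would invoke \cref{pr.equiv}: $\theta$ coincides with the projection to the first $g$ coordinates of the $\Sigma_{g+1}$-equivariant isomorphism $\ve \colon E^g \to \ker(\sfsum) \subseteq E^{g+1}$. Since the hyperplane $L_{\theta(\sfx)}$ is determined by the underlying unordered divisor on $E$, and this divisor depends only on the $\Sigma_{g+1}$-orbit of $\ve(\sfx)$, the morphism $\Phi_{n/k}$ is invariant under $\Sigma_{n/k} = \Sigma_{g+1}$ and descends through $\phi$ to a morphism $\iota \colon E^g/\Sigma_{g+1} \to \PP^{n-1}$.

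Finally, to show that $\iota$ is an isomorphism, I would identify the quotient geometrically. Composing $\ve$ with the symmetrization $E^{g+1} \to S^{g+1}E$ realizes $E^g/\Sigma_{g+1}$ as the fiber over $0 \in E$ of the summation map $S^{g+1}E \to E$; this fiber is the complete linear system $|n(0)|$, canonically a projective space of dimension $g$. Under this identification $\iota$ becomes the standard bijection sending a divisor in $|n(0)|$ to the hyperplane of $\PP^{n-1}$ cutting it out on $E$, which is bijective by the projective normality of the elliptic normal embedding (for $n \geq 3$). The main obstacle will be promoting this bijection on closed points to a scheme-theoretic isomorphism; for this I would note that $\cL_{n/k}$ is ample (\cref{pr.ample}), so $\Phi_{n/k}$, and hence $\iota$, is finite, and then invoke Zariski's Main Theorem applied to the normal (indeed smooth) target $\PP^{n-1}$ to conclude that the bijective finite morphism $\iota$ is an isomorphism.
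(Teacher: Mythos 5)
Your proof is correct, and its skeleton matches the paper's: both invoke \cref{prop.emb.ell} to recognize $\Phi_{n/k}$ as the surjection $\sfx\mapsto L_{\theta(\sfx)}$ onto $(\PP^{g})^\vee$ (with the $\Sigma_{n/k}$-invariance coming, as you say, from the fact that the hyperplane depends only on the unordered divisor, i.e.\ on the $\Sigma_{g+1}$-orbit of $\ve(\sfx)$), and both identify $E^g/\Sigma_{n/k}$ with the fibre over $0$ of the summation map $S^{g+1}E\to E$, hence with $\PP^g$. You part ways only at the last step. The paper's route is sheaf-theoretic: it equips $\cL_{n/k}$ with its $\Sigma_{n/k}$-equivariant structure (\cref{prop.lnk.equiv.str}, \cref{rem.char.triv}), descends it via \cref{pr.gen-equivariant} to an invertible sheaf $(\phi_*\cL_{n/k})^{\Sigma_{n/k}}$ on $E^g/\Sigma_{n/k}\cong\PP^g$ whose space of global sections is $H^0(E^g,\cL_{n/k})$, of dimension $n=g+1$, and concludes that this sheaf must be $\cO_{\PP^g}(1)$, whose complete linear system is an isomorphism. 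Your route is geometric: you recognize $\iota$ as the classical correspondence between the linear system $|n(0)|$ and the hyperplanes of the elliptic normal embedding, and upgrade the resulting bijection on closed points to an isomorphism via finiteness and Zariski's Main Theorem. Both are sound. Two minor remarks on yours: the appeal to projective normality is stronger than needed (only surjectivity of $H^0(\PP^{n-1},\cO(1))\to H^0(E,\cO_E(n(0)))$ in degree one is used, and that is automatic for an embedding by a complete linear system), and the ZMT step could be bypassed since the fibre of $S^{g+1}E\to E$ over $0$ is canonically $\PP(H^0(E,\cO_E(n(0))))$ as a variety, under which $\iota$ becomes the tautological isomorphism onto the dual projective space. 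What the paper's heavier argument buys is reusability: the descended sheaf $(\phi_*\cL_{n/k})^{\Sigma_{n/k}}$ and the triviality of the $\Sigma_{n/k}$-action on $H^0(E^g,\cL_{n/k})$ are precisely the ingredients reused in \cref{le.tg-inj} and in the proof of \cref{thm.char.var.quot}, whereas your argument is more self-contained but yields only the statement of this lemma.
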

\begin{proof}
\Cref{prop.emb.ell} shows that $\Phi_{n/k}$ is a surjection of $E^g$ onto $\PP^g$. Since all $n_i=2$, $\Sigma_{n/k}\cong \Sigma_{g+1}$ and $E^g/\Sigma_{n/k}\cong \PP^{g}$. Indeed, by \Cref{pr.equiv}, the morphism $\ve:E^g \to E^{g+1}$ is equivariant with respect to this action and the natural action of $\Sigma_{g+1}$ on $E^{g+1}$. But $E^{g+1}/\Sigma_{g+1}$  is the symmetric power $S^{g+1}E$ and the composition 
$E^g \stackrel{\ve}{\longrightarrow} E^{g+1} \to S^{g+1}E$ sends $E^g$ to the fiber over 0 of the morphism 
$S^{g+1}E \to E$, $(\!(z_1,\ldots,z_{g+1})\!) \mapsto z_1+\cdots + z_{g+1}$.  
It is well-known that all fibers of this morphism are isomorphic to $\PP^g$.

By \Cref{prop.lnk.equiv.str}, $\cL=\cL_{n/k}$ admits a $\Sigma_{n/k}$-equivariant structure. Since $\Phi_{n/k}\colon E^{g}\to \PP(H^0(E^g,\cL)^*)$ factors through the quotient $\phi\colon E^g\to E^g/\Sigma_{n/k}$, \cref{rem.char.triv} implies that the action of $\Sigma_{n/k}$ on $H^0(E^g,\cL)$ is trivial. \Cref{pr.gen-equivariant} thus applies to show that $(\phi_*\cL)^{\Sigma_{n/k}}$ is an invertible $\cO_{E^g/\Sigma_{n/k}}$-module. Note also that
\begin{equation}\label{eq:6-2}
  H^0\left(E^g/\Sigma_{n/k},(\phi_*\cL)^{\Sigma_{n/k}}\right) \; \cong \; H^0(E^g/\Sigma_{n/k},\phi_*\cL)^{\Sigma_{n/k}}  \; \cong \; H^0(E^g,\cL)^{\Sigma_{n/k}}\; =\;  H^0(E^g,\cL).
\end{equation}
The morphism
$$
\xymatrix{ 
  \PP^g \; \cong \; E^g/\Sigma_{n/k}   \ar[rr]^-{\iota} && \PP(H^0(E^g,\cL)^*) \; \cong \; \PP^g
}
$$
is induced by $(\phi_*\cL)^{\Sigma_{n/k}}$ via the identification \Cref{eq:6-2}; since $(\phi_*\cL)^{\Sigma_{n/k}}$ has a
$(g+1)$-dimensional space of global sections it must be isomorphic to $\cO_{\PP^g}(1)$. This proves the claim that $\iota$ is an isomorphism.
\end{proof}

%%%%%%%%%%%%%%%%%%%%%%%%%%%%%%%%%%%%%%%%%%%%%%%%%%%%%%%%%%%%%%%%
\subsection{The restriction of $\cL_{n/k}$ to certain subvarieties of $E^g$}
%%%%%%%%%%%%%%%%%%%%%%%%%%%%%%%%%%%%%%%%%%%%%%%%%%%%%%%%%%%%%%%%
%%%%%%%%%%%%%%%%%%%%%%%%%%%%%%%%%%%%%%%%%%%%%%%%%%%%%%%%%%%%%%%%

\begin{lemma}
\label{prop.surj.sections}
Suppose that $n_{i}\geq 2$ for all $i$ and fix $t$ such that $n_{t}\geq 3$. Let $Y:=E^{t-1}\times \{z\}\times E^{g-t}$, $D$ a standard divisor of type $(n_{1},\ldots,n_{g})$, and $\cL:=\cO_{E^{g}}(D)$. The natural map 
$$
H^0(E^g,\cL) \; \longrightarrow H^0(Y,\cL\vert_{Y})
$$
is onto.
\end{lemma}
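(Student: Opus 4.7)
The plan is to use the ideal sheaf sequence for the smooth Cartier divisor $Y \subset E^g$. Tensoring $0 \to \cO_{E^g}(-Y) \to \cO_{E^g} \to \cO_Y \to 0$ with the invertible sheaf $\cL$ gives
\begin{equation*}
0 \to \cO_{E^g}(D - Y) \to \cL \to \cL|_Y \to 0,
\end{equation*}
and passing to cohomology reduces the surjectivity of $H^0(E^g,\cL)\to H^0(Y,\cL|_Y)$ to the vanishing $H^1(E^g, \cO_{E^g}(D - Y)) = 0$.

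To establish this vanishing I would argue that $\cO_{E^g}(D - Y)$ is algebraically equivalent to the line bundle of a standard divisor of type $(n_1, \ldots, n_{i-1}, n_i - 1, n_{i+1}, \ldots, n_g)$. Writing $D = \sum_j \pr_j^*(\fd_j) + \sum_j \Delta_{j,j+1}^{z_j}$, the difference $D - Y$ has the same form with $\fd_i$ replaced by $\fd_i - (z)$, a divisor class on $E$ of degree $n_i - 3 + \delta_{i,1} + \delta_{i,g}$, which matches the $i$-th component of a standard divisor of the new type. Because any two effective divisors of equal degree on $E$ are algebraically equivalent, and because the graphs $\Delta^z_{j,j+1}$ for different values of $z$ are translates of one another, the algebraic equivalence class of a standard divisor on $E^g$ depends only on its type; this yields the desired equivalence.

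The hypothesis $n_i \geq 3$ is precisely what makes the reduction work: it guarantees that the new $i$-th entry $n_i - 1 \geq 2$, so the shifted type still has every entry $\geq 2$ and \cref{pr.ample} applies to produce an ample standard divisor in the comparison class. Ampleness is preserved under algebraic equivalence by \cref{thm.mumf.p77}, so $\cO_{E^g}(D - Y)$ is itself ample, and Kodaira vanishing on the abelian variety $E^g$ (whose canonical bundle is trivial) — the argument of \cref{cor.xi-h0} applied verbatim — yields $H^q(E^g, \cO_{E^g}(D-Y)) = 0$ for every $q > 0$, and in particular for $q = 1$. No substantial obstacle arises beyond this bookkeeping; the content of the argument is simply that $n_i \geq 3$ keeps $D - Y$ in the ample cone.
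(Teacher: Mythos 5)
Your proof is correct and takes essentially the same route as the paper: tensor the ideal-sheaf sequence of $Y$ by $\cL$, reduce surjectivity to $H^{1}(E^{g},\cL(-Y))=0$, and observe that $\cL(-Y)$ is the line bundle of a standard divisor of type $(n_{1},\ldots,n_{i}-1,\ldots,n_{g})$, whose entries are still all $\geq 2$ because $n_{i}\geq 3$, so the Kodaira-vanishing argument of \cref{cor.xi-h0} applies. If anything you are slightly more careful than the paper, which asserts that $\cL(-Y)$ is "again a standard divisor" without noting that $\fd_{i}-(z)$ need not be effective; your detour through algebraic equivalence and \cref{thm.mumf.p77} cleanly patches that point.
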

\begin{pf}
There is an exact sequence $0\to \cL(-Y)\to\cL\to\cL|_{Y}\to 0$ so the result will follow once
we show that $H^1(E^g,\cL(-Y))=0$. The sheaf $\cL(-Y)$ is again a balanced standard divisor but
$n_i$ is replaced by $n_i-1$. Thus
the cohomology vanishing follows from \cref{cor.xi-h0}.
\end{pf}

%%%%%%%%%%%%%%%%%%%%%%%%%%%%%%%%%%%%%%%%%%%%%%%%%%%%%%%%%%%%%%%%%%%%%%%%%%%%%%%%
\subsection{The characteristic variety is isomorphic to $E^g/\Sigma_{n/k}$}
\label{subse.char.var.is.a.quotient}
%%%%%%%%%%%%%%%%%%%%%%%%%%%%%%%%%%%%%%%%%%%%%%%%%%%%%%%%%%%%%%%%%%%%%%%%%%%%%%%%

In this section we show that $X_{n/k}$ is isomorphic to $E^{g}/\Sigma_{n/k}$. 
We argue by induction on $g$; various divisors that are not necessarily of the form $D_{n/k}$ appear;  we state the result in terms of balanced standard divisors.

\begin{theorem}\label{thm.char.var.quot}
  Let $D$ be a balanced standard divisor of type $(n_{1},\ldots,n_{g})$. Then the morphism $\Phi_{|D|}\colon E^{g}\to\bbP^{n-1}$ factors as
  \begin{equation}\label{eq:factor.Phi}
    \xymatrix{
      E^g \ar@/^2pc/[rr]^-{\Phi_{|D|}}  \ar[r]_>>>>>{\phi} & E^g/\Sigma_{n/k}   \ar[r]_-{\iota} & \PP^{n-1}
    }
  \end{equation}
  in which $\phi$ is the quotient morphism and $\iota$ is a closed immersion.
\end{theorem}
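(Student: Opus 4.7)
The plan is to argue by induction on $g$, reducing the inductive step to smaller cases by restriction to the $\Sigma_{n/k}$-stable subvarieties $Y_{z} := E^{t-1} \times \{z\} \times E^{g-t}$ for an index $t$ with $n_{t} \geq 3$, exploiting the decomposition provided by \Cref{lem.restr.dnk.equiv} after the $(\Sigma_{\ell} \times \Sigma_{r})$-equivariant isomorphism $\psi \colon E^{t-1} \times E^{g-t} \to Y_{z}$ supplied there (where $\Sigma_{\ell}, \Sigma_{r}$ are the $\Sigma$-groups for the subsequences $(n_{1},\ldots,n_{t-1})$ and $(n_{t+1},\ldots,n_{g})$). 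The base cases are $g = 1$ (for $n_{1} \geq 3$ a balanced standard divisor of type $(n_{1})$ is effective of degree $n_{1}$ and thus gives a classical closed immersion of $E$ into $\mathbb{P}^{n-1}$; for $n_{1} = 2$ the unique balanced divisor is $2(0)$ and $\Phi_{|2(0)|}\colon E \to \mathbb{P}^{1}$ is the $\Sigma_{2}$-quotient) and the case in which every $n_{i} = 2$, where a balanced divisor of that type is forced to be $D_{n/k}$ and \Cref{pr.all2} applies. In the inductive step I may therefore assume $g \geq 2$ and fix $t$ with $n_{t} \geq 3$, so that $s_{t} \notin \Sigma_{n/k}$ and $\Sigma_{n/k} = \Sigma_{\ell} \times \Sigma_{r}$.

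First I would establish that $\Phi_{|D|}$ factors through $\phi$ by showing that the canonical $\Sigma_{n/k}$-equivariant structure on $\cL := \cO_{E^{g}}(D)$ guaranteed by \Cref{prop.lnk.equiv.str} and \Cref{le.inv-div} induces the trivial action on $H^{0}(E^{g}, \cL)$. For each $z \in E$ the restriction map
\[
H^{0}(E^{g}, \cL) \; \longrightarrow \; H^{0}(Y_{z}, \cL|_{Y_{z}})
\]
is $\Sigma_{n/k}$-equivariant and surjective by \Cref{prop.surj.sections}, and these maps are collectively injective as $z$ ranges over $E$ since a section vanishing on every $Y_{z}$ vanishes identically. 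Through $\psi$ and the Künneth formula the target becomes $H^{0}(E^{t-1}, \cO(D_{\ell})) \otimes H^{0}(E^{g-t}, \cO(D_{r}))$, on which $\Sigma_{\ell}$ and $\Sigma_{r}$ act trivially by the inductive hypothesis applied to the balanced standard divisors $D_{\ell}$ and $D_{r}$. Therefore $\Sigma_{n/k}$ acts trivially on each $H^{0}(Y_{z}, \cL|_{Y_{z}})$, hence on $H^{0}(E^{g}, \cL)$, and \Cref{pr.gen-equivariant} then identifies $(\phi_{*}\cL)^{\Sigma_{n/k}}$ as an invertible sheaf on $E^{g}/\Sigma_{n/k}$ whose pullback is $\cL$, yielding the desired factorization $\Phi_{|D|} = \iota \circ \phi$.

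Next I would verify that $\iota$ is injective on points. Suppose $\Phi_{|D|}(\sfx) = \Phi_{|D|}(\sfy)$ with $\sfx = (x_{1}, \ldots, x_{g})$ and $\sfy = (y_{1}, \ldots, y_{g})$. If $x_{t} \neq y_{t}$ for some $t$ with $n_{t} \geq 3$, then \Cref{lem.sep.pts.ni.geq.three} produces a standard divisor linearly equivalent to $D$ that contains $\sfx$ but not $\sfy$, contradicting the equality of images. Hence $x_{t} = y_{t}$ whenever $n_{t} \geq 3$; fixing one such $t$, both $\sfx$ and $\sfy$ lie in $Y_{x_{t}}$, and applying the inductive hypothesis to $D_{\ell}$ and $D_{r}$ through $\psi$ forces the remaining coordinates of $\psi^{-1}(\sfx)$ and $\psi^{-1}(\sfy)$ to be $\Sigma_{\ell}$- and $\Sigma_{r}$-related, placing $\sfx$ and $\sfy$ in the same $\Sigma_{n/k}$-orbit.

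Finally, to promote the set-theoretic bijection $\iota \colon E^{g}/\Sigma_{n/k} \to \iota(E^{g}/\Sigma_{n/k})$ to a closed immersion I would show that $\iota$ is unramified, i.e.\ that $|D|$ separates tangent directions modulo $\Sigma_{n/k}$; together with properness this yields a closed immersion. For the tangent-separation step I would refine \Cref{lem.dnk.pt.in.only.div} to produce, for each $\sfx \in E^{g}$ and each tangent direction $v$ at $\sfx$ not tangent to the $\Sigma_{n/k}$-orbit through $\sfx$, a standard divisor linearly equivalent to $D$ containing $\sfx$ whose first-order vanishing at $\sfx$ separates $v$; restricting to $Y_{x_{t}}$ then reduces the remaining directions to tangent separation on the smaller factors, handled inductively. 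The main obstacle will be precisely this tangent-separation step: unlike the point-separation argument underlying \Cref{lem.sep.pts.ni.geq.three}, it demands that the linearly equivalent divisors contain $\sfx$ with prescribed first-order behaviour along a chosen direction, and the induction must transport this finer datum coherently through the restriction isomorphism $\psi$.
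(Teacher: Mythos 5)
Your overall architecture coincides with the paper's: induction on $g$ with base cases $g=1$ and ``all $n_i=2$'' (\cref{pr.all2}), point separation via \cref{lem.sep.pts.ni.geq.three}, reduction to the slices $Y=E^{t-1}\times\{z\}\times E^{g-t}$ via \cref{lem.restr.dnk.equiv} and \cref{prop.surj.sections}, and the final appeal to ``injective + injective on tangent spaces $\Rightarrow$ closed immersion''. The one place you genuinely diverge is the factorization through $\phi$. The paper proves this directly (\cref{le.factors}): for an index $i$ with $n_i=2$ it restricts an arbitrary effective $D'\sim D$ to the curve $\{(z_1,\dots,z_{i-1})\}\times E\times\{(z_{i+1},\dots,z_g)\}$, where $D$ restricts to a degree-$2$ divisor, and observes that linear equivalence on $E$ forces $\overline{D'}$ to be $\{p,q\}$ with $p+q=z_{i-1}+z_{i+1}$, i.e.\ $\overline{D'}=\sfz+s_i(\sfz)$. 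Your route instead deduces the factorization from triviality of the $\Sigma_{n/k}$-action on $H^0(E^g,\cL)$, propagated inductively through the restrictions to the slices $Y_z$ for a $t$ with $n_t\ge 3$. This can be made to work, but it carries a hidden cost: \cref{lem.restr.dnk.equiv} only gives $\psi^*(D|_{Y})\sim D_\ell\times E^{g-t}+E^{t-1}\times D_r$, not equality, so the K\"unneth identification of $H^0(Y_z,\cL|_{Y_z})$ is not \emph{a priori} equivariant, and you must separately argue (e.g.\ via \cref{le.act-scale} and \cref{rem.char.triv}, using that the restriction of the canonical fixed section of $\cO(D)$ to a generic $Y_z$ is a nonzero fixed vector) that the induced action on $H^0(Y_z,\cL|_{Y_z})$ is trivial rather than merely $\chi$-isotypic. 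The paper's divisor argument avoids this entirely and moreover handles the case where no $n_t\ge 3$ exists without a separate branch.

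On the step you flag as the main obstacle: it is resolvable exactly along the lines you sketch, and more easily than you anticipate — no refinement of \cref{lem.dnk.pt.in.only.div} controlling ``prescribed first-order behaviour along a chosen direction'' is needed. The lemma as stated produces $D'\sim D$ whose only component through $\sfx$ is $D_0=E^{t-1}\times(x_t)\times E^{g-t}$, with multiplicity one. The correct dichotomy for a tangent vector $v$ at $\phi(\sfx)$ is not ``tangent to the orbit or not'' (the orbit is finite, so that distinguishes nothing) but ``tangent to $D_0/\Sigma$ or not'': in the transverse case the section cutting out $D'$ already separates $v$, precisely because $D_0$ occurs with multiplicity one and is the unique component through $\sfx$; in the tangent case $v$ lives on the slice and is handled by the inductive hypothesis applied to $D_\ell$ and $D_r$ through $\psi$, using surjectivity of $H^0(E^g,\cL)\to H^0(Y,\cL|_Y)$. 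This is exactly the paper's \cref{le.tg-inj}.
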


First we consider the case $g=1$, i.e., $k=1$. Since $D$ is a degree-$n$ divisor on $E$, it is very ample if $n\geq 3$. If $n=2$, then the claim follows from \cref{pr.all2}.

The proof for $g\geq 2$ consists of a series of partial results.

\begin{lemma}\label{le.factors}
  The morphism $\Phi_{|D|}$ factors as in \Cref{eq:factor.Phi}.
\end{lemma}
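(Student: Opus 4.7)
The plan is to proceed by induction on $g$. When $g = 1$, either $n_1 \ge 3$, in which case $\Sigma_{n/k}$ is trivial and there is nothing to show, or $n_1 = 2$ (so $n = 2$), which is handled by \cref{pr.all2}; this provides the base case. For the inductive step, suppose the lemma holds in all dimensions below $g$ and let $D$ be a balanced standard divisor of type $(n_1,\ldots,n_g)$ with $g\ge 2$. If $n_i = 2$ for every $i$, then \cref{pr.all2} applies and we are done. Otherwise fix some $t$ with $n_t\ge 3$; since $s_t \notin \Sigma_{n/k}$, the canonical isomorphism
\begin{equation*}
\Sigma_{n/k} \;\cong\; \Sigma_{[n_1,\ldots,n_{t-1}]} \times \Sigma_{[n_{t+1},\ldots,n_g]}
\end{equation*}
discussed before \cref{lem.restr.dnk.equiv} holds, and every $\gamma \in \Sigma_{n/k}$ fixes the $t$-th coordinate of any point of $E^g$.

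It suffices to show that $\Phi_{|D|}(\gamma\sfx) = \Phi_{|D|}(\sfx)$ for every $\gamma\in\Sigma_{n/k}$ and every $\sfx\in E^g$. Both $\sfx$ and $\gamma\sfx$ lie in the slice $Y := E^{t-1}\times\{x_t\}\times E^{g-t}$. By \cref{prop.surj.sections}, the restriction $H^0(E^g,\cL)\twoheadrightarrow H^0(Y,\cL|_Y)$ is surjective; dually this produces a linear embedding $\iota_Y\colon\bbP(H^0(Y,\cL|_Y)^*)\hookrightarrow \bbP(H^0(E^g,\cL)^*)$ with $\Phi_{|D|}|_Y = \iota_Y\circ\Phi_{|D|_Y|}$. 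Since $\iota_Y$ is injective, it is enough to prove that $\Phi_{|D|_Y|}\colon Y\to\bbP(H^0(Y,\cL|_Y)^*)$ is $\Sigma_{n/k}$-invariant.

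To establish this invariance, apply \cref{lem.restr.dnk.equiv}: there is a $\Sigma_{n/k}$-equivariant isomorphism $\psi\colon E^{t-1}\times E^{g-t}\to Y$ such that $\psi^*(D|_Y)$ is linearly equivalent to $D_\ell\times E^{g-t} + E^{t-1}\times D_r$ for balanced standard divisors $D_\ell$ and $D_r$ on $E^{t-1}$ and $E^{g-t}$ of types $(n_1,\ldots,n_{t-1})$ and $(n_{t+1},\ldots,n_g)$ respectively. Since $\cO(D_\ell)\boxtimes\cO(D_r)$ computes the corresponding line bundle, the Künneth formula gives
\begin{equation*}
H^0\!\big(E^{t-1}\times E^{g-t},\,\cO(D_\ell)\boxtimes\cO(D_r)\big) \;\cong\; H^0(E^{t-1},\cO(D_\ell))\otimes H^0(E^{g-t},\cO(D_r)),
\end{equation*}
so that the associated morphism is the composite of $\Phi_{|D_\ell|}\times\Phi_{|D_r|}$ with the Segre embedding. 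By the inductive hypothesis applied to $D_\ell$ and $D_r$, both factor morphisms descend to $E^{t-1}/\Sigma_{[n_1,\ldots,n_{t-1}]}$ and $E^{g-t}/\Sigma_{[n_{t+1},\ldots,n_g]}$ respectively, whence the Segre composite factors through $(E^{t-1}\times E^{g-t})/\Sigma_{n/k}$. Transporting via $\psi$ yields the desired factorization of $\Phi_{|D|_Y|}$ through $Y/\Sigma_{n/k}$, and the induction is complete.

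The main delicate point in this scheme is the identification in the inductive step: one must check that $\psi^*(\cL|_Y)$ and the pullback of $\cO(D_\ell)\boxtimes\cO(D_r)$ produce the \emph{same} morphism to projective space after accounting for the surjective restriction $H^0(E^g,\cL)\twoheadrightarrow H^0(Y,\cL|_Y)$, and that the equivariance of $\psi$ really transports $\Sigma_{n/k}$-invariance on the product to $\Sigma_{n/k}$-invariance on $Y$. Both of these are already packaged into \cref{prop.surj.sections,lem.restr.dnk.equiv}.
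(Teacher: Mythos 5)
Your argument is correct, but it is a genuinely different proof from the one in the paper. The paper's proof of \cref{le.factors} is direct and non-inductive: for a generator $s_i$ with $n_i=2$ and a point $\sfz$, it restricts $D$ and an arbitrary effective divisor $D'\sim D$ containing $\sfz$ to the one-dimensional slice $\{(z_1,\ldots,z_{i-1})\}\times E\times\{(z_{i+1},\ldots,z_g)\}$; the restriction of $D$ there has degree $n_i=2$, so the restriction of $D'$ is a degree-two divisor $(p)+(q)$ with $p+q=z_{i-1}+z_{i+1}$, and if $p=z_i$ then necessarily $q=z_{i-1}-z_i+z_{i+1}$, i.e.\ $s_i(\sfz)\in\Supp(D')$. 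This isolates exactly why $n_i=2$ matters (degree-two linear systems on $E$ are determined by the sum of their points) and needs nothing beyond the Seesaw computation already in \cref{pr.ssw}. Your route instead runs an induction on $g$: you peel off a coordinate $t$ with $n_t\ge 3$, use the fact that $\Sigma_{n/k}$ fixes the $t$-th coordinate to confine each orbit to a slice $Y$, and then invoke the surjectivity of restriction (\cref{prop.surj.sections}), the equivariant splitting of $D|_Y$ (\cref{lem.restr.dnk.equiv}), and K\"unneth/Segre to reduce to lower-dimensional factors, with \cref{pr.all2} as the terminal case. This is heavier machinery than the statement requires, but it is sound — indeed it is precisely the reduction scheme the paper deploys later for the injectivity and tangent-space statements (\cref{le.iota-inj,le.tg-inj}) — and your handling of the two delicate points (that linear equivalence only changes the morphism by a projective linear automorphism of the target, and that the equivariance of $\psi$ transports invariance) is adequate. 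The only cost of your approach relative to the paper's is that it obscures the elementary reason the factorization holds, and it makes \cref{le.factors} logically dependent on \cref{prop.surj.sections} and \cref{lem.restr.dnk.equiv}, which the paper's proof does not need.
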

\begin{proof}
Let $\sfz$ be an arbitrary point in $E^g$.  We must show that $\Phi_{|D|}(\sfz)=\Phi_{|D|}(s_i(\sfz))$ for all $i$ such that $n_i=2$;
i.e., we must show that if $D'$ is an effective divisor linearly equivalent to $D$ and $\sfz \in \Supp D'$, then $s_i(\sfz)\in \Supp D'$ for all $i$ such that $n_i=2$.

Let $D'$ be an effective divisor such that $D'\sim D$, and suppose $\sfz=(z_1,\ldots,z_g)$ is in $\Supp D'$. Fix $i$ such that $n_{i}=2$. Let $\overline{D}$ and $\overline{D'}$ be the restrictions of $D$ and $D'$, respectively, to
$$
E_i \; := \; \{(z_1,\ldots,z_{i-1})\} \times E\times \{(z_{i+1},\ldots,z_g)\}.
$$
Since $D\sim D'$, $\overline{D} \sim \overline{D'}$ as divisors on $E_i$. 
Since $D$ is a balanced standard divisor of type $(n_{1},\ldots,n_{g})$ and $n_{i}=2$,
\begin{equation*}
\overline{D} \;=\; (z_1,\ldots,z_{i-1}, z_{i-1}, z_{i+1},\ldots,z_g) + (z_1,\ldots,z_{i-1}, z_{i+1}, z_{i+1},\ldots,z_g)
\end{equation*}
where $z_{0}=z_{g+1}=0$.
Since $\overline{D} \sim \overline{D'}$,
$$
\overline{D'} \;=\; (z_1,\ldots,z_{i-1}, p,z_{i+1},\ldots,z_g)\,+\, (z_1,\ldots,z_{i-1}, q, z_{i+1},\ldots,z_g)
$$
where $p+q=z_{i-1}+z_{i+1}$.
By hypothesis, $(z_1,\ldots,z_g) \in \Supp D'$ so we may assume $p=z_{i}$ and $q=z_{i-1}-z_i+z_{i+1}$. Thus $\overline{D'}=\sfz+s_{i}(\sfz)$, and hence $s_{i}(\sfz)\in D'$.
\end{proof}

\begin{lemma}\label{le.iota-inj}
The morphism $\iota$ in \Cref{eq:factor.Phi} is injective.   
\end{lemma}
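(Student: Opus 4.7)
My plan is to argue by induction on $g$, the case $g=1$ having already been dispatched (via very ampleness of $D$ on $E$ when $n\geq 3$, and by \cref{pr.all2} when $n=2$).

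Assume $g \geq 2$ and $\Phi_{|D|}(\mathbf{x}) = \Phi_{|D|}(\mathbf{y})$ for some $\mathbf{x}, \mathbf{y} \in E^g$; the goal is to show that $\mathbf{y}$ lies in the $\Sigma_{n/k}$-orbit of $\mathbf{x}$. If every $n_i$ equals $2$, the claim is already contained in \cref{pr.all2}, so I may assume there is some $t$ with $n_t \geq 3$. The first step is to observe, using \cref{lem.sep.pts.ni.geq.three}, that if $x_s \neq y_s$ for any index $s$ with $n_s \geq 3$, then there exists an effective $D' \sim D$ containing $\mathbf{x}$ but not $\mathbf{y}$, contradicting $\Phi_{|D|}(\mathbf{x}) = \Phi_{|D|}(\mathbf{y})$; hence $x_s = y_s$ for every such $s$.

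Fix one such $t$ and set $z := x_t = y_t$, so both points lie on $Y := E^{t-1} \times \{z\} \times E^{g-t}$. The surjection $H^0(E^g, \cO(D)) \twoheadrightarrow H^0(Y, \cO(D|_Y))$ from \cref{prop.surj.sections} implies that the morphism $\Phi_Y \colon Y \to \PP(H^0(Y,\cO(D|_Y))^*)$ induced by $\cO(D|_Y)$ also satisfies $\Phi_Y(\mathbf{x}) = \Phi_Y(\mathbf{y})$. I will then transport the situation to $E^{t-1} \times E^{g-t}$ along the $\Sigma_{[n_1,\ldots,n_{t-1}]} \times \Sigma_{[n_{t+1},\ldots,n_g]}$-equivariant isomorphism $\psi$ of \cref{lem.restr.dnk.equiv}, which replaces $D|_Y$ (up to linear equivalence) with a box sum $D_\ell \boxplus D_r$ of balanced standard divisors of types $(n_1,\ldots,n_{t-1})$ and $(n_{t+1},\ldots,n_g)$.

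By K\"unneth, $H^0(\cO(D_\ell \boxplus D_r)) \cong H^0(\cO(D_\ell)) \otimes H^0(\cO(D_r))$, so the morphism $\Phi_{|D_\ell \boxplus D_r|}$ is the composition of $(\Phi_{|D_\ell|}, \Phi_{|D_r|})$ with the Segre embedding. Injectivity of the Segre map combined with the inductive hypothesis applied to $D_\ell$ on $E^{t-1}$ and $D_r$ on $E^{g-t}$ (both having strictly fewer factors than $g$) will place the two factors of $\psi^{-1}(\mathbf{x})$ and $\psi^{-1}(\mathbf{y})$ in single orbits of $\Sigma_{[n_1,\ldots,n_{t-1}]}$ and $\Sigma_{[n_{t+1},\ldots,n_g]}$ respectively, and equivariance of $\psi$ will promote this to $\mathbf{y} \in \Sigma_{n/k} \cdot \mathbf{x}$, completing the induction. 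The main technical point I expect will require care is verifying cleanly that, under the identifications supplied by \cref{prop.surj.sections,lem.restr.dnk.equiv}, the restriction of $\Phi_{|D|}$ to $Y$ truly factors as the Segre composite of the two lower-dimensional $\Phi$'s, so that the inductive hypothesis applies to the projected points.
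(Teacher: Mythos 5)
Your proposal is correct and follows essentially the same route as the paper's proof: separating coordinates at indices with $n_t\ge 3$ via \cref{lem.sep.pts.ni.geq.three}, restricting to the slice $Y$ using the surjectivity from \cref{prop.surj.sections}, splitting the divisor via the equivariant isomorphism of \cref{lem.restr.dnk.equiv}, and concluding by the Segre embedding, induction, and \cref{pr.all2} for the all-$n_i=2$ base case. The only cosmetic difference is that you argue directly from $\Phi_{|D|}(\mathbf{x})=\Phi_{|D|}(\mathbf{y})$ while the paper argues contrapositively by exhibiting a separating divisor, and the "technical point" you flag is exactly what the paper's commutative diagram \cref{eq.iota.inj.comm.diag} records.
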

\begin{proof}
Suppose $\sfz$ and $\sfw$ are in different $\Sigma_{n/k}$-orbits.  We will find an effective divisor $D'$ that is linearly equivalent to $D$ such that $\sfz \in \Supp D'$ but $\sfw \notin \Supp D'$. We do this by induction on $g$, reducing to the case when all $n_i$ are $2$ (or $g=1$).

{\bf (1) The case when $z_i\ne w_i$ and $n_i\ge 3$ for some $i$.} This follows from \cref{lem.sep.pts.ni.geq.three}.

{\bf (2) The case when  $z_i=w_i$ and $n_i\ge 3$ for some $i$.}
The points $\sfz$ and $\sfw$ both belong to
\begin{equation*}
	Y \; :=\;  E^{i-1}\times \{z_i\} \times E^{g-i}.
\end{equation*}
Applying \cref{lem.restr.dnk.equiv} to $D$ and $Y$, we obtain a $\Sigma_{[n_{1},\ldots,n_{i-1}]}\times\Sigma_{[n_{i+1},\ldots,n_{g}]}$-equivariant isomorphism $\psi\colon E^{i-1}\times E^{g-i}\to Y$ and balanced standard divisors $D_{\ell}$ and $D_{r}$ of types $(n_{1},\ldots,n_{i-1})$ and $(n_{i+1},\ldots,n_{g})$ such that
\begin{equation}\label{eq.iota.inj.lin.eq}
	\psi^{*}(\cO_{E^{g}}(D)|_{Y})\;\cong\;\cO(\overline{D})
\end{equation}
where $\overline{D}=D_{\ell}\times E^{g-i}+E^{i-1}\times D_{r}$.
There is a commutative diagram
\begin{equation}\label{eq.iota.inj.comm.diag}
	\begin{tikzcd}
		E^{i-1}\times E^{g-i}\ar[d,"\Phi_{|\psi^{*}(\cO(D)|_{Y})|}"]\ar[r,"\psi","\cong"'] & Y\ar[d,"\Phi_{|\cO(D)|_{Y}|}"]\ar[r] & E^{g}\ar[d,"\Phi_{|D|}"] \\
		\bbP(H^{0}(\psi^{*}(\cO_{E^{g}}(D)|_{Y}))^{*})\ar[r,"\cong"'] & \bbP(H^{0}(\cO_{E^{g}}(D)|_{Y})^{*})\ar[r] & \bbP(H^{0}(\cO_{E^{g}}(D))^{*})
	\end{tikzcd}
\end{equation}
where the lower right horizontal arrow is a closed immersion since the canonical map
\begin{equation*}
  H^0(E^g,\cO_{E^{g}}(D))\to H^0(Y,\cO_{E^{g}}(D)\vert_Y)
\end{equation*}
is onto by \cref{prop.surj.sections}. So it suffices to prove that $\Phi_{|\psi^{*}(\cO(D)|_{Y})|}$ sends $\psi^{-1}(\sfz)$ and $\psi^{-1}(\sfw)$ to different points. By \cref{eq.iota.inj.lin.eq}, we can replace $\Phi_{|\psi^{*}(\cO(D)|_{Y})|}$ by $\Phi_{|\overline{D}|}$, which is the composition of
\begin{equation*}
	\Phi_{|D_{\ell}|}\times\Phi_{|D_{r}|}\colon E^{i-1}\times E^{g-i}\to\bbP(H^{0}(\cO_{E^{i-1}}(D_{\ell}))^{*})\times\bbP(H^{0}(\cO_{E^{g-i}}(D_{r}))^{*})
\end{equation*}
and the Segre embedding
\begin{equation*}
	\bbP(H^{0}(\cO_{E^{i-1}}(D_{\ell}))^{*})\times\bbP(H^{0}(\cO_{E^{g-i}}(D_{r}))^{*})\to\bbP((H^{0}(\cO_{E^{i-1}}(D_{\ell}))\otimes H^{0}(\cO_{E^{g-i}}(D_{r})))^{*}).
\end{equation*}
By the induction hypothesis, the morphisms $\iota$ for $D_{\ell}$ and $D_{r}$ are injective. Since $\psi^{-1}(\sfz)$ and $\psi^{-1}(\sfw)$ do not belong to the same orbit, they are sent to different points by $\Phi_{|D_{\ell}|}\times\Phi_{|D_{r}|}$, and so by $\Phi_{|\overline{D}|}$. Therefore $\iota$ for $D$ is injective.

{\bf (3) The case when $n_i=2$ for all $i$.} The only balanced standard divisor of type $(2,\ldots,2)$ is of the form \cref{eq.min.bal.std.div}. So the claim follows from \Cref{pr.all2}.
\end{proof}

\begin{lemma}\label{le.tg-inj}
	The morphism $\iota$ in \Cref{eq:factor.Phi} induces embeddings of tangent spaces.
\end{lemma}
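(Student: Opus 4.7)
The plan is to establish the infinitesimal analogue of \Cref{le.iota-inj}: for every $\sfz\in E^{g}$ and every tangent vector $v\in T_{\sfz}E^{g}$ whose image in $T_{\phi(\sfz)}(E^{g}/\Sigma_{n/k})$ is non-zero, we will produce a section $s\in H^{0}(E^{g},\cO_{E^{g}}(D))$ with $s(\sfz)=0$ and $ds_{\sfz}(v)\neq 0$. Together with the injectivity from \Cref{le.iota-inj} and the automatic properness of $\iota$, this forces $\iota$ to be unramified, hence a closed immersion. We argue by induction on $g$, paralleling the case analysis in \Cref{le.iota-inj}.

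The base case $g=1$ is immediate: for $n\geq 3$, $D$ has degree $n$ on $E$, so $\cO_{E}(D)$ is very ample and $\Phi_{|D|}$ is a closed immersion, while the case $n=2$ is covered by \Cref{pr.all2}. For $g\geq 2$, if all $n_{i}=2$ then $\iota$ is already an isomorphism by \Cref{pr.all2}. Otherwise, choose $t$ with $n_{t}\geq 3$ and put $Y:=E^{t-1}\times\{z_{t}\}\times E^{g-t}$. Because $n_{t}\geq 3$ forces $s_{t}\notin\Sigma_{n/k}$ while every other generator $s_{i}$ leaves the $t$-th coordinate untouched, $Y$ is $\Sigma_{n/k}$-stable and the $\Sigma_{n/k}$-orbit of $\sfz$ lies entirely in $Y$. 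Decompose $T_{\sfz}E^{g}=T_{\sfz}Y\oplus N$ with $N\cong T_{z_{t}}E$ the normal direction, and write $v=v_{Y}+v_{N}$.

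If $v_{N}\neq 0$, apply \Cref{lem.dnk.pt.in.only.div} to produce $D'\sim D$ such that $Y$ appears in $D'$ with multiplicity one and no other component of $D'$ passes through $\sfz$. The associated section $s\in H^{0}(E^{g},\cO(D))$ factors locally near $\sfz$ as $s=c\cdot\pi$, where $\pi$ is a local equation cutting out $Y$ and $c(\sfz)\neq 0$; thus $ds_{\sfz}(v)=c(\sfz)\,d\pi_{\sfz}(v_{N})\neq 0$. If instead $v_{N}=0$ and $v=v_{Y}\in T_{\sfz}Y$ is not tangent to the $\Sigma_{n/k}$-orbit of $\sfz$, then \Cref{lem.restr.dnk.equiv} supplies an equivariant isomorphism $\psi\colon E^{t-1}\times E^{g-t}\xrightarrow{\sim}Y$ under which $D|_{Y}$ is linearly equivalent to $D_{\ell}\times E^{g-t}+E^{t-1}\times D_{r}$ for balanced standard divisors $D_{\ell}$ and $D_{r}$ of types $(n_{1},\ldots,n_{t-1})$ and $(n_{t+1},\ldots,n_{g})$. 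The induction hypothesis applied to $D_{\ell}$ and $D_{r}$, combined with the Segre embedding as in \cref{eq.iota.inj.comm.diag}, exhibits the map $Y\to\PP(H^{0}(Y,\cO_{Y}(D|_{Y}))^{*})$ as the composition of the quotient $Y\twoheadrightarrow Y/\Sigma_{n/k}$ with a closed immersion. Hence some $\overline{s}\in H^{0}(Y,\cO_{Y}(D|_{Y}))$ satisfies $\overline{s}(\sfz)=0$ and $d\overline{s}_{\sfz}(v)\neq 0$, and \Cref{prop.surj.sections} lifts $\overline{s}$ to a section $s\in H^{0}(E^{g},\cO(D))$; since $v\in T_{\sfz}Y$, $ds_{\sfz}(v)=d\overline{s}_{\sfz}(v)\neq 0$.

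The main subtlety will be the tangent-space bookkeeping for $\iota$ at points whose $\Sigma_{n/k}$-stabilizer is non-trivial, where the Zariski tangent space of $E^{g}/\Sigma_{n/k}$ at $\phi(\sfz)$ does not coincide with $T_{\sfz}E^{g}$ modulo orbit directions. Once we verify that $\cO_{E^{g}}(D)$ separates every $v\in T_{\sfz}E^{g}$ that maps non-trivially to $T_{\phi(\sfz)}(E^{g}/\Sigma_{n/k})$, a Nakayama-type argument applied to the induced map on cotangent spaces of the quotient at $\phi(\sfz)$ yields the desired injectivity of $d\iota_{\phi(\sfz)}$.
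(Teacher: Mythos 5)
Your overall architecture (induction on $g$, the dichotomy between the normal direction to $Y=E^{t-1}\times\{z_t\}\times E^{g-t}$ and directions inside $Y$, and the use of \Cref{lem.dnk.pt.in.only.div}, \Cref{lem.restr.dnk.equiv} and \Cref{prop.surj.sections}) mirrors the paper's, but there is a genuine gap at exactly the point you flag and then defer to a ``Nakayama-type argument.'' You propose to separate those tangent vectors $v\in T_{\sfz}E^g$ whose image under $d\phi_{\sfz}$ is non-zero. At a point $\sfz$ with non-trivial $\Sigma_{n/k}$-stabilizer the quotient $E^g/\Sigma_{n/k}$ is still smooth, but $\phi$ is ramified there, so $d\phi_{\sfz}$ is not surjective: for instance when $n/k=[3,2]$ the quotient is $S^2E$ and $d\phi_{\sfz}$ has rank $1$ along the fixed curve $z_1=2z_2$ of $s_2$, while $\dim T_{\phi(\sfz)}(S^2E)=2$. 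Moreover every global section of $\cO_{E^g}(D)$ is $\Sigma_{n/k}$-invariant and is the pullback of a section of $(\phi_*\cL)^{\Sigma_{n/k}}$, so its derivative at a point of its zero locus factors through $d\phi_{\sfz}$. Consequently the data you produce --- derivatives at $\sfz$ of sections upstairs --- can only ever control $d\iota_{\phi(\sfz)}$ on the subspace $\mathrm{im}(d\phi_{\sfz})\subsetneq T_{\phi(\sfz)}(E^g/\Sigma_{n/k})$, and no argument on cotangent spaces can recover injectivity of $d\iota_{\phi(\sfz)}$ on the rest of $T_{\phi(\sfz)}(E^g/\Sigma_{n/k})$ from that information: the surjectivity of $\fm_{\iota(\sfy)}/\fm_{\iota(\sfy)}^2\to\fm_{\sfy}/\fm_{\sfy}^2$ required for a closed immersion is a statement about the full tangent space of the quotient.

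The paper's proof avoids this by working downstairs from the outset: it takes $v\in T_{\sfy}(E^g/\Sigma_{n/k})$ with $\sfy=\phi(\sfx)$, regards the section $s$ cutting out $D'$ as a section of the descended invertible sheaf $(\phi_*\cL)^{\Sigma_{n/k}}$ supplied by \Cref{pr.gen-equivariant}, and splits into the cases ``$v$ is / is not tangent to $D_0/\Sigma_{n/k}$'' --- a dichotomy on the tangent space of the quotient, not of $E^g$. In the non-tangential case that descended section already separates $v$; in the tangential case the induction hypothesis is applied to the quotient of $Y=D_0$, which sits as a closed subvariety of the image via \Cref{prop.surj.sections} and the diagram \Cref{eq.iota.inj.comm.diag}. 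To repair your argument you would need to restate both the claim and the induction hypothesis in terms of $T_{\phi(\sfz)}(E^g/\Sigma_{n/k})$ and of invariant sections viewed on the quotient, at which point it becomes the paper's proof.
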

\begin{proof}
Write $\Sigma=\Sigma_{n/k}$ and $\cL=\cO_{E^{g}}(D)$. The sheaf $\cL$ admits the $\Sigma$-equivariant structure on $\cL$ described in \Cref{prop.lnk.equiv.str} and, by \cref{rem.char.triv}, the $\Sigma$-action on $H^0(E^g,\cL)$ is trivial. By \Cref{pr.gen-equivariant},
$(\phi_*\cL)^{\Sigma} $  is an invertible sheaf on $E^g/\Sigma$ such that  
$$
\phi^*\big((\phi_*\cL)^{\Sigma} \big) \cong \cL
$$ 
and,  as in \Cref{eq:6-2},
\begin{equation*}
  H^0(E^g/\Sigma,\big(\phi_*\cL)^{\Sigma} \big))\;\cong\;H^0(E^g,\cL).
\end{equation*}
The morphism $  \iota\colon E^g/\Sigma\to \PP^{n-1}$  is $\Phi_{|(\phi_*\cL)^{\Sigma}|}$.

We prove the claim by induction on $g$. For the induction step we consider two cases:

{\bf (1) All $n_i=2$.} This is nothing but \Cref{pr.all2}.

{\bf (2) There is some $i$ with $n_i\ge 3$.} Consider a point
\begin{equation*}
 \sfx=(x_1,\cdots,x_g)\in E^g 
\end{equation*}
with the goal of showing that $\iota$ is one-to-one on $T_{\phi(\sfx)}(E^g/\Sigma)$. Because $n_i\ge 3$, \cref{lem.dnk.pt.in.only.div} implies that there is a standard divisor $D'$ that is linearly equivalent to $D$ such that
\begin{itemize}
\item $D_0:=E^{i-1}\times(x_i)\times E^{g-i}$ is a component of $D'$ with multiplicity one;
\item no other component of $D'$ contains $\sfx$.
\end{itemize}
$D_0$ is invariant under $\Sigma$ so we have an embedding
\begin{equation*}
  D_0/\Sigma\;\subseteq\;E^g/\Sigma.
\end{equation*}
The divisor $D'$ is the zero locus of a section $s\in H^0(E^g,\cL)$. When regarded as an element of $H^0(E^g/\Sigma,(\phi_{*}\cL)^{\Sigma})$, $s$ vanishes on $D_0$ and no other components of its zero locus contains $\sfy:=\phi(\sfx)$.

Now consider a non-zero tangent vector $v\in T_{\sfy}(E^g/\Sigma)$. There are two possibilities:

{\bf (2a) $v$ is not tangent to $D_0/\Sigma$.} Since $D_0/\Sigma$ is the only component of the zero locus of $s\in H^0(E^g/\Sigma,(\phi_{*}\cL)^{\Sigma})$ containing $\sfy$, this means that $v$ is not tangent to that zero locus. But then the section $s$ witnesses tangent vector separation at $v\in T_{\sfy}(E^g/\Sigma)$.

{\bf (2b) $v$ is tangent to $D_0/\Sigma$.} Let $Y:=D_{0}$. By the argument in the proof of \cref{le.iota-inj}, we obtain commutative diagram \cref{eq.iota.inj.comm.diag}. By the induction hypothesis, $\iota$ sends $v$ to a non-zero tangent vector.
\end{proof}

\begin{proof}[Proof of \cref{thm.char.var.quot}]
	To see that $\iota$ is an isomorphism onto its image we can apply \cite[Cor.~14.10]{H92}, stating that this follows from the injectivity of $\iota$ provided it induces embeddings of tangent spaces. The two conditions are provided by \Cref{le.iota-inj,le.tg-inj}.
\end{proof}

\begin{corollary}
\label{prop.factor.Phi}
The morphism $\Phi_{n/k}\colon E^g \to \PP^{n-1}$ factors as
\begin{equation}
\xymatrix{
  E^g \ar@/^2pc/[rr]^-{\Phi_{n/k}}  \ar[r]_>>>>>{\phi} & E^g/\Sigma_{n/k}   \ar[r]_-{\iota} & \PP^{n-1}
}
\end{equation}
in which $\phi$ is the quotient morphism and $\iota$ is a closed immersion. In particular, 
\begin{enumerate}
\item\label{item.char.var.quot}
$X_{n/k} \cong E^g/\Sigma_{n/k}$ and
\item\label{item.autom.desc} the automorphism $\s\colon E^g \to E^g$ defined by $\s(\sfz)=\sfz+(\sfk+\sfl-\sfn)\tau$ descends via $\Phi_{n/k}$ to an automorphism of $X_{n/k}$ that we also denote by $\s$.
\end{enumerate}
\end{corollary}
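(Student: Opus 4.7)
The plan is to extract everything from \cref{thm.char.var.quot} and \cref{33396547}, since the corollary is essentially an immediate specialization. First I would verify that $D_{n/k}$ fits the hypotheses of \cref{thm.char.var.quot}: by construction $D_{n/k}$ is a standard divisor of type $(n_1,\ldots,n_g)$, and because its $\fd_i$-components and shift points are chosen so that $(0)\times E^{g-1} + \Delta_{1,2}+\cdots+\Delta_{g-1,g}+E^{g-1}\times (0) \le D_{n/k}$ and $z_j=0$ for all $j$, it is balanced in the sense introduced just before \cref{prop.lnk.equiv.str}. Since $n_i\ge 2$ for all $i$, all assumptions of the theorem are satisfied, so \cref{thm.char.var.quot} applied to $D=D_{n/k}$ yields the desired factorization $\Phi_{n/k}=\iota\circ\phi$ with $\phi$ the quotient morphism and $\iota$ a closed immersion.

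For part \cref{item.char.var.quot}, I would argue as follows. By definition, $X_{n/k}$ is the scheme-theoretic image of $\Phi_{n/k}$. Since $\phi$ is surjective, the image of $\Phi_{n/k}=\iota\circ\phi$ coincides with the image of $\iota$. A closed immersion is an isomorphism onto its (closed) image, so $\iota$ induces an isomorphism $E^g/\Sigma_{n/k}\xrightarrow{\sim}X_{n/k}$. This identification is canonical given the factorization.

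For part \cref{item.autom.desc}, I would invoke \cref{33396547}(3): the translation automorphism $\sigma\colon E^g\to E^g$ commutes with the $\Sigma_{n/k}$-action and therefore descends to an automorphism of the quotient $E^g/\Sigma_{n/k}$. Transporting this via the isomorphism $\iota$ from part \cref{item.char.var.quot} produces an automorphism of $X_{n/k}$, which by abuse of notation is also denoted $\sigma$.

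Since every ingredient is already in place, there is no real obstacle here — the proof is simply a matter of verifying the hypotheses and assembling the pieces. The only thing worth being explicit about is the observation that $D_{n/k}$ is balanced, because that is what licenses the appeal to \cref{thm.char.var.quot}; everything else is formal.
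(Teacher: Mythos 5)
Your proposal is correct and follows the paper's own route exactly: apply \cref{thm.char.var.quot} to the balanced standard divisor $D_{n/k}$, identify $X_{n/k}$ with the image of the closed immersion $\iota$, and obtain the descent of $\sigma$ from \cref{33396547}(3). Your explicit check that $D_{n/k}$ is balanced is a point the paper already records immediately after introducing that notion, so nothing is missing on either side.
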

\begin{proof}
	The first claim is \cref{thm.char.var.quot} for $D=D'_{n/k}$. Statement \cref{item.char.var.quot} now follows from the definition of $X_{n/k}$ as the image of $\Phi_{n/k}$, and \cref{item.autom.desc} is now  \cref{prop.t.sigma.comm}\cref{item.t.sigma.comm.des}.
\end{proof}

\subsubsection{Very ampleness}
\label{ssect.very.ample.1}
By \Cref{thm.mumf.p77}, ampleness or very ampleness of $D_{n/k}$ depends only on the class of $D_{n/k}$
in $\NS(E^g)$. By \cref{pr.ample}, $D_{n/k}$ is ample. It is stated at \cite[p.~212]{FO89} that $D_{n/k}$ is {\it very} ample when 
all $n_i$ are $\ge 3$. We now give a direct geometric proof of the following stronger statement.  (See also \cref{ssect.very.ample.2}.)

\begin{proposition}
\label{pr.va}
A standard divisor $D$ of type $(n_{1},\ldots,n_{g})$ is very ample if and only if $n_i \ge 3$ for all $i$. This applies to $D=D_{n/k}$ in particular.
\end{proposition}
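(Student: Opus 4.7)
The plan is to reduce the claim to the balanced case $D=D_{n/k}$ already handled by \cref{thm.char.var.quot} and then read off very ampleness from whether the quotient map $\phi$ is an isomorphism. First, since the algebraic equivalence class of a standard divisor $D_{\fd_i,z_j}$ depends only on the type $(n_1,\ldots,n_g)$ (changing each $\fd_i$ within its linear equivalence class and shifting each $z_j$ produces divisors differing by an element of $\mathrm{Pic}^0(E^g)$, compare \cref{pr.ssw}), and since very ampleness is invariant under algebraic equivalence by \cref{thm.mumf.p77}, it suffices to treat $D=D_{n/k}$.

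For the ``if'' direction, suppose $n_i\ge 3$ for all $i$. By the definition of $\Sigma_{n/k}$ in \cref{sect.Sigma.n/k} as the subgroup of $\Sigma_{g+1}\subseteq\Aut(E^g)$ generated by the $s_i$ with $n_i=2$, we have $\Sigma_{n/k}=\{1\}$. Consequently the quotient morphism $\phi\colon E^g\to E^g/\Sigma_{n/k}$ in \cref{prop.factor.Phi} is an isomorphism, and the factorization $\Phi_{n/k}=\iota\circ\phi$ exhibits $\Phi_{n/k}$ as a closed immersion (since $\iota$ is). Hence $D_{n/k}$ is very ample.

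For the ``only if'' direction, suppose $n_i=2$ for some $i$. Then $s_i\in\Sigma_{n/k}$ is a nontrivial group automorphism of the abelian variety $E^g$, so its fixed locus is a proper closed subset; in particular, its action on $E^g$ is not the identity and there exist points $\sfz\ne s_i(\sfz)$ with the same image in $E^g/\Sigma_{n/k}$. By \cref{prop.factor.Phi}, $\Phi_{n/k}(\sfz)=\Phi_{n/k}(s_i(\sfz))$, so $\Phi_{n/k}$ is not injective and $D_{n/k}$ is not very ample.

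The substantive input is \cref{thm.char.var.quot}; beyond that, the only point requiring a line of care is the reduction to the balanced case via algebraic equivalence, which follows cleanly from \cref{pr.ssw} and \cref{thm.mumf.p77}. There is no real obstacle once that reduction is made.
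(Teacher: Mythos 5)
Your ``if'' direction is essentially the paper's argument: reduce to a distinguished divisor in the same algebraic equivalence class (the paper reduces to a balanced standard divisor, you to $D_{n/k}$ --- both legitimate via \cref{thm.mumf.p77}), observe that $\Sigma_{n/k}$ is trivial when all $n_i\ge 3$, and invoke \cref{thm.char.var.quot}. That half is fine.

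The ``only if'' direction is where you diverge from the paper, and it has a genuine gap. You prove the contrapositive only when some $n_i$ equals exactly $2$. But the definition of a standard divisor only requires the $\fd_i$ to be effective, so $\deg\fd_1=0$ or $\deg\fd_g=0$ is allowed, i.e.\ $n_1=1$ or $n_g=1$ (and \cref{pr.ample} explicitly treats $n_1\ge 1$, so these types are in scope). For a type such as $(1,3,3)$ no $n_i$ equals $2$, so the group generated by the $s_i$ with $n_i=2$ is trivial and your non-injectivity argument produces nothing; worse, $[n_1,\ldots,n_g]$ need not equal $\frac{n}{k}$ for any coprime $n>k\ge 1$ (here $[1,3,3]=\tfrac58$), so the reduction to $D_{n/k}$ and the appeal to \cref{prop.factor.Phi} are simply unavailable. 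The paper's proof of this direction avoids the quotient machinery entirely: it restricts $D$ to the curve $C_i=\{x\}\times\cdots\times E\times\cdots\times\{x\}$ (with $E$ in the $i^{\rm th}$ slot), computes $D\cdot C_i=n_i$, and uses the fact that a very ample divisor on an elliptic curve has degree $\ge 3$. That computation is uniform in $i$, handles $n_i\in\{1,2\}$ simultaneously, and requires none of \cref{thm.char.var.quot}. In the case your argument does cover (some $n_i=2$ with all $n_j\ge 2$) it is correct, though it leans on the main theorem of the section where a one-line intersection number suffices; to complete the proof you must at least patch the boundary cases $n_1=1$ and $n_g=1$, for instance by exactly this curve-restriction argument.
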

\begin{proof}
($\Rightarrow$)
Assume $D$ is very ample.

Let $x \in E$.  For each $i$, let $C_i\subseteq E^g$ be the curve consisting  of the points whose $j^{\rm th}$ coordinate 
is $x$ for all $j\ne i$. Since $D$ is a very ample divisor on $E^g$, $\cO_{E^{g}}(D)|_{C_{i}}$ is a very ample invertible $\cO_{C_i}$-module. 
Since $C_i\cong E$, $\deg\cO_{E^{g}}(D)|_{C_{i}} \ge 3$; but $ \deg\cO_{E^{g}}(D)|_{C_{i}} = D\cdot C_i = n_i$ so $n_i \ge 3$.

($\Leftarrow$)
Since $D$ is linearly equivalent to a translate of a balanced standard divisor by a point of $E^{g}$, we can assume that $D$ is balanced. 
If $n_i \ge 3$ for all $i$, then the group $\Sigma_{n/k}$ is trivial. So this is a special case of \cref{thm.char.var.quot}.
\end{proof}

%%%%%%%%%%%%%%%%%%%%%%%%%%%%%%%%%%%%%%%%%%%%%%%%%%%%%%%%%%%%%%%%%%%%%%%%%%%%%%%%
\subsection{A description of the characteristic variety as a bundle over a power of $E$}
\label{sect.Xn/k.again}
%%%%%%%%%%%%%%%%%%%%%%%%%%%%%%%%%%%%%%%%%%%%%%%%%%%%%%%%%%%%%%%%%%%%%%%%%%%%%%%%

If $\frac{n}{k}=[n_1,\ldots,n_g]$, we define 
\begin{equation}
\label{defn.Sigma}
\Sigma \; = \;\Sigma_{n/k} \;:=\;\langle (i,i+1) \; | \; n_i=2\rangle \, \subseteq \, \Sigma_{g+1} \subseteq \Aut(E^{g+1}). 
\end{equation}
Thus, the characteristic variety $X_{n/k}$ is isomorphic to $E^g/\Sigma$ where we identify $E^g$ with the subgroup $E^{g+1}_0$ of 
$E^{g+1}$ consisting of those points whose coordinates sum to zero, the identification being given by the morphism $\ve$ in \cref{defn.varepsilon}.

We use the following the notation in this subsection:\index{J@$J$}\index{I_alpha@$I_{\a}$}\index{Sigma_I_alpha@$\Sigma_{I_{\a}}$}
\begin{align*}
J & \; := \; \text{the points in $\{1,\ldots,g+1\}$ fixed by $\Sigma$},
\\
I_1,\ldots,I_s  & \; := \; \text{the $\Sigma$-orbits in $\{1,\ldots,g+1\}$ of size $\ge 2$},
\\
\Sigma_{I_\a} & \; :=\; \text{the group of permutations of $I_\a$}.
\end{align*}
The partition $\{1,\ldots,g+1\}= J \sqcup I_1 \sqcup  \ldots \sqcup I_s$ 
corresponds to factorizations
$E^{g+1}=  E^{J} \times E^{I_1}  \times \cdots \times E^{I_s}$
and $\Sigma=\Sigma_{I_1} \times \cdots \times \Sigma_{I_s}$. (Both these are factorizations as a product of subgroups.)
It follows that
\begin{equation}
\label{E^(g+1)/Sigma}
E^{g+1}/\Sigma \; \cong \;  E^{J} \times S^{I_1}E \times \cdots \times S^{I_s}E 
\end{equation}
where $S^{I_\a}E=E^{I_\a}/\Sigma_{I_\a}$\index{S^I_alphaE@$S^{I_\a}E$} is the symmetric power of $E$ of dimension $|I_\a|$.
As is well-known, the summation function $S^rE \to E$ presents $S^rE$ as a $\PP^{r-1}$ bundle over $E$.
Thus, $E^{g+1}/\Sigma$ is a bundle over $E^{J} \times E^s$
with fibers isomorphic to $\PP^{|I_1|-1} \times  \cdots \times \PP^{|I_s|-1}$.

\begin{proposition} 
cf. \cite[Prop.~2.9]{AA18}
\label{prop.E^g/Sigma}
With the above notation, $E^g/\Sigma$ is a bundle over $E^{|J|+s-1}$ with fibers 
isomorphic to $\PP^{|I_1|-1} \times  \cdots \times \PP^{|I_s|-1}$. In particular,
\begin{equation}
\label{E^g/Sigma}
E^g/\Sigma \; \cong \; 
\begin{cases} 
\big(S^{I_1}E \times  \cdots \times S^{I_s}E\big)_0 & \text{if $J=\varnothing$,}
\\
E^{|J|-1} \times S^{I_1}E \times  \cdots \times S^{I_s}E &\text{if  $J \ne \varnothing$,}
\end{cases}
\end{equation}
where $(S^{I_1}E \times  \cdots \times S^{I_s}E)_0$ denotes the subvariety of $S^{I_1}E \times  \cdots \times S^{I_s}E$ whose 
coordinates sum to $0$.
\end{proposition}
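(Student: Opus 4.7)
The plan is to deduce this directly from the identification in \cref{E^(g+1)/Sigma} together with the standard fact that the summation morphism $\mathrm{sum}\colon S^{r}E\to E$ is a $\bbP^{r-1}$-bundle (it is the Abel-Jacobi morphism sending an effective degree-$r$ divisor to its sum in $E$, whose fibers are the complete linear systems $|\mathfrak{d}|\cong\bbP^{r-1}$ of such divisors). First I would recall that $\ve\colon E^{g}\to E^{g+1}$ identifies $E^{g}$ with the kernel of $\mathrm{sum}\colon E^{g+1}\to E$ and that $\mathrm{sum}$ is $\Sigma$-invariant, so it descends to a morphism
\begin{equation*}
  \overline{\mathrm{sum}}\colon E^{g+1}/\Sigma\;\longrightarrow\;E,
\end{equation*}
whose fiber over $0$ is exactly $E^{g}/\Sigma$.

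Next, under the identification of \cref{E^(g+1)/Sigma}, the morphism $\overline{\mathrm{sum}}$ becomes
\begin{equation*}
  \bigl((e_{j})_{j\in J},\xi_{1},\ldots,\xi_{s}\bigr)\;\longmapsto\;\sum_{j\in J}e_{j}\,+\,\sum_{\alpha=1}^{s}\mathrm{sum}(\xi_{\alpha}).
\end{equation*}
Factor this through the intermediate morphism
\begin{equation*}
  \pi\colon E^{J}\times S^{I_{1}}E\times\cdots\times S^{I_{s}}E\;\longrightarrow\;E^{J}\times E^{s},\qquad \pi=\mathrm{id}\times\mathrm{sum}\times\cdots\times\mathrm{sum}.
\end{equation*}
Being a product of Abel-Jacobi morphisms, $\pi$ is a Zariski-locally trivial fiber bundle with fiber $\bbP^{i_{1}-1}\times\cdots\times\bbP^{i_{s}-1}$. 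The composition $E^{J}\times E^{s}\to E$ that one post-composes with is just the group-theoretic summation on the abelian variety $E^{|J|+s}$; its kernel is a connected abelian subvariety isomorphic to $E^{|J|+s-1}$. Restricting $\pi$ to the preimage of this kernel gives a $\bbP^{i_{1}-1}\times\cdots\times\bbP^{i_{s}-1}$-bundle over $E^{|J|+s-1}$ whose total space is $E^{g}/\Sigma$; this proves (2).

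Finally, for the two explicit descriptions in \cref{E^g/Sigma}, when $J\neq\varnothing$ I would fix an element $j_{0}\in J$ and observe that the constraint $\sum_{j\in J}e_{j}+\sum_{\alpha}\mathrm{sum}(\xi_{\alpha})=0$ expresses $e_{j_{0}}$ uniquely in terms of the remaining data, so projecting away the $j_{0}$-coordinate gives an isomorphism
\begin{equation*}
E^{g}/\Sigma\;\xrightarrow{\cong}\;E^{|J|-1}\times S^{I_{1}}E\times\cdots\times S^{I_{s}}E.
\end{equation*}
When $J=\varnothing$ there is no coordinate to eliminate and the fiber is tautologically $(S^{I_{1}}E\times\cdots\times S^{I_{s}}E)_{0}$. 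The main conceptual point---and the only place that requires care---is the local triviality of the bundle in (2); this is not a genuine obstacle, since it reduces to the known local triviality of $S^{r}E\to E$, but it is worth stating carefully because the proposition is stated as a bundle and not merely as a morphism with constant fiber dimension.
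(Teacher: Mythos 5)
Your proposal is correct and follows essentially the same route as the paper: identify $E^{g}/\Sigma$ with the fiber over $0$ of the descended summation map on $E^{g+1}/\Sigma\cong E^{J}\times S^{I_1}E\times\cdots\times S^{I_s}E$, factor through $\pi=\mathrm{id}\times\mathrm{sum}\times\cdots\times\mathrm{sum}$, and restrict the resulting $\PP^{i_1-1}\times\cdots\times\PP^{i_s-1}$-bundle to the kernel $E^{|J|+s}_0\cong E^{|J|+s-1}$ of the group summation on the base. The only difference is that you spell out the local triviality of $S^{r}E\to E$ and the elimination of one $J$-coordinate, both of which the paper treats as well known.
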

\begin{proof}
The structure map, $\pi:E^{g+1}/\Sigma \to E^{J} \times E^s$, for the bundle described just before
the statement of the proposition is
$$
\pi(\sfz,\sfx_1,\ldots,\sfx_s) \; = \; (\sfz,\sfsum\sfx_1, \ldots, \sfsum\sfx_s)
$$
where $\sfz \in E^J$, $\sfx_\a \in S^{I_\a}E$, and $\sfsum\sfx_\a$ is the sum of the coordinates of $\sfx_\a \in S^{I_\a}E$.
As we said at the outset, we identify $E^g$ with the subgroup $E^{g+1}_0$ of $E^{g+1}$ consisting of those points whose 
coordinates sum to 0. Hence $E^g/\Sigma=E^{g+1}_0/\Sigma = \pi^{-1}(E^{|J|+s}_0)$  is the restriction of the 
bundle to $E^{|J|+s}_0 \cong E^{|J|+s-1}$ which is, of course, a bundle with fibers isomorphic to 
$\PP^{|I_1|-1} \times  \cdots \times \PP^{|I_s|-1}$. 

The description of $E^g/\Sigma$ in \cref{E^g/Sigma} follows from the description of $E^{g+1}/\Sigma$ in \cref{E^(g+1)/Sigma}.
\end{proof}

We thank S\'andor Kov\'acs for the next result.

\begin{proposition}
[Kov\'acs]
\label{prop.Kovacs}
Let $X$ be a projective variety. For $i=1,2$, let $f_i:X \to Z_i$ be fiber bundles with the property that the fibers of both are rationally connected varieties. If there are no rational curves on either $Z_1$ or $Z_2$, then fibers of $f_1$ 
are fibers of $f_2$, and conversely. 
\end{proposition}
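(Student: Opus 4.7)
The plan is to reduce the proposition to the standard principle that any morphism from a rationally connected projective variety to a projective variety containing no rational curves must be constant. Granting this principle, I would fix a fiber $F_1$ of $f_1$ and consider the composition $g:=f_2\circ\iota\colon F_1\to Z_2$, where $\iota\colon F_1\hookrightarrow X$ is the inclusion. Since $F_1$ is rationally connected by hypothesis and $Z_2$ admits no rational curves, $g$ must be constant; hence $F_1$ lies set-theoretically inside a single fiber of $f_2$. The symmetric argument, swapping the roles of $f_1$ and $f_2$, shows that each fiber of $f_2$ lies inside a fiber of $f_1$. These two containments together force the partitions of $X$ by fibers of $f_1$ and by fibers of $f_2$ to coincide: the fiber of $f_1$ through any point $x\in X$ is contained in the fiber of $f_2$ through $x$, which in turn is contained in the fiber of $f_1$ through $x$, so all three sets are equal.

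For the principle itself, take any two closed points $p,q\in F_1$. Rational connectedness of $F_1$ furnishes a morphism $u\colon\PP^1\to F_1$ whose image contains both $p$ and $q$. The composite $g\circ u\colon\PP^1\to Z_2$ has irreducible scheme-theoretic image that is either a single point or an irreducible curve dominated by $\PP^1$, i.e.\ a rational curve on $Z_2$. The hypothesis on $Z_2$ rules out the second possibility, so $g(p)=g(q)$. Since $p,q\in F_1$ were arbitrary, $g$ is constant.

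There is no serious obstacle; once one has rational connectedness in hand, the argument is essentially formal. The only points demanding modest care are that the fibers must be treated set-theoretically (or equivalently as reduced subschemes) and that a fiber bundle presentation ensures every fiber is irreducible, so that rational connectedness applies uniformly fiberwise. One could also phrase the conclusion sheaf-theoretically by saying that $f_2$ factors through $f_1$ and vice versa, which (combined with the fact that both $f_i$ have connected fibers) yields an isomorphism $Z_1\cong Z_2$ compatible with the bundle structures.
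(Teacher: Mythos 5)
Your proposal is correct and follows essentially the same route as the paper: a fiber of $f_1$ is rationally connected, so its image in $Z_2$ is a point, giving containment in a fiber of $f_2$, and the symmetric containment forces equality. The only difference is that you spell out the standard lemma (a morphism from a rationally connected projective variety to a variety without rational curves is constant) that the paper takes for granted.
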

\begin{proof}
Let $X_1 \subseteq X$ be a fiber of $f_1$. Since $X_1$ is rationally connected and $Z_2$ has no rational curves $f_2(X_1)$ is a single
point on $Z_2$. Thus $X_1$ is contained in a fiber,  $X_2$ say, of $f_2$. Similarly, $X_2$ is contained in a fiber of $f_1$; but that
fiber contains $X_1$ so we must have $X_1=X_2$. 
\end{proof}

By \cref{prop.E^g/Sigma}, there are, after fixing $g$, 
only finitely many possibilities for the bundle structure of the characteristic variety $X_{n/k}$. 
It is reasonable to fix one of those possible structures then ask for all pairs $(n,k)$ such that $X_{n/k}$ has that structure. 
We have already seen one example of this:  
$X_{n/k}$ is isomorphic to $E^g$ if and only if $n_i \ge 3$ for all $i$. \cref{cor.E^g/Sigma=S^gE} classifies those $(n,k)$ for which 
$X_{n/k}$ is a symmetric power of $E$. 

\begin{proposition}\label{prop.isom.bundle}
	If the characteristic variety $X_{n/k}$ is a bundle over $E^{r}$ with fibers isomorphic to $\PP^{a_{1}-1}\times\cdots\times\PP^{a_{q}-1}$, then, in the notation of \cref{prop.E^g/Sigma}, $r=|J|+s-1$ and there is an equality of multisets
	\begin{equation}\label{eq.multisets}
		\{\!\{a_{1},\ldots,a_{q}\}\!\}\;=\;\{\!\{|I_{1}|,\ldots,|I_{s}|\}\!\}.
	\end{equation}
\end{proposition}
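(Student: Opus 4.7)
The plan is to use \cref{prop.Kovacs} to compare two bundle structures on $X_{n/k}$. By \cref{prop.E^g/Sigma}, we already have one such structure: $X_{n/k}$ is a bundle over $E^{|J|+s-1}$ with fibers isomorphic to $\PP^{|I_{1}|-1}\times\cdots\times\PP^{|I_{s}|-1}$. The hypothesis of the proposition supplies a second bundle structure, with base $E^{r}$ and fibers $\PP^{a_{1}-1}\times\cdots\times\PP^{a_{q}-1}$.

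Both fiber types are products of projective spaces, and hence rationally connected; both bases are abelian varieties, so they contain no rational curves (every morphism $\PP^{1}\to E^{m}$ is constant). Thus \cref{prop.Kovacs} applies and tells us that each fiber of one bundle is a fiber of the other. Picking any point of $X_{n/k}$ and taking the fibers of both projections through it yields an isomorphism
\[
\PP^{|I_{1}|-1}\times\cdots\times\PP^{|I_{s}|-1}\;\cong\;\PP^{a_{1}-1}\times\cdots\times\PP^{a_{q}-1}.
\]

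It then remains to invoke the uniqueness, up to reordering, of the expression of a smooth projective variety as a product of projective spaces. This is classical and can be extracted, for instance, from the decomposition of the tangent bundle into indecomposable summands (each summand being the pullback of $T_{\PP^{a_{i}-1}}$), or from the fact that the extremal rays of the Mori cone of such a product are in bijection with the factors and recover them as fibers of the corresponding contractions. Either way, the multisets $\{\!\{a_{1},\ldots,a_{q}\}\!\}$ and $\{\!\{|I_{1}|,\ldots,|I_{s}|\}\!\}$ must coincide, which is \cref{eq.multisets}. Matching dimensions in the two bundle presentations then forces $r=|J|+s-1$: since $\dim X_{n/k}=g$ and the fibers of the known presentation have dimension $\sum_{\alpha}(|I_{\alpha}|-1)=g+1-|J|-s$, both base dimensions equal $|J|+s-1$.

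The step I expect to be most delicate is the unique-factorization claim, since bare Picard rank and total dimension are insufficient (compare $\PP^{1}\times\PP^{3}$ with $\PP^{2}\times\PP^{2}$); a short tangent-bundle or Mori-cone argument should suffice, but it may be worth including a self-contained version for the reader's convenience.
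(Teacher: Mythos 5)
Your proposal is correct and follows essentially the same route as the paper: apply \cref{prop.Kovacs} (rationally connected fibers, no rational curves on abelian varieties) to identify the two fiber types, deduce \cref{eq.multisets} from the uniqueness of the factorization of a product of projective spaces, and recover $r=|J|+s-1$ by counting dimensions. The only difference is in the unique-factorization substep, where the paper simply reads off the multiset $\{\!\{a_1,\ldots,a_q\}\!\}$ from the graded Chow ring $\ZZ[t_1,\ldots,t_q]/(t_i^{a_i})$ rather than using the tangent-bundle or Mori-cone arguments you sketch; all three are valid, and your caution that Picard rank plus dimension alone would not suffice is well placed.
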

\begin{proof}
	Since products of projective spaces are rationally connected and there are no rational curves on products of elliptic curves, \cref{prop.Kovacs} implies that the fiber $\PP^{a_{1}-1}\times\cdots\times\PP^{a_{q}-1}$ is isomorphic to the fiber $\PP^{|I_1|-1} \times  \cdots \times \PP^{|I_s|-1}$ described in \cref{prop.E^g/Sigma}. Since the Chow ring of $\PP^{a_{1}-1}\times\cdots\times\PP^{a_{q}-1}$ is $\ZZ[t_{1},\ldots,t_{q}]$ with $\deg t_{i}=1$ modulo the relations $t_{i}^{a_{i}}=0$ ($1\leq i\leq q$), the equality \cref{eq.multisets} holds. Therefore
	\begin{equation*}
		r\;=\;\dim X_{n/k}-\sum_{i=1}^{q}(a_{i}-1)\;=\;g-\sum_{j=1}^{s}(|I_{j}|-1)\;=\;|J|+s-1.\qedhere
	\end{equation*}
\end{proof}

\begin{corollary}
\label{cor.E^g/Sigma=S^gE}
The characteristic variety is isomorphic to $S^gE$ if and only if $\frac{n}{k}=[n_1,\ldots,n_g]$ is either $[m,2,\ldots,2]$ or $[2,\ldots,2,m]$ for some integer $m \ge 3$.
\end{corollary}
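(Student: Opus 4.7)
The plan is to combine \cref{prop.isom.bundle} with the explicit description \cref{E^g/Sigma} of $E^g/\Sigma_{n/k}$ to translate the condition $X_{n/k}\cong S^g E$ into a combinatorial constraint on the $\Sigma$-orbit structure on $\{1,\ldots,g+1\}$, and then read off the allowed shapes of $(n_1,\ldots,n_g)$.

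First I would view $S^g E$ as a $\PP^{g-1}$-bundle over $E$ via the summation morphism (as recalled just before \cref{prop.E^g/Sigma}). For $g\ge 2$ this exhibits $S^g E$ as a bundle over $E^1$ with a single projective factor of dimension $g-1$, while for $g=1$ it is $E$ itself with no projective factor. Applying \cref{prop.isom.bundle} to an isomorphism $X_{n/k}\cong S^g E$ then yields $|J|+s-1=1$ together with the multiset equality $\{\!\{|I_1|,\ldots,|I_s|\}\!\}=\{\!\{g\}\!\}$ when $g\ge 2$ (and the empty multiset when $g=1$). For $g\ge 2$ this forces $s=1$, $|I_1|=g$, $|J|=1$; for $g=1$ it forces $s=0$ and $|J|=2$, i.e.\ $\Sigma_{n/k}$ trivial.

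Next I would translate these orbit data back into conditions on $(n_1,\ldots,n_g)$. Because $\Sigma_{n/k}=\langle s_i \mid n_i=2\rangle$ acts on $\{1,\ldots,g+1\}$ through $\Sigma_{g+1}\subseteq\Aut(E^g)$ via the identifications of \cref{sect.symmetric.group.action}, its orbits are precisely the maximal intervals $[a,b]\subseteq\{1,\ldots,g+1\}$ with $n_a=\cdots=n_{b-1}=2$. When $g\ge 2$, the unique orbit of size $g$ must be a maximal interval of $g$ consecutive integers inside $\{1,\ldots,g+1\}$, hence either $\{1,\ldots,g\}$ or $\{2,\ldots,g+1\}$ (an interior interval of length $g$ cannot leave only a single fixed point). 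Maximality at the non-boundary endpoint then forces, respectively, $n_1=\cdots=n_{g-1}=2$ with $n_g\ge 3$, or $n_1\ge 3$ with $n_2=\cdots=n_g=2$, giving the patterns $[2,\ldots,2,m]$ and $[m,2,\ldots,2]$ with $m\ge 3$. When $g=1$, triviality of $\Sigma_{n/k}$ reads simply $n_1\ge 3$, matching either pattern degenerating to $[m]$.

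For the converse I would apply \cref{E^g/Sigma} directly. When $(n_1,\ldots,n_g)$ has one of the allowed shapes and $g\ge 2$, the orbit analysis above yields $|J|=1$, $s=1$, $|I_1|=g$, so
\[
X_{n/k} \;\cong\; E^g/\Sigma_{n/k} \;\cong\; E^{|J|-1}\times S^{I_1}E \;=\; S^g E.
\]
When $g=1$ and $m\ge 3$, $\Sigma_{n/k}$ is trivial and $X_{n/k}\cong E=S^1 E$. The only step with any real content is the combinatorial check in Step 2 that the two listed intervals exhaust the possibilities and that maximality at the endpoint forces $n_g\ge 3$ or $n_1\ge 3$ respectively; everything else is a direct assembly of the previously established propositions.
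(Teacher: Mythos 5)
Your proposal is correct and follows essentially the same route as the paper: the forward direction invokes \cref{prop.isom.bundle} (via the $\PP^{g-1}$-bundle structure of $S^gE$ over $E$) to force $s=1$, $|J|=1$, $|I_1|=g$, and then identifies the unique fixed point as $1$ or $g+1$; the converse reads off $J$, $s$, $I_1$ from the two patterns and applies \cref{E^g/Sigma}. Your treatment is slightly more explicit than the paper's about why the size-$g$ orbit must be one of the two end intervals and about the degenerate case $g=1$, but the argument is the same.
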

\begin{proof}
($\Leftarrow$) 
If $\frac{n}{k}=[m,2,\ldots,2]$, then $J=\{1\}$, $s=1$, and $I_1=\{2,\ldots,g+1\}$, so $X_{n/k} \cong S^gE$. 
If $\frac{n}{k}=[2,\ldots,2,m]$, then  $J=\{g+1\}$, $s=1$, and $I_1=\{1,\ldots,g\}$, so $X_{n/k} \cong S^gE$. 

($\Rightarrow$) 
Assume $X_{n/k} \cong S^gE$. Thus $X$ is a bundle over $E$ with fibers isomorphic to $\PP^{g-1}$. By \cref{prop.isom.bundle}, $|J|+s-1=1$ and $\{\!\{|I_{1}|,\ldots,|I_{s}|\}\!\}=\{\!\{g\}\!\}$. Thus $s=1$, $|J|=1$, and $|I_{1}|=g$.

Since $|J|=1$, $\Sigma_{n/k}$ fixes a unique point in $\{1,\ldots,g+1\}$ and, since $s=1$, that point is either $1$ or 
$g+1$. In the first case, $\frac{n}{k}=[m,2,\ldots,2]$ with $m \ge 3$.  In the second case, $\frac{n}{k}=[2,\ldots,2,m]$ with $m \ge 3$. 
 \end{proof}

We now make the isomorphism $E^g/\Sigma_{n/k} \to S^gE$ explicit for those cases.

\begin{proposition}
\label{prop.S^gE}
The morphisms $\rho:E^g \to S^gE$ given by 
\begin{equation}
\label{eq:rho1}
\rho(z_1,\ldots,z_g) \; :=\; (\!(z_2-z_1,z_3-z_2,\ldots,z_g-z_{g-1},-z_g)\!)
\end{equation} 
and
\begin{equation}
\label{eq:rho2}
\rho(z_1,\ldots,z_g) \; :=\; (\!(-z_1,z_1-z_2,z_2-z_3,\ldots,z_{g-1}-z_g)\!)
\end{equation}
are quotients for the action of $\Sigma_{n/k}$ on $E^g$ when $\frac{n}{k}=[m,2,\ldots,2]$ and  $\frac{n}{k}=[2, \ldots,2,m]$,
respectively.
\end{proposition}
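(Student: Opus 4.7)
The plan is to realize each $\rho$ as the composition of the $\Sigma_{g+1}$-equivariant embedding $\varepsilon\colon E^{g}\hookrightarrow E^{g+1}$ of \cref{pr.equiv} with the natural quotient of $E^{g+1}$ by the appropriate Young subgroup of $\Sigma_{g+1}$, possibly followed by the automorphism $[-1]$ of $S^{g}E$. This reduces the problem to a standard computation with symmetric products: the quotient $E^{g+1}/\Sigma_{I}$ for a Young subgroup $\Sigma_{I}\subseteq\Sigma_{g+1}$ with $|I|=g$ is $E\times S^{g}E$ with an explicit multiset formula.

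First I would identify the subgroup. Since $s_{i}\in\Sigma_{n/k}$ exactly when $n_{i}=2$, for $\frac{n}{k}=[m,2,\ldots,2]$ the group $\Sigma_{n/k}$ is generated by $s_{2},\ldots,s_{g}$, which correspond under \cref{pr.equiv} to the transpositions $(2,3),(3,4),\ldots,(g,g+1)\in\Sigma_{g+1}$; hence $\Sigma_{n/k}=\Sigma_{\{2,\ldots,g+1\}}$. The natural quotient $E^{g+1}/\Sigma_{\{2,\ldots,g+1\}}\cong E\times S^{g}E$ is given by $(z_{1},\ldots,z_{g+1})\mapsto(z_{1},\{\!\{z_{2},\ldots,z_{g+1}\}\!\})$. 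Since the closed subvariety $E^{g+1}_{0}:=\ker(\sfsum)$ is $\Sigma_{g+1}$-stable, the standard compatibility of finite quotients with stable closed subschemes in the quasi-projective setting identifies $E^{g+1}_{0}/\Sigma_{n/k}$ with the image of $E^{g+1}_{0}$ in $E\times S^{g}E$, namely $\{(z_{1},M):z_{1}=-\sfsum(M)\}\cong S^{g}E$ via the second projection. Pre-composing with $\varepsilon$ then yields exactly the map \cref{eq:rho1}.

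For $\frac{n}{k}=[2,\ldots,2,m]$ the analogous analysis identifies $\Sigma_{n/k}$ with $\Sigma_{\{1,\ldots,g\}}$ and produces the quotient map $(z_{1},\ldots,z_{g})\mapsto\{\!\{z_{1},z_{2}-z_{1},\ldots,z_{g}-z_{g-1}\}\!\}$; post-composing with the involution $[-1]$ of $S^{g}E$, which is an automorphism and hence preserves the quotient property, yields \cref{eq:rho2}. The only real wrinkle in the argument is justifying that the restricted map $E^{g+1}_{0}\to S^{g}E$ is indeed a categorical quotient of $E^{g+1}_{0}$ by $\Sigma_{n/k}$, rather than merely being $\Sigma_{n/k}$-invariant; this is handled by the compatibility just mentioned together with the observation that any multiset $M\in S^{g}E$ admits a preimage $(-\sfsum(M),M)\in E^{g+1}_{0}$, ensuring surjectivity onto the full image $S^gE$.
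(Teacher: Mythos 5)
Your proof is correct, but it takes a different route from the paper's. The paper proves this by a conjugation trick entirely inside $E^g$: it takes $\phi:E^g\to S^gE$ to be the usual quotient for the \emph{natural} permutation action of $\Sigma_g$, exhibits an explicit automorphism $\nu$ of $E^g$ (namely $\nu(z_1,\ldots,z_g)=(z_2-z_1,\ldots,z_g-z_{g-1},-z_g)$ in the first case and $\nu(z_1,\ldots,z_g)=(-z_1,z_1-z_2,\ldots,z_{g-1}-z_g)$ in the second) satisfying $\nu\circ s_i=(i-1,i)\circ\nu$ (resp.\ $(i,i+1)\circ\nu$), concludes $\Sigma_{n/k}=\nu^{-1}\Sigma_g\nu$, and invokes the elementary fact that $\phi\circ\nu$ is then a quotient for the conjugate group; this produces \cref{eq:rho1} and \cref{eq:rho2} directly, with no need for the involution $[-1]$. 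You instead pass through the equivariant embedding $\varepsilon:E^g\hookrightarrow E^{g+1}$ of \cref{pr.equiv}, identify $\Sigma_{n/k}$ with a Young subgroup of $\Sigma_{g+1}$, form $E^{g+1}/\Sigma_I\cong E\times S^gE$, and restrict to the stable subvariety $E^{g+1}_0$ — which is exactly the mechanism the paper uses for the general bundle description in \cref{prop.E^g/Sigma}, specialized to $s=1$, $|J|=1$. Your route is more structural and explains \emph{why} the answer is $S^gE$, at the price of needing the compatibility of finite quotients with stable closed subschemes (harmless here, since over $\CC$ taking $\Gamma$-invariants is exact, so $Z/\Gamma\to\phi(Z)$ is an isomorphism for $\Gamma$-stable closed $Z$) and of the extra post-composition with $[-1]$ in the second case to land on the exact formula \cref{eq:rho2}; the paper's route is shorter and purely computational. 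Both your group identifications and both multiset computations check out.
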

\begin{proof}
We will use the following general fact.
Let $\G$ and $\G'$ be groups acting on a scheme $X$ and suppose 
$\phi:X \to Y$ is a (categorical) quotient morphism for the action of $\G$. If $\nu:X \to X$ is an automorphism such that $\G'=\nu^{-1}\G \nu$, then
the morphism $\rho=\phi \circ \nu :X \to Y$ is a quotient for the action of $\G'$.  

Let  $\phi:E^g \to S^gE$ be the usual quotient morphism for the natural action of $\Sigma_g$  on $E^g$. 

Assume $\frac{n}{k}=[m,2,\ldots,2]$. Then $\Sigma_{n/k}$ is generated by $s_2,\ldots,s_g$. 
It is easily checked that if $\nu$ is the automorphism of 
$E^g$ given by $\nu(z_1,\ldots,z_g) = (z_2-z_1,z_3-z_2,\ldots,z_g-z_{g-1},-z_g)$, then
$\nu \circ s_i = (i-1,i) \circ \nu$ for $i=2,\ldots,g$ . Hence $\Sigma_{n/k}=\nu^{-1}\Sigma_g \nu$ and $\phi \circ \nu:E^g \to S^gE$ is a
quotient for the action of $\Sigma_{n/k}$ on $E^g$; the morphism $\phi \circ \nu$ is the morphism in \cref{eq:rho1}.

Assume  $\frac{n}{k}=[2,\ldots,2,m]$. Now $\Sigma_{n/k}$ is generated by $s_1,\ldots,s_{g-1}$. If $\nu$ is the automorphism of 
$E^g$ given by $\nu(z_1,\ldots,z_g) = (-z_1,z_1-z_2,\ldots,z_{g-1}-z_g)$, then
$\nu \circ s_i = (i,i+1) \circ \nu$ for $i=1,\ldots,g-1$. As before, it follows that $\phi \circ \nu$, which is now given by \cref{eq:rho2}, is a 
quotient for the action of $\Sigma_{n/k}$ on $E^g$. 
\end{proof}

In the previous result, there are other morphisms $\rho:E^g \to S^gE$ that are quotients for the action of $\Sigma_{n/k}$. 
This is because the automorphism $\nu$ such that $\Sigma_{n/k}=\nu^{-1}\Sigma_g \nu$ is not unique. For example,
the automorphism $[-1]:E^g \to E^g$, $\sfz \mapsto -\sfz$, commutes with the action of $\Sigma_g$ so we could replace the $\nu$
in the proof by $[-1]\circ \nu$. Doing this, one finds that the $\rho$ in \cref{eq:rho1} can be replaced by
$\rho(z_1,\ldots,z_g) =(\!(z_1-z_2,z_2-z_3,\ldots,z_{g-1}-z_{g},z_g)\!)$. Similarly for \cref{eq:rho2}.

%%%%%%%%%%%%%%%%%%%%%%%%%%%%%%%%%%%%%%%%%%%%%%%%%%%%%%%%%%%%%%%%%%%%%%%%%%%%%%%%
\subsubsection{Very ampleness}
\label{ssect.very.ample.2}
%%%%%%%%%%%%%%%%%%%%%%%%%%%%%%%%%%%%%%%%%%%%%%%%%%%%%%%%%%%%%%%%%%%%%%%%%%%%%%%%

In \cref{ssect.very.ample.1}, we showed that $D_{n/k}$ is {\it very} ample if and only if all $n_i$ are $\ge 3$. 
This also follows from \cref{prop.E^g/Sigma}: $D_{n/k}$ is very ample if and only if $X_{n/k} \cong E^g$, i.e., if and only if 
$E^g/\Sigma \cong E^g$, i.e., if and only if $\Sigma$ is trivial which happens if and only if all $n_i$ are $\ge 3$.
 
%%%%%%%%%%%%%%%%%%%%%%%%%%%%%%%%%%%%%%%%%%%%%%%%%%%%%%%%%%%%%%%%%%%%%%%%%%%%%%%%
\subsubsection{Special case: $X_{n/k}=\PP^g$}
\label{subsec.sp.char.var.1}
%%%%%%%%%%%%%%%%%%%%%%%%%%%%%%%%%%%%%%%%%%%%%%%%%%%%%%%%%%%%%%%%%%%%%%%%%%%%%%%%
The characteristic variety for $Q_{n,n-1}(E,\tau)$ is isomorphic to $\PP^{n-1}$. To see this, first note that 
$\frac{n}{n-1}=[2,\ldots,2]$ where the number of $2$'s is $g=n-1$; thus $\Sigma=\Sigma_{g+1}$ in this case;
therefore $X_{n/k} \cong E^g/\Sigma_{g+1} \cong \PP^g$ (see also \cref{pr.all2}).

%%%%%%%%%%%%%%%%%%%%%%%%%%%%%%%%%%%%%%%%%%%%%%%%%%%%%%%%%%%%%%%%%%%%
%%%%%%%%%%%%%%%%%%%%%%%%%%%%%%%%%%%%%%%%%%%%%%%%%%%%%%%%%%%%%%%%%%%%
\section{Theta function methods}
\label{sect.theta.several.var}
%%%%%%%%%%%%%%%%%%%%%%%%%%%%%%%%%%%%%%%%%%%%%%%%%%%%%%%%%%%%%%%%%%%%
%%%%%%%%%%%%%%%%%%%%%%%%%%%%%%%%%%%%%%%%%%%%%%%%%%%%%%%%%%%%%%%%%%%%

The study of $Q_{n,k}(E,\tau)$ involves both geometric methods and methods involving theta functions. 
This section focuses on methods involving theta functions.

\subsection{The action of the Heisenberg group $H_n$ on $\Theta_{n/k}(\Lambda)$}
\label{ssect.Hn.action.g.vars}
We will now define an action of $H_n$ on $\Theta_{n/k}(\Lambda)$ and then define a basis for  $\Theta_{n/k}(\Lambda)$ that behaves 
well with respect to the $H_n$-action. 

\begin{proposition}
\label{prop.Theta.N}
  Let $g$ be a positive integer,  $\sfN$ a symmetric $g \times g$ integer-valued matrix, $N_{ii}$ its $i^{\th}$ diagonal entry, and write
   $\sfd=\frac{1}{2}(N_{11},\ldots, N_{gg})$.
Fix a point   $\sfc=(c_1,\ldots,c_g) \in \CC^g$.
If $f:\CC^g \to \CC$ is a holomorphic function such that 
 \begin{equation}
 \label{one-coord-0}
 f(\sfz +\sfe_i \eta) \;=\; e(\sfz \sfN \sfe_i^\sT +c_i) f(\sfz ) \qquad \text{for $i=1,\ldots,g$},
 \end{equation}
 then
 \begin{equation}
 \label{quasi-period}
f(\sfz +\sfm\eta) \;=\;   
 e\big(\sfz \sfN \sfm^\sT  \,+\,   \tfrac{1}{2} \sfm \sfN \sfm^\sT \eta \, +\,  (\sfc   \, - \,  \sfd \eta)\sfm^\sT \big) 
\,
 f(\sfz )
\end{equation}
for all $\sfm=(m_1,\ldots,m_g) \in \ZZ^g$.
\end{proposition}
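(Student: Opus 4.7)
The plan is to prove the formula by induction on $|\sfm|:=m_1+\cdots+m_g$, building up $\sfm$ one standard basis vector at a time and keeping careful track of how the exponents add.

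The base case $\sfm=0$ is trivial, and the case $\sfm=\sfe_i$ reduces to the hypothesis \cref{one-coord-0}: one simply checks that $\sfm\sfN\sfm^\sT = \sfN_{ii}$ and $\sfd\sfm^\sT = \tfrac12\sfN_{ii}$, so the $\eta$-terms contributed by $\tfrac{1}{2}\sfm\sfN\sfm^\sT\eta$ and $-\sfd\eta\sfm^\sT$ cancel, leaving exactly $\sfz\sfN\sfe_i^\sT+c_i$.

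For the inductive step, choose any $i$ with $m_i\geq 1$ and write $\sfm=\sfm'+\sfe_i$. Setting $\sfw:=\sfz+\sfm'\eta$, apply \cref{one-coord-0} to obtain
\[
f(\sfz+\sfm\eta)\;=\;f(\sfw+\sfe_i\eta)\;=\;e\bigl(\sfw\sfN\sfe_i^\sT+c_i\bigr)\,f(\sfw),
\]
then expand $\sfw\sfN\sfe_i^\sT=\sfz\sfN\sfe_i^\sT+\sfm'\sfN\sfe_i^\sT\eta$, and apply the inductive hypothesis to $f(\sfz+\sfm'\eta)$. Multiplying the two exponentials produces a single exponential whose argument I would group as: a linear-in-$\sfz$ part $\sfz\sfN(\sfm'+\sfe_i)^\sT=\sfz\sfN\sfm^\sT$; an $\eta$-coefficient $\tfrac12\sfm'\sfN\sfm'^\sT+\sfm'\sfN\sfe_i^\sT$; plus $c_i+(\sfc-\sfd\eta)\sfm'^\sT$. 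Comparing with the target expression for $\sfm=\sfm'+\sfe_i$, the only nontrivial identity needed is
\[
\tfrac12(\sfm'+\sfe_i)\sfN(\sfm'+\sfe_i)^\sT \;=\; \tfrac12\sfm'\sfN\sfm'^\sT+\sfm'\sfN\sfe_i^\sT+\tfrac12\sfN_{ii},
\]
which uses the symmetry of $\sfN$ in an essential way. The leftover $\tfrac12\sfN_{ii}\eta$ appearing on the right is exactly balanced by the extra $-\sfd\eta\sfe_i^\sT=-\tfrac12\sfN_{ii}\eta$ contributed when $\sfm'$ is replaced by $\sfm'+\sfe_i$ in the term $(\sfc-\sfd\eta)\sfm^\sT$; and the extra $c_i$ assembles with $\sfc\sfm'^\sT$ to give $\sfc\sfm^\sT$. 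This closes the induction.

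There is no real obstacle here; the argument is a bookkeeping check. The only mildly subtle point is that the $\sfd$-correction in the statement is precisely what is needed so that the quadratic form $\tfrac12\sfm\sfN\sfm^\sT\eta$ — which double-counts the diagonal contributions that arise when one accumulates $\sfm$ one $\sfe_i$ at a time — is compensated exactly, and that this compensation only works because $\sfN$ is symmetric. Since the argument only increases entries of $\sfm$, the hypothesis $\sfm\in\NN^g$ is all that is used.
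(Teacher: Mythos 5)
Your proof is correct and is essentially the paper's argument: both build $\sfm$ up from standard basis vectors by induction, invoking the hypothesis \cref{one-coord-0} at each step and using the symmetry of $\sfN$ to merge the two cross terms $\tfrac12\sfm'\sfN\sfe_i^\sT$ and $\tfrac12\sfe_i\sfN\sfm'^\sT$, with the $-\sfd\eta\sfm^\sT$ correction absorbing the extra diagonal contribution $\tfrac12\sfN_{ii}\eta$. The only organizational difference is that the paper runs a two-stage induction (first on $m$ in $f(\sfz+m\sfe_i\eta)$, producing a $\binom{m}{2}\sfN_{ii}\eta$ term, and then on the number of nonzero coordinates of $\sfm$), whereas you run a single induction on $m_1+\cdots+m_g$ adding one unit at a time, which is a slightly leaner bookkeeping of the same computation.
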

\begin{pf}
Fix an integer $i$ between $1$ and $g$. 
An induction argument shows that
 \begin{equation}
 \label{one-coord}
 f(\sfz +m\sfe_i \eta) \;=\; e\big(m\sfz \sfN \sfe_i^\sT +\tbinom{m}{2}  N_{ii} \eta +mc_i  \big) f(\sfz )
 \end{equation}
 for all positive integers $m$. 
If we set $\sfm=m\sfe_i$, then (\ref{one-coord}) becomes
 \begin{align*}
  f(\sfz +\sfm \eta) 
  & \;=\; e\big(\sfz \sfN\sfm^\sT+\tfrac{1}{2}\sfm \sfN\sfm^\sT\eta-\tfrac{1}{2}m  N_{ii}  \eta +\sfc \sfm^\sT  \big) f(\sfz ).
 \end{align*}
 Thus, the proposition holds when $\sfm=m\sfe_i$.
 
Let $r$ be an  integer between $1$ and $g$ and let $\sfm=(m_1,\ldots,m_r,0,\ldots,0) \in \ZZ^g$. 
We will prove the proposition by induction on $r$. More explicitly, we will show that
 $$
  f(\sfz +\sfm\eta) 
     \;=\; e\Big(\sfz \sfN \sfm^\sT  \,+\,\tfrac{1}{2} \sfm \sfN \sfm^\sT \eta    \, - \,\tfrac{1}{2} \sum_{i=1}^{r}    m_i  N_{ii}  \eta  
     \, + \,   \sfc \sfm^\sT  \Big)
  f(\sfz ).
  $$
  When $r=g$ this is the formula in (\ref{quasi-period}).

We have already shown that  (\ref{quasi-period}) holds 
when $r=1$. We now assume $r \ge 2$ and 
write $\sfm'=(m_1,\ldots,m_{r-1},0,\ldots,0)$ so that $\sfm=\sfm'+m_r\sfe_r$. 
 
 It follows from (\ref{one-coord}) that 
 \begin{align*}
 & f(\sfz +\sfm\eta) \;=\;    f(\sfz +\sfm'  \eta+m_r\sfe_r \eta) 
 \\
 & \phantom{xx}
   \;=\; e\big(m_r(\sfz +\sfm' \eta)\sfN \sfe_r^\sT  \,+\,  \tbinom{m_{r}}{2} N_{rr} \eta +m_rc_r\big) 
  f(\sfz +\sfm'  \eta)
  \\
   &  \phantom{xx}
    \;=\; e\big(m_r(\sfz +\sfm' \eta)\sfN \sfe_r^\sT  \,+\,  \tbinom{m_{r}}{2} N_{rr}  \eta +m_rc_r\big) 
    \\
  & \phantom{xxxxx} 
  \times e\Big( \sfz \sfN\sfm'^\sT   \,  +\, \tfrac{1}{2}  \sfm'\sfN \sfm'^\sT  \eta      \, - \,  \tfrac{1}{2}    \sum_{i=1}^{r-1}   m_i  N_{ii} \eta  
   +   \sfc \sfm'^\sT  \Big)
  f(\sfz )
      \\
   &  \phantom{xx}
    \;=\; e\Big(\sfz \sfN \sfm^\sT \,+\,   \sfm' \sfN (m_r \sfe_r)^\sT \eta +   \tbinom{m_{r}}{2} N_{rr}  \eta 
 \,  +\,  \tfrac{1}{2}   \sfm'\sfN \sfm'^\sT  \eta     \, - \,  \tfrac{1}{2}    \sum_{i=1}^{r-1}    m_i  N_{ii} \eta  
     \, + \,    \sfc \sfm^\sT  \Big)
  f(\sfz )
    \\
   &  \phantom{xx}
    \;=\; e\Big(\sfz \sfN \sfm^\sT  \,+\,   \sfm' \sfN (m_r \sfe_r)^\sT   \eta    \,  +\,  \tfrac{1}{2}  m_r^2 N_{rr} \eta 
 \,  +\,  \tfrac{1}{2}   \sfm'\sfN \sfm'^\sT  \eta     \, - \,  \tfrac{1}{2}    \sum_{i=1}^{r}    m_i N_{ii} \eta  
     \, + \,   \sfc \sfm^\sT  \Big)
  f(\sfz ).
  \end{align*} 
Since $\sfN$ is symmetric, $\sfe_i\sfN\sfe_j^\sT=\sfe_j\sfN\sfe_i^\sT$ for all $i$ and $j$. 
Therefore
$$
 \sfm' \sfN (m_r \sfe_r)^\sT   =  \tfrac{1}{2}  \sfm' \sfN (m_r \sfe_r)^\sT  \, + \,   \tfrac{1}{2} (m_r \sfe_r) \sfN \sfm'^\sT 
$$
and it follows that 
 $$
  \tfrac{1}{2} \sfm \sfN \sfm^\sT
   \;=\; 
  \sfm' \sfN (m_r \sfe_r)^\sT       \,  +\,  \tfrac{1}{2}  m_r^2  N_{rr} 
 \,  +\,  \tfrac{1}{2}   \sfm'\sfN \sfm'^\sT .
 $$ 
 Therefore
 $$
  f(\sfz +\sfm\eta) 
     \;=\; e\Big(\sfz \sfN \sfm^\sT  \,+\,   \tfrac{1}{2} \sfm \sfN \sfm^\sT    \, - \,  \tfrac{1}{2}    \sum_{i=1}^{r}    m_i  N_{ii}  \eta  
     \, + \,   \sfc \sfm^\sT  \Big)
  f(\sfz ).
  $$
Thus the induction proceeds and the final case $r=g$ gives (\ref{quasi-period}).
  \end{pf}

\begin{proposition}\label{prop.theta.func.per.kl}
Fix a point $(c_1,\ldots,c_g) \in \CC^g$.
Let $z_0=z_{g+1}=0$.
If $f:\CC^g \to \CC$ is a holomorphic function such that
\begin{equation}
\label{hypoth.on.f}
f(z_1,\ldots,z_i+\eta,\ldots,z_g) \;=\; e(z_{i-1}-n_iz_i+z_{i+1}+c_i)f(z_1,\ldots,z_g)
\end{equation}
for all $i=1,\ldots,g$, then 
$$
f(z_1+k_1\eta,\ldots,z_g+k_g\eta)  \;=\;   
 e\Big(   -nz_1  \, +\,   \sum_{i=1}^gk_i(c_i+\eta)  \,-\,   \tfrac{1}{2} (nk-n+k+1) \eta   \Big)   \,   f(\sfz )
$$
and
$$
f(z_1+l_1\eta,\ldots,z_g+l_g\eta)  \;=\;   
 e\Big(   -nz_g  \, +\,   \sum_{i=1}^gl_i(c_i+\eta)  \,-\,   \tfrac{1}{2} (nk'-n+k'+1) \eta   \Big)   \,   f(\sfz ).
$$
\end{proposition}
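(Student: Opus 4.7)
The plan is to reduce both identities to a single application of \cref{prop.Theta.N} with the symmetric matrix $\sfN := -\sfD(n_1,\ldots,n_g)$. With this choice, $\sfN_{ii} = -n_i$ and a direct check gives $\sfz\sfN\sfe_i^\sT = z_{i-1}-n_iz_i+z_{i+1}$, so the functional equation \cref{hypoth.on.f} in the hypothesis is precisely the condition \cref{one-coord-0} required to apply \cref{prop.Theta.N}. By \cref{prop.nkl}\cref{item.prop.nkl.kl}, the vectors $\sfk=(k_1,\ldots,k_g)$ and $\sfl=(l_1,\ldots,l_g)$ both lie in $\NN^g$, so \cref{quasi-period} applies with $\sfm=\sfk$ and with $\sfm=\sfl$.

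The first formula will follow by taking $\sfm=\sfk$ and simplifying $\sfN\sfk^\sT$ componentwise using the recursion $n_ik_i=k_{i-1}+k_{i+1}$. For $2\leq i\leq g-1$ this forces $(\sfN\sfk^\sT)_i=k_{i-1}-n_ik_i+k_{i+1}=0$, while at the endpoints one gets $(\sfN\sfk^\sT)_1=-k_0=-n$ and $(\sfN\sfk^\sT)_g=-k_{g+1}=0$. Consequently $\sfz\sfN\sfk^\sT=-nz_1$ and $\sfk\sfN\sfk^\sT=-nk_1=-nk$. The constant term in the exponent produced by \cref{prop.Theta.N} is then
\begin{equation*}
\tfrac{1}{2}\sfk\sfN\sfk^\sT\eta+(\sfc-\sfd\eta)\sfk^\sT \;=\; -\tfrac{1}{2}nk\,\eta+\sum_{i=1}^gk_ic_i+\tfrac{1}{2}\!\left(\sum_{i=1}^gn_ik_i\right)\!\eta,
\end{equation*}
so after extracting $\sum_ik_i\eta$ to assemble $\sum_ik_i(c_i+\eta)$, what remains to check is the telescoping identity $\sum_{i=1}^g(n_i-2)k_i=n-k-1$. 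But $\sum_{i=1}^g(n_i-2)k_i=\sum_{i=1}^g(k_{i-1}+k_{i+1}-2k_i)$, which collapses to $k_0-k_1-k_g+k_{g+1}=n-k-1+0$ by \cref{prop.nkl}\cref{item.prop.nkl.kl} and \cref{item.prop.nkl.zg}.

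The second formula is proved identically with $\sfm=\sfl$ in place of $\sfk$: the recursion $n_il_i=l_{i-1}+l_{i+1}$ now forces $\sfN\sfl^\sT=-n\sfe_g^\sT$ (the nonzero entry shifts to the last slot because $l_0=0$ and $l_{g+1}=n$), giving $\sfz\sfN\sfl^\sT=-nz_g$ and $\sfl\sfN\sfl^\sT=-nl_g=-nk'$. The analogous telescoping calculation yields $\sum_{i=1}^g(n_i-2)l_i=l_0-l_1-l_g+l_{g+1}=0-1-k'+n=n-k'-1$, and the constant term rearranges to $-\frac{1}{2}(nk'-n+k'+1)\eta$ exactly as before.

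There is no real obstacle — the entire argument is a direct invocation of \cref{prop.Theta.N} followed by two bookkeeping simplifications. The only place requiring any care is verifying the telescoping identities $\sum(n_i-2)k_i=n-k-1$ and $\sum(n_i-2)l_i=n-k'-1$, both of which reduce immediately to the boundary values $k_0=l_{g+1}=n$, $k_1=k$, $l_g=k'$, $k_g=l_1=1$, and $k_{g+1}=l_0=0$ recorded in \cref{prop.nkl}.
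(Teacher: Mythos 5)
Your proof is correct and follows essentially the same route as the paper: both arguments set $\sfN=-\sfD(n_1,\ldots,n_g)$, invoke \cref{prop.Theta.N} with $\sfm=\sfk$, use $\sfN\sfk^\sT=-n\sfe_1^\sT$ from the recursion $n_ik_i=k_{i-1}+k_{i+1}$, and finish with the same boundary-value bookkeeping (your telescoping identity $\sum(n_i-2)k_i=n-k-1$ is just a repackaging of the paper's rearrangement of $-\tfrac12 nk+\tfrac12\sum k_in_i$). The only cosmetic difference is that the paper obtains the second formula by the substitution $(n_1,\ldots,n_g)\mapsto(n_g,\ldots,n_1)$ applied to the first, whereas you rerun the computation directly with $\sfm=\sfl$; both are equally valid.
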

\begin{proof}
Recall that $k_0=n$, $k_1=k$, $k_g=1$, and $k_{g+1}=0$.

Let $\sfN =-\sD (n_1,\ldots,n_g)$ where $\sD$ is the tri-diagonal matrix in (\ref{defn.D.matrix}).
With this choice of $\sfN$, the hypothesis that $f$ satisfies (\ref{hypoth.on.f}) can be written as
$$
 f(\sfz +\sfe_i \eta) \;=\; e(\sfz \sfN \sfe_i^\sT +c_i) f(\sfz )
 $$ 
 which is condition (\ref{one-coord-0}) appearing in \cref{prop.Theta.N}. We now apply that result with 
$\sfk=(k_1,\ldots,k_g)$ playing the role that $\sfm$ played there.

Since
\begin{align*}
 \sfN\sfk^\sT   &  \;=\; (-n_1k_1+k_2\,,k_1-n_2k_2+k_3,\ldots,k_{g-2}-n_{g-1}k_{g-1}+k_g,\, k_{g-1}-n_gk_g)^\sT
 \\
  &  \;=\; (-k_0,0,\ldots, 0)^\sT
   \\
  &  \;=\; (-n,0,\ldots, 0)^\sT,
\end{align*}
the formula in (\ref{quasi-period}) becomes
\begin{align*}
f(\sfz +{\sfk}\eta)
&  \;=\;   
 e\Big(   -nz_1  \,-\,   \tfrac{1}{2} k_1n \eta \, +\,   \sum_{i=1}^gk_i(c_i+     \tfrac{1}{2} n_i \eta) \Big)   \,   f(\sfz )
 \\
 &  \;=\;   
 e\Big(   -nz_1  \, +\,   \sum_{i=1}^gk_ic_i  \,-\,   \tfrac{1}{2} nk \eta  \,+\,     \tfrac{1}{2}  \sum_{i=1}^gk_i n_i \eta  \Big)   \,   f(\sfz )
  \\
 &  \;=\;   
 e\Big(   -nz_1  \, +\,   \sum_{i=1}^gk_i(c_i+\eta)  \,-\,   \tfrac{1}{2} (nk-n+k+1) \eta   \Big)   \,   f(\sfz )
\end{align*}
where the last equality follows from the calculation
\begin{align*}
\,-\,   \tfrac{1}{2} nk   \,+\,     \tfrac{1}{2}  \sum_{i=1}^gk_i n_i 
& \;=\; \,-\,   \tfrac{1}{2} nk   \,+\,     \tfrac{1}{2}  \sum_{i=1}^g(k_{i-1}+k_{i+1})
\\
 & \;=\; \,-\,   \tfrac{1}{2} nk   \,+\,     \tfrac{1}{2}(k_0-k_1-k_g) +  \sum_{i=1}^g k_i.
\end{align*}
This completes the proof of the first quasi-periodicity property.

The second follows from the first by replacing $n/k$ by $n/k'$, replacing $(k_1,\ldots,k_g)$ by $(l_g,\ldots,l_1)$, and 
replacing $(z_1,\ldots,z_g)$ by $(z_g,\ldots,z_1)$. 
\end{proof}

In \cite[Appendix~B]{Od-survey}, Odesskii defined two operators,  that we denote by $S$ and $T$,  on the ring of holomorphic functions on 
$\CC^g$.  We also  define operators $S'$ and $T'$ that are related to $n/k'$ in the same way as $S$ and $T$ are related to $n/k$.

\begin{definition}
\label{4308528}
	Define linear operators $S$\index{S@$S$}, $T$\index{T@$T$}, $S'$\index{S'@$S'$}, and $T'$\index{T'@$T'$}, on the space of holomorphic functions on $\CC^{g}$ by
	\begin{align*}
		(S\cdot f)(z_{1},\ldots,z_{g})&\;=\; f(z_{1}+\tfrac{k_{1}}{n},\ldots,z_{g}+\tfrac{k_{g}}{n}\big),
		\\
		(T \cdot f)(z_{1},\ldots,z_{g})&\;=\; e(z_{1}+C)f\big(z_{1}+\tfrac{k_{1}}{n}\eta,\ldots,z_{g}+\tfrac{k_{g}}{n}\eta\big),
		\\
		(S' \cdot f)(z_{1},\ldots,z_{g})&\;=\; f\big(z_{1}+\tfrac{l_1}{n},\ldots,z_{g}+\tfrac{l_g}{n} \big),
		\\
		(T' \cdot f)(z_{1},\ldots,z_{g})&\;=\; e\big(z_{g}+C')f\big(z_{1}+\tfrac{l_1}{n}\eta,\ldots,z_{g}+
		\tfrac{l_g}{n}\eta\big),
	\end{align*}
	where\index{C@$C$}\index{C'@$C'$}
	\begin{align}
		 C & \;=\; -\, \tfrac{1}{n} \left(\sum_{i=1}^{g}k_{i}(c_{i}+\eta)+\left(\tfrac{n-1}{2}-k\right)\eta\right),  \label{defn.scalar.varphi}
		 \\
		C'  & \;=\; -\,\tfrac{1}{n}  \left(\sum_{i=1}^{g}l_{i}(c_{i}+\eta)+\left(\tfrac{n-1}{2} -k'\right)\eta\right). \label{defn.scalar.varphi'}
	\end{align}
\end{definition}

\begin{proposition}\leavevmode
	\begin{enumerate}
		\item\label{2049852} 
		The four operators satisfy the following relations:
		\begin{align*}
			&ST \;=\; e\big(\tfrac{k}{n}\big)  TS, &
			& S'T' \;=\; e\big(\tfrac{k'}{n}\big)      T'S',&
			&   SS' \;=\; S'S  ,\\
			&ST' \;=\; e\big(\tfrac{1}{n}\big)  T'S,&
			&S'T \;=\; e\big(\tfrac{1}{n}\big)  TS' ,&
			&T T' \;=\; T'T.
		\end{align*}
		\item\label{8492848} 
		The operators $S,T,S',T'$  send  $\Theta_{n/k}(\Lambda)$ to itself.
		\item\label{0089507} 
		$S^n=T^n=S'^n=T'^n=1$ on $\Theta_{n/k}(\Lambda)$.
	\end{enumerate}
\end{proposition}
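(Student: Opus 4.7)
Each of the three statements yields to a direct calculation, with the combinatorial identities for $k_i$ and $l_i$ established in \Cref{prop.nkl} doing essentially all of the work. The key facts are: $k_0 = l_{g+1} = n$, $k_1 = k$, $l_g = k'$, $k_g = l_1 = 1$, $k_{g+1} = l_0 = 0$, and the three-term recurrences $k_{i-1} - n_i k_i + k_{i+1} = 0$ and $l_{i-1} - n_i l_i + l_{i+1} = 0$ for $1 \le i \le g$.

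For \cref{2049852}, I would just expand both sides of each relation. For instance
\begin{align*}
(ST \cdot f)(\sfz) &\;=\; e\big(z_1 + \tfrac{k_1}{n} + C\big) f\big(\sfz + \tfrac{\sfk}{n} + \tfrac{\sfk}{n}\eta\big),\\
(TS \cdot f)(\sfz) &\;=\; e(z_1 + C) f\big(\sfz + \tfrac{\sfk}{n} + \tfrac{\sfk}{n}\eta\big),
\end{align*}
whose ratio is $e(k_1/n) = e(k/n)$; the five remaining relations are handled the same way, the relevant scalars being $e(l_g/n)=e(k'/n)$, $e(k_g/n) = e(1/n)$, $e(l_1/n) = e(1/n)$, and, for $TT'=T'T$, the pair of factors $e(k_g\eta/n)$ and $e(l_1\eta/n)$ which coincide because $k_g = l_1 = 1$. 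The two translations $S$ and $S'$ obviously commute.

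For \cref{8492848}, $S$ and $S'$ preserve $\Theta_{n/k}(\L)$ because they are translations by elements of $\frac{1}{n}\ZZ^g \subseteq \CC^g$. For $T$, the $1$-periodicity in each $z_i$ follows from the $1$-periodicity of the prefactor $e(z_1+C)$ in every coordinate. For the $\eta$-quasi-period, I would compute
\begin{equation*}
  (Tf)(z_1,\ldots,z_i + \eta,\ldots,z_g) \;=\; e(z_1 + \delta_{i,1}\eta + C)\cdot e\bigl(z_{i-1}-n_iz_i+z_{i+1}+c_i + \tfrac{k_{i-1}-n_ik_i+k_{i+1}}{n}\eta\bigr)\cdot f(\sfz + \tfrac{\sfk}{n}\eta),
\end{equation*}
and then observe that the recurrence $k_{i-1} - n_i k_i + k_{i+1}=0$ (with the convention $k_0 = n$, $k_{g+1}=0$ at the boundary, noting that $-k_0/n=-1$ cancels the stray $\eta$ from $\delta_{i,1}\eta$ in the $i=1$ case) kills the unwanted contribution, leaving exactly $e(z_{i-1}-n_iz_i+z_{i+1}+c_i)(Tf)(\sfz)$. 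The argument for $T'$ is symmetric, with $\sfk$ replaced by $\sfl$ and the boundary term $l_{g+1} = n$ cancelling the stray $\eta$ from $\delta_{i,g}\eta$.

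For \cref{0089507}, $S^n f(\sfz) = f(\sfz + \sfk) = f(\sfz)$ because $\sfk \in \ZZ^g$ and $f$ is $1$-periodic in each coordinate; the same argument gives $(S')^n = 1$. For $T^n$, an easy induction (using $e(z_1 + \tfrac{m k_1}{n}\eta + C)$ at the $m$-th step) yields
\begin{equation*}
  (T^n f)(\sfz) \;=\; e\bigl(n z_1 + \tfrac{n-1}{2} k \eta + n C\bigr)\, f(\sfz + \sfk \eta),
\end{equation*}
and applying \cref{prop.theta.func.per.kl} to $f(\sfz + \sfk\eta)$ together with the defining formula \cref{defn.scalar.varphi} for $C$ gives total exponent
\begin{equation*}
  \tfrac{n-1}{2} k \eta \,-\, \bigl(\tfrac{n-1}{2} - k\bigr)\eta \,-\, \tfrac{1}{2}(nk-n+k+1)\eta \;=\; 0,
\end{equation*}
so $T^n = 1$; the computation of $(T')^n$ is identical after replacing $(k, \sfk, C, z_1)$ by $(k', \sfl, C', z_g)$.

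There is no real obstacle here: everything reduces to bookkeeping and the recurrences for $k_i$ and $l_i$. The only mildly delicate point is tracking the boundary corrections $(k_0, k_{g+1}) = (n,0)$ and $(l_0, l_{g+1}) = (0,n)$ in the quasi-period check of \cref{8492848}, which is why the prefactors $e(z_1 + C)$ and $e(z_g + C')$ appear in the definitions of $T$ and $T'$ in the first place.
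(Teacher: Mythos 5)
Your overall strategy---direct expansion of the four operators, with the recurrences $k_{i-1}-n_ik_i+k_{i+1}=0$, $l_{i-1}-n_il_i+l_{i+1}=0$ and the boundary values $k_0=l_{g+1}=n$, $k_{g+1}=l_0=0$, $k_g=l_1=1$ doing all the work---is exactly the paper's, and your computations for part (1), for the quasi-periodicity of $Tf$ and $T'f$ in part (2), and for part (3) all check out.

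There is, however, one step whose stated justification fails: in part (2) you dispose of $S$ and $S'$ by asserting that they preserve $\Theta_{n/k}(\Lambda)$ ``because they are translations by elements of $\frac{1}{n}\ZZ^g$.'' That is not a sufficient reason. If $f\in\Theta_{n/k}(\Lambda)$ and $\sfa\in\RR^g$, then $f(\cdot+\sfa)$ still has period $1$ in each variable, but its $\eta$-quasi-periodicity in the $i$-th variable acquires the extra factor $e(a_{i-1}-n_ia_i+a_{i+1})$ (with $a_0=a_{g+1}=0$), i.e.\ $e\big(-(\sD\sfa)_i\big)$ for the tridiagonal matrix $\sD=\sD(n_1,\ldots,n_g)$. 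So translation by $\sfa$ preserves $\Theta_{n/k}(\Lambda)$ only when $\sD\sfa\in\ZZ^g$; since $\sD^{-1}\ZZ^g/\ZZ^g\cong\ZZ_n$ while $\frac{1}{n}\ZZ^g/\ZZ^g\cong\ZZ_n^g$, a generic element of $\frac{1}{n}\ZZ^g$ does \emph{not} work once $g\ge 2$ (for instance, translation by $\frac{1}{n}\sfe_1$ multiplies the $i=2$ quasi-period factor by $e(1/n)\ne 1$). What saves $S$ and $S'$ is precisely that $\sD\sfk^{\sfT}=n\sfe_1^{\sfT}$ and $\sD\sfl^{\sfT}=n\sfe_g^{\sfT}$, so that $\sD(\sfk/n)$ and $\sD(\sfl/n)$ lie in $\ZZ^g$---the same recurrence-plus-boundary cancellation you invoke two sentences later for $T$. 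The paper carries out exactly this check for $Sf$, with the $i=1$ case producing the integer $-k_0/n=-1$. With that one repair your proposal coincides with the paper's proof.
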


\begin{proof}
	\cref{2049852}
	It is clear that $SS'=S'S$. 
	
	To see that  $TT'=T'T$  we first compute
	\begin{align*}
		&(TT' f)(z_{1},\ldots,z_{g})\\
		&  \;=\;  e(z_{1}+C)(T'f)\big(z_{1}+\tfrac{k_1}{n}\eta,\ldots,z_{g}+\tfrac{k_g}{n}\eta\big)
		    \\
		&\;=\; e(z_{1}+C)e\big(z_{g}+\tfrac{k_g}{n} \eta+C'\big)
		f\big(z_{1}+\tfrac{k_1+l_1}{n}\eta,\ldots,z_{g}+\tfrac{k_g+l_g}{n}\eta\big)
	\end{align*}
	Interchanging the roles of $k_{i}$ and $l_{i}$ in the previous calculation gives a similar expression for $T'T$. 
	Comparing the two calculations, we see that $TT'f=T'Tf$ since $k_{g}=1=l_{1}$.
	
	The following calculation shows that  $ST'=e(\frac{1}{n})T'S$:
	\begin{align*}
		(ST'f)(z_{1},\ldots,z_{g})
		&=(T'f)\big(z_{1}+ \tfrac{k_{1}}{n},\ldots,z_{g}+\tfrac{k_{g}}{n}\big )\\
		&=e\big(z_{g}+\tfrac{k_{g}}{n}+C' \big)
		f(z_{1}+\tfrac{k_{1}}{n}+\tfrac{l_{1}}{n}\eta,\ldots,z_{g}+\tfrac{k_{g}}{n}+\tfrac{l_{g}}{n}\eta\big)\\
		&=e\big(\tfrac{1}{n} \big)
		e\big(z_{g}+C'\big)
		f\big(z_{1}+\tfrac{k_{1}}{n}+\tfrac{l_{1}}{n}\eta,\ldots,z_{g}+\tfrac{k_{g}}{n}+
		\tfrac{l_{g}}{n} \eta\big)
		\\
		&=e\big(\tfrac{1}{n} \big)
		e\big(z_{g}+C'\big)
		(Sf)(z_{1}+\tfrac{l_{1}}{n} \eta,\ldots,z_{g}+\tfrac{l_{g}}{n} \eta\big)\\
		&=e\big(\tfrac{1}{n} \big)  (T'Sf)(z_{1},\ldots,z_{g} ).
	\end{align*}
	
	Similar calculations prove the other three equalities in \cref{2049852}.
	
	\cref{8492848} 
	We only show that for all $f\in\Theta_{n/k}(\Lambda)$ the function $Sf$ has the appropriate quasi-periodicity property 
	with respect to $\eta$ in each variable: 
	\begin{align*}
		&(Sf)(z_{1},\ldots,z_{i}+\eta,\ldots,z_{g})\\
		&=f\big(z_{1}+\tfrac{k_1}{n},\ldots,z_{i}+\tfrac{k_i}{n}+\eta,\ldots,z_{g}+\tfrac{k_g}{n}\big)
		\\
		&=e\big(-n_{i}(z_{i}+ \tfrac{k_i}{n})+(1-\delta_{i,1})(z_{i-1}+\tfrac{k_{i-1}}{n})+(1-\delta_{i,g})(z_{i+1}+\tfrac{k_{i+1}}{n}\big)+c_{i}\big)f\big(z_{1}+\tfrac{k_1}{n},\ldots,z_{g}+\tfrac{k_g}{n} \big)\\
		&=e\big(-\tfrac{k_{0}}{n}\delta_{i,1}-\tfrac{k_{g+1}}{n}\delta_{i,g}\big)e(-n_{i}z_{i}+(1-\delta_{i,1})z_{i-1}+(1-\delta_{i,g})z_{i+1}+c_{i})  
		f\big(z_{1}+\tfrac{k_1}{n},\ldots,z_{i}+\tfrac{k_i}{n},\ldots,z_{g}+\tfrac{k_g}{n}\big)  
		\\
		&=e(-n_{i}z_{i}+z_{i-1}+z_{i+1}+c_{i})(Sf)(z_{1},\ldots,z_{g})
	\end{align*}
	where the last equality follows because $k_{0}=n$ and $k_{g+1}=0$.
	
	\cref{0089507} 
	Since $f$ is periodic with respect to $1$ in each variable, $S^{n}=S'^{n}=1$. We will now show that $T^{n}=1$. First,
	\begin{align*}
		&(T^{n}f)(z_{1},\ldots,z_{g})\\
		&=e(z_{1}+C)
		(T^{n-1}f)\big(z_{1}+\tfrac{k_1}{n} \eta,\ldots,z_{g}+\tfrac{k_g}{n}\eta\big)
		\\
		&=e(z_{1}+C)
		e\big(z_{1}+\tfrac{k_{1}}{n}\eta+C\big)\\
		&\phantom{=}\times\cdots\times e\big(z_{1}+(n-1)\tfrac{k_{1}}{n}+C\big) f(z_{1}+k_{1}\eta,\ldots,z_{g}+k_{g}\eta)
		\\ 
		&=e\big(nz_{1}+\tfrac{n-1}{2}k_{1}\eta+nC\big)
		f(z_{1}+k_{1}\eta,\ldots,z_{g}+k_{g}\eta).
	\end{align*}
	By applying \cref{prop.theta.func.per.kl}, this is equal to
	\begin{align*}
		&e\big(nz_{1}+\tfrac{n-1}{2}k_{1}\eta+nC\big) 
		\\
		&\phantom{=}\times e\bigg(-nz_{1}+\sum_{i=1}^{g}k_{i}(c_{i}+\eta)-\tfrac{1}{2}(nk-n+k+1) \eta\bigg)f(z_{1},\ldots,z_{g})
		\\
		&=e\bigg(nC+\sum_{i=1}^{g}k_{i}(c_{i}+\eta)+(\tfrac{n-1}{2}-k)\eta\bigg)f(z_{1},\ldots,z_{g})\\
		&=f(z_{1},\ldots,z_{g}).
	\end{align*}
	A similar argument shows that $T'^{n}=1$.
\end{proof}

\subsubsection{The basis $w_0,\ldots,w_{n-1}$ for $\Theta_{n/k}(\Lambda)$}
\label{sssect.wi.basis}
We now exhibit a basis  for $\Theta_{n/k}(\Lambda)$ consisting of $S$-eigenvectors. 

The actions of $S$ and $T$ on $\Theta_{n/k}(\Lambda)$ make it a representation of the Heisenberg group $H_n$ of order $n^3$. The generator
\begin{equation*}
  [S,T] = STS^{-1}T^{-1}
\end{equation*}
of the center of $H_n$ acts as the primitive root of unity $e\big(\frac{k}{n}\big)$, so it follows from the classification of irreducible $H_n$-representations (e.g., \cite[\S 3.1]{schulte}) that $\Theta_{n/k}(\Lambda)$ is the unique irreducible $H_n$-representation with this property. Furthermore, it has a basis $\{w_{\a}\;|\;\a\in\ZZ_n\}$\index{w_alpha(z_1,...,z_g)@$w_{\a}(z_{1},\ldots,z_{g})$}, unique up to multiplication by a common non-zero scalar, such that
\begin{equation*}
	Sw_{\a}\;=\; e\big(\tfrac{k\a}{n}\big)w_{\a}   \qquad \text{and} \qquad  Tw_{\a}\;=\;w_{\a+1}.
\end{equation*}

\begin{proposition}\label{02987452}
	There exist $c_{\frac{1}{n}},c_{\frac{1}{n}\eta}\in\CC^{\times}$ such that for all $\a\in\ZZ_n$,
	\begin{equation*}
		S'w_{\a}\; =\; c_{\frac{1}{n}}e\big(\tfrac{\a}{n}\big)w_{\a}
		\qquad \text{and} \qquad 
		T'w_{\a}\;=\;c_{\frac{1}{n}\eta}w_{\a+k'}
	\end{equation*}
	where $k'$ denotes the inverse of $k$ in $\ZZ_n$. 
\end{proposition}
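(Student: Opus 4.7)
The plan is to exploit the commutation relations in Part \cref{2049852} of the proposition preceding the statement, together with the fact that $\{w_\alpha\}_{\alpha\in\ZZ_n}$ are characterized (up to a common scalar) by being the simultaneous eigenbasis of $S$ with distinct eigenvalues $e(k\alpha/n)$ and satisfying $Tw_\alpha = w_{\alpha+1}$. Since $S'$ and $T'$ are invertible operators on $\Theta_{n/k}(\Lambda)$ (they have order $n$ by Part \cref{0089507}), any constants that emerge must be non-zero, so the crux is to identify how $S'$ and $T'$ act on the basis.

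For the claim about $S'$: since $SS' = S'S$, the operator $S'$ preserves each $S$-eigenspace. Because $k$ is coprime to $n$, the eigenvalues $e(k\alpha/n)$ are distinct, so each eigenspace is one-dimensional and hence $S'w_\alpha = \lambda_\alpha w_\alpha$ for some scalar $\lambda_\alpha \in \CC^\times$. Applying the relation $S'T = e(\frac{1}{n})TS'$ to $w_\alpha$ gives
\begin{equation*}
\lambda_{\alpha+1}w_{\alpha+1} \;=\; S' w_{\alpha+1} \;=\; S'Tw_\alpha \;=\; e\bigl(\hbox{$\frac{1}{n}$}\bigr)TS'w_\alpha \;=\; e\bigl(\hbox{$\frac{1}{n}$}\bigr)\lambda_\alpha w_{\alpha+1},
\end{equation*}
so $\lambda_\alpha = \lambda_0 \cdot e(\alpha/n)$. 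Setting $c_{\frac{1}{n}} := \lambda_0$ yields the first formula.

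For the claim about $T'$: applying $ST' = e(\frac{1}{n})T'S$ to $w_\alpha$ gives $S(T'w_\alpha) = e(\frac{k\alpha+1}{n})T'w_\alpha$, so $T'w_\alpha$ is an $S$-eigenvector with eigenvalue $e(\frac{k\alpha+1}{n})$. Comparing with $Sw_\beta = e(\frac{k\beta}{n})w_\beta$ and using $kk' \equiv 1 \pmod n$ forces $\beta = \alpha + k'$, hence $T'w_\alpha = \mu_\alpha w_{\alpha+k'}$ for some $\mu_\alpha \in \CC^\times$. To see that $\mu_\alpha$ does not depend on $\alpha$, apply the relation $TT' = T'T$ to $w_\alpha$:
\begin{equation*}
\mu_\alpha w_{\alpha+k'+1} \;=\; TT'w_\alpha \;=\; T'Tw_\alpha \;=\; \mu_{\alpha+1}w_{\alpha+k'+1},
\end{equation*}
so $\mu_\alpha = \mu_{\alpha+1}$ for all $\alpha$; setting $c_{\frac{1}{n}\eta} := \mu_0$ completes the proof.

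There is no real obstacle here: every step is a direct consequence of the commutation relations \cref{2049852} and the fact that $\{w_\alpha\}$ diagonalizes $S$ with simple spectrum. The only mild subtlety is recognizing that the shift index for $T'$ must be precisely $k'$, which is forced by the multiplicative inverse relation $kk'\equiv 1 \pmod n$ when one matches $S$-eigenvalues.
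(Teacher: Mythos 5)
Your proof is correct and follows essentially the same route as the paper's: both arguments rest entirely on the commutation relations of $S,T,S',T'$ and the fact that $\{w_\a\}$ is the (simple-spectrum) $S$-eigenbasis permuted by $T$. The only cosmetic difference is that the paper pins down the action on $w_0$ first and propagates via $T^\a$, whereas you derive the recursions $\lambda_{\a+1}=e(\frac{1}{n})\lambda_\a$ and $\mu_{\a+1}=\mu_\a$ directly; the content is identical.
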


\begin{proof}
	Since $SS'w_{0}  =S'Sw_{0}=S'w_{0}$,
	$S'w_{0}=c_{\frac{1}{n}}w_{0}$ for some $c_{\frac{1}{n}}\in\CC^{\times}$. For each $\alpha$,
	\begin{equation*}
		S'w_{\a}\;=\; S'T^{\a}w_{0}
		\;=\; e\big(\tfrac{\a}{n}\big)T^{\a}S'w_{0}
		\;=\; e\big(\tfrac{\a}{n}\big)T^{\a}\big(c_{\frac{1}{n}}w_{0}\big)\;=\; c_{\frac{1}{n}}e\big(\tfrac{\a}{n}\big)w_{\a}.
	\end{equation*}
	Therefore $ST'w_{0}= e\big(\tfrac{1}{n}\big)T'Sw_{0}	= e\big(\tfrac{kk'}{n}\big)T' w_{0}$
	implies $T'w_{0}=c_{\frac{1}{n}\eta}w_{k'}$ for some $c_{\frac{1}{n}\eta}\in\CC^{\times}$. For each $\a$,
	\begin{equation*}
		T'w_{\a}\;=\; T'T^{\a}w_{0}\;=\; T^{\a}T'w_{0}\;=\; T^{\a}\big(c_{\frac{1}{n}\eta}w_{k'}\big)\;=\; c_{\frac{1}{n}\eta}w_{\a+k'}.
	\end{equation*}
	The proof is complete.
\end{proof}

%%%%%%%%%%%%%%%%%%%%%%%%%%%%%%%%%%%%%%%%%%%%%%%%%%%%%%%%%%%%%%%%%%%%%%%%%%%%%%%%
\subsection{The identification $\Theta_{n/k}(\L)=H^0(E^g,\cL_{n/k})$}
\label{sect.morphism.Phi.n.k}
%%%%%%%%%%%%%%%%%%%%%%%%%%%%%%%%%%%%%%%%%%%%%%%%%%%%%%%%%%%%%%%%%%%%%%%%%%%%%%%%

In this subsection we fix a point 
\begin{equation}\label{eq.good.c}
	\sfc\;=\;(c_{1},\ldots,c_{g})\; \in \;\tfrac{1}{2}(n_{1},\ldots,n_{g})+\Lambda^g.
\end{equation}
and define  $\Theta_{n/k}(\Lambda)$ as in \Cref{sect.Theta.n/k} using that choice of $\sfc$. 
The $\Theta_{n/k}(\Lambda)$  defined at \cite[p.~1152]{Od-survey} is our $\Theta_{n/k}(\Lambda)$ with  
$\sfc=\tfrac{1}{2}(n_{1},\ldots,n_{g})- (0,1,\ldots,1)\eta$. Let $m_{1},\ldots,m_{g}$ be the unique integers such that
\begin{equation*}
\sfc\in\tfrac{1}{2}(n_{1},\ldots,n_{g})+(m_{1},\ldots,m_{g})\eta+\ZZ^{g}.
\end{equation*}

For a complex algebraic variety $X=(X,\cO)$, there is a corresponding analytic space $X^{\mathrm{an}}=(X^{\mathrm{an}},\cO^{\mathrm{an}})$ called the \textsf{analytification} of $X$, and a canonical morphism $\lambda\colon X^{\mathrm{an}}\to X$ of 
ringed spaces. 
There is an exact functor $(-)^{\mathrm{an}}$ 
from the category of (algebraic) coherent $\cO$-modules to the category of coherent analytic $\cO^{\mathrm{an}}$-modules and
Serre's GAGA theorem says this is an equivalence when $X$ is a complex projective algebraic variety.

We now apply this to $X=E^{g}$. Its analytification $X^{\mathrm{an}}$ is simply $E^{g}$ regarded as a complex manifold in the usual way. 
Define $\cL$ to be the invertible analytic $\cO^{\mathrm{an}}$-module whose sections on an analytic open subset $P\subseteq X^{\mathrm{an}}$ are the holomorphic functions on $\pi^{-1}(P)\subseteq\bbC^{g}$ satisfying the quasi-periodicity properties \cref{eq:od-qp}, where $\pi:\CC^{g}\to E^{g}$ is the quotient map. Thus $H^{0}(E^{g},\cL)=\Theta_{n/k}(\Lambda)$. By GAGA, there is an  {\it algebraic} invertible 
$\cO$-module $\cL'$, 
unique up to isomorphism, such that $(\cL')^{\mathrm{an}}\cong\cL$. For open algebraic  subsets $U\subseteq X$, the maps $\cL'(U) \to \cL(U)$ are injective, and the images form an algebraic coherent sheaf $\cL^{\mathrm{alg}}$ that is isomorphic to $\cL'$ and does not depend on the choice of $\cL'$. Moreover, we obtain a canonical isomorphism $H^{0}(X,\cL^{\mathrm{alg}})\to H^{0}(X^{\mathrm{an}},\cL)=\Theta_{n/k}(\Lambda)$.  The next result relates $H^{0}(X,\cL^{\mathrm{alg}})$ to the divisor $D_{n/k}$.

\begin{lemma}\label{prop.sheaf.theta.func}
Let $h\colon\bbC^{g} \to \CC$ be the meromorphic function 
		\begin{equation*}
			h(z_{1},\ldots,z_{g}) \; :=\; \prod_{i=1}^{g}e(m_{i}z_{i})\theta(z_{i})^{n_{i}}\cdot\prod_{j=1}^{g-1}\frac{e(z_{j+1})\theta(z_{j}-z_{j+1})}{\theta(z_{j})\theta(z_{j+1})}.
		\end{equation*}
	\begin{enumerate}
		\item\label{item.prop.sheaf.theta.func.h} 
		The function $h$ belongs to $H^{0}(E^{g},\cL)=\Theta_{n/k}(\Lambda)$.
		\item\label{item.prop.sheaf.theta.func.isom} 
		If we identify $h$ with the corresponding element of $H^{0}(X,\cL^{\mathrm{alg}})$ via the canonical isomorphism, then $(h)_{0}=D_{n/k}$, and thus there is a unique isomorphism $\cO_{E^{g}}(D_{n/k})\to\cL^{\mathrm{alg}}$ that maps $1$ to $h$.
	\end{enumerate}
\end{lemma}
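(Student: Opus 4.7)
The plan is to prove the two parts separately. For \cref{item.prop.sheaf.theta.func.h}, the only ingredient required is the transformation law $\theta(z+\eta)=-e(-z)\theta(z)$, which follows from \cref{defn.Jacobi.theta} by reindexing the summation. Periodicity of $h$ under $z_j \mapsto z_j+1$ is immediate because every factor of $h$ is either $1$-periodic (each $\theta$) or an exponential of an integer-valued linear form in $z_j$. To verify the quasi-periodicity in the $\eta$-direction, I would track how every factor of $h$ that depends on $z_j$ transforms under $z_j\mapsto z_j+\eta$: the $n_j$ numerator copies of $\theta(z_j)$; the one or two denominator copies of $\theta(z_j)$ coming from the product over $i$ in the definition of $h$; the adjacent cross-factors $\theta(z_{j-1}-z_j)$ and $\theta(z_j-z_{j+1})$ (when in range); and the prefactors $e(m_j z_j)$ and $e(z_j)$, the latter arising from the $i=j-1$ term of the second product when $j\ge 2$. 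Multiplying the resulting $-1$'s yields $(-1)^{n_j}=e(n_j/2)$; the $z$-dependent exponentials collapse to $e(z_{j-1}-n_j z_j+z_{j+1})$; and the $-\eta$ from $\theta(z_{j-1}-z_j)$ is cancelled exactly by the $+\eta$ from $e(z_j)$, leaving only $m_j\eta$ from the shift of $e(m_j z_j)$. These combine to precisely $c_j=\tfrac{n_j}{2}+m_j\eta$, as required by \cref{eq.good.c}. The boundary cases $j\in\{1,g\}$ are handled using the convention $z_0=z_{g+1}=0$: for $j=1$ both the cross-factor $\theta(z_0-z_1)$ and the compensating prefactor $e(z_1)$ are absent, so both the $-\eta$ and $+\eta$ contributions simultaneously disappear, and the rest of the computation is unchanged.

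For \cref{item.prop.sheaf.theta.func.isom}, I would carry out a divisor count. A priori $h$ is meromorphic on $\CC^g$, with vanishing and poles supported on the $\Lambda^g$-translates of $\{z_i=0\}$ and $\{z_i=z_{i+1}\}$, which descend to $\pr_i^{-1}(0)$ and $\Delta_{i,i+1}$ on $E^g$. Along $\pr_i^{-1}(0)$ the net multiplicity is $n_i$ (from $\theta(z_i)^{n_i}$) minus $1$ if $i\in\{1,g\}$ or $2$ otherwise (from the denominator $\theta(z_i)$'s contributed by the cross-terms), matching $\deg\fd_i$ in \cref{defn.Di}. Along $\Delta_{i,i+1}$ the multiplicity is $1$, from the numerator factor $\theta(z_i-z_{i+1})$. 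Thus $(h)_0=D_{n/k}$ is effective; in particular $h$ is in fact holomorphic on $\CC^g$ (consistent with $h\in\Theta_{n/k}(\Lambda)$ from the first part), and under the GAGA identification it corresponds to a non-zero global section of $\cL^{\mathrm{alg}}$ whose zero divisor equals that of the canonical section $1\in H^0(E^g,\cO_{E^g}(D_{n/k}))$. Because $E^g$ is projective and irreducible, $H^0(E^g,\cO_{E^g})=\CC$, so any two non-zero sections of an invertible sheaf with the same zero divisor are proportional; the scaling ambiguity in the isomorphism $\cO_{E^g}(D_{n/k})\to\cL^{\mathrm{alg}}$ is therefore uniquely pinned down by the requirement $1\mapsto h$.

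The main obstacle will be the bookkeeping in \cref{item.prop.sheaf.theta.func.h}: there are several $\pm\eta$-contributions, and it is essentially a feature of the asymmetric prefactor $e(z_{i+1})$ (rather than, say, a symmetric $e(\tfrac{1}{2}(z_i+z_{i+1}))$) in the definition of $h$ that they cancel to leave precisely $m_j\eta$. Once this verification is carried out carefully for a single interior index $j$, the boundary cases are a direct variant and the divisor computation of part \cref{item.prop.sheaf.theta.func.isom} follows routinely.
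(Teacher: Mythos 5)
Your proposal is correct and follows essentially the same route as the paper's (much terser) proof: part (1) by direct verification of the quasi-periodicity \cref{eq:od-qp} using $\theta(z+\eta)=-e(-z)\theta(z)$, and part (2) by counting zeros and poles of the explicit factors to get $(h)_0=D_{n/k}$, with uniqueness of the isomorphism coming from $\Aut(\cO_{E^g}(D_{n/k}))\cong\CC^\times$ on the projective variety $E^g$ (the argument of \cref{le.act-scale}). The only cosmetic difference is that you defer the observation that the denominator $\theta(z_i)$'s are absorbed by $\theta(z_i)^{n_i}$ (so $h$ is actually holomorphic, as required for membership in $\Theta_{n/k}(\Lambda)$) to part (2); logically it belongs with part (1), but the computation is the same.
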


\begin{proof}
	\cref{item.prop.sheaf.theta.func.h} follows from the quasi-periodicity properties of $\theta$ (see \cref{sect.theta.fns.1.var}).
	
	\cref{item.prop.sheaf.theta.func.isom} Since the only zeros of $\theta$ are at the points of $\L$ and they all have order one,  
	the divisor of zeros $(h)_{0}$ is $D_{n/k}$. 
	The argument used in the proof of \cref{le.act-scale} completes the proof.
\end{proof}

Another argument in the proof of \cref{le.act-scale} shows that every isomorphism $\cO_{E^{g}}(D_{n/k})\to\cL^{\mathrm{alg}}$ is a non-zero scalar multiple of the one in \cref{prop.sheaf.theta.func}\cref{item.prop.sheaf.theta.func.isom}.
Thus we have a vector space isomorphism
\begin{equation*}
	H^{0}(E^{g},\cL_{n/k})\;=\;H^{0}(E^{g},\cO_{E^{g}}(D_{n/k}))\;\cong\;H^{0}(E^{g},\cL^{\mathrm{alg}})\;\cong\;H^{0}(E^{g},\cL)\;=\;\Theta_{n/k}(\Lambda)
\end{equation*}
that is canonical up to non-zero scalar multiple. Hence there is a canonical isomorphism (no scalar multiples required)
	$\bbP(H^{0}(E^{g},\cL_{n/k})^{*})  \cong  \bbP(\Theta_{n/k}(\Lambda)^{*})$
which we will treat  as an identification
$$
\bbP(H^{0}(E^{g},\cL_{n/k})^{*}) \; = \; \bbP(\Theta_{n/k}(\Lambda)^{*}).
$$

\begin{remark}\label{rem.good.c}
	In order to obtain a global section of $\cL^{\mathrm{alg}}$ (or equivalently $\cL$) whose divisor of zeros is $D_{n/k}$, the constant $\sfc$ has to be of the form \cref{eq.good.c}. To see this, take $h$ as in \cref{prop.sheaf.theta.func} for $\sfc$ satisfying \cref{eq.good.c}, and let $h'$ be a global section for an arbitrary $\sfc'=(c'_{1},\ldots,c'_{g})\in\CC^{g}$ whose divisor of zeros is $D_{n/k}$. Since $(h)_{0}=(h')_{0}$, the function $f:=h'/h$ is holomorphic on $\CC^{g}$ and satisfies
	\begin{equation*}
		\begin{cases}
			f(z_{1},\ldots,z_{i}+1,\ldots,z_{g})\;=\;f(z_{1},\ldots,z_{g}),\\
			f(z_{1},\ldots,z_{i}+\eta,\ldots,z_{g})\;=\;e(c'_{i}-c_{i})f(z_{1},\ldots,z_{g})
		\end{cases}
	\end{equation*}
	Thus the argument in the proof of \cref{prop.dim.theta.sp} shows that $c'_{i}-c_{i}$ has to be an element of $\L$ for all $i$. Therefore $\sfc'$ is also of the form \cref{eq.good.c}.
\end{remark}

%%%%%%%%%%%%%%%%%%%%%%%%%%%%%%%%%%%%%%%%%%%%%%%%%%%%%%%%%%%%%%%%%%%
\subsection{Remarks on the degree-one component of $Q_{n,k}(E,\tau)$}
\label{ssect.Q_1}
%%%%%%%%%%%%%%%%%%%%%%%%%%%%%%%%%%%%%%%%%%%%%%%%%%%%%%%%%%%%%%%%%%%

Although  $Q_{n,k}(E,\tau)$ is defined as a quotient of the tensor algebra on an anonymous $n$-dimensional 
vector space, $V$, it is useful to give $V$ various concrete interpretations:
\begin{enumerate}[label=(\alph*)]
  \item 
  as the space $\Theta_n(\L)$ of theta functions in one variable;
  \item 
  as the space $\Theta_{n/k}(\L)$ of theta functions in $g$ variables;
  \item 
  as the space $H^0(E^g,\cL_{n/k})$.
\end{enumerate}
We now make some remarks on how these interpretations of $V$ are used.

(1)
The quadratic relations for $Q_{n,k}(E,\tau)$ are expressed in terms an anonymous basis $\{x_0,\ldots,x_{n-1}\}$ for $V$. 
When we take $V$ to be 
$\Theta_n(\L)$ we make the identification $x_\a=\theta_\a$ where the $\theta_\a$'s are the functions in \Cref{official.theta_alpha}. 
We used this identification in \cite[\S3.2]{CKS1}:  there is an action (related to the translation action of $\frac{1}{n}\L$ on $\CC$)
of the Heisenberg group $H_n$ as automorphisms of the field of meromorphic functions $\CC \to \CC$ and 
$\Theta_n(\L)$ becomes an irreducible representation of $H_n$ with respect to that action \cite[\S2.3]{CKS1}; one transfers this 
action of $H_n$ to $V$ by making the identification $x_\a=\theta_\a$; 
the space of quadratic relations for $Q_{n,k}(E,\tau)$ is then seen to be stable under the action
of $H_n$, whence $H_n$ acts as algebra automorphisms of $Q_{n,k}(E,\tau)$. 
The parameter $\tau$ plays no role in this action of $H_n$ on $\Theta_n(\L)$ or $V$; neither does the integer $k$.

(2)
In \Cref{ssect.Hn.action.g.vars} we showed that $\Theta_{n/k}(\L)$ 
is stable under an action of $H_n$ on the space of meromorphic functions $\CC^g \to \CC^g$. 
(That action was defined in terms of the point $\sfk=(k_1,\ldots,k_g)\in \ZZ^g$ and the translation action of $\frac{1}{n}\L^g$ on $\CC^g$.)
The $\theta_\a$'s in $\Theta_n(\L)$ transformed in a nice way with respect to the standard generators for $H_n$. 
A basis $w_0,\ldots,w_{n-1}$ for $\Theta_{n/k}(\L)$ having similar transformation properties was defined in 
\Cref{sssect.wi.basis}. We identify $V$ with $\Theta_{n/k}(\L)$ by setting $x_\a=w_\a$. 
The parameter $\tau$ plays no role in this. 

(3) 
\Cref{prop.sheaf.theta.func} provides a canonical identification of $\Theta_{n/k}(\L)$ 
with $H^0(E^g,\cL_{n/k})$ (up to a non-zero scalar multiple).  Under this identification the $w_\a$'s 
become a basis for $H^0(E,\cL_{n/k})$ and  the map $\Phi_{n/k}:E^g \to \PP(H^0(E^g,\cL_{n/k})^*)$
can now be written as 
\begin{equation}
\label{eq:Phi.nk.in.terms.of.theta.fns}
\Phi_{n/k}(\sfz) \; := \; (w_0(\sfz),\ldots,w_{n-1}(\sfz)), \qquad \sfz \in E^g.
\end{equation}

(4)
Combining the identifications in remarks (2) and (3) leads to an identification $V=H^0(E^g,\cL_{n/k})$. 
Since $X_{n/k}$ is a subvariety of $\PP(H^0(E^g,\cL_{n/k})^*)$, we obtain $X_{n/k} \subseteq \PP(V^*)$.
In this way, $V$ is realized as linear forms on $X_{n/k}$. Thus, if $x$ is a non-zero homogeneous element of degree one in 
$Q_{n/k}(E,\tau)$ we can speak of its divisor of zeros on $X_{n/k}$. In a similar way, $V \otimes V$ consists of bilinear forms
on $\PP(V^*) \times \PP(V^*)$ so we can speak of the vanishing locus of an element in $V \otimes V$ on $X_{n/k} \times 
X_{n/k}$. \Cref{43258924} uses the explicit description of $\Phi_{n/k}$ in \cref{eq:Phi.nk.in.terms.of.theta.fns} 
and Odesskii's identity in \Cref{09847525} to show that
the quadratic relations for $Q_{n,k}(E,\tau)$ vanish on the graph of the automorphism $\s:X_{n/k} \to X_{n/k}$
defined in \cref{sect.aut.sigma}.

(5)
Make the identifications $V=\Theta_n(\L)$ and $x_\a=\theta_\a$ as in remark (1). 
There is an embedding $E \to \PP(\Theta_n(\L)^*)$ given by 
\begin{equation}
\label{eq:embedding.E.via.theta.fns}
z  \; \mapsto \; (\theta_0(z),\ldots,\theta_{n-1}(z)).
\end{equation}
The action of $H_n$ on $\Theta_n(\Lambda)$ induces an action
of $H_n$ on $\PP^{n-1}=\PP(\Theta_n(\Lambda)^*)$; it follows from the action of $H_n$ on the basis $\{\theta_i \; | \; i \in \ZZ_n\}$
that  the image of $E$ in $\PP(\Theta_n(\L)^*)$ is  stable under this action (see \cref{lem.Hn.action.1}).

(6)
Similarly, the action of $H_n$ on $\Theta_{n/k}(\Lambda)$ induces an action of $H_n$ on 
$\PP^{n-1}=\PP(\Theta_n(\Lambda)^*)$ and $X_{n/k}$ is stable 
under this action (see \cref{sssect.wi.basis}).

(7)
Let $\pr_i:E^g \to E$ be the projection $\pr_i(z_1,\ldots,z_g)=z_i$. Since $\mathbf{R}\pr_{1*} \cL_{n/k}=\Phi(\cO_E)$ 
(where $\Phi$ is the functor defined in \cref{prop.FM.transform}),  $\cV_{n,k}:=\pr_{1*} \cL_{n/k}$ is a locally free indecomposable  $\cO_E$-module of rank $k$ and degree $n$. 
The natural map $H^0(E^g,\cL_{n/k}) \to H^0(E, \pr_{1*} \cL_{n/k})$ is an isomorphism so the identification of $Q_{n,k}(E,\tau)_1$
with $H^0(E^g,\cL_{n/k})$ in (4)  leads to an  identification of $Q_{n,k}(E,\tau)_1$ with $H^0(E,\cV_{n,k})$.

There is a result like \cref{prop.FM.transform} where one first pulls back via $\pr_{1}^*$ then pushes forward via 
$\pr_{g*}$; such a result says, in particular, that $\pr_{g*}\cL_{n/k}$ is an indecomposable locally free  $\cO_E$-module 
$\cV_{n,k'}$ having rank $k'$ and degree $n$. (See \cite[Prop.~8.2]{CKS3} for another proof.)
Thus $Q_{n,k}(E,\tau)_1$ also has an interpretation as $H^0(E,\cV_{n,k'})$.

Serre duality provides a natural isomorphism $H^0(E,\cV_{n,k}) \cong \Ext^1(\cV_{n,k},\cO_E)^*$ so there are also natural 
identifications of $Q_{n,k}(E,\tau)_1$ with $ \Ext^1(\cV_{n,k},\cO_E)^*$ and $ \Ext^1(\cV_{n,k'},\cO_E)^*$.

%%%%%%%%%%%%%%%%%%%%%%%%%%%%%%%%%%%%%%%%%%%%%%%%%%%%%%%%%%%%%%%%%%%
%%%%%%%%%%%%%%%%%%%%%%%%%%%%%%%%%%%%%%%%%%%%%%%%%%%%%%%%%%%%%%%%%%%
\subsection{The vanishing locus of the relations for $Q_{n,k}(E,\tau)$}
%%%%%%%%%%%%%%%%%%%%%%%%%%%%%%%%%%%%%%%%%%%%%%%%%%%%%%%%%%%%%%%%%%%
%%%%%%%%%%%%%%%%%%%%%%%%%%%%%%%%%%%%%%%%%%%%%%%%%%%%%%%%%%%%%%%%%%%

Adopting the point of view in \Cref{ssect.Q_1}(4), we will now show that the common zero locus of the relations for 
$Q_{n,k}(E,\tau)$ contains the graph of the automorphism $\s:X_{n/k} \to X_{n/k}$.  
To do this we use a variation of an identity that appears in Odesskii's survey. 
In \Cref{sec.prf.th.id} we give a proof of it that follows  Odesskii's with a little more detail.

In this subsection the points $\sfk=(k_1,\ldots,k_g)$ and $\sfl=(l_1,\ldots,l_g)$ in $\ZZ^g$ are as in \cref{sect.aut.sigma}, 
and the functions  $\theta\in\Theta_{1}(\Lambda)$, $\theta_{\alpha}\in\Theta_{n}(\Lambda)$, and 
$w_{\alpha}\in\Theta_{n/k}(\Lambda)$ are as before, but $\sfc\in\CC^{g}$ is arbitrary here.

\begin{theorem}
{\cite[p.~1153]{Od-survey}}
\label{09847525}
Let $\sfy= (y_{1},\ldots,y_{g})$ and $\sfz=(z_{1},\ldots,z_{g})$ be points in $\CC^g$. 
	For all $\alpha,\beta\in\bbZ_n$ and all $(u,v)\in\bbC^2$, the following equality  holds whenever all denominators  are non-zero:
	\begin{align*}
		\frac{\theta(-nu+y_{1}-z_{1})}{\theta(-nu)\theta(y_{1}-z_{1})} &  w_{\alpha}(\sfy +\sfk u)w_{\beta}(\sfz +\sfl v)\\
		&+\sum_{t=1}^{g-1}\frac{\theta(z_{t}-y_{t}+y_{t+1}-z_{t+1})}{\theta(z_{t}-y_{t})\theta(y_{t+1}-z_{t+1})}
		w_{\alpha}(z_{1}+k_{1}u,\ldots,z_{t}+k_{t}u,y_{t+1}+k_{t+1}u,\ldots,y_{g}+k_{g}u)\\
		&\qquad\times w_{\beta}(y_{1}+l_{1}v,\ldots,y_{t}+l_{t}v,z_{t+1}+l_{t+1}v,\ldots,z_{g}+l_{g}v)\\
		\phantom{xxx} + \; \frac{\theta(z_{g}-y_{g}+nv)}{\theta(z_{g}-y_{g})\theta(nv)} & w_{\alpha}(\sfz +\sfk u)w_{\beta}(\sfy +\sfl v)\\
		\; =\; \tfrac{1}{n}\theta(\tfrac{1}{n})\ldots\theta(\tfrac{n-1}{n}) &
	     \sum_{r\in\ZZ_n}\frac{\theta_{\beta-\alpha+r(k-1)}(-u+v)}{\theta_{\beta-\alpha-r}(-u)\theta_{rk}(v)}w_{\beta-r}(\sfy)w_{\alpha+r}(\sfz + \sfk u+\sfl v).
	\end{align*}
\end{theorem}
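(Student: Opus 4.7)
The proof treats both sides of the identity as meromorphic functions on $\CC^{2}$ in the variables $(u,v)$, with $\sfy,\sfz,\alpha,\beta$ held as parameters, and proceeds by showing the two sides lie in the same finite-dimensional space of theta-like functions and agree on it. The approach is to (i) match quasi-periodicity, (ii) match polar divisor and residues, and (iii) pin down the resulting constant by evaluation at a convenient configuration. I would \emph{not} attempt induction on $g$: the natural symmetry of the identity lives in the $(u,v)$-variables rather than in the length of the continued fraction.

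\textbf{Step 1 (quasi-periodicity).} Using \cref{prop.theta.func.per.kl} to compute how $w_{\alpha}(\sfy+\sfk u)$ and $w_{\beta}(\sfz+\sfl v)$ transform under $u\mapsto u+1,\ u+\eta$ and $v\mapsto v+1,\ v+\eta$, together with the standard rules for $\theta$ and for the $\theta_{\gamma}$ from \cref{prop.official.theta.basis}, one checks that both sides are periodic in $u$ and in $v$ with period $1$ and quasi-periodic with the same multiplier under the shifts by $\eta$. In particular, the factors of automorphy in $u$ and $v$ agree, placing both sides in a single finite-dimensional space of holomorphic sections of a line bundle on $E\times E$.

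\textbf{Step 2 (pole locus and residues).} On the LHS the only possible poles are along $\{nu\in\Lambda\}$, $\{nv\in\Lambda\}$, and $\{z_{t}-y_{t}\in\Lambda\}$ for $1\le t\le g-1$. The poles at $y_{t}=z_{t}$ are \emph{apparent}: the middle-sum terms indexed by $t{-}1$ and $t$ have residues that coincide up to sign, because when $y_{t}=z_{t}$ the two $w_{\alpha}$-arguments and the two $w_{\beta}$-arguments become literally equal, while the prefactors differ by the sign induced by the oddness of $\theta$. On the RHS, \cref{prop.official.theta.basis}(6) shows the pole locus is precisely $\{nu\in\Lambda\}\cup\{nv\in\Lambda\}$; at $u=0$ only the summand with $r\equiv\beta-\alpha\pmod{n}$ contributes, and the resulting residue matches the LHS residue once one uses the product-formula identity
\begin{equation*}
\theta_{0}'(0)\;=\;\theta'(0)\,\theta(\tfrac{1}{n})\cdots\theta(\tfrac{n-1}{n})
\end{equation*}
read off directly from \cref{official.theta_alpha}. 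A symmetric computation handles $v=0$, and translation by $\frac{1}{n}\Lambda$ propagates the matching to every point of the polar locus.

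\textbf{Step 3 (vanishing of the difference).} After Steps 1 and 2, the difference of the two sides is holomorphic on $\CC^{2}$ and lives in the finite-dimensional quasi-periodic space identified in Step 1. A dimension count modelled on \cref{prop.dim.theta.sp} shows this space is small enough that a single specialization forces the difference to be zero; a convenient choice is $\sfy=\sfz$ combined with $v=-u$, where many terms collapse or cancel coherently, reducing the check to a one-variable identity in $u$. \textbf{The main obstacle} is the bookkeeping in Step 2: keeping all signs, normalizing constants, and the interplay between the prefactor $\frac{1}{n}\theta(\frac{1}{n})\cdots\theta(\frac{n-1}{n})$ on the RHS and the derivatives $\theta'(0)$, $\theta_{0}'(0)$ coming from the residues consistent throughout is delicate, and the residue cancellations along the diagonals $y_{t}=z_{t}$ require particularly careful use of the oddness of $\theta$ together with the coincidence of $w$-arguments on the diagonal.
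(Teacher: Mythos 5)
Your Steps 1 and 2 track the paper's opening moves: the paper likewise verifies quasi-periodicity of the difference $\varphi=\mathrm{LHS}-\mathrm{RHS}$ and then shows $\varphi$ is holomorphic by checking that the apparent poles cancel (pairing the $u=0$ pole of the first LHS term against the $r\equiv\beta-\alpha$ summand of the RHS via the product identity for $\theta(nu)$, handling the diagonal poles $y_t=z_t$ by the consecutive-term cancellation you describe, and cleaning up the codimension-two leftover with the second Riemann extension theorem). The genuine gap is in Step 3. For fixed $\sfy,\sfz$, the factor of automorphy of $\varphi$ in $(u,v)$ is $e(-nku+\mathrm{const})$ under $u\mapsto u+\eta$ and $e(-nk'v+\mathrm{const})$ under $v\mapsto v+\eta$, with no cross terms; so the ``finite-dimensional space of sections of a line bundle on $E\times E$'' you invoke is $H^0(\cL_1\boxtimes\cL_2)$ with $\deg\cL_1=nk$ and $\deg\cL_2=nk'$, of dimension $n^2kk'$. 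A single specialization imposes one linear condition on a space of that size and cannot force vanishing; moreover your chosen specialization $\sfy=\sfz$, $v=-u$ sits on the polar locus you just removed. A dimension count modelled on \cref{prop.dim.theta.sp} would reveal exactly this problem rather than resolve it.

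The missing idea — and the paper's actual finishing move — is to exploit the quasi-periodicity in $\sfy$ and $\sfz$ as well, not just in $(u,v)$. One expands $\varphi=\sum_{\lambda,\nu}\psi_{\lambda,\nu}(u,v)\,w_{\lambda}(\sfy)\,w_{\nu}(\sfz+\sfk u+\sfl v)$; the shift $\sfz+\sfk u+\sfl v$ in the second argument is essential, because it absorbs the dominant part of the $(u,v)$-automorphy and leaves the holomorphic coefficients satisfying
\begin{equation*}
\psi_{\lambda,\nu}(u+\eta,v)=e(nv)\,\psi_{\lambda,\nu}(u,v),\qquad
\psi_{\lambda,\nu}(u,v+\eta)=e(nu)\,\psi_{\lambda,\nu}(u,v),
\end{equation*}
together with period $1$ in each variable. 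This is the automorphy of (a power of) the Poincar\'e bundle, which has no nonzero holomorphic sections: a Fourier expansion in $u$ for fixed $v$ shows a nonzero solution forces $e(m\eta)=e(nv)$ for some integer $m$, which fails for $v\notin\frac{1}{n}\Lambda$, so $\psi_{\lambda,\nu}\equiv 0$ by continuity. No evaluation at a special point is needed, and without this reduction to the indefinite automorphy class your argument cannot close.
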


The identity in \cite[p.~1153]{Od-survey} can be obtained from \cref{09847525} by making the following substitutions:
replace our $\tau$ by $\eta$; 
replace our $k_i$ by $m_i$;\footnote{In \cite[p.~1153]{Od-survey}, $m_{\alpha}$ was defined to be $d(n_{\alpha+1},\ldots,n_{g})\eta$ 
but it should have been $d(n_{\alpha+1},\ldots,n_{g})=k_\a$.}
 replace our $u$ by $u-v$;  replace our $v$ by $\eta$;  replace our $y_i$ by $y_{i}+m_{i}v$;
replace our $z_i$ by  $z_{i}+m_{i}v$. Note, too, that the $l_i$ in  \cite[p.~1153]{Od-survey}, which is defined to be 
$d(n_1,\ldots,n_{i-1})\eta$, equals our $l_i \tau=d(n_{i-1}, \ldots, n_1)\tau$ (see the remark in \cref{sect.negative.contd.fracs}
explaining why $d(a,b,\ldots,c)=d(c,\ldots,b,a)$).

\begin{corollary}\label{43258924}
\label{cor.relns.vanish.on.graph}
Let $\sfk$, $\sfl$, $\sfn$, and $\s$ be as in \cref{sect.aut.sigma}.
	For all $\alpha,\beta\in\ZZ_n$, all $\sfy=(y_{1},\ldots,y_{g})\in\bbC^g$, and all $\tau\in\CC-\frac{1}{n}\Lambda$,
	\begin{equation}\label{eq.id.rel.w}
		\sum_{r\in\ZZ_n}  \frac{\theta_{\beta-\alpha+r(k-1)}(0)}{\theta_{\beta-\alpha-r}(-\tau) \theta_{rk}(\tau)} 
		\, w_{\beta-r}(\sfy)\,w_{\alpha+r}(\s(\sfy)) 
		\;=\; 0.
	\end{equation}
The relations for $Q_{n,k}(E,\tau)$ therefore vanish on the graph of the automorphism $\s:X_{n/k} \to X_{n/k}$.	
\end{corollary}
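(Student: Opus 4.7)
The plan is to deduce \cref{cor.relns.vanish.on.graph} directly from Odesskii's identity (\cref{09847525}) by a carefully chosen specialization of its free parameters. Specifically, I would substitute
\[
u \;=\; v \;=\; \tau, \qquad \sfz \;=\; \sfy - \sfn\tau
\]
(so that $z_i = y_i - n\tau$ for every $i$) into the identity and check what each side becomes.

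First I would verify the right-hand side. With these choices,
\[
\sfz + \sfk u + \sfl v \;=\; \sfy + (\sfk+\sfl-\sfn)\tau \;=\; \sigma(\sfy)
\]
by the definition of $\sigma$ in \cref{sect.aut.sigma}, and $-u+v = 0$. Hence the $r$-th summand on the right of \cref{09847525} becomes exactly
\[
\frac{\theta_{\beta-\alpha+r(k-1)}(0)}{\theta_{\beta-\alpha-r}(-\tau)\,\theta_{rk}(\tau)}\; w_{\beta-r}(\sfy)\,w_{\alpha+r}(\sigma(\sfy)),
\]
so the RHS of \cref{09847525} is $\frac{1}{n}\theta(\tfrac{1}{n})\cdots\theta(\tfrac{n-1}{n})$ times the sum appearing in \cref{eq.id.rel.w}; this prefactor is nonzero because the zeros of $\theta$ lie in $\Lambda$.

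Next I would show the left-hand side vanishes. Each of the three types of numerators is a value $\theta(\cdot)$ at an argument that equals $0 \bmod \Lambda$: the first term carries $\theta(-n\tau + y_1 - z_1) = \theta(0)$, every middle summand carries $\theta(z_t - y_t + y_{t+1} - z_{t+1}) = \theta(-n\tau + n\tau) = \theta(0)$, and the last term carries $\theta(z_g - y_g + n\tau) = \theta(0)$. Thus every summand is zero provided the corresponding denominators do not vanish, which requires $\theta(\pm n\tau) \ne 0$; this holds precisely because $\tau \notin \frac{1}{n}\Lambda$. Combined with the genericity hypothesis ensuring that $\theta_{\beta-\alpha-r}(-\tau)\theta_{rk}(\tau) \ne 0$ (also automatic for $\tau \notin \frac{1}{n}\Lambda$ for all but finitely many residues, and extended to the remaining cases by continuity in $\sfy$ after clearing denominators), we conclude that \cref{eq.id.rel.w} holds.

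For the graph statement, I would invoke the point of view of \cref{ssect.Q_1}: under the identification $x_\alpha \leftrightarrow w_\alpha(\sfz)$ the degree-one component $V$ of $Q_{n,k}(E,\tau)$ becomes a space of linear forms on $X_{n/k}$, and $V \otimes V$ becomes a space of bilinear forms on $X_{n/k} \times X_{n/k}$. The relation $R_{ij}$ in \cref{the-relns-1}, evaluated on the pair $(\Phi_{n/k}(\sfy), \Phi_{n/k}(\sigma(\sfy)))$, becomes exactly the left-hand side of \cref{eq.id.rel.w} with $(\alpha,\beta) = (i,j)$, hence is zero. Since the graph of $\sigma\colon X_{n/k} \to X_{n/k}$ is the image of $\sfy \mapsto (\Phi_{n/k}(\sfy), \Phi_{n/k}(\sigma(\sfy)))$, the relations vanish on it. The only slightly delicate point—the "main obstacle," though a minor one—is to handle the small number of $\tau$ for which some denominator in Odesskii's identity is zero but the overall identity still makes sense after clearing denominators; this can be dealt with either by continuity/density or by noting that both sides of \cref{eq.id.rel.w} are meromorphic in $\tau$ on $\CC - \frac{1}{n}\Lambda$ so equality on an open set implies equality throughout.
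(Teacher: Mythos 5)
Your proposal is correct and is exactly the paper's argument: the paper proves \cref{43258924} by substituting $u=v=\tau$ and $z_i=y_i-n\tau$ into \cref{09847525}, observing that every numerator on the left becomes $\theta(0)=0$ while the right-hand side becomes a nonzero multiple of the sum in \cref{eq.id.rel.w}. Your worry about exceptional $\tau$ is unnecessary: by \cref{prop.official.theta.basis} the zeros of every $\theta_\a$ lie in $\frac{1}{n}\Lambda$, so for $\tau\notin\frac{1}{n}\Lambda$ all denominators in the specialized identity are automatically nonzero and no continuity or density argument is needed.
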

\begin{proof}
Since $\s(\sfy)=\sfy+(\sfk+\sfl-\sfn)\tau$, \cref{eq.id.rel.w} follows from the identity in \cref{09847525} when we set $u=v=\tau$ and $z_{i}=y_{i}-n\tau$. Since $X_{n/k}$ is the image of the composition $\CC^g \to E^g \to \PP(V^*)$, the second assertion follows.
\end{proof}

\begin{corollary}
\cite[(30), p.~1151]{Od-survey}
\label{43258934}
For  all $\alpha,\beta\in\ZZ_n$ and all $y,z \in \CC$,
\begin{align}
	&\frac{\theta(-n\tau+y-z)}{\theta(-n\tau)\theta(y-z)} \, \big(\theta_{\alpha}(y+\tau)\theta_{\beta}(z+\tau)-\theta_{\alpha}(z+\tau) \theta_{\beta}(y+\tau)\big)
	\label{42905872}\\
	&\; =\; \tfrac{1}{n}\theta(\tfrac{1}{n})\ldots\theta(\tfrac{n-1}{n}) \sum_{r\in\ZZ_n}\frac{\theta_{\beta-\alpha}(0)}{\theta_{\beta-\alpha-r}(-\tau)\theta_{r}(\tau)}\theta_{\beta-r}(y)\theta_{\alpha+r}(z+2\tau).
	\label{42905873}
\end{align}
\end{corollary}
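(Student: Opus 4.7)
The plan is to obtain the corollary as a specialization of \cref{09847525} to the case $g=1$, for which necessarily $k=1$, $n_1=n$, and $\sfk=\sfl=(1)$; the space $\Theta_{n/1}(\Lambda)$ then coincides with $\Theta_n(\Lambda)$ and $w_\alpha = \theta_\alpha$. Because $g=1$, the middle sum $\sum_{t=1}^{g-1}$ in the theorem is empty, so only two boundary terms survive on the left-hand side. Relabeling $y_1 = y$, $z_1 = z$ and setting $u = v = \tau$ should produce the desired identity.

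Under this specialization the right-hand side of the theorem becomes exactly the right-hand side \cref{42905873}: the factor $\theta_{\beta-\alpha+r(k-1)}(-u+v)$ collapses to $\theta_{\beta-\alpha}(0)$ because $k=1$ and $u=v$, and $\theta_{rk}(v)$ reduces to $\theta_r(\tau)$. On the left, the first boundary term already carries the coefficient $\frac{\theta(-n\tau+y-z)}{\theta(-n\tau)\theta(y-z)}$ required in \cref{42905872}, attached to $\theta_\alpha(y+\tau)\theta_\beta(z+\tau)$.

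The only step demanding real work is matching the second boundary term. The plan is to verify the sign identity
\[
\frac{\theta(z-y+n\tau)}{\theta(z-y)\theta(n\tau)} \;=\; -\frac{\theta(-n\tau+y-z)}{\theta(-n\tau)\theta(y-z)}
\]
using the elementary symmetry $\theta(-x) = -e(-x)\theta(x)$, which follows from the series \cref{defn.Jacobi.theta} by the index substitution $m \mapsto 1-m$ (equivalently, from \cref{prop.official.theta.basis}\cref{item.official.theta.basis.minus} applied with $n=1$, $\alpha=0$, after noting that $\theta_0 = \theta$ in that case). Applied to each of $\theta(z-y+n\tau)$, $\theta(z-y)$, and $\theta(n\tau)$, this symmetry contributes one sign from the numerator and two from the denominator (net $-1$) together with three exponential prefactors $e(z-y+n\tau)$, $e(z-y)$, $e(n\tau)$ whose quotient is $1$. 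Inserting the resulting $-1$ into the second boundary term allows one to factor out $\frac{\theta(-n\tau+y-z)}{\theta(-n\tau)\theta(y-z)}$ and recover the antisymmetric bracket $\theta_\alpha(y+\tau)\theta_\beta(z+\tau) - \theta_\alpha(z+\tau)\theta_\beta(y+\tau)$ in \cref{42905872}.

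No substantive obstacle is anticipated: the deep content is already packaged into \cref{09847525}, and what remains is a dimensional specialization together with a short quasi-periodicity sign check.
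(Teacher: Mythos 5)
Your proposal is correct and follows the same route as the paper: specialize \cref{09847525} to $g=1$ (so $k=1$, $n_1=n$, $k_1=l_1=1$, $w_\alpha=\theta_\alpha$ after fixing $c_1$ appropriately), set $u=v=\tau$, and combine the two surviving boundary terms. In fact you make explicit the one computation the paper's proof leaves silent, namely the sign identity $\frac{\theta(z-y+n\tau)}{\theta(z-y)\theta(n\tau)}=-\frac{\theta(-n\tau+y-z)}{\theta(-n\tau)\theta(y-z)}$ needed to produce the antisymmetric bracket, and your bookkeeping of signs and exponential prefactors there is accurate.
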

\begin{proof}
Let $k=1$. Then  $g=1$, $n_{1}=n$, $k_{1}=k=1$, and $l_{1}=1$. 
We choose the scalar $c_1$ in the definition of $\Theta_{n/1}(\Lambda)$ so that $\Theta_{n/1}(\Lambda)=\Theta_{n}(\Lambda)$; 
for example, we can take $c_1=\frac{1}{2}$.  
If we set $y=y_{1}$ and $z=z_{1}$, then the identity in \cref{09847525} becomes the identity in the statement of this corollary.
\end{proof}

\begin{corollary}
\label{cor.8935}
For  all $\alpha,\beta\in\ZZ_n$ and all $y \in \CC$,
\begin{equation}
\label{identity.for.(n,k)=(n,1)}
	\sum_{r\in\ZZ_n}\frac{\theta_{\beta-\alpha}(0)}{\theta_{\beta-\alpha-r}(-\tau)\theta_{r}(\tau)}\theta_{\beta-r}(y)\theta_{\alpha+r}(y+(2-n)\tau)=0.
\end{equation}
\end{corollary}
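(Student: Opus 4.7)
The plan is to derive \cref{cor.8935} by specializing the two-variable identity in \cref{43258934}. Specifically, I will set $z = y - n\tau$ so that $z + 2\tau = y + (2-n)\tau$, matching the argument of $\theta_{\alpha+r}$ in the target identity. With this substitution the right-hand side of \cref{43258934} becomes precisely
\[
\tfrac{1}{n}\theta(\tfrac{1}{n})\cdots\theta(\tfrac{n-1}{n}) \,\sum_{r\in\ZZ_n}\frac{\theta_{\beta-\alpha}(0)}{\theta_{\beta-\alpha-r}(-\tau)\theta_{r}(\tau)}\,\theta_{\beta-r}(y)\,\theta_{\alpha+r}(y+(2-n)\tau),
\]
so it suffices to show that the left-hand side of \cref{43258934} vanishes after this substitution.

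For that, I will observe that with $z = y - n\tau$ we have $-n\tau + y - z = 0$, so the numerator $\theta(-n\tau + y - z) = \theta(0) = 0$. The denominator $\theta(-n\tau)\theta(y-z) = \theta(-n\tau)\theta(n\tau)$ is non-zero because the standing hypothesis $\tau \notin \tfrac{1}{n}\Lambda$ ensures that $n\tau \notin \Lambda$, and the zeros of $\theta$ lie exactly at the lattice points (see \cref{prop.official.theta.basis}\cref{item.official.theta.basis.zeros}). The bracket $\theta_{\alpha}(y+\tau)\theta_{\beta}(z+\tau) - \theta_{\alpha}(z+\tau)\theta_{\beta}(y+\tau)$ is bounded (in fact entire in $y$), so the product is $0$.

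Hence the right-hand side, with the substitution $z = y - n\tau$, equals $0$. Since the prefactor $\tfrac{1}{n}\theta(\tfrac{1}{n})\cdots\theta(\tfrac{n-1}{n})$ is a non-zero constant (again because $\tfrac{j}{n} \notin \Lambda$ for $1 \le j \le n-1$), dividing it out yields exactly the claimed identity \cref{identity.for.(n,k)=(n,1)}. There is no genuine obstacle here; the only care needed is to confirm that the denominators appearing in the left-hand side of \cref{43258934} remain non-zero under the substitution, which follows from the running assumption on $\tau$.
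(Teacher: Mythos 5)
Your proof is correct and is essentially identical to the paper's: both specialize \cref{43258934} at $z=y-n\tau$, note that $\theta(0)=0$ kills the left-hand side, and divide out the non-zero constant prefactor. The additional checks you make on the non-vanishing of the denominators are fine and merely make explicit what the paper leaves implicit.
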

\begin{proof}
Since $\theta(0)=0$, the term \cref{42905872} is zero when $z=y-n\tau$. The result therefore follows by setting $z=y-n\tau$ in 
\cref{42905873}.
\end{proof}

%%%%%%%%%%%%%%%%%%%%%%%%%%%%%%%%%%%%%%%%%%%%%%%%%%%%%%%%%%%%%%%%%%%%
\subsection{The point modules and point variety for $Q_{n,k}(E,\tau)$}
\label{sect.pt.mods}
%%%%%%%%%%%%%%%%%%%%%%%%%%%%%%%%%%%%%%%%%%%%%%%%%%%%%%%%%%%%%%%%%%%% 
%%%%%%%%%%%%%%%%%%%%%%%%%%%%%%%%%%%%%%%%%%%%%%%%%%%%%%%%%%%%%%%%%%%% 

The ``simplest'' representations of an associative algebra are its one dimensional modules. The simplest representations of a
connected graded $\Bbbk$-algebra are its point modules (defined in \Cref{ssect.pt.mods}).

\begin{proposition}
\label{prop.pt.mods}
Let $V=\Theta_{n/k}(\L)$ and view $Q_{n,k}(E,\tau)$ as a quotient of  the tensor algebra $TV$.
Let $\sfz \in X_{n/k}$. Make the vector space $M(\sfz)=\CC v_0 \oplus \CC v_1 \oplus \cdots$ a left $TV$-module via
$$
x \cdot v_i \; :=\; x(\s^{-i}(\sfz)) v_{i+1}
$$
for each $x \in V$.\footnote{Strictly speaking,  $x(\s^{-i}(\sfz))$ makes no sense so one should interpret it as the value of the 
theta function $x$ at a preimage of $\s^{-i}(\sfz)$ in $\CC^g$. Thus, if $\sfz \in \CC^g$, then $M(\sfz)$ is the graded module with basis $v_0,v_1,\ldots$ and action $$x_\a\cdot v_i=w_\a(\sfz-i(\sfk+\sfl-\sfn)\tau)v_{i+1},$$
and $M(\sfz) \cong M(\sfz')$ if and only if $\sfz$ and $\sfz'$ have the same image in $X_{n/k}$.} 
Then $M(\sfz)$ is a left $Q_{n,k}(E,\tau)$-module and is a point module with respect to the grading 
$\deg v_i=i$. 
\end{proposition}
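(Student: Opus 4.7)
The plan is to reduce the proposition to the Odesskii-style theta-function identity already recorded as \cref{cor.relns.vanish.on.graph}. By construction, $M(\sfz)=\bigoplus_{i\ge 0}\CC v_i$ is graded with $\dim_\CC M_i=1$, and the prescribed action of $V=Q_{n,k}(E,\tau)_1$ is homogeneous of degree $+1$, so what must be verified is that (i) the defining relations $R_{ij}$ of $Q_{n,k}(E,\tau)$ annihilate $M(\sfz)$, and that (ii) $M(\sfz)$ is generated by $M_0=\CC v_0$.

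For (i), I would compute $R_{ij}\cdot v_m$ directly. Using the identification $x_\alpha=w_\alpha$ described in \Cref{ssect.Q_1}, the associativity of the $TV$-action gives
\[
R_{ij}\cdot v_m \;=\; \left(\sum_{r\in\ZZ_n}\frac{\theta_{j-i+(k-1)r}(0)}{\theta_{j-i-r}(-\tau)\theta_{kr}(\tau)}\,w_{j-r}(\sfy)\,w_{i+r}(\s(\sfy))\right) v_{m+2},
\]
where $\sfy:=\s^{-m-1}(\sfz)$ and hence $\s(\sfy)=\s^{-m}(\sfz)$. Setting $\alpha=i$ and $\beta=j$, the parenthesised scalar is exactly the left-hand side of \cref{eq.id.rel.w}, so it vanishes by \cref{cor.relns.vanish.on.graph}. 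Thus $R_{ij}\cdot v_m=0$ for all $i,j\in\ZZ_n$ and all $m\ge 0$, and the $TV$-action factors through $Q_{n,k}(E,\tau)$.

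For (ii), the point is that $v_{i+1}$ lies in $V\cdot v_i$ provided at least one scalar $w_\alpha(\s^{-i}(\sfz))$ is non-zero. This is exactly the condition that the $w_\alpha$'s, viewed via \cref{prop.sheaf.theta.func} as a basis of $H^0(E^g,\cL_{n/k})$, have no common zero at the relevant point, and that is precisely what \cref{prop.gen.gl.sec} asserts: $\cL_{n/k}$ is globally generated. Hence (ii) holds, and $M(\sfz)$ is a point module.

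The only non-formal ingredient is the theta-function identity of \cref{09847525}, whose proof is deferred to the Appendix; given that identity, the proof of the proposition is essentially bookkeeping, and I do not anticipate any further obstacle.
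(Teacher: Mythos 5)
Your proposal is correct and follows essentially the same route as the paper: the relations are killed term-by-term by reducing to the identity of \cref{cor.relns.vanish.on.graph} evaluated at $(\s^{-m-1}(\sfz),\s^{-m}(\sfz))$, and cyclicity over $M_0$ follows from the fact that the $w_\a$'s have no common zero (the paper states this directly, you cite \cref{prop.gen.gl.sec}, which is the same fact). No gaps.
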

\begin{pf}
If $\sfy \in \CC^p$, then $f(\sfy) \ne 0$ for some  $f \in \Theta_{n/k}(\L)$. It follows that
$M(\sfz)$ is generated by $v_0$ as a $TV$-module. 
Hence $M(\sfz)$ is a point module for $TV$. To prove the result it suffices to show that every quadratic relation for $Q_{n,k}(E,\tau)$ 
annihilates all $v_i$'s. Since
$$
(x \otimes x')\cdot v_i \;=\;  x(\s^{-i-1}(\sfz)) \, x'(\s^{-i}(\sfz))\, v_{i+2}
$$
it suffices to show that the quadratic relations for $Q_{n,k}(E,\tau)$ vanish at $(\s^{-i-1}(\sfz),\s^{-i}(\sfz))$ for all $i\ge 0$.
But \cref{43258924} shows exactly that.
\end{pf}

\begin{corollary}
\label{cor.commutative}
If $Q_{n,k}(E,\tau)$ is commutative, then $(n-k_i-l_i)\tau=0$ for all $i$. In particular, $(n-k-1)\tau=0$. 
\end{corollary}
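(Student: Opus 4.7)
The approach is to combine the vanishing of the defining relations of $Q_{n,k}(E,\tau)$ on the graph of $\sigma$, established in \cref{cor.relns.vanish.on.graph}, with the description of $X_{n/k}$ as $E^g/\Sigma_{n/k}$ from \cref{prop.factor.Phi}, and then transport the resulting triviality of $\sigma$ on $X_{n/k}$ back to a vanishing statement on $E^g$.

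First I would argue that commutativity of $Q_{n,k}(E,\tau)$ forces every commutator in $V\otimes V$ (with $V=\Theta_{n/k}(\Lambda)$) to lie in the linear span of the defining relations $R_{ij}$, because the degree-two part of the ideal defining $Q_{n,k}(E,\tau)$ is exactly that span. Combining this with \cref{cor.relns.vanish.on.graph}, every commutator $x\otimes y-y\otimes x$ vanishes on the graph of $\sigma$, so that for each $\sfw\in E^g$ and each pair $x,y\in V$,
\begin{equation*}
x(\sfw)\,y(\sigma(\sfw))\;-\;y(\sfw)\,x(\sigma(\sfw))\;=\;0.
\end{equation*}
This is the vanishing of every $2\times 2$ minor of the $2\times n$ matrix whose rows are the images $\Phi_{n/k}(\sfw)$ and $\Phi_{n/k}(\sigma(\sfw))$; since $\cL_{n/k}$ is base-point free (\cref{prop.gen.gl.sec}) neither row is the zero vector, so the two rows are proportional and $\Phi_{n/k}(\sfw)=\Phi_{n/k}(\sigma(\sfw))$ in $\PP^{n-1}$ for all $\sfw\in E^g$. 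Equivalently, $\sigma$ acts as the identity on $X_{n/k}$.

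Next I would transport this trivial action back to $E^g$. By \cref{prop.factor.Phi}, the action of $\sigma$ on $X_{n/k}=E^g/\Sigma_{n/k}$ is descended from the translation $\sfz\mapsto \sfz+\sft$ on $E^g$, where $\sft=(\sfk+\sfl-\sfn)\tau$, so triviality reads: for each $\sfz\in E^g$ there is some $g\in\Sigma_{n/k}$ with $g(\sfz)=\sfz+\sft$. The closed subvarieties
\begin{equation*}
F_g\;:=\;\{\sfz\in E^g\,:\,g(\sfz)-\sfz=\sft\},\qquad g\in\Sigma_{n/k},
\end{equation*}
are finite in number and their union is all of $E^g$; irreducibility of $E^g$ then forces some $F_{g_0}$ to equal $E^g$, i.e., $g_0(\sfz)=\sfz+\sft$ identically in $\sfz$. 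Since $\Sigma_{n/k}$ acts through group automorphisms of $E^g$ (\cref{pr.equiv}), $g_0(0)=0$, whence $\sft=0$. Unwinding, $(k_i+l_i-n)\tau=0$ for every $i$, which is the asserted conclusion; the ``in particular'' case is $i=1$, for which $k_1=k$ and $l_1=1$ by the conventions of \cref{subsubsect.cont.frac}.

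I do not anticipate a serious obstacle, as the two main ingredients---the Odesskii identity producing the vanishing on the graph of $\sigma$ and the isomorphism $X_{n/k}\cong E^g/\Sigma_{n/k}$---are already in hand. The most delicate point is the descent of the second paragraph: without the irreducibility of $E^g$ one would only obtain orbit-by-orbit information, and without the fact that $\Sigma_{n/k}$ acts through \emph{group} automorphisms (rather than possibly through translations), the identity $g_0(0)=0$ would not force $\sft=0$.
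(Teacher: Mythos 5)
Your proof is correct, and the first half takes a genuinely different route from the paper's. The paper deduces that $\Phi_{n/k}\circ\s=\Phi_{n/k}$ via point modules: it invokes the module $M(\sfz)$ of \cref{prop.pt.mods} and the fact that over a commutative connected graded ring every point module is annihilated by an ideal $I$ with $Q_{n,k}(E,\tau)/I$ a polynomial ring in one variable, so that the whole forward $\s$-orbit of $\sfz$ maps to a single point $p\in\PP(V^*)$. You instead observe that commutativity forces every commutator $x\otimes y-y\otimes x$ into the span of the $R_{ij}$ (the degree-two component of the defining ideal), so by \cref{cor.relns.vanish.on.graph} all $2\times 2$ minors of the pair $\big(\Phi_{n/k}(\sfw),\Phi_{n/k}(\s(\sfw))\big)$ vanish; base-point-freeness then gives $\Phi_{n/k}(\sfw)=\Phi_{n/k}(\s(\sfw))$ directly. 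Your version is more elementary in that it needs nothing about modules over commutative graded rings, only linear algebra on the quadratic relations; the paper's version buys a slightly stronger picture (it identifies what the point modules of a commutative quotient must look like, which is in the spirit of \cref{sect.pt.mods}). The endgames coincide: both reduce to the statement that $\sft=(\sfk+\sfl-\sfn)\tau$ lies in the $\Sigma_{n/k}$-orbit of a point, and both exploit that $\Sigma_{n/k}$ acts by \emph{group} automorphisms so fixes $0\in E^g$. Your irreducibility argument producing a single $g_0$ with $g_0(\sfz)=\sfz+\sft$ identically is valid but unnecessary: the paper simply evaluates the orbit statement at $\sfz=0$, whose $\Sigma_{n/k}$-orbit is $\{0\}$, and concludes $\sft=0$ at once.
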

\begin{proof}
Assume $Q_{n,k}(E,\tau)$ is commutative. 

Let $\sfz \in \CC^g$ and define the point module $M(\sfz)$ as in the previous footnote.
Every point module for $Q_{n,k}(E,\tau)$ is annihilated by a two-sided
ideal $I$ with the property that $Q_{n,k}(E,\tau)/I$ is a polynomial ring on one variable. 
Let $p \in \PP(V^*)=\PP(\Theta_n(\L)^*)$ denote the vanishing locus of $I$. 
Since $I$ annihilates the basis vectors $v_i$  for $M(\sfz)$ for all $i \ge 0$,
$$
\big(w_0(\sfz-i(\sfk+\sfl-\sfn)\tau), \ldots, w_{n-1}(\sfz-i(\sfk+\sfl-\sfn)\tau)\big) \;=\; p
$$
for all $i\ge 0$. In other words, the image of $\sfz-i(\sfk+\sfl-\sfn)\tau$ under the composition
$\CC^g \to E^g \to \PP(V^*)$ is the same for all $i \ge 0$. Hence the points $\sfz-i(\sfk+\sfl-\sfn)\tau \in E^g$  
belong to the same $\Sigma_{n/k}$-orbit for all $i \ge0 $. Now assume $\sfz=0$. 
Since $\Sigma_{n/k}$ fixes the point $0 \in E^g$ it follows that  $(\sfk+\sfl-\sfn)\tau=0$. 
In particular, $0=(k_1+l_1-n)\tau=(k+1-n)\tau$.  
\end{proof}

\subsubsection{Other point modules}
We are not saying that the point modules in \cref{prop.pt.mods} are all the point modules for $Q_{n,k}(E,\tau)$. 
Indeed, this is not the case for $Q_{4,1}(E,\tau)$, \cite[Rmk.~(ii), p.~270]{SS92}, or $Q_{8,3}(E,\tau)$.
For generic $\tau\in E$, a computation using the relations $R_{ij}$ and $R'_{ij}$ in \cite[Prop.~3.24]{CKS1} shows that
$$
\frac{Q_{8,3}(E,\tau)}{(x_0,x_2,x_4,x_6)} \; = \; \frac{\CC[x_1,x_3,x_5,x_7]}{(x_1-x_5,x_3-x_7)(x_1+x_5,x_3+x_7)}
$$
where $\CC[x_1,x_3,x_5,x_7]$ is a polynomial ring. Similarly
$$
\frac{Q_{8,3}(E,\tau)}{(x_1,x_3,x_5,x_7)} \; = \; \frac{\CC[x_0,x_2,x_4,x_6]}{(x_0-x_4,x_2-x_6)(x_0+x_4,x_2+x_6)}.
$$
Thus, for $Q_{8,3}(E,\tau)$ there are point modules parametrized by 4 lines in $\PP(V^*)$ that do not lie on $X_{8/3} \cong E^2$.

%%%%%%%%%%%%%%%%%%%%%%%%%%%%%%%%%%%%%%%%%%%%%%%%%%%%%%%%%%%%%%%%%%%%%%%%%%%%%%%%
%%%%%%%%%%%%%%%%%%%%%%%%%%%%%%%%%%%%%%%%%%%%%%%%%%%%%%%%%%%%%%%%%%%%%%%%%%%%%%%%
\appendix
%%%%%%%%%%%%%%%%%%%%%%%%%%%%%%%%%%%%%%%%%%%%%%%%%%%%%%%%%%%%%%%%%%%%%%%%%%%%%%%%
%%%%%%%%%%%%%%%%%%%%%%%%%%%%%%%%%%%%%%%%%%%%%%%%%%%%%%%%%%%%%%%%%%%%%%%%%%%%%%%%

%%%%%%%%%%%%%%%%%%%%%%%%%%%%%%%%%%%%%%%%%%%%%%%%%%%%%%%%%%%%%%%%%%%%%%%%%%%%%%%%
%%%%%%%%%%%%%%%%%%%%%%%%%%%%%%%%%%%%%%%%%%%%%%%%%%%%%%%%%%%%%%%%%%%%%%%%%%%%%%%%
\section{A detailed proof of Odesskii's theta identity}
\label{sec.prf.th.id}
%%%%%%%%%%%%%%%%%%%%%%%%%%%%%%%%%%%%%%%%%%%%%%%%%%%%%%%%%%%%%%%%%%%%%%%%%%%%%%%%
%%%%%%%%%%%%%%%%%%%%%%%%%%%%%%%%%%%%%%%%%%%%%%%%%%%%%%%%%%%%%%%%%%%%%%%%%%%%%%%%

We will now prove the formulation in \cref{09847525} of the identity on page 1153 of \cite{Od-survey}. 
We follow Odesskii's argument but with more detail. 

Let $\varphi(u,v,\sfy,\sfz)=\varphi(u,v,y_{1},\ldots,y_{g},z_{1},\ldots,z_{g})$ be the left-hand side of the equation of \cref{09847525} minus the right-hand side.

The next two lemmas are proved by straightforward computation using \cref{prop.theta.func.per.kl}.

\begin{lemma}\label{78542905}
	$\varphi(u,v,\sfy,\sfz)$ is periodic with respect to $y_{1},\ldots,y_{g},z_{1},\ldots,z_{g}$ with period $1$. Moreover
	\begin{align*}
		&\varphi(u,v,y_{1},\ldots,y_{s}+\eta,\ldots,y_{g},z_{1},\ldots,z_{g})\\
		&=e((1-\delta_{s,1})y_{s-1}-n_{s}y_{s}+(1-\delta_{s,g})y_{s+1}+c_{s})\varphi(u,v,y_{1},\ldots,y_{g},z_{1},\ldots,z_{g})\\
		&\varphi(u,v,y_{1},\ldots,y_{g},z_{1},\ldots,z_{s}+\eta,\ldots,z_{g})\\
		&=e((1-\delta_{s,1})z_{s-1}-n_{s}z_{s}+(1-\delta_{s,g})z_{s+1}-\delta_{s,1}nu-\delta_{s,g}nv+c_{s})\varphi(u,v,y_{1},\ldots,y_{g},z_{1},\ldots,z_{g})
	\end{align*}
\end{lemma}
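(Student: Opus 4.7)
The plan is to verify the three claims (periodicity with period $1$, and the two $\eta$-quasi-periodicities) by a direct summand-by-summand computation.

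Period $1$ in each $y_i, z_j$ is immediate: both $\theta(z)$ and every $w_\alpha(\sfz) \in \Theta_{n/k}(\Lambda)$ are periodic with period $1$ in each variable (for $w_\alpha$ this is the first line of \cref{eq:od-qp}), and every summand of $\varphi$ is a product of such functions evaluated at arguments that shift by integer amounts when any single $y_i$ or $z_j$ is shifted by $1$.

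For the $y_s$-quasi-periodicity, apply \cref{eq:od-qp} to each $w_\alpha, w_\beta$ factor and the transformation $\theta(z+\eta) = -e(-z)\theta(z)$ to the $\theta$-coefficients, then check that every summand on the LHS of the identity of \cref{09847525}, and the single $y_s$-dependent factor $w_{\beta-r}(\sfy)$ on the RHS, scale by the same factor $e\bigl((1-\delta_{s,1})y_{s-1} - n_s y_s + (1-\delta_{s,g})y_{s+1} + c_s\bigr)$. For the boundary term $\frac{\theta(-nu+y_1-z_1)}{\theta(-nu)\theta(y_1-z_1)} w_\alpha(\sfy+\sfk u) w_\beta(\sfz+\sfl v)$, shifting $y_s \to y_s + \eta$ inside $w_\alpha(\sfy+\sfk u)$ produces
\begin{equation*}
e\bigl((1-\delta_{s,1})(y_{s-1}+k_{s-1}u) - n_s(y_s+k_s u) + (1-\delta_{s,g})(y_{s+1}+k_{s+1}u) + c_s\bigr),
\end{equation*}
whose $u$-dependent part $(k_{s-1} - n_s k_s + k_{s+1})u$ vanishes for $1 \le s \le g$ by the $k$-recurrence $n_s k_s = k_{s-1} + k_{s+1}$, with the conventions $k_0 = n$, $k_{g+1}=0$, in every case except $s=1$, where the leftover $-nu$ is cancelled by the factor picked up from $\theta(-nu + y_1 - z_1)/\theta(y_1 - z_1)$. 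A parallel check, using the $l$-recurrence $n_s l_s = l_{s-1} + l_{s+1}$ and $l_0 = 0, l_{g+1} = n$, handles the other boundary term involving $w_\beta(\sfy+\sfl v)$. For each middle term indexed by $t$ one distinguishes the subcases $s \le t$ (so $y_s$ sits in a $w_\beta$-slot) and $s > t$ (so $y_s$ sits in a $w_\alpha$-slot), the cases $s \in \{t, t+1\}$ contributing additional factors from the $\theta$-coefficient $\theta(z_t-y_t+y_{t+1}-z_{t+1})/[\theta(z_t-y_t)\theta(y_{t+1}-z_{t+1})]$ that compensate for the mismatch between the $w$-quasi-periodicity (which sees $z_{t}$ or $z_{t+1}$ at the neighbouring slot) and the target factor.

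The $z_s$-quasi-periodicity is entirely analogous, with the roles of the $k$- and $l$-recurrences interchanged in the two boundary terms; the additional terms $-\delta_{s,1}nu$ and $-\delta_{s,g}nv$ in the transformation factor arise from the $\theta$-quasi-periodicities of $\theta(-nu+y_1-z_1)$ (when $s=1$) and $\theta(z_g - y_g + nv)$ (when $s=g$) in the boundary-term coefficients. The main obstacle is bookkeeping: a sum of $g+2$ intricate summands must transform uniformly, and many factors from different sources must combine correctly; but the structural reason it all works is the pair of recurrences for $(k_i)$ and $(l_i)$ together with their endpoint conventions, precisely the content of \cref{prop.theta.func.per.kl}.
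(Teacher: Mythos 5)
Your proposal is correct and is exactly the argument the paper intends: the paper dispenses with this lemma in one line ("proved by straightforward computation"), and your summand-by-summand verification — using the defining quasi-periodicity \cref{eq:od-qp} of $\Theta_{n/k}(\Lambda)$, the transformation $\theta(z+\eta)=-e(-z)\theta(z)$, and the recurrences $n_sk_s=k_{s-1}+k_{s+1}$, $n_sl_s=l_{s-1}+l_{s+1}$ with the endpoint conventions $k_0=l_{g+1}=n$, $k_{g+1}=l_0=0$ — is precisely that computation, including the correct identification of where the boundary factors $e(\mp nu)$, $e(\mp nv)$ arise and cancel.
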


\begin{lemma}\label{409258254}
	$\varphi(u,v,\sfy,\sfz)$ is periodic with respect to $u,v$ with period $1$. Moreover
	\begin{align*}
		&\varphi(u+\eta,v,y_{1},\ldots,y_{g},z_{1},\ldots,z_{g})\\
		&=e\Big(-n(z_{1}+ku)+\sum_{i=1}^{g}k_{i}(c_{i}+\eta)-\tfrac{1}{2}(nk-n+k+1)\eta\Big)\varphi(u,v,y_{1},\ldots,y_{g},z_{1},\ldots,z_{g}),\\
		&\varphi(u,v+\eta,y_{1},\ldots,y_{g},z_{1},\ldots,z_{g})\\
		&=e\Big(-n(z_{g}+k'v)+\sum_{i=1}^{g}l_{i}(c_{i}+\eta)-\tfrac{1}{2}(nk'-n+k'+1)\eta\Big)\varphi(u,v,y_{1},\ldots,y_{g},z_{1},\ldots,z_{g}).
	\end{align*}
\end{lemma}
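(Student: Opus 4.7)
My plan is to verify Lemma~\ref{409258254} term by term, showing that every term on both sides of the identity defining $\varphi$ transforms under $u\mapsto u+\eta$ (resp. $v\mapsto v+\eta$) by \emph{the same} scalar, namely the one asserted in the lemma. The two inputs are Proposition~\ref{prop.theta.func.per.kl} (for $w_\alpha$) together with the standard quasi-periodicity of the Jacobi theta function. Periodicity with period $1$ in $u,v$ is immediate: every $\theta(\cdot)$ occurring has period $1$ in its argument, and shifting $u$ by $1$ shifts the arguments of the various $w_\alpha(\,\cdot\,+\sfk u)$ by the integer vector $\sfk$, which is absorbed by the $1$-periodicity of $w_\alpha$ in each coordinate (cf.\ \eqref{eq:od-qp}); the RHS is treated identically, since $\theta_\a$ has period $1$ and $w_{\alpha+r}(\sfz+\sfk u+\sfl v)$ shifts by $\sfk\in\ZZ^g$.

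For $u\mapsto u+\eta$, apply Proposition~\ref{prop.theta.func.per.kl} with the shift $\sfk\eta$: for any $\sfx\in\CC^g$,
\[
w_\alpha(\sfx+\sfk\eta)\;=\;e\Big(-nx_1+\sum_{i=1}^g k_i(c_i+\eta)-\tfrac12(nk-n+k+1)\eta\Big)\,w_\alpha(\sfx).
\]
Write $M(x_1):=e(-nx_1+\sum k_i(c_i+\eta)-\tfrac12(nk-n+k+1)\eta)$. In the first term of the LHS the factor $w_\alpha(\sfy+\sfk u)$ contributes $M(y_1+ku)$, while the ratio $\theta(-nu+y_1-z_1)/\theta(-nu)$ — using the elementary computation $\theta(z-n\eta)=e(nz-\binom{n+1}{2}\eta-n/2)\theta(z)$ — picks up the factor $e(n(y_1-z_1))$, the $\theta(y_1-z_1)$ in the denominator being inert. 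Multiplying, $e(n(y_1-z_1))\cdot M(y_1+ku)=M(z_1+ku)$, which is the claimed multiplier. Each of the middle summands ($1\le t\le g-1$) and the third LHS term are even easier: their $\theta$-ratios do not involve $u$, and the $w_\alpha$-factor already has first coordinate $z_1+k_1u=z_1+ku$, so it directly contributes $M(z_1+ku)$.

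For the RHS, $w_{\beta-r}(\sfy)$ is inert, while $w_{\alpha+r}(\sfz+\sfk u+\sfl v+\sfk\eta)$ contributes $M(z_1+ku+l_1v)=M(z_1+ku+v)$ (since $l_1=1$ by \cref{prop.nkl}). The remaining $u$-dependence sits in $\theta_{\beta-\alpha+r(k-1)}(-u+v)/\theta_{\beta-\alpha-r}(-u)$; using $\theta_\alpha(z-\eta)=e(nz-n\eta-n+\tfrac12)\theta_\alpha(z)$, both numerator and denominator pick up $e(-n\eta-n+\tfrac12)$, and their quotient yields a net factor $e(nv)$. Combining, the $r$-th RHS term transforms by $e(nv)\cdot M(z_1+ku+v)=M(z_1+ku)$, independent of $r$, matching the LHS multiplier. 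The $v\mapsto v+\eta$ case is perfectly symmetric: one applies the second formula in Proposition~\ref{prop.theta.func.per.kl} (with $\sfl$ in place of $\sfk$, $l_g=k'$ in place of $k$, and first coordinate replaced by the last), and the only non-trivial ratio is now $\theta_{\beta-\alpha+r(k-1)}(-u+v)/\theta_{rk}(v)$, which contributes $e(-nu)$.

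The main obstacle is purely bookkeeping: one must keep straight which of the many $w$- and $\theta$-factors actually depend on $u$ (resp.\ $v$), and verify that the "twist" factor $e(n(y_1-z_1))$ from the Jacobi ratio in the first LHS term, and the factor $e(nv)$ from the $\theta_\a$-ratio on the RHS, conspire with the respective $w$-shifts so that the same scalar $M(z_1+ku)$ emerges uniformly. Once this is checked on a single representative term of each type (term $1$, a middle term, term $3$, and the generic RHS summand), the invariance across $r$ and $t$ is automatic and $\varphi$ inherits the stated transformation law.
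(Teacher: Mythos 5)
Your proof is correct and is precisely the argument the paper intends: the paper dismisses this lemma with the single line that it is ``proved by straightforward computation using Proposition~\ref{prop.theta.func.per.kl}'', and your term-by-term verification is that computation carried out. The only slip is in your last sentence: under $v\mapsto v+\eta$ the ratio $\theta_{\beta-\alpha+r(k-1)}(-u+v)/\theta_{rk}(v)$ contributes $e(nu)$, not $e(-nu)$ --- mirroring the $e(nv)$ you correctly extracted in the $u$-case --- and this positive sign is exactly what is needed to cancel the $e(-nu)$ hidden in the shift factor of $w_{\alpha+r}(\sfz+\sfk u+\sfl v)$, whose last coordinate is $z_g+u+k'v$.
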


\begin{lemma}
	$\varphi(u,v,\sfy,\sfz)$ is uniquely extended to a holomorphic function on $\bbC^{g+2}$.
\end{lemma}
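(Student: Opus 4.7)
The plan is to show that every apparent singularity of the meromorphic function $\varphi$ is removable; Riemann's removable singularity theorem will then produce the unique holomorphic extension to $\bbC^{g+2}$. The task thus reduces to verifying that the residue of $\varphi$ vanishes along each potential polar hyperplane.

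First I would catalogue the polar hyperplanes. Using \Cref{prop.official.theta.basis}\cref{item.official.theta.basis.zeros} to describe the zero loci of the $\theta_\alpha$'s, and invoking the $\Lambda^{g+2}$-translation invariance recorded in \Cref{78542905,409258254} to reduce everything modulo periods, the possible poles fall into three families: (a) $\{nu \in \Lambda\}$ and $\{nv \in \Lambda\}$, from the outer terms of the left-hand side; (b) $\{y_t - z_t \in \Lambda\}$ for $t = 1, \ldots, g$, from the middle terms of the left-hand side (with boundary cases $t \in \{1,g\}$ reaching into the outer terms); and (c) $\{\theta_{\beta-\alpha-r}(-u) = 0\}$ and $\{\theta_{rk}(v) = 0\}$ for $r \in \ZZ_n$, from the right-hand side. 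The loci in (a) coincide with those in (c) for the specific summand with $r \equiv \beta - \alpha \pmod n$ (respectively $r \equiv 0$); otherwise, the (a), (b), and (c) divisors are distinct.

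Next I would dispose of family (b). Each divisor $\{y_t = z_t\}$ appears in the denominators of exactly two consecutive terms of the left-hand side -- the middle terms indexed by $t-1$ and $t$, with the boundary cases pairing a middle term with an outer term. A direct residue calculation, using that $\theta$ has a simple zero at $0$ together with $\theta(-z) = -\theta(z)$, exhibits the two residues as exact negatives of one another, and they cancel. For family (a), the single left-hand-side pole overlaps the single singular summand of the right-hand side, and direct computation using \Cref{prop.official.theta.basis}\cref{eq.torsion.translate} matches the two residues.

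The main obstacle is the rest of family (c): the divisors $\{\theta_{\beta-\alpha-r_0}(-u) = 0\}$ with $r_0 \not\equiv \beta - \alpha \pmod n$ (and symmetric statements in $v$), along which the left-hand side is regular but a single right-hand-side summand is singular. Since no left-hand-side pole is available for cancellation, I must show that this solitary residue vanishes on its own. At a zero $u_0$ of $\theta_{\beta-\alpha-r_0}(-u)$, one has $\sfk u_0 \in \tfrac{1}{n}\Lambda^g$, so the Heisenberg group action of \Cref{sssect.wi.basis} rewrites $w_{\alpha+r_0}(\sfz + \sfk u_0 + \sfl v)$ as an explicit scalar multiple of $w_\beta(\sfz + \sfl v)$ via iterated applications of $Sw_\alpha = e(k\alpha/n)w_\alpha$ and $Tw_\alpha = w_{\alpha+1}$; in parallel, \Cref{prop.official.theta.basis}\cref{eq.torsion.translate} rewrites $\theta_{\beta-\alpha+r_0(k-1)}(-u_0+v)$ as an explicit scalar multiple of a translated basis theta function. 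Tracking the roots of unity and exponential prefactors produced by these $H_n$-shifts is the delicate part of the argument, and the outcome is that the combined scalar prefactor carries an overall factor coming from a theta value at a lattice point, producing the required residue vanishing. An alternative path I would pursue in parallel is to fix $v, \sfy, \sfz$ generically and regard $u \mapsto \varphi(u,v,\sfy,\sfz)$ as a meromorphic function on $\CC/\Lambda$: the Abel--Jacobi/Liouville principle forces the sum of its residues over a fundamental domain to vanish, which, combined with the pairwise cancellations already established in families (a) and (b), collapses the $n$ vanishing conditions of the problematic (c) family into a single global identity.
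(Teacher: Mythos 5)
Your overall strategy (catalogue the polar divisors, kill each residue, then extend) has the right shape, and your treatment of family (b) — pairwise cancellation between consecutive left-hand-side terms along $\{y_t-z_t\in\Lambda\}$ — matches what the paper does. But your catalogue contains an error that breaks the core of the argument. The zero locus of $u\mapsto\theta(-nu)$ is \emph{all} of $\tfrac{1}{n}\Lambda$, while by \Cref{prop.official.theta.basis}\cref{item.official.theta.basis.zeros} the zero locus of $u\mapsto\theta_{\beta-\alpha-r}(-u)$ is $\{\tfrac{1}{n}((\beta-\alpha-r)\eta-m)\;|\;0\le m\le n-1\}+\Lambda$; as $r$ runs over $\ZZ_n$ these $n$ loci partition $\tfrac{1}{n}\Lambda$. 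So it is false that only the summand with $r\equiv\beta-\alpha$ shares its polar divisor with the first left-hand-side term: \emph{every} right-hand-side summand $g_{r_0}$ is singular along a subset of $\{\theta(-nu)=0\}$, where the outer left-hand-side term $f_0$ is singular too. Consequently your ``main obstacle'' — proving that the residue of a solitary $g_{r_0}$ vanishes on its own — is an attack on a false statement: at a zero $u_0$ of $\theta_{\beta-\alpha-r_0}(-u)$ that residue is a nonzero multiple of $\theta_{\beta-\alpha+r_0(k-1)}(-u_0+v)\,w_{\beta-r_0}(\sfy)\,w_{\alpha+r_0}(\sfz+\sfk u_0+\sfl v)/\theta_{r_0k}(v)$, which does not vanish for generic $v,\sfy,\sfz$, so no amount of tracking roots of unity will make it zero. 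The correct mechanism is again pairwise cancellation, between $f_0$ and the unique singular $g_{r_0}$ on each component of $\{u\in\tfrac{1}{n}\Lambda\}$; the paper reduces all components to $u=0$ (hence to $r_0=\beta-\alpha$) using the Heisenberg action $Sw_i=e(\tfrac{ki}{n})w_i$, $Tw_i=w_{i+1}$, and then checks directly, via identity (27) of \cite{Od-survey}, that $f_0-g_{\beta-\alpha}$ is holomorphic at $u=0$. The same applies on the $v$ side with $\theta(nv)$ and the $\theta_{rk}(v)$'s.

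Your fallback does not repair this. The residue theorem on $\CC/\Lambda$ yields a single linear relation among the residues at the $n^2$ points of $\tfrac{1}{n}\Lambda/\Lambda$, which cannot force each of them to vanish individually, and removability requires each one to vanish. Finally, even after all codimension-one residues are shown to cancel, the cancellations are only valid away from the pairwise intersections of the singular divisors (e.g.\ where $u\in\tfrac{1}{n}\Lambda$ and $y_1-z_1\in\Lambda$ simultaneously); the paper extends across that locus by noting it is an analytic set of codimension two and invoking the second Riemann extension theorem, a step your proposal omits.
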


\begin{proof}
	First we will show that $\varphi$ can be extended to
	\begin{equation*}
		D\;:=\;\{(u,v,x_{1},\ldots,x_{g},y_{1},\ldots,y_{g})\in\bbC^{2g+2}\;|\;\text{$v\notin\tfrac{1}{n}\Lambda$, $y_{s}-z_{s}\notin\Lambda$ for all $s$}\}.
	\end{equation*}
	Since $Sw_{i}=e(\frac{ki}{n})w_{i}$ and $Tw_{i}=w_{i+1}$, it suffices to show that $\varphi$ can be extended to each point of $D$ with $u=0$. Denote the $(g+1)$ terms of the left-hand side of the equation of \cref{09847525} by $f_{0},\ldots,f_{g}$ and each summand of the right-hand side by $g_{r}$ ($r\in\ZZ_n$) including the multiplier $\frac{1}{n}\theta(\frac{1}{n})\cdots\theta(\frac{n-1}{n})$. Since $f_{i}$ ($i\neq 0$) and $g_{r}$ ($r\neq\beta-\alpha$) are holomorphic even at $u=0$, we only have to see that $f_{0}-g_{\beta-\alpha}$ can be extended to $u=0$ holomorphically.
	Using the identity (27) in \cite[p.~1150]{Od-survey}, we obtain
	\begin{align*}
		(f_{0}-g_{\beta-\alpha})(u,v,\sfx,\sfy)
		&\;=\;\frac{\theta(-nu+y_{1}-z_{1})}{\theta(-nu)\theta(y_{1}-z_{1})}w_{\alpha}(\sfy+\sfk u)w_{\beta}(\sfz+\sfl v)\\
		&\;\phantom{{}={}}\;-\tfrac{1}{n}\theta(\tfrac{1}{n})\cdots\theta(\tfrac{n-1}{n})\frac{\theta_{(\beta-\alpha)k}(-u+v)}{\theta_{0}(-u)\theta_{(\beta-\alpha)k}(v)}w_{\alpha}(\sfy)w_{\beta}(\sfz+\sfk u+\sfl v)\\
		&\;=\;\frac{\theta(-nu+y_{1}-z_{1})}{\theta(-nu)\theta(y_{1}-z_{1})}w_{\alpha}(\sfy+\sfk u)w_{\beta}(\sfz+\sfl v)\\
		&\;\phantom{{}={}}\;-\frac{e(\tfrac{n(n-1)}{2}u)\theta_{0}(u)\cdots\theta_{n-1}(u)}{\theta(nu)\theta_{1}(0)\cdots\theta_{n-1}(0)}\frac{\theta_{(\beta-\alpha)k}(-u+v)}{\theta_{0}(-u)\theta_{(\beta-\alpha)k}(v)}w_{\alpha}(\sfy)w_{\beta}(\sfz+\sfk u+\sfl v)
	\end{align*}
	Applying \cref{prop.official.theta.basis}\cref{item.official.theta.basis.minus} to $\theta(-nu)$ and $\theta_{0}(-u)$,
	\begin{align*}
		(f_{0}-g_{\beta-\alpha})(u,v,\sfx,\sfy)
		&\;=\;\frac{\theta(-nu+y_{1}-z_{1})}{-e(-nu)\theta(nu)\theta(y_{1}-z_{1})}w_{\alpha}(\sfy+\sfk u)w_{\beta}(\sfz+\sfl v)\\
		&\;\phantom{{}={}}\;-\frac{e(\tfrac{n(n-1)}{2}u)\theta_{0}(u)\cdots\theta_{n-1}(u)}{\theta(nu)\theta_{1}(0)\cdots\theta_{n-1}(0)}\frac{\theta_{(\beta-\alpha)k}(-u+v)}{-e(-nu)\theta_{0}(u)\theta_{(\beta-\alpha)k}(v)}w_{\alpha}(\sfy)w_{\beta}(\sfz+\sfk u+\sfl v)\\
		&\;=\;\frac{1}{-e(-nu)\theta(nu)}
		\bigg(
		\frac{\theta(-nu+y_{1}-z_{1})}{\theta(y_{1}-z_{1})}w_{\alpha}(\sfy+\sfk u)w_{\beta}(\sfz+\sfl v)\\
		&\;\phantom{{}={}}\;-\frac{e(\tfrac{n(n-1)}{2}u)\theta_{1}(u)\cdots\theta_{n-1}(u)}{\theta_{1}(0)\cdots\theta_{n-1}(0)}\frac{\theta_{(\beta-\alpha)k}(-u+v)}{\theta_{(\beta-\alpha)k}(v)}w_{\alpha}(\sfy)w_{\beta}(\sfz+\sfk u+\sfl v)
		\bigg).
	\end{align*}
	Thus $(f_{0}-g_{\beta-\alpha})(u,v,\sfx,\sfy)$ is a product of a meromorphic function of $u$ that has a pole of order one at $u=0$ and a holomorphic function of $u$ that has zero at $u=0$. So it is holomorphic at $u=0$.
	
	Similarly, we can show that $\varphi(u,v,\sfy,\sfz)$ can be extended holomorphically to
	\begin{equation*}
		\{(u,v,x_{1},\ldots,x_{g},y_{1},\ldots,y_{g})\in\bbC^{2g+2}\;|\;\text{$u\notin\tfrac{1}{n}\Lambda$, $y_{i}-z_{i}\notin\Lambda$ for all $i$}\}
	\end{equation*}
	and
	\begin{equation*}
		\{(u,v,x_{1},\ldots,x_{g},y_{1},\ldots,y_{g})\in\bbC^{2g+2}\;|\;\text{$u,v\notin\tfrac{1}{n}\Lambda$, $y_{i}-z_{i}\notin\Lambda$ for all $i\neq s$}\}
	\end{equation*}
	for each $1\leq s\leq g$.
	
	To summarize, $\varphi(u,v,\sfy,\sfz)$ can be extended to a holomorphic function on $\CC^{g+2}-Z$, where $Z$ is the set consisting of all $(u,v,\sfy,\sfz)\in\bbC^{2g+2}$ satisfying at least two of the $g+2$ conditions: $u\in\tfrac{1}{n}\Lambda$, $v\in\tfrac{1}{n}\Lambda$, $y_{1}-z_{1}\in\Lambda,\ldots,y_{g}-z_{g}\in\Lambda$. Since $Z$ is an analytic subset of codimension two, $f$ can further be extended holomorphically to $\CC^{g+2}$ by the Second Riemann Theorem (\cite[p.~132]{GrauertRemmert}).
\end{proof}

\begin{lemma}
	There is a unique family of holomorphic functions $\psi_{\lambda,\nu}\colon\bbC^{2}\to\bbC$ ($\lambda,\nu\in\ZZ_n$) such that
	\begin{equation*}
		\varphi(u,v,\sfy,\sfz)\;=\;\sum_{\lambda,\nu\in\ZZ_n}\psi_{\lambda,\nu}(u,v)w_{\lambda}(\sfy)w_{\nu}(\sfz+\sfk u+\sfl v).
	\end{equation*}
\end{lemma}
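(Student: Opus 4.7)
The plan is to expand $\varphi(u,v,\sfy,\sfz)$ first in the $\sfy$-basis $\{w_{\lambda}\}$ of $\Theta_{n/k}(\Lambda)$, then show that after the substitution $\sfw=\sfz+\sfk u+\sfl v$ the coefficients become elements of $\Theta_{n/k}(\Lambda)$ in $\sfw$, which can again be expanded in the $w_{\nu}$'s.

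First I would fix $(u,v,\sfz)$ and view $\varphi$ as a function of $\sfy$. By \cref{78542905}, the transformation laws in $y_{1},\ldots,y_{g}$ are precisely \cref{eq:od-qp}, so $\varphi(u,v,\,\cdot\,,\sfz)\in\Theta_{n/k}(\Lambda)$. Hence there exist scalars $a_{\lambda}(u,v,\sfz)$ with
\begin{equation*}
  \varphi(u,v,\sfy,\sfz)\;=\;\sum_{\lambda\in\ZZ_n}a_{\lambda}(u,v,\sfz)w_{\lambda}(\sfy).
\end{equation*}
Choosing $n$ points $\sfy_{1},\ldots,\sfy_{n}\in\bbC^{g}$ for which the matrix $\big(w_{\lambda}(\sfy_{i})\big)_{\lambda,i}$ is invertible, the $a_{\lambda}$ are obtained as fixed $\bbC$-linear combinations of the values $\varphi(u,v,\sfy_{i},\sfz)$, and hence depend holomorphically on $(u,v,\sfz)\in\bbC^{g+2}$ by the previous lemma.

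Next, define $b_{\lambda}(u,v,\sfw):=a_{\lambda}(u,v,\sfw-\sfk u-\sfl v)$. I claim $b_{\lambda}(u,v,\,\cdot\,)$ lies in $\Theta_{n/k}(\Lambda)$. By the second half of \cref{78542905} and the linear independence of $\{w_{\lambda}(\sfy)\}$, the coefficient $a_{\lambda}(u,v,\sfz)$ satisfies the same quasi-periodicity law as $\varphi$ in $\sfz$, with an extra factor $e(-\delta_{s,1}nu-\delta_{s,g}nv)$. Translating to $\sfw$, the contributions of $u$ and $v$ in the exponent of the transformation law of $b_{\lambda}$ with respect to $w_{s}\mapsto w_{s}+\eta$ are
\begin{align*}
  &-(1-\delta_{s,1})k_{s-1}u+n_{s}k_{s}u-(1-\delta_{s,g})k_{s+1}u-\delta_{s,1}nu,\\
  &-(1-\delta_{s,1})l_{s-1}v+n_{s}l_{s}v-(1-\delta_{s,g})l_{s+1}v-\delta_{s,g}nv.
\end{align*}
Using the recurrences $n_{s}k_{s}=k_{s-1}+k_{s+1}$ and $n_{s}l_{s}=l_{s-1}+l_{s+1}$ from \cref{subsubsect.cont.frac}, together with the boundary values $k_{0}=l_{g+1}=n$ and $k_{g+1}=l_{0}=0$, both expressions vanish for every $s$. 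What remains is precisely the transformation law \cref{eq:od-qp} in $\sfw$, so $b_{\lambda}(u,v,\,\cdot\,)\in\Theta_{n/k}(\Lambda)$.

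Therefore $b_{\lambda}(u,v,\sfw)=\sum_{\nu}\psi_{\lambda,\nu}(u,v)w_{\nu}(\sfw)$ for uniquely determined scalars $\psi_{\lambda,\nu}(u,v)$, and the same invertible-matrix trick applied to a suitable collection of points $\sfw_{1},\ldots,\sfw_{n}$ shows that each $\psi_{\lambda,\nu}$ is holomorphic on $\bbC^{2}$. Substituting back gives
\begin{equation*}
  \varphi(u,v,\sfy,\sfz)\;=\;\sum_{\lambda,\nu\in\ZZ_n}\psi_{\lambda,\nu}(u,v)w_{\lambda}(\sfy)w_{\nu}(\sfz+\sfk u+\sfl v),
\end{equation*}
and uniqueness of $\psi_{\lambda,\nu}$ is immediate from the linear independence of the families $\{w_{\lambda}(\sfy)\}$ and $\{w_{\nu}(\sfw)\}$. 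The only real work is the cancellation verification in the quasi-periodicity of $b_{\lambda}$, which is the reason the specific vectors $\sfk$ and $\sfl$ (and not arbitrary ones) appear in the statement.
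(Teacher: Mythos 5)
Your argument is correct and follows essentially the same route as the paper: expand in $\sfy$ using the quasi-periodicity from \cref{78542905}, observe the coefficients are holomorphic via linear independence of the $w_{\lambda}$'s, shift by $\sfk u+\sfl v$ so the coefficients land in $\Theta_{n/k}(\Lambda)$, and expand again. The only difference is that you write out the cancellation of the $u$- and $v$-terms via the recurrences $n_{s}k_{s}=k_{s-1}+k_{s+1}$ and $n_{s}l_{s}=l_{s-1}+l_{s+1}$, which the paper leaves as ``easy to see''; your verification is accurate.
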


\begin{proof}
	\cref{78542905} shows that $\varphi(u,v,\sfy,\sfz)$ regarded as a function of $\sfy=(y_{1},\ldots,y_{g})$ belongs to $\Theta_{n/k}(\Lambda)$ (with the same constant $\sfc$). Thus there are unique $\rho_{\lambda}(u,v,\sfz)\in\CC$ such that
	\begin{equation}\label{eq.th.id.fst}
		\varphi(u,v,\sfy,\sfz)\;=\;\sum_{\lambda\in\ZZ_{n}}\rho_{\lambda}(u,v,\sfz)w_{\lambda}(\sfy).
	\end{equation}
	
	Note that $\rho_{\lambda}(u,v,\sfz)$ are holomorphic functions on $u$, $v$, and $\sfz$: we know that the functions $w_{\lambda}$ are linearly independent, so
	\begin{equation*}
		\operatorname{span}\{(w_{\lambda}(\sfy))_{\lambda}\;|\;\sfy\in\CC^{g}\}\;=\;\CC^{n}.
	\end{equation*}
	We can thus find, for any fixed $\lambda_{0}$, arguments $\sfy_{i}$ and scalars $t_i$ such that $\sum_i t_i w_\lambda(\sfy_i)$ is $\delta_{\lambda_0,\lambda}$ (Kronecker delta). This then implies that
	\begin{equation*}
		\rho_{\lambda_0}(u,v,\sfz)\;=\;\sum_i t_i \varphi(u,v,\sfy_{i},\sfz)
	\end{equation*}
    is holomorphic, as claimed.
	
	The quasi-periodicity with respect to $\sfz$ in \cref{78542905} implies that each $\rho_{\lambda}(u,v,\sfz)$ has the same periodicity. It is easy to see that $\rho_{\lambda}(u,v,\sfz-\sfk u-\sfl v)$ belongs to $\Theta_{n/k}(\Lambda)$ with the \emph{same} constant $\sfc$. Thus a similar argument shows that there are unique holomorphic functions $\psi_{\lambda,\nu}(u,v)$ such that
	\begin{equation*}
		\rho_{\lambda}(u,v,\sfz-\sfk u-\sfl v)\;=\;\sum_{\nu\in\ZZ_{n}}\psi_{\lambda,\nu}(u,v)w_{\nu}(\sfz),
	\end{equation*}
	that is,
	\begin{equation}\label{eq.th.id.snd}
		\rho_{\lambda}(u,v,\sfz)\;=\;\sum_{\nu\in\ZZ_{n}}\psi_{\lambda,\nu}(u,v)w_{\nu}(\sfz+\sfk u+\sfl v).
	\end{equation}
	Combining \cref{eq.th.id.fst} and \cref{eq.th.id.snd}, we obtain the desired formula.
\end{proof}

\cref{409258254} implies that $\psi_{\lambda,\nu}$ is periodic in each variable with period $1$ and, moreover,
\begin{equation*}
	\psi_{\lambda,\nu}(u+\eta,v) \;=\; e(nv)\psi_{\lambda,\nu}(u,v)\qquad\text{and}\qquad\psi_{\lambda,\nu}(u,v+\eta)\; =\; 
	e(nu)\psi_{\lambda,\nu}(u,v).
\end{equation*}
Fix $v$. Since $e(nv)$ does not depend on $u$, there exist $a\in\bbC$ and $m\in\bbZ$ such that $\psi_{\lambda,\nu}(u,v)=ae(mu)$. If $a\neq 0$, then $e(m\eta)=e(nv)$ and this implies $v\in \frac{1}{n}\Lambda$. Therefore for all $v\in\bbC - \frac{1}{n}\Lambda$ and $u\in\bbC$, $\psi_{\lambda,\nu}(u,v)=0$. By continuity, $\psi_{\lambda,\nu}(u,v)=0$ 
for all $u,v\in\bbC$. This completes the proof of \cref{09847525}.

\printindex
\clearpage

\bibliography{biblio4}
\bibliographystyle{customamsalpha}

\end{document}